\numberwithin{equation}{section}
\newcommand{\ot}{\otimes}
\newcommand{\id}{\mathord{\operatorname{id}}}
\newcommand{\Z}{\mathbb{Z}}
\newcommand{\R}{\mathbb{R}}
\newcommand{\N}{\mathbb{N}}
\newcommand{\C}{\mathbb{C}}
\newcommand{\F}{\mathbb{F}}
\newcommand{\PolG}{{\rm Pol}(G)}
\newcommand{\Gq}{\mathbb{G}}
\theoremstyle{plain}
\newtheorem{theorem}{Theorem}[section]
\newtheorem{lemma}[theorem]{Lemma}
\newtheorem{proposition}[theorem]{Proposition}
\newtheorem{corollary}[theorem]{Corollary}
\theoremstyle{definition}
\newtheorem{definition}[theorem]{Definition}
\newtheorem{example}[theorem]{Example}
\newtheorem{remark}[theorem]{Remark}
\begin{document}

\begin{center}
{\LARGE\bf  On Compact Bicrossed Products}
\bigskip

{\sc Pierre Fima\footnote{Supported by the ANR grants NEUMANN and OSQPI}, Kunal Mukherjee and Issan Patri}
\end{center}

\begin{abstract}
\noindent We make a comprehensive and self-contained study of compact bicrossed products arising from matched pairs of discrete groups and  compact groups. We exhibit an automatic regularity property of such a matched pair and produce an easy construction of the associated bicrossed product $\Gq$. We investigate the relative co-property $(T)$ and the relative co-Haagerup property of the pair comprising of the compact group and the bicrossed product, discuss property $(T)$ and  Haagerup property of the discrete dual $\widehat{\Gq}$, and review co-amenability of $\Gq$ as well. We distinguish two such non-trivial compact bicrossed products with relative co-property $(T)$ and also provide an infinite family of pairwise non isomorphic non-trivial discrete quantum groups with property $(T)$, the existence of even one of the latter was unknown. Finally, we examine all the properties mentioned above for the crossed product quantum group given by an action by quantum automorphisms of a discrete group on a compact quantum group, and also establish the permanence of rapid decay and weak amenability and provide several explicit examples.
\end{abstract}

\section{Introduction}
In the eighties, Woronowicz \cite{Wo87,Wo88,Wo95} introduced the notion of compact quantum groups and generalized the classical Peter-Weyl representation theory. However, the theory of quantum groups goes back to Kac \cite{Ka63, Ka65} in the early sixties, and his notion of ring groups in modern terms are known as finite dimensional Kac algebras. In the fundamental work \cite{Ka68} on extensions of finite groups, Kac introduced the notion of \textit{matched pair} of finite groups and developed the bicrossed product construction giving the first examples of semisimple Hopf algebras that are neither commutative nor cocommutative. It was later generalized by Baaj and Skandalis \cite{BS93} in the context of Kac algebras and then by Vaes and Vainerman \cite{VV03} in the framework of locally compact \footnote{All l.c. spaces are assumed to be Hausdorff.} (l.c. in the sequel) quantum groups; the latter was introduced by Kustermans and Vaes in \cite{KV00}. In the classical case, i.e., in the ambience of groups,  Baaj and Skandalis concentrated only on the case of \textit{regular} matched pairs of l.c. groups.  In \cite{VV03}, the authors extended the study to \textit{semi-regular} matched pairs of l.c. groups. The case of a general matched pair of locally compact groups was settled by Baaj, Skandalis and Vaes in \cite{BSV03}. 

As a standing assumption, all throughout the paper, all Hilbert spaces and all C*-algebras are separable, all von Neumann algebras have separable preduals, all discrete groups are countable and all compact groups are Hausdorff and second countable.

The theory of quantum groups is fathomless. In order to have deeper insights, it is necessary to generate and study many explicit examples. The bicrossed product construction is a way to get abundant non-trivial examples of quantum groups which are very far from groups \cite{Fi07}. A \textit{compact bicrossed product} is one, in which the resulting quantum group is compact. Without being bogged technically, the bicrossed product construction in the classical case associates a l.c. quantum group to a matched pair of l.c. groups $(G_1, G_2)$. The associated l.c. quantum group $($in the bicrossed product construction$)$ has a Haar state, i.e., is a compact quantum group, if and only if $G_1$ is discrete and $G_2$ is compact \cite{VV03}. In this paper, such a pair will be called as a \textit{compact} matched pair. Moving to the quantum case, 
one can introduce the notion of matched pair of l.c. quantum groups, and perform an analogous bicrossed product construction to manufacture a l.c. quantum group that generalizes the classical bicrossed product construction. This construction is quite technical and we refer the interested reader to \cite{VV03} for details. It is to be noted that, in the same vein, the crossed product of a compact quantum group $G$ by a countable discrete group $\Gamma$ acting on $G$ by quantum automorphisms (see Section 2.2 for precise definition), as considered by Wang \cite{Wa95b}, is subsumed in the quantum bicrossed product construction and hence is a  simple case of compact bicrossed product. Needless to say, that the aforesaid class of bicrossed products in the quantum setup, does not exhaust the entire class of compact bicrossed products. We point out though, that despite the intricacy in the bicrossed product construction, the `compactness' of the matched pair in the classical case (for groups) alleviates technical obstacles, which is the primary sagacity of this paper. 


This paper investigates compact bicrossed products in both classical and quantum setting and studies their approximation properties, namely,  amenability, $K$-amenability, weak amenability, (relative) Haagerup property, (relative) property (T) and rapid decay, which enables one to manufacture explicit examples.  The paper has two major parts: one dealing with the classical case and one dealing with the quantum case. In the quantum case, we only concentrate on compact crossed products.

We provide a totally self-contained and direct approach dedicated towards the construction of a compact bicrossed product arising from matched pair of groups $(\Gamma,G)$, where $\Gamma$ is discrete and $G$ is compact. An advantage with our construction is that  it avoids technical intricacies that are obligatory when dealing with l.c. groups.  In the process, we observe that the compactness of $G$ constrains the matched pair to automatically satisfy a regularity property, notably, $\Gamma,G$ are subgroups of a l.c. group $H$ such that $\Gamma G =H$ and the canonical action of either group on its complementary pair is continuous. Moreover, the action of $\Gamma$ on $G$ happens via measure preserving homeomorphisms.  This regularity is not automatic in the l.c. setting and one has to compensate with `almost everywhere statements'. The aforesaid regularity galvanizes one to directly perform the bicrossed construction; the bicrossed product is of course known to be a Kac algebra. The continuous action of the group $G$ on the countable \textit{set} $\Gamma$ yields magic unitaries, which along with the irreducible unitary  representations of $G$ and the action of $\Gamma$ on $G$ by measure preserving homeomorphisms assist us in constructing the bicrossed product in an elegant fashion (Theorems \ref{ThmBicrossed}). Some easy consequences on amenability (which is known from \cite{DQV02}), $K$-amenability, Haagerup property are also presented in Corollary \ref{CorMatched}. We also compute the intrinsic group and the spectrum of the full C*-algebra of the associated bicrossed product in terms of the fixed points of the canonical actions and the spectrum of the groups in Proposition \ref{PropInt}. Needless to say, these are isomorphism invariants for compact quantum groups. 
 
With the help of the construction above, we explore the approximation properties of the dual of a compact bicrossed product arising from a compact matched pair of groups. We characterize the relative co-property $(T)$ for the pair $(G,\Gq)$, where $\Gq$ is the bicrossed product of the compact matched pair $(\Gamma, G)$, in terms of the action of $\Gamma$ on $G$. More precisely,  the negation of relative co-property $(T)$ for the pair $(G,\Gq)$ amounts to the existence of an asymptotically $\Gamma$-invariant sequence of Borel probability measures on $G$ each of which assign zero weight to the identity $e$ of the group but yet converge to the Dirac measure $\delta_e$ in the weak* topology (Theorem \ref{ThmRelT}).  In the event of existence of such a sequence of measures on $G$, by a standard result in measure theory (due to Parthasarathy and Steernman \cite{PS85}), the measures in the sequence versus their push forwards with respect to the group action implemented by $\Gamma$ have large common support. Thus, along the way, we show that such a sequence of measures can be replaced by one for which the $\Gamma$-action on $G$ is nonsingular. This result generalizes the classical characterization of the relative property $(T)$ for the pair $(H_0,\Gamma_0\ltimes H_0)$ (originally defined in \cite{Ma82}), where $\Gamma_0$ is a countable discrete group acting on a countable discrete abelian group $H_0$ \cite{CT11}.  We show that if the dual $\widehat{\Gq}$ of the bicrossed product has property $(T)$, then $\Gamma$ necessarily has property $(T)$ and the set of fixed points in $G$ of the action of $\Gamma$ on $G$ is finite. We also show that if $\Gamma$ has $(T)$ and $G$ is finite then $\widehat{\Gq}$ has $(T)$ and the converse holds when the action of $\Gamma$ on $G$ is compact (Theorem \ref{ThmPropT}). 

Proceeding further, we characterize the relative co-Haagerup property for the pair $(G,\Gq)$ again in terms of the action of $\Gamma$ on $G$. Like before, we prove that this property is equivalent to the existence of an approximately $\Gamma$-invaraint sequence of probability measures on $G$ which converge in weak* topology to $\delta_e$ and whose Fourier transform (regarded as an element of the multiplier algebra of $C^*_r(G))$ fall in $C^*_r(G)$ (Theorem \ref{ThmRelH}), and, like before, we show that the measures can be chosen such that the action of $\Gamma$ on $G$ is nonsingular. Again, this result generalizes the classical characterization of the relative Haagerup property for the pair $(H_0,\Gamma_0\ltimes H_0)$, where $\Gamma_0$ is a countable discrete group acting on the countable discrete abelian group $H_0$ \cite{CT11}. 

In the quantum setting, an example of a matched pair of a classical countable discrete  group with a compact quantum group is the pair arising in a crossed product in which the discrete group acts on the compact quantum group by quantum automorphisms \cite{Wa95b}. Since one of the involved actions is trivial, the representation theory is easier to study. But as the compact quantum group need not be commutative, Kac or co-amenable,  approximation properties become harder to exhibit. We provide a self-contained and very short approach to this construction and study all the properties mentioned above for the associated  crossed product quantum group. Let $\alpha\,:\,\Gamma\curvearrowright G$ be an action of the discrete group $\Gamma$ on the compact quantum group $G$ by quantum automorphisms and $\Gq$ be the crossed product quantum group. In this context, we first provide a short account of the quantum group structure of $\Gq$ and its representation theory which was initially studied by Wang in \cite{Wa95b} (but, in contrast with the work of Wang, we do not invoke free products).  We deduce obvious consequences on amenability and $K$-amenability of  $\widehat\Gq$ in Corollary \ref{Amenablestuff} and describe the intrinsic group and the spectrum of the full C*-algebra of $\Gq$ in Proposition \ref{Prop-CrossedInt}.

In the quantum setting, we study weak amenablity of $\widehat{\Gq}$. In \cite{KR99}, it was proved that when $G$ is Kac, then $\Lambda_{cb}(\widehat{G})=\Lambda_{cb}(C(G))=\Lambda_{cb}(\rm{L}^{\infty}(G))$. In our setup we estimate (in Theorem \ref{ThmWA}) the Cowling-Haagerup constants under compactness. When the action $\alpha$ of $\Gamma$ is compact we show that $\Lambda_{cb}(C(\Gq))\leq\Lambda_{cb}(\Gamma)\Lambda_{cb}(\widehat{G})$. 

Rapidly decreasing functions on group C*-algebras were first studied by Jolissaint in \cite{Jo90}. Generalizing this notion, 
rapid decay $($(RD) in the sequel$)$ for quantum groups was studied in \cite{Ve07} and subsequently this notion was calibrated in \cite{BVZ14}. Following \cite{BVZ14}, we show the permanence of (RD) in the setup of crossed products. To be precise, we show that if $\Gamma$ acts on $G$ via quantum automorphisms, there is a length function $l$ on $\rm{Irr}(G)$ which is invariant with respect to the canonical action of $\Gamma$ on $\rm{Irr}(G)$ such that $(\widehat{G},l)$ has (RD), and $\Gamma$ has (RD), then $\widehat{\Gq}$ has (RD) with respect to a  pertinent length function on $\rm{Irr}(\Gq)$ (Theorem \ref{ThmRD}).

Our characterization of the relative co-property $(T)$ for the pair $(G,\Gq)$ is analogous to the classical bicrossed product case: the approximating measures and $\delta_e$ in the characterization of the classical case are replaced in the quantum setting respectively by states on $C_m(G)$ and the counit of $G$. This proof is technically more involved than the classical case (Theorem \ref{ThmRelT2}). We also obtain a statement about property $(T)$ for $\widehat{\Gq}$ analogous to the property $(T)$ statement we mentioned above for classical bicrossed products (Theorem \ref{Thm-Crossed-T}).

Analogous statements hold for the relative co-Haagerup property of the pair $(G,\Gq)$ as well (Theorem \ref{Q-RelHaag}).  Moreover, we generalize a result of Jolissaint regarding Haagerup property to the setup of non tracial von Neumann algebras \cite{Jo07}: for a compact, state $($faithful normal$)$ preserving action of a countable discrete group with Haagerup property on a von Neumann algebra with the same property, the crossed product has the Haagerup property (Poposition \ref{PropHaag}). Hence, if $\Gamma$ and ${\rm L}^\infty(G)$ have the Haagerup property and the action $\alpha$ is compact  then ${\rm L}^\infty(\Gq)$ also has the Haagerup property. It is known that, for any compact quantum group $G$, if $\widehat{G}$ has the Haagerup property then ${\rm L}^\infty(G)$ also has the Haagerup property and the converse holds when $G$ is Kac \cite{DFSW13}. In general, one needs to assume that $\widehat{G}$ is strongly inner amenable \cite{OOT15}. Nevertheless, we show that if $\widehat{G}$ and $\Gamma$ both have the Haagerup property and the action of $\Gamma$ on $G$ is compact, then $\widehat\Gq$ has the Haagerup property (Theorem \ref{Thm-Crossed-H}).

It is now appropriate to highlight our examples. We point out that it is quite hard to generate examples of compact matched
pairs of groups for which both the actions are non-trivial. Thus, starting with a bicrossed product arising from a compact matched pair for which any one of the actions is trivial, we leverage a \textit{crossed homomorphism} $($see Section $7$ for definition$)$ to deform the original matched pair into one for which both the canonical actions become possibly 
non-trivial $($see discussions in the beginning of Sections $7.1.1$ and $7.1.2$). The added advantages with this deformation 
process are two fold. Firstly, the computations of the spectrum of the maximal C*-algebra and the intrinsic group of the deformed bicrossed product are  still convenient. Secondly, all the approximation properties and notably the relative co-property $(T)$ and the relative co-Haagerup property are inherited by the deformed bicrossed product. This allows us to provide a concrete infinite family of pairwise non-isomorphic, non-commutative and non-cocommutative infinite dimensional compact quantum groups whose duals have the property $(T)$ (Theorem \ref{InfiniteExample}). We mention that, as far as we are aware, these are the first explicit non-trivial examples of such compact quantum groups, since the twisting example of \cite{Fi10} is based on \cite[Theorem 3]{Fi10} and the proof of this theorem is erroneous. Using the same methodology, we also distinguish two compact bicrossed products arising from compact matched pairs both of which have relative co-property $(T)$ and for both of which the canonical actions are non-trivial. We are able to distinguish these quantum groups in the most obvious way, by computing the spectrum of the maximal $C^*$-algebra.

We also provide examples of non-trivial crossed product compact quantum groups by considering the canonical conjugation action induced by a countable subgroup of the spectrum of the full C*-algebra of a non-trivial compact quantum group. For these specific crossed products, we compute the intrinsic groups and the spectrum of the full C*-algebras, estimate the Cowling-Haagerup constants and characterized the property $(RD)$, the Haagerup property and the property $(T)$ in terms of the discrete group $\Gamma$ and the compact quantum group $G$ in Corollary \ref{CorExCrossed} and we apply this results to the free orthogonal and free unitary quantum groups in Example \ref{ExCrossedFree}. Finally, we provide some explicit non-trivial examples of crossed product without the Haagerup property but with the relative Haagerup property in Example \ref{ExCrossedRelH}.

The lay out of the paper is as follows. In Section 2, we jot down all the notations, recall preliminary facts and basics of compact quantum groups that is used all throughout this paper.  In the same section, we also prove that co-Haagerup property and co-weak amenability of a finite index quantum subgroup extend to the compact quantum group. Section 3 concentrates on the bicrossed product construction from compact matched pairs of a discrete group and a compact group. In Section 4 and Section 5, we respectively study (relative) Kazhdan property and (relative) Haagerup property for the dual of a compact bicrossed products. Section 6 is divided into many subsections, and in this section, we study the properties of crossed products of compact quantum groups by discrete groups.  Section 7 is dedicated to examples.

\textbf{Acknowledgement}: The authors are very grateful to Prof. Karl H. Hofmann for his illuminating remarks that led the authors to the explicit examples in Section 7. The authors would also like to thank Makoto Yamashita, Stefaan Vaes and Christian Voigt for comments and discussions.

\section{Preliminaries}\label{section-preliminaries}

\textbf{Notations.} Throughout the paper, $\mathcal{B}(H)$ denotes the von Neumann algebra of all bounded operators on the Hilbert space $H$. The inner products of Hilbert spaces are assumed to be linear in the first variable. The same symbol $\ot$ will denote the tensor product of Hilbert spaces, the minimal tensor product of C*-algebras and as well as the tensor product of von Neumann algebras.

\subsection{Compact group action on countable sets}

We first record some facts regarding actions of compact groups on countable sets. This will be necessary in studying the bicrossed product construction for compact matched pairs of groups.   

Let $X$ be a countable infinite set and let $S(X)$ be the group of bijections of $X$. It is a Polish group equipped with the topology of pointwise convergence which is the topology generated by the sets $S_{x,y}=\{\alpha\in S(X)\,:\,\alpha(x)=y\}$ for $x,y\in X$. Since $S_{x,y}^c=\cup_{z\in X\setminus\{y\}}S_{x,z}$, these sets are clopen in $S(X)$. Moreover, for any compact subset $K\subset S(X)$ and for any $x\in X$, the orbit $K\cdot x\subset X$ is finite. Indeed, from the open cover $K\subset\cup_{y\in X} S_{x,y}$, we find $y_1,\cdots, y_n\in X$ such that $K\subset\cup_{i=1}^n S_{x,y_i}$, which implies that $K\cdot x\subset\{y_1,\cdots,y_n\}$.

Let $\beta\,:\,G\rightarrow S(X)$ be a continuous right action of $G$ on $X$. To simplify the notations, we write $x\cdot g=\beta_g(x)$ for $g\in G$ and $x\in X$.

Observe that, since $\beta$ is continuous and $G$ is compact, every $\beta$-orbit in $X$ is finite. Fix $r,s\in X$ and denote by $A_{r,s}$ the set
$$A_{r,s}=\{g\in G\,:\,r\cdot g=s\}=\beta^{-1}(S_{r,s}).$$
Note that, since $\beta$ is continuous, $A_{r,s}$ is open and closed in $G$ for all $r,s\in X$. Hence, $1_{A_{r,s}}\in C(G)$. Moreover, $1_{A_{r,s}}\neq 0$ if and only if $r$ and $s$ are in the same orbit and we have the following relations:

\begin{enumerate}
\item $1_{A_{s,r}}1_{A_{t,r}}=\delta_{t,s}1_{A_{s,r}}$ for all $r,s,t\in X$.
\item $1_{A_{s,r}}1_{A_{s,t}}=\delta_{r,t}1_{A_{s,r}}$ for all $r,s,t\in\ X$.
\item $\sum_{s\in X}1_{A_{r,s}}=\sum_{s\in r\cdot G}1_{A_{r,s}}=1$ for all $r\in X$.
\item $\sum_{r\in X}1_{A_{r,s}}=\sum_{r\in s\cdot G}1_{A_{r,s}}=1$ for all $r\in X$.
\item If $r\cdot G=s\cdot G$, then $\Delta_G(1_{A_{s,r}})=\sum_{t\in s.G}1_{A_{s,t}}\ot 1_{A_{t,r}}$,
\end{enumerate}
where $\Delta_G$ is the usual comultiplication on $C(G)$. In other words, for every orbit $x\cdot G$, the matrix $(1_{A_{r,s}})_{r,s\in x\cdot G}\in M_{\vert x\cdot G\vert}(\C)\ot C(G)$ is a magic unitary and a unitary representation of $G$.

\subsection{Compact and discrete quantum groups}\label{SectionCQG}

In this section, we recall well known and basic facts about compact quantum groups that will be indispensable. For the general theory of compact quantum groups, we refer the reader to \cite{Wo87, Wo95}.

For a compact quantum group $G$ with comultiplication $\Delta$ (or $\Delta_G$ when there can be ambiguity), we denote by $h$ (or $h_G)$ the Haar state on $G$ and by $C(G)$ (resp. ${\rm L}^\infty(G)$) the C*-algebra (resp. the von Neumann algebra) generated by the GNS construction of $h$. Hence we view $C(G)\subset\mathcal{B}({\rm L}^2(G))$, where ${\rm L}^2(G)$ is the GNS space of $h$. The reader should be cautious that the symbol $\Delta$ $($or $\Delta_{G})$ will be used to denote the comultiplications of all three compact quantum groups $C(G)$, the universal quantum group of $C(G)$ and ${\rm L}^\infty(G)$.  For two finite dimensional representations of $G$, we denote by Mor$(u,v)$ the space of intertwiners from $u$ to $v$ and by $u\ot v$ their tensor product. The trivial representation is denoted by $1$. We also denote by ${\rm Irr}(G)$ the set of equivalence classes of irreducible unitary representations of $G$. For $x\in{\rm Irr}(G)$, we choose a representative $u^x\in\mathcal{B}(H_x)\ot C(G)$, where $u^{x}$ is a irreducible representation on the Hilbert space $H_x$.

Recall that there is a natural involution $x\mapsto\overline{x}$ such that $u^{\overline{x}}$ is the unique (up to equivalence) irreducible representation of $G$ such that Mor$(1,x\ot \overline{x})\neq\{0\}\neq$Mor$(\overline{x}\ot x,1)$. For any $x\in{\rm Irr}(G)$, take a non-zero element $E_x\in{\rm Mor}(1,x\ot\overline{x})$ and define an anti-linear map $J_x\,:\, H_x\rightarrow H_{\overline{x}}$ by letting $\xi\mapsto (\xi^*\ot 1)E_x$. Define $Q_x=J_xJ_x^*\in\mathcal{B}(H_x)$. We normalize $E_x$ in such a way that ${\rm Tr}_x(Q_x)={\rm Tr}_x(Q_x^{-1})$, where ${\rm Tr}_x$ is the unique trace on $\mathcal{B}(H_x)$ such that ${\rm Tr}_x(1)={\rm dim}(x)$. This uniquely determines $Q_x$ and fixes $E_x$ up to a complex number of modulus $1$. The number ${\rm dim}_q(x):={\rm Tr}_x(Q_x)={\rm Tr}_x(Q_x^{-1})$ is called the \textit{quantum dimension of} $x$. Let $u^x_{cc}=(\id\otimes S_{G}^2)(u^x)$, where $S_G$ denotes the antipode of $G$. It is well known (see e.g. section 5 of \cite{Wo87}) that $Q_x$ is also uniquely determined by the fact that $Q_x\in$ Mor$(u^{x}, u^x_{cc})$ and  that $Q_x$ is invertible and ${\rm Tr}_x(Q_x)={\rm Tr}_x(Q_x^{-1})>0$.

We denote by ${\rm Pol}(G)$ the linear span of the coefficients of $\{u^x: x\in {\rm Irr}(G)\}$, which is a unital dense $\ast$-subalgebra of $C(G)$. We also denote by $C_m(G)$ the enveloping $C^*$-algebra of ${\rm Pol}(G)$, by $\lambda$ (or $\lambda_G$) the canonical surjection from $C_m(G)$ to $C(G)$ and by $\varepsilon$ (or $\varepsilon_G$) the counit on $C_m(G)$.

For a unital C*-algebra $A$, we use the standard notation ${\rm Sp}(A)$ to denote the spectrum of $A$.

Let $G$ be a compact quantum group and write $\chi(G):={\rm Sp}(C_m(G))$. It is a group with the product defined by $gh=(g\ot h)\circ\Delta$, for $g,h\in \chi(G)$. The unit of $\chi(G)$ is the counit $\varepsilon_G\in C_m(G)^*$ and the inverse of $g\in \chi(G)$ is given by $g\circ S_G$, where $S_G$ is the antipode on $C_m(G)$. Viewing $\chi(G)$ as a closed subset of the unit ball of $C_m(G)^*$, one can consider the weak* topology on $\chi(G)$ which turns  $\chi(G)$ to a compact group.

Finally, let ${\rm Int}(G)=\{u\in\mathcal{U}(C_m(G))\,:\,\Delta_G(u)=u\ot u\}$ denote the intrinsic group of $G$. It is the set of all $1$-dimensional irreducible unitary representations of $G$ and it is countable (since $C_m(G)$ is supposed to be separable).

For a classical l.c. group $H$, we denote by ${\rm Sp}(H)$ the spectrum of $C^*(H)$. It is a l.c. abelian group (with pointwise multiplication and weak* topology arising from the inclusion ${\rm Sp}(H)\subset C^*(H)^*$) and is compact if $H$ is discrete and mutatis mutandis; we will view it as the group of $1$-dimensional unitary representations of $H$. It is the Pontryagin dual of $H$ (when $H$ is abelian). We do not use the standard notation $\widehat{H}$ since we reserve this notation for the dual quantum group and, in the non-abelian case, it does not correspond to the dual quantum group.

We also denote by ${\rm Aut}(G)$ the set of quantum automorphisms of a compact quantum group $G$. More precisely, ${\rm Aut}(G)=\{\alpha\in{\rm Aut}(C_m(G))\,:\,\Delta\circ\alpha=(\alpha\ot\alpha)\circ\Delta\}$.

Hence, ${\rm Aut}(G)$ as a closed subgroup of the Polish \footnote{with respect to the topology of pointwise norm convergence} group ${\rm Aut}(C_m(G))$, is itself a Polish group.

Observe that each $\alpha\in{\rm Aut}(G)$ induces a bijection $\alpha\in S({\rm Irr}(G))$. Indeed, for $x\in{\rm Irr}(G)$, $\alpha(x)$ is the equivalence class of the irreducible unitary representation $(\id\ot\alpha)(u^x)$. By construction, the map ${\rm Aut}(G)\rightarrow S({\rm Irr}(G))$ is a group homomorphism.

We will need the following auxiliary result which is certainly well known to specialists. We include a proof since we could not locate any reference in the literature.

\begin{proposition}\label{PropContinuity}
The map ${\rm Aut}(G)\rightarrow S({\rm Irr}(G))$ is continuous.
\end{proposition}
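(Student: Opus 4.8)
The plan is to show that the group homomorphism $\Phi:\Aut(G)\rightarrow S(\IrrG)$ is continuous by checking continuity at the identity, or equivalently by checking that the preimage of each subbasic clopen set $S_{x,y}=\{\sigma\in S(\IrrG):\sigma(x)=y\}$ is open. Since $\Aut(G)$ carries the topology of pointwise norm convergence on $C_m(G)$ and $S(\IrrG)$ carries the topology of pointwise convergence on $\IrrG$, it suffices to prove: if $\alpha_n\rightarrow\alpha$ in $\Aut(G)$, then for each fixed $x\in\IrrG$ one has $\alpha_n(x)=\alpha(x)$ in $\IrrG$ for all $n$ large. Equivalently, fixing $x$, the representations $(\id\ot\alpha_n)(u^x)$ are eventually unitarily equivalent to $(\id\ot\alpha)(u^x)$.

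The key mechanism is that $\IrrG$ is discrete in a suitable sense and that equivalence classes cannot change under small perturbations. First I would fix $x\in\IrrG$ with representative $u^x=\sum_{i,j}e_{ij}\ot u^x_{ij}\in\M_{n_x}(\C)\ot\PolG\subset\M_{n_x}(\C)\ot C_m(G)$, where $n_x=\dim(x)$. For $\alpha\in\Aut(G)$ write $v^\alpha:=(\id\ot\alpha)(u^x)$, a unitary representation on $H_x$; since $\alpha$ is a $*$-homomorphism commuting with $\Delta$, $v^\alpha$ is indeed a finite-dimensional unitary corepresentation, hence decomposes as a direct sum of irreducibles, and in fact (as recorded in the excerpt) $v^\alpha$ is irreducible with class $\alpha(x)$. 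Now if $\alpha_n\rightarrow\alpha$ then $\alpha_n(u^x_{ij})\rightarrow\alpha(u^x_{ij})$ in norm for each $i,j$, so $v^{\alpha_n}\rightarrow v^{\alpha}$ in norm inside $\M_{n_x}(\C)\ot C_m(G)$. The point is then to upgrade this norm convergence of irreducible corepresentations to eventual equality of equivalence classes. One clean way: the character $\chi(v^{\alpha_n})=(\operatorname{Tr}_{n_x}\ot\id)(v^{\alpha_n})\in C_m(G)$ converges in norm to $\chi(v^\alpha)$. Characters of inequivalent irreducibles are orthogonal in $\rL^2(G)$ with $\|\chi(y)\|_2=1$ for each irreducible $y$; consequently if $\alpha_n(x)\neq\alpha(x)$ then $\|\chi(v^{\alpha_n})-\chi(v^\alpha)\|_2^2=2$. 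But $\|\cdot\|_2\leq\|\cdot\|$ on $C_m(G)$ (after applying $\lambda_G$ and the Haar state, or directly since $h\circ\lambda_G$ is a state), so $\|\chi(v^{\alpha_n})-\chi(v^\alpha)\|\geq\sqrt2$, contradicting norm convergence for large $n$. Hence $\alpha_n(x)=\alpha(x)$ eventually, which is precisely what pointwise convergence in $S(\IrrG)$ demands.

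To organize the write-up I would (i) recall the topologies on both groups and reduce to sequential convergence statements, (ii) reduce to a single fixed $x\in\IrrG$ and introduce $v^\alpha=(\id\ot\alpha)(u^x)$ together with its character, (iii) observe the norm convergence $\chi(v^{\alpha_n})\rightarrow\chi(v^\alpha)$ coming from $\alpha_n\rightarrow\alpha$, (iv) invoke orthonormality of characters of irreducibles with respect to the Haar state to convert norm-closeness into equality of classes, and (v) conclude that $\Phi(\alpha_n)(x)=\Phi(\alpha)(x)$ for $n$ large, hence $\Phi(\alpha_n)\rightarrow\Phi(\alpha)$ in $S(\IrrG)$. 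A small technical point to handle carefully in step (iv) is that we only have a state (not a faithful trace) on $C_m(G)$, but $h_G\circ\lambda_G$ is a state on $C_m(G)$ and its GNS seminorm dominates nothing below it is fine: we need $\|a\|_{2,h}\leq\|a\|$, which holds for any state, so the argument goes through.

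The main obstacle I anticipate is purely bookkeeping rather than conceptual: making sure that $v^\alpha=(\id\ot\alpha)(u^x)$ is genuinely a unitary corepresentation whose class is exactly $\alpha(x)$ (this is asserted just before the Proposition, so it may be quoted), and being careful that the character orthogonality relations, which are usually stated for $C(G)$ with the faithful Haar state, transfer correctly to $C_m(G)$ via $\lambda_G$. Neither is serious; the heart of the matter is the one-line observation that an irreducible character stays unit-distance (in $\rL^2$, hence bounded below in the C$^*$-norm pushed to $C(G)$) away from every other irreducible character, so norm-continuity of $\alpha\mapsto\chi((\id\ot\alpha)(u^x))$ forces local constancy of $\alpha\mapsto\alpha(x)$.
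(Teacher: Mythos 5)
Your argument is correct, and it reaches the conclusion by a genuinely different route from the paper. The paper first proves a standalone lemma: if $u,v$ are unitary representations on the same finite-dimensional Hilbert space with $\Vert u-v\Vert<1$, then they are equivalent. The proof of that lemma averages to get the intertwiner $x=(\id\ot h)(v^*u)$, shows $\Vert 1-x\Vert<1$ so that $x$ is invertible, and extracts a unitary intertwiner from the polar decomposition; the proposition then follows by applying the lemma to $(\id\ot\alpha_n)(u^x)$ and $(\id\ot\alpha)(u^x)$ once $\Vert(\id\ot\alpha_n)(u^x)-(\id\ot\alpha)(u^x)\Vert<\tfrac12$. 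You instead pass to characters and use the Peter--Weyl orthogonality relations $h(\chi_x\chi_y^*)=\delta_{x,y}$ (valid since ${\rm Tr}_x(Q_x)={\rm Tr}_x(Q_x^{-1})={\rm dim}_q(x)$), so that inequivalent irreducibles have characters at $\rL^2$-distance $\sqrt2$, hence at $C^*$-distance at least $\sqrt2$ because $h\circ\lambda_G$ is a state on $C_m(G)$; norm convergence of $\alpha_n\rightarrow\alpha$ then forces $\alpha_n(x)=\alpha(x)$ eventually. Both proofs are complete. What the paper's route buys is the auxiliary lemma itself, a dimension-free rigidity statement ($\Vert u-v\Vert<1$ suffices) that the authors flag as being of independent interest; your character argument only yields the dimension-dependent threshold $\Vert u-v\Vert<\sqrt2/\dim(x)$, but that is irrelevant for continuity, and in exchange you avoid the polar-decomposition manipulation entirely, at the cost of invoking the orthogonality relations (which are themselves proved by the same averaging device). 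Your two flagged technical points — that $(\id\ot\alpha)(u^x)$ is irreducible of class $\alpha(x)$, and that the orthogonality relations transfer to $C_m(G)$ via $\lambda_G$ — are handled exactly as you propose, the first being asserted in the paragraph preceding the proposition and the second following from $\Vert a\Vert_{2,h}\leq\Vert a\Vert$ for the state $h\circ\lambda_G$.
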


\begin{proof}
We shall need the following well known lemma which is of independent interest. We include a proof for the convenience of the reader.

\begin{lemma}\label{LemCloseRep}
Let $u,v\in\mathcal{B}(H)\ot C_m(G)$ be two unitary representations of $G$ on the same finite dimensional Hilbert space $H$. If $\Vert u-v\Vert<1$, then $u$ and $v$ are equivalent.
\end{lemma}

\begin{proof}
Define $x=(\id\ot h)(v^*u)\in\mathcal{B}(H)$. Since $u$ and $v$ are unitary representations, $h$ being the Haar state forces $(x\ot 1)u=v(x\ot 1)$. We have $u^*(x^*x\ot 1)u=x^*x\ot 1$. Hence, $u^*\vert x\vert\ot 1 u=\vert x\vert\ot 1$. Now observe that $\Vert 1-x\Vert=\Vert(\id\ot h)(1-v^*u)\Vert\leq\Vert 1-v^*u\Vert=\Vert v-u\Vert<1$. Hence $x$ is invertible, and in the polar decomposition $x=w\vert x\vert$, the polar part $w$ is a unitary. Consequently, $v^*(w\vert x\vert\ot 1)u=v^*(w\ot 1)u(\vert x\vert\ot 1)=(w\ot 1)(\vert x\vert \ot 1)$. By uniqueness of the polar decomposition of $x\ot 1$, we deduce that $v^*(w\ot 1)u=w\ot 1$. Hence, $u$ and $v$ are equivalent.
\end{proof}

We can now prove the proposition. Let $(\alpha_n)_n$ be a sequence in ${\rm Aut}(G)$ which converges to $\alpha\in{\rm Aut}(G)$. Let $F\subset{\rm Irr}(G)$ be a finite subset and let $N\in\N$ be such that for all $n\geq N$ 
$$\Vert (\id\ot\alpha_n)(u^x)-(\id\ot\alpha)(u^x)\Vert< \frac{1}{2}\quad\text{for all }x\in F.$$
It follows from Lemma \ref{LemCloseRep} that $(\id\ot\alpha_n)(u^x)$ and $(\id\ot\alpha)(u^x)$ are equivalent for all $n\geq N$ and for all $x\in F$. This means that $\alpha_n(x)=\alpha(x)$ for all $x\in F$ whenever $n\geq N$. This establishes the continuity. 
\end{proof}

\begin{remark}\label{RmkAut}
We can also define ${\rm Aut}_r(G)=\{\alpha\in{\rm Aut}(C(G))\,:\,\Delta\circ\alpha=(\alpha\ot\alpha)\circ\Delta\}$ which is again a Polish group as it is a closed subgroup of the Polish group ${\rm Aut}(C(G))$. Since any $\alpha\in{\rm Aut}(G)$ preserves the Haar state, it defines a unique element in ${\rm Aut}_r(G)$. Hence, we have a canonical map ${\rm Aut}(G)\rightarrow {\rm Aut}_r(G)$ which is obviously a group homomorphism. Moreover, it is actually bijective. The inverse bijection is constructed in the following way. Since any $\alpha\in{\rm Aut}_r(G)$ restrict to an automorphism of ${\rm Pol}(G)$, it extends uniquely by the universal property to an automorphism in ${\rm Aut}(G)$. It is also easy to check that the map ${\rm Aut}(G)\rightarrow {\rm Aut}_r(G)$ is continuous.

Also, since any automorphism of $C(G)$ intertwining $\Delta$ has a unique normal extension to ${\rm L}^\infty(G)$, it induces a map ${\rm Aut}_r(G)\rightarrow {\rm Aut}_\infty(G)$, where ${\rm Aut}_\infty(G)=\{\alpha\in{\rm Aut}({\rm L}^\infty(G))\,:\,\Delta\circ\alpha=(\alpha\ot\alpha)\circ\Delta\}$. As before, this map is a bijective group homomorphism and is continuous (the topology on ${\rm Aut}({\rm L}^\infty(G))$ being governed by the pointwise $\Vert\cdot\Vert_{2,h}$ convergence).
\end{remark}
For a discrete group $\Gamma$ and a compact quantum group $G$, a group homomorphism $\alpha\,:\,\Gamma\rightarrow{\rm Aut}(G)$ is called \textit{an action by quantum automorphisms} and is denoted by $\alpha\,:\,\Gamma\curvearrowright G$, see \cite[Section 6]{Pa13}. We call such an action \textit{compact} if the closure of the image of $\Gamma$ in ${\rm Aut}(G)$ is compact. By remark \ref{RmkAut}, it follows that for compact actions, the associated actions of $\Gamma$ on the C*-algebra $C(G)$ (and $C_m(G)$) and the von Neumann algebra ${\rm L}^\infty(G)$ are compact. By Proposition \ref{PropContinuity}, it follows that for compact actions the induced action of $\Gamma$ on ${\rm Irr}(G)$ has all orbits finite. It is shown in \cite{MP15} that the converse is actually true: $\Gamma\curvearrowright G$ is compact if and only if the induced action of $\Gamma$ on $\text{Irr}(G)$ has all orbits finite.

The associated operator algebras of the discrete dual $\widehat{G}$ of $G$ are denoted by
$$\ell^\infty(\widehat{G})=\bigoplus^{\ell^\infty}_{x\in{\rm Irr}(G)}\mathcal{B}(H_x)\quad\text{and}\quad c_0(\widehat{G})=\bigoplus^{c_0}_{x\in{\rm Irr}(G)}\mathcal{B}(H_x).$$

We denote by $V_G=\bigoplus_{x\in{\rm Irr}(G)}u^x\in M(c_0(\widehat{G})\ot C_m(G))$ to be the \emph{maximal multiplicative unitary}. Let $p_x$ be the minimal central projection of $\ell^\infty(\widehat{G})$ corresponding to the block $\mathcal{B}(H_x)$. We say that $a\in{\ell}^\infty(\widehat{G})$ has \textit{finite support} if $ap_x=0$ for all but finitely many $x\in{\rm Irr}(G)$. The set of finitely supported elements of $\ell^\infty(\widehat{G})$ is dense in $c_c(\widehat{G})$ and the latter is equal to the algebraic direct sum $c_c(\widehat{G})=\bigoplus^{alg}_{x\in{\rm Irr}(G)}\mathcal{B}(H_x)$.

The (left-invariant) Haar weight on $\widehat{G}$ is the n.s.f. weight on $\ell^\infty({\widehat{G}})$ defined by 
$$h_{\widehat{G}}(a)=\sum_{x\in{\rm Irr}(G)}{\rm Tr}_x(Q_x){\rm Tr}_x(Q_xap_x),$$ 
whenever the formula makes sense. It is known that the GNS representation of $h_{\widehat{G}}$ is of the form $(\widehat{\lambda}_G,{\rm L}^2(G),\Lambda_{\widehat{G}})$, where $\Lambda_{\widehat{G}}\,:\, c_c(\widehat{G})\rightarrow {\rm L}^2(G)$ is linear with dense range and $\widehat{\lambda}_G\,:\, \ell^{\infty}(\widehat{G})\rightarrow\mathcal{B}({\rm L}^2(G))$ is a unital normal $*$-homomorphism such that $\Delta_G(x)=W_G(x\ot 1)W_G^*$ for all $x\in C(G)$, where $W_G=(\widehat{\lambda}_G\ot\lambda_G)(V_G)$. We call $W_G$ the \textit{reduced multiplicative unitary}.

\subsection{Approximation properties}\label{section-AP}

In this section we recall the definition of the Haagerup property, weak amenability and Cowling-Haagerup constants for discrete quantum groups.  We also show some basic facts we could not find in the literature: permanence of the (co)-Haagerup property and (co)-weak amenability from a quantum subgroup of finite index to the ambiance compact quantum group.

Let $G$ be a compact quantum group. For $\omega\in C_m(G)^*$, define its Fourier transform $\widehat{\omega}=(\id\ot\omega)(V)\in M(c_0(\widehat{G}))$, where $V=\bigoplus_{x\in{\rm Irr}(G)}u^x\in M(c_0(\widehat{G})\ot C_m(G))$ is the maximal multiplicative unitary. Observe that $\omega\mapsto \widehat{\omega}$ is linear and $\Vert\widehat{\omega}\Vert_{\mathcal{B}({\rm L}^2(G))}\leq\Vert\omega\Vert_{C_m(G)^*}$ for all $\omega\in C_m(G)^*$.

When $G$ is a classical compact group with Haar measure $\mu$ and $\nu$ is a complex Borel measure on $G$, then the Fourier transform $\widehat{\nu}\in M(C_r^*(G))$ is the operator $\widehat{\nu}=\int_G\lambda_gd\nu(g)\in M(C^*_r(G))\subset\mathcal{B}({\rm L}^2(G))$.

Following \cite{DFSW13}, we say that $\widehat{G}$ \textit{has the Haagerup property} if there exists a sequence of states $\omega_n\in C_m(G)^*$ such that $\omega_n\rightarrow\varepsilon_G$ in the weak* topology and $\widehat{\omega}_n\in c_0(\widehat{G})$ for all $n\in\N$.

For $a\in \ell^{\infty}(\widehat{G})$ with finite support, we define a finite rank map $m_a\,:\, C(G)\rightarrow C(G)$ by $(\id\ot m_a)(u^x)=u^x(ap_x\ot 1)$. We say that a sequence $a_i\in \ell^\infty(\widehat{G})$ \textit{converges pointwise to} $1$, if $\Vert a_ip_x-p_x\Vert_{\mathcal{B}(H_x)}\rightarrow 0$ for all $x\in{\rm Irr}(G)$.

Recall that $\widehat{G}$ is said to be \textit{weakly amenable} if there exists a sequence of finitely supported $a_i\in \ell^\infty(\widehat{G})$ converging pointwise to $1$ and such that $C=\sup_i\Vert m_{a_i}\Vert_{cb}<\infty$. The infimum of those $C$ is denoted by $\Lambda_{cb}(\widehat{G})$ (and is, by definition, infinite if $\widehat{G}$ is not weakly amenable). It was proved in \cite{KR99} that, when $G$ is Kac, we have $\Lambda_{cb}(\widehat{G})=\Lambda_{cb}(C(G))=\Lambda_{cb}({\rm L}^\infty(G))$.

\begin{definition}\label{Defn:FinInd}
We say that a compact quantum group $H$ is a (quantum) subgroup of $G$ is there exists a surjection $\rho:C_m(G)\rightarrow C_m(H)$ such that $(\rho\otimes \rho)\circ \Delta_{G}=\Delta_{H}\circ \rho$. We define the (left) coset space by $C_m(G/H) := \{a \in C_m(G) \mid (\id \otimes \rho) \Delta_{G}(a) = a \otimes 1\}$. We say that $H$ \textit{is a finite index subgroup of} $G$ if $C_m(G/H)$ is finite dimensional.
\end{definition}

We refer to \cite{DKSS12} for a systematic treatment of the notion of (closed) subgroups of locally compact quantum groups. 

\begin{theorem}\label{wab}
Let $H$ be a finite index quantum subgroup of $G$. Then the following holds.
\begin{enumerate}
\item If $\widehat{H}$ has the Haagerup property, then $\widehat{G}$ has the Haagerup property.
\item $\Lambda_{cb}(\widehat{G})\leq\Lambda_{cb}(\widehat{H})$. 
\end{enumerate}
\end{theorem}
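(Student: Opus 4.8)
The basic idea is that a finite index quantum subgroup $H \leq G$ gives a finite-dimensional coset space $C_m(G/H)$, and one should transfer the approximating data for $\widehat{H}$ back up to $\widehat{G}$ through the surjection $\rho : C_m(G) \to C_m(H)$, paying only a finite multiplicative or additive cost controlled by $\dim C_m(G/H)$. First I would set up the dictionary between representation-theoretic data on $G$ and on $H$: the surjection $\rho$ intertwining the comultiplications means that every unitary representation $v$ of $H$ on $H_v$ pulls back, but more usefully, restriction along $\rho$ sends $u^x \in \mathcal{B}(H_x) \ot C_m(G)$ to a representation of $H$, which decomposes into irreducibles of $H$; dually one gets an embedding $\ell^\infty(\widehat H) \hookrightarrow \ell^\infty(\widehat G)$ (or at least a normal $*$-homomorphism in the appropriate direction) and a relation between the Fourier transforms: for $\omega \in C_m(H)^*$ one has $\widehat{\omega \circ \rho}$ expressible in terms of $\widehat\omega$. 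The finite-index hypothesis enters because the multiplicity with which each $x \in \operatorname{Irr}(G)$ occurs ``over'' a given $y \in \operatorname{Irr}(H)$ is globally bounded — this is exactly the finite-dimensionality of $C_m(G/H)$ — so that ``$c_0$ on $\widehat H$'' pulls back to ``$c_0$ on $\widehat G$'' and ``finite support on $\widehat H$'' pulls back to ``finite support on $\widehat G$''.

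For part (1): given states $\omega_n \in C_m(H)^*$ with $\omega_n \to \varepsilon_H$ weak* and $\widehat{\omega_n} \in c_0(\widehat H)$, I would consider $\psi_n := \omega_n \circ \rho \in C_m(G)^*$. These are states, and $\psi_n \to \varepsilon_H \circ \rho = \varepsilon_G$ weak* since $\varepsilon_H \circ \rho = \varepsilon_G$ (which follows because $\varepsilon_G$ is characterized by $(\varepsilon_G \ot \id)\Delta_G = \id = (\id \ot \varepsilon_G)\Delta_G$ and $\rho$ intertwines the coproducts and is surjective). The point to check is $\widehat{\psi_n} \in c_0(\widehat G)$: one expresses $\widehat{\psi_n} p_x$ in terms of $\widehat{\omega_n} q_y$ for the finitely many $y \in \operatorname{Irr}(H)$ appearing in the restriction of $u^x$, and uses that these $y$ range over larger and larger ``$H$-blocks'' as $x \to \infty$ (again by finite index), so that $\|\widehat{\psi_n} p_x\| \to 0$ as $x \to \infty$ for each fixed $n$. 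Thus $\widehat{\psi_n} \in c_0(\widehat G)$ and $\widehat G$ has the Haagerup property.

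For part (2): given finitely supported $a_i \in \ell^\infty(\widehat H)$ converging pointwise to $1$ with $\sup_i \|m_{a_i}\|_{cb} \le \Lambda_{cb}(\widehat H) + \eps$, I would build finitely supported $b_i \in \ell^\infty(\widehat G)$ converging pointwise to $1$ by pulling the $a_i$ back through the block structure (on each $\mathcal{B}(H_x)$, $b_i p_x$ acts as the ``value of $a_i$'' on the relevant $H$-irreducibles, using that Mor-spaces on $G$ sit inside Mor-spaces on $H$); this is finitely supported because $a_i$ is and the fibers are finite, and it converges pointwise to $1$ because $a_i$ does. Then the key estimate is $\|m_{b_i}\|_{cb} \le \|m_{a_i}\|_{cb}$, proven by realizing $m_{b_i}$ on $C(G)$ as the ``restriction'' of (the amplification of) $m_{a_i}$ on $C(H)$ — more precisely by checking $m_{b_i}$ is compatible with $\rho$ in the sense that it's induced from $m_{a_i}$ via an averaging/conditional-expectation argument over the finite-dimensional coset space, and a composition of $\rho$, $m_{a_i}$, and a completely contractive lift is completely bounded with the stated norm. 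Taking infima gives $\Lambda_{cb}(\widehat G) \le \Lambda_{cb}(\widehat H)$.

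The main obstacle I anticipate is making the compatibility ``$m_{b_i} = (\text{c.c. map}) \circ m_{a_i} \circ \rho$'' precise and showing the outer maps are completely contractive — equivalently, constructing a completely contractive (or completely positive unital) lift $C(H) \to C(G)$ splitting $\rho$ at the level needed, which is not available as an algebra map but should exist as a $\rho$-bimodule conditional-expectation-type map precisely because $C_m(G/H)$ is finite-dimensional (so $C(G)$ is a finitely generated module over $C(H)$ and one can average). Getting the bookkeeping of block multiplicities uniformly bounded — i.e. extracting the quantitative content of ``$C_m(G/H)$ finite-dimensional'' as ``bounded fibers'' — is the other point requiring care, though conceptually routine.
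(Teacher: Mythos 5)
Your part (1) is essentially the paper's proof: pull the states back along $\rho$, use $\varepsilon_H\circ\rho=\varepsilon_G$, and control the sets $\{x:\Vert(\id\ot\omega_n\circ\rho)(u^x)\Vert\geq\epsilon\}$ by the fibers over ${\rm Irr}(H)$. The one thing you treat as ``conceptually routine'' is in fact the only real content there: the finiteness of each fiber $N_y^\rho=\{x\in{\rm Irr}(G)\,:\,{\rm Mor}(v^y,(\id\ot\rho)(u^x))\neq\{0\}\}$ is \emph{not} immediately ``the finite-dimensionality of $C_m(G/H)$''. That hypothesis directly yields only the finiteness of the fiber over the trivial representation (a $\rho$-fixed vector in $H_x$ produces a coefficient $u^x_{11}\in C_m(G/H)$, and coefficients of inequivalent irreducibles are linearly independent); for general $y$ one needs a Frobenius-reciprocity-type reduction: if $x_0,x_1,\dots\in N_y^\rho$, then $(\id\ot\rho)(u^{\overline{x}_0}\ot u^{x_i})$ contains the trivial representation of $H$, so some $z_i\in N_1^\rho$ satisfies $z_i\subset\overline{x}_0\ot x_i$, hence $x_i\subset x_0\ot z_i$, and infinitely many distinct $x_i$ would force infinitely many distinct $z_i$. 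Short, but it is an argument, not bookkeeping.

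The genuine gap is in part (2). Your candidate $b_i$ is the right object (it is the image $\widehat\rho(a_i)$ under the dual morphism, finitely supported precisely by the fiber-finiteness above), but the proposed proof of $\Vert m_{b_i}\Vert_{cb}\leq\Vert m_{a_i}\Vert_{cb}$ via a factorization $m_{b_i}=L\circ m_{a_i}\circ\rho$ with $L$ a completely contractive ``lift'' splitting $\rho$ cannot work. The map $\rho$ is a non-injective quotient, and $m_{b_i}$ is not constant on its fibers (two inequivalent $x,x'\in{\rm Irr}(G)$ can have coefficients with the same image under $\rho$ while $m_{b_i}$ must treat the blocks $\mathcal{B}(H_x)$ and $\mathcal{B}(H_{x'})$ separately), so no map $L$ whatsoever makes that diagram commute. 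Your supporting intuition is also inverted: for a quantum subgroup, $C(H)$ is a \emph{quotient} of $C_m(G)$, not a subalgebra, so ``$C(G)$ is a finitely generated module over $C(H)$'' is not meaningful here; the finite-dimensional object is the subalgebra $C_m(G/H)\subset C_m(G)$, and there is nothing to average over. The correct mechanism, which the paper uses, replaces $\rho$ by the \emph{injective} map $\pi=(\id\ot\lambda_H\circ\rho)\circ\widetilde{\Delta}_G\,:\,C(G)\rightarrow C(G)\ot C(H)$. This is a right quantum homomorphism, hence unitarily implemented by \cite[Theorem 5.3]{MRW12} and therefore a complete isometry, and it satisfies the intertwining relation $(\id\ot m_{a_i})\circ\pi=\pi\circ m_{\widehat\rho(a_i)}$; the bound $\Vert m_{\widehat\rho(a_i)}\Vert_{cb}\leq\Vert m_{a_i}\Vert_{cb}$ then falls out with no loss at all. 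Note in particular that, contrary to your opening plan, the norm estimate costs nothing and uses no finite-index hypothesis; finite index enters only to guarantee that $\widehat\rho(a_i)$ has finite support.
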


\begin{proof} We will need the following Claim.

\textbf{Claim.}\textit{ If $H$ is a finite index quantum subgroup of $G$ with surjective morphism $\rho\,:\,C_m(G)\rightarrow C_m(H)$ then the set $N_y^\rho=\{x\in{\rm Irr}(G)\,:\,{\rm Mor}(v^y,(\id\ot\rho)(u^x))\neq\{0\}\}$ is finite for all $y\in{\rm Irr}(H)$, where $\{v^y\,:\,y\in{\rm Irr}(H)\}$ is a complete set of representatives.}

\textit{Proof of the Claim.} We first show that $N^\rho_1$ is finite. Let $x\in N_1^\rho$ and $\xi\in H_x$ be such that $\Vert\xi\Vert=1$ and $(\id\ot\rho)(u^x)\xi\ot 1=\xi\ot 1$. Choose an orthonormal basis $(e^x_k)_k$ of $H_x$ such that $e^x_1=\xi$. Observe that the coefficients of $u^x$ with respect with this orthonormal basis satisfy $\rho(u^x_{11})=1$ and $\rho(u^x_{k1})=0$ for all $k\neq 1$. It follows that $u^x_{11}\in C_m(G/H)$. Since the coefficients of non-equivalent representations are linearly independent and since $ C_m(G/H)$ is finite dimensional, it follows that the set $N^\rho_1$ is finite.

Suppose that there exists $y\in{\rm Irr}(H)\setminus\{1\}$ such that $N^\rho_y$ is infinite and let $(x_n)_{n\in\N\cup \{0\}}$ be an infinite sequence of elements in $N^\rho_y$. Since $(\id\otimes \rho)(u^{\overline{x}_0}\otimes u^{x_i})$ has a sub-representation isomorphic to $v^{\overline{y}}\ot v^y$, it contains the trivial representation. It follows that, for all $i\geq 1$, there exists $z_i\in N^\rho_1$ such that $z_i\subset \overline{x}_0\ot x_i$. Hence, $x_i\subset x_0\ot z_i$ and the set $\{z_i\,:\,i\geq 1\}$ is infinite, a contradiction. 

$(1).$ Let $(\mu_n)_{n\in\N}$ be a sequence of states on $C_m(H)$ such that $\widehat{\mu_n}\in c_0(\widehat{H})$ for all $n\in\N$ and $\mu_n \rightarrow \varepsilon_{H}$ in the weak* topology. Define $\mu_n\circ \rho\in C_m(G)^*$, where $\rho:C_m(G)\rightarrow C_m(H)$ is the subgroup surjection. Since $\varepsilon_{G}=\varepsilon_{H}\circ \rho$, we have $\omega_n\circ \rho\rightarrow \varepsilon_{G}$ in the weak* topology. Let $n\in\N$ and $\epsilon>0$. We need to show that the set $G_{n,\epsilon}=\{x\in \text{Irr}(G):|\vert (\id\otimes \omega_n)(u^x)\vert|\geq \epsilon\}$ is finite. Since $\widehat{\mu_n}\in c_0(\widehat{H})$, the set $H_{n,\epsilon}=\{y\in \text{Irr}(H): |\vert (id\otimes \mu_n)(v^y) \vert| \geq \epsilon\}$ is finite, and since $G_{n,\epsilon}=\cup_{y\in H_{n,\epsilon}}N_y^\rho$, by the previous claim we are done.

$(2).$ We may and will suppose that $\widehat{H}$ is weakly amenable.  Let $\epsilon>0$ and $a_i\in \ell^\infty(\widehat{H})$ be a sequence of finitely supported elements that converges to $1$ pointwise and such that ${\rm sup}_i\|m_{a_i}\|_{cb}\leq\Lambda_{cb}(\widehat{H})+\epsilon$.

We consider the dual morphism $\widehat{\rho}\,:\,c_0(\widehat{H})\rightarrow M(c_0(\widehat{G}))$, which is the unique non-degenerate $*$-homomorphism satisfying $(\id\otimes \rho)(V_{G})=(\widehat{\rho}\otimes \id)(V_{H})$. 

We first show that $\widehat{\rho}({a_i})\in \ell^\infty(\widehat{G})$ is finitely supported for all $i$ and the sequence $(\widehat{\rho}(a_i))_i$ converges to $1$ pointwise. Consider the functional $\omega_{a_i}\in C_m(H)^*$ defined by $(\id\otimes\omega_{a_i})(v^y)=a_ip_y$ for all $y\in{\rm Irr}(H)$ so $(\id\otimes\omega_{a_i})(V_{H})=a_i$ and, by definition of the dual morphism $\widehat{\rho}(a_i)=(\id\otimes \omega_{a_i}\circ \rho)(V_{G})$, we have $\widehat{\rho}(a_i)p_x=(\id\ot\omega_{a_i}\circ\rho)(u^x)$ and $\{x\in{\rm Irr}(G)\,:\,\widehat{\rho}(a_i)p_x\neq 0\}=\cup_{y\in{\rm Irr}(H), a_ip_y\neq 0}N_y^\rho$. Hence, $\widehat{\rho}(a_i)$ is finitely supported for all $i$. Moreover, for all $x\in{\rm Irr}(G)$,
\begin{eqnarray*}
\Vert\widehat{\rho}(a_i)p_x-p_x\Vert&=&\Vert(\id\ot\omega_{a_i}\circ\rho)(u^x)-p_x\Vert=\underset{y\in{\rm Irr}(H)\text{ and }x\in N_y^\rho}{\sup}\Vert(\id\ot\omega_{a_i})(v^y)-p_y\Vert\\
&=&\underset{y\in{\rm Irr}(H)\text{ and }x\in N_y^\rho}{\sup}\Vert a_ip_y-p_y\Vert\rightarrow _i0.
\end{eqnarray*}

We now show that $\sup_i\|m_{\widehat{\rho}(a_i)}\|_{cb}<\Lambda_{cb}(\widehat{H})+\epsilon$. First let us note that, by Fell's Absorption Principle, we have $(W_G)_{12}(V_G)_{13}=(V_G)_{23}(W_G)_{12}(V_G)_{23}^\ast$. Thus,  
there exists a $\ast$-homomorphism $\tilde{\Delta}_G:C(G)\rightarrow C(G)\otimes C_m(G)$ which extends the comultiplication $\Delta_G$ on Pol$(G)$. We now define a unital $\ast$-homomorphism $\pi: C(G)\rightarrow C(G)\otimes C(H)$ such that
$$\pi(x)=({\rm id}\otimes \lambda_H\circ \rho)\circ \tilde{\Delta}_G,$$
where $\lambda_H:C_m(H)\rightarrow C(H)$ denotes the canonical surjection given by the GNS-representation with respect to the haar state of $H$. Clearly, $\pi$ extends the map $({\rm id}\otimes \rho)\circ \Delta_G$ on Pol$(G)$. Now it not hard to see that the map $\pi$ is a \textit{right quantum homomorphism} (see section 1 of \cite{MRW12}); in other words $\pi$ satisfies the equations -
$$(\Delta_G\otimes {\rm id})\circ \pi= ({\rm id}\otimes \pi)\circ\Delta_G,$$
$$({\rm id}\otimes \Delta_H)\circ\pi= (\pi\otimes {\rm id})\circ \pi.$$
Both of the above equations follow easily from the coassociativity condition of the co-multiplication of $G$ and $H$ and from the fact that $\pi=({\rm id}\otimes \rho)\circ \Delta_G$ and $(\rho\otimes\rho)\circ \Delta_G=\Delta_H\circ \rho$ on Pol$(G)$. This together with Theorem 5.3 of \cite{MRW12} implies that there exists a unitary operator $V_{\rho}\in \mathcal{B}({\rm L}^2(G))\otimes C(H)$ such that 
$$\pi(x)=V_{\rho}(x\otimes 1)V_{\rho}^\ast.$$
Hence, it follows that, $\pi$ is isometric. 

It is now not hard to see that $(\id\otimes m_{a_i})\pi=\pi\circ m_{\widehat{\rho}(a_i)}$ for all $i$. Indeed, since $m_{a_i}(x)=(\id\otimes \omega_{a_i})\Delta_{H}(x)$ and $m_{\widehat{\rho}(a_i)}(x)=(\id\otimes \omega_{a_i}\circ\rho)\Delta_{G}(x)=(\id\otimes \omega_{a_i}\circ\rho)\Delta_{G}(x)$ for all $x\in{\rm Pol}(G)$, we find that for $x\in{\rm Pol}(G)$,
\begin{eqnarray*}
(\id\otimes m_{a_i})\pi(x)&=&(\id\ot\id\otimes \omega_{a_i})(\id\ot\Delta_{H})\circ\pi(x)=(\id\ot\id\otimes \omega_{a_i})(\pi\ot\rho)\circ\Delta_G(x)\\
&=&\pi\left((\id\otimes \omega_{a_i}\circ\rho)(\Delta_G(x)\right)=\pi\circ m_{\widehat{\rho}(a_i)}(x).
\end{eqnarray*}

Since $\pi$ is isometric, we have $\|m_{\widehat{\rho}(a_i)}\|_{cb}\leq \|m_{a_i}\|\leq\Lambda_{cb}(\widehat{H})+\epsilon$ for all $i$. Hence, $\Lambda_{cb}(\widehat{G})\leq \Lambda_{cb}(\widehat{H})+\epsilon$. Since $\epsilon$ is arbitrary the proof is complete. 
\end{proof}

\section{Bicrossed products}\label{section-bicrossed}

This section has two parts. In the first part, we discuss on matched pair of groups
of which one is compact and show an automatic regularity property of such matched pairs (Proposition \ref{PropMatched}).  In the second part, we study bicrossed products of  
compact matched pair of groups and study their representation theory and related concepts. 

\subsection{Matched pairs}

\begin{definition}[\cite{BSV03}] We say that a pair of l.c. groups $(G_1,G_2)$ is \textit{matched} if both $G_1,G_2$ are closed subgroups of a l.c. group $H$ satisfying $G_1\cap G_2=\{e\}$ and such that the complement of $G_1 G_2$ in $H$ has Haar measure zero.
\end{definition}

From a matched pair $(G_1,G_2)$ as above, one can construct a l.c. quantum group called the bicrossed product and it follows from \cite{VV03} that the bicrossed product is compact if and only if $G_1$ is discrete and $G_2$ is compact. In the next proposition, we show some regularity properties of matched pairs $(G_1,G_2)$ with $G_2$ being compact.

\begin{proposition}\label{PropMatched}
Let $(G_1,G_2)$ be a matched pair and suppose that $G_2$ is compact. Then $G_1 G_2=H$, and, for all $(\gamma,g)\in G_1\times G_2$ there exists unique $(\alpha_\gamma(g),\beta_\gamma(g))\in G_2\times G_1$ such that $\gamma g=\alpha_\gamma(g)\beta_g(\gamma)$. Moreover,
\begin{enumerate}
\item For $g,h\in G_2$ and $r,s\in G_1$, we have
\begin{equation}\label{EqMatched}
\alpha_r(gh)=\alpha_r(g)\alpha_{\beta_g(r)}(h),\,\,\beta_g(rs)=\beta_{\alpha_s(g)}(r)\beta_g(s)\quad\text{and}\quad\alpha_r(e)=e,\,\,\beta_g(e)=e.
\end{equation}
\item $\alpha$ is a continuous left action of $G_1$ on the topological space $G_2$. Moreover, the Haar measure on $G_2$ is $\alpha$-invariant whenever $G_1$ is discrete.
\item $\beta$ is a continuous right action of $G_2$ on the topological space $G_1$.
\item $\alpha$ is trivial $\Leftrightarrow$ $G_1$ is normal in $H$. Also, $\beta$ is trivial $\Leftrightarrow$ $G_2$ is normal in $H$.
\end{enumerate}
\end{proposition}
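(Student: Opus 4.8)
The plan is to unravel the group equation $\gamma g = \alpha_\gamma(g)\beta_g(\gamma)$ systematically. First I would establish that $G_1G_2 = H$: since $G_2$ is compact, it is closed and of nonzero Haar measure in $H$; for any $h \in H$ the set $h G_2^{-1} = hG_2$ has nonempty interior intersection with $G_1G_2$ (because $H \setminus G_1G_2$ is null, hence $hG_2 \cap G_1G_2 \neq \emptyset$), so $h \in G_1G_2$. Thus every $h \in H$ factors as $h = \gamma g$ with $\gamma \in G_1$, $g \in G_2$, and the factorization is unique because $G_1 \cap G_2 = \{e\}$. Applying this to the element $\gamma g \in H$ rewritten the ``other way'' gives the unique $(\alpha_\gamma(g), \beta_g(\gamma)) \in G_2 \times G_1$ with $\gamma g = \alpha_\gamma(g)\beta_g(\gamma)$.

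Next I would derive the cocycle identities in (1) purely formally. For $g, h \in G_2$ and $r \in G_1$, compute $r(gh)$ two ways: directly it equals $\alpha_r(gh)\beta_{gh}(r)$; iteratively, $rg = \alpha_r(g)\beta_g(r)$, so $r(gh) = \alpha_r(g)(\beta_g(r)h) = \alpha_r(g)\alpha_{\beta_g(r)}(h)\beta_h(\beta_g(r))$, and uniqueness of the $G_2 G_1$ factorization yields $\alpha_r(gh) = \alpha_r(g)\alpha_{\beta_g(r)}(h)$ (and simultaneously $\beta_{gh}(r) = \beta_h(\beta_g(r))$, giving part of the action axiom). Symmetrically, factoring $(rs)g$ for $r, s \in G_1$, $g \in G_2$ gives $\beta_g(rs) = \beta_{\alpha_s(g)}(r)\beta_g(s)$ together with $\alpha_{rs}(g) = \alpha_r(\alpha_s(g))$. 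Putting $g = h = e$ resp. $r = s = e$ in $\gamma e = \gamma$, $eg = g$ forces $\alpha_r(e) = e$, $\beta_g(e) = e$, $\alpha_e(g) = g$, $\beta_e(r) = r$. Combined with the iterated identities just obtained, this shows $\alpha$ is a left action of $G_1$ on the set $G_2$ and $\beta$ a right action of $G_2$ on the set $G_1$ (invertibility of $\alpha_r, \beta_g$ follows from $\alpha_{r^{-1}}\alpha_r = \alpha_e = \id$ etc.).

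For the topological and measure-theoretic content of (2)--(3), I would argue that the maps $(\gamma, g) \mapsto \alpha_\gamma(g)$ and $(\gamma, g) \mapsto \beta_g(\gamma)$ are continuous as compositions of the jointly continuous multiplication $H \times H \to H$ with the ``reverse factorization'' map $H \to G_2 \times G_1$, $\gamma g \mapsto (\alpha_\gamma(g), \beta_g(\gamma))$; the latter is continuous on the open dense set where it is defined, but since $G_1G_2 = H$ it is defined everywhere, and one checks continuity by noting that $G_2$ being compact makes the projection $H = G_1 \times_{\mathrm{factored}} G_2 \to G_2$ continuous (the graph of the factorization is closed, and properness of the $G_2$-coordinate gives continuity). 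Thus $\alpha$ restricted to $\{r\} \times G_2 \to G_2$ is a homeomorphism for each fixed $r$, and similarly for $\beta$. For $\alpha$-invariance of the Haar measure $\mu$ on $G_2$ when $G_1$ is discrete: $\alpha_r \colon G_2 \to G_2$ is a homeomorphism, so $(\alpha_r)_*\mu$ is again a left-translation-invariant Borel probability measure on the compact group $G_2$ (left translation invariance needs a short check using the identity $\alpha_r(g)\,\alpha_{\beta_g(r)}(h)$ — actually one verifies $(\alpha_r)_*\mu$ is a Haar measure by a change of variables argument), hence equals $\mu$ by uniqueness of the Haar probability measure.

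Finally, for (4): $\alpha$ is trivial means $\alpha_\gamma(g) = g$ for all $\gamma, g$, i.e. $\gamma g = g\beta_g(\gamma)$, i.e. $g^{-1}\gamma g = \beta_g(\gamma) \in G_1$ for all $g \in G_2$, $\gamma \in G_1$; since every element of $H = G_1G_2$ is of the form $\gamma' g$ and $G_1$ is already normalized by itself, $\alpha$ trivial is equivalent to $G_2$ normalizing $G_1$, i.e. to $G_1 \trianglelefteq H$. The symmetric statement for $\beta$ and $G_2$ is proved the same way from $\gamma g = \alpha_\gamma(g)\gamma$, i.e. $\gamma g \gamma^{-1} = \alpha_\gamma(g) \in G_2$. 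The main obstacle I anticipate is not the algebra — which is all forced by uniqueness of factorization — but the clean justification of joint continuity of the factorization map $H \to G_2 \times G_1$; this is exactly where compactness of $G_2$ is essential (it fails for general l.c. matched pairs), and I would isolate it as the one nontrivial point, proving it via the closed-graph/properness argument sketched above rather than by any computation.
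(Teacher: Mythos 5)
Your overall architecture is the paper's: uniqueness of the $G_2G_1$-factorization forces all the identities in (1) and the action axioms, compactness of $G_2$ is the engine behind continuity, and the normality statement in (4) is read off the factorization exactly as in the paper. Your closed-graph argument for continuity of the factorization map $H\to G_2\times G_1$ is a legitimate variant of the paper's route (which instead observes that $\pi|_{G_2}\colon G_2\to H/G_1$ is a continuous bijection from a compact space onto a Hausdorff space, hence a homeomorphism, and writes $\alpha=\rho\circ\pi\circ\psi$). However, two of your steps have genuine gaps.

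First, your proof that $G_1G_2=H$ does not work as written. You assert that $G_2$, being compact, has nonzero Haar measure in $H$; this is false in general (a compact subgroup of a noncompact l.c.\ group is typically null, e.g.\ $\mathbb{T}\times\{0\}$ inside $\mathbb{T}\times\R$, which forms a matched pair with $\{0\}\times\R$). Consequently the inference ``$H\setminus G_1G_2$ is null, hence $hG_2\cap G_1G_2\neq\emptyset$'' fails: a translate of a null set can sit inside a null set. The missing idea is topological, not measure-theoretic: since $G_1$ is closed and $G_2$ is compact, the product $G_1G_2$ is \emph{closed} in $H$, so its complement is open and of Haar measure zero, hence empty because Haar measure has full support. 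Second, your argument for $\alpha$-invariance of the Haar measure $\mu$ on $G_2$ is not a proof. You propose to check that $(\alpha_r)_*\mu$ is left-translation-invariant ``by a change of variables,'' but $\alpha_r$ is not a group homomorphism of $G_2$, and the cocycle identity gives $\alpha_r\circ L_g=L_{\alpha_r(g)}\circ\alpha_{\beta_g(r)}$; the change of variables therefore yields $(L_{\alpha_r(g)^{-1}})_*(\alpha_r)_*\mu=(\alpha_{\beta_g(r)})_*\mu$, i.e.\ translating $(\alpha_r)_*\mu$ produces a \emph{different} pushforward $(\alpha_{\beta_g(r)})_*\mu$, not $(\alpha_r)_*\mu$ back again. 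So left-invariance of $(\alpha_r)_*\mu$ is exactly what is in doubt. The paper's fix is to transport the problem to $H/G_1$: when $G_1$ is discrete it is a cocompact lattice, $H$ is unimodular, and the unique $H$-invariant probability measure on $H/G_1$ pulls back under the homeomorphism $\pi|_{G_2}$ to a measure on $G_2$ that is simultaneously invariant under left translation by $G_2$ (hence equals $\mu$ by uniqueness of Haar measure) and under every $\alpha_\gamma$ (which corresponds to left translation by $\gamma$ on $H/G_1$).
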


\begin{proof}
First observe that, since $G_2$ is compact, $H$ is Hausdorff and $G_1$ is closed, the set $G_1 G_2$ is closed. Hence, the complement of $G_1 G_2$ is open and has Haar measure zero. It follows that $G_1 G_2=H=H^{-1}=G_2^{-1}G_1^{-1}=G_2G_1$. Since $G_1\cap G_2=\{e\}$, the existence and uniqueness of $\alpha_\gamma(g)$ and $\beta_g(\gamma)$ for all $\gamma\in G_1$ and $g\in G_2$ are obvious. Then, the relations in $(1)$ and the facts that $\alpha$ (resp. $\beta$) is a left (resp. right) action as in the statement easily follow from the aforementioned uniqueness.

Now let us check the continuity of these actions. Since the subgroup $G_1$ is closed in the l.c. group $H$, so $H/G_1$ is a l.c. Hausdorff space equipped with the quotient topology and the projection map $\pi\,:\,H\rightarrow H/G_1$ is continuous. Hence, $\pi_{\vert_{G_2}}\,:\,G_2\rightarrow H/G_1$ is continuous and bijective since $G_1\cap G_2=\{e\}$ and $G_1G_2=H$. Since $G_2$ is compact, $\pi_{|G_2}$ is an homeomorphism. Let $\rho\,:\, H/G_1\rightarrow G_2$ be the inverse of $\pi_{|G_{2}}$ and observe that the map $\alpha\,:\,G_1\times G_2\rightarrow G_2$, $(\gamma,g)\mapsto\alpha_\gamma(g)$ satisfies $\alpha=\rho\circ\pi\circ\psi$, where $\psi\,:\,G_1\times G_2\rightarrow H$ is the continuous map given by $\psi(\gamma,g)=\gamma g$, for $\gamma\in G_1, g\in G_2$. Consequently, the action $\alpha$ is continuous. Since for all $\gamma\in G_1$ and $g\in G_2$, we have $\beta_g(\gamma)=\alpha_\gamma(g)^{-1}\gamma g$, we deduce the continuity of $\beta\,:\, G_1\times G_2\rightarrow G_1$, $(\gamma,g)\mapsto\beta_g(\gamma)$ from the continuity of $\alpha$ and the continuity of the product and inverse operations in $H$.

Moreover,  suppose that $G_1$ is discrete. Then $G_1$ is a co-compact lattice in $H$ and it follows from the general theory that $H$ is unimodular and hence there exists a unique $H$-invariant Borel probability measure $\nu$ on $H/G_1$. Consider the homeomorphism $\pi_{\vert_{G_2}}\,:\,G_2\rightarrow H/G_1$ and the Borel probability measure $\mu=(\pi_{\vert_{G_2}})_{*}(\nu)$ on $G_2$. Since, for all $\gamma\in G_1$, the map $\pi_{\vert_{G_2}}$ intertwines the homeomorphism $\alpha_\gamma$ of $G_2$ with the left translation by $\gamma$ on $H/G_1$ and since $\nu$ is invariant under the left translation by $\gamma$, it follows that $\mu$ is invariant under $\alpha_\gamma$. Also, $\pi_{\vert_{G_2}}$ intertwines the left translation by $h$ on $G_2$ with the left translation by $h$ on $H/G_1$  for all $h\in G_2$. Hence, $\mu$ is invariant under the left translation by $h$ for all $h\in G_2$. It follows that $\mu$ is the Haar measure.

Suppose that $G_1$ is normal is $H$. Then for all $\gamma\in G_1$, $g\in G_2$, we have $g^{-1}\gamma g=g^{-1}\alpha_\gamma(g)\beta_g(\gamma)\in G_1$. Since $g^{-1}\alpha_\gamma(g)\in G_2$ and $G_1\cap G_2=\{1\}$, we deduce that $g^{-1}\alpha_\gamma(g)=1$ for all $\gamma\in G_1$, $g\in G_2$. For the reverse implication in (4), suppose that $\alpha$ is trivial. Then for all $\gamma\in G_1$, $g\in G_2$, we have $\gamma g=g\beta_g(\gamma)\in G_1$. Hence, $g^{-1}G_1 g\subset G_1$ for all $g\in G_2$ and since we trivially have $\gamma^{-1} G_1\gamma\subset G_1$ for all $\gamma\in G_1$ and $H=G_1 G_2$, we deduce that $G_1$ is normal in $H$. The proof of the last assertion of the Proposition is analogous.
\end{proof}

The next Proposition is well known, it is called the Zappa-Sz\'ep product (also known as the Zappa-R\'edei-Sz\'ep product, general product or knit product) and it is a converse of Proposition \ref{PropMatched}. We include a proof for the convenience of the reader.

\begin{proposition}\label{PropRecMatched}
Suppose that $G_1$ and $G_2$ are two l.c. groups with a continuous left action $\alpha$ of $G_1$ on the topological space $G_2$ and a continuous right action $\beta$ of $G_2$ on the topological space $G_1$ satisfying the relations $(\ref{EqMatched})$. Then there exists a l.c. group $H$ for which $G_1,G_2$ are closed subgroups satisfying  $G_1\cap G_2=\{e\}$, $H=G_1G_2$, and for all $\gamma\in G_1,g\in G_2$, $\gamma g=\alpha_\gamma(g)\beta_g(\gamma)$.
\end{proposition}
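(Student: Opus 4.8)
The plan is to construct the group $H$ concretely as a set with a twisted multiplication, and then verify the group axioms and the topological statements. Specifically, I would take $H = G_2 \times G_1$ as a topological space (with the product topology) and define a multiplication by
\[
(g,\gamma)\cdot(h,s) = \bigl(g\,\alpha_\gamma(h),\ \beta_h(\gamma)\,s\bigr)
\]
for $g,h \in G_2$ and $\gamma,s \in G_1$. The idea is that the element $(g,\gamma)$ should be thought of as the product $g\gamma$ in the group we are building, and then $g\gamma h s = g\,(\gamma h)\,s = g\,\alpha_\gamma(h)\,\beta_h(\gamma)\,s$, which is already in "$G_2$ then $G_1$" normal form because $\alpha_\gamma(h)\in G_2$ and $\beta_h(\gamma)\in G_1$. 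This motivates the formula and essentially dictates every subsequent computation.

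The key steps, in order, are: (i) verify that $(e,e)$ is a two-sided identity, which is immediate from $\alpha_\gamma(e)=e$ and $\beta_e(\gamma)=\gamma$ (the latter following from $\beta$ being an action, together with $\beta_g(e)=e$ — more precisely one uses the first relation in $(\ref{EqMatched})$ with $r$ or $s$ equal to $e$ to extract the needed normalizations; note $\beta_e$ acts trivially since $e\in G_2$ is the identity of the acting group); (ii) verify associativity, which is the computational heart of the argument — expanding $\bigl((g,\gamma)\cdot(h,s)\bigr)\cdot(k,t)$ and $(g,\gamma)\cdot\bigl((h,s)\cdot(k,t)\bigr)$ and matching the two sides reduces exactly to the cocycle-type identities $\alpha_\gamma(hk)=\alpha_\gamma(h)\alpha_{\beta_h(\gamma)}(k)$ and $\beta_k(\gamma s)=\beta_{\alpha_s(k)}(\gamma)\beta_k(s)$ from $(\ref{EqMatched})$, plus the facts that $\alpha_\gamma$ and $\beta_g$ are (anti)homomorphisms in their base-point variable; (iii) produce inverses — the inverse of $(g,\gamma)$ should be $\bigl(\alpha_{\gamma^{-1}}(g^{-1}),\, \beta_{g^{-1}}(\gamma^{-1})\bigr)$ (guessing from "$(g\gamma)^{-1} = \gamma^{-1}g^{-1}$" rewritten in normal form), and one checks both one-sided products give $(e,e)$ using the same relations. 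Then (iv) continuity of multiplication and inversion follows from continuity of $\alpha$, $\beta$, and the group operations of $G_1$ and $G_2$; with the product topology $H$ is locally compact and Hausdorff since $G_1,G_2$ are.

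Finally I would identify $G_2$ with $\{(g,e) : g\in G_2\}$ and $G_1$ with $\{(e,\gamma):\gamma\in G_1\}$; these are closed subgroups (closed because they are preimages of $\{e\}$ under the continuous coordinate projections, and subgroups because $\alpha$, $\beta$ are trivial when one argument is $e$), they intersect only in $(e,e)$ since $G_1\cap G_2=\{e\}$ as a set equation reads off the coordinates, and every element factors as $(g,\gamma) = (g,e)\cdot(e,\gamma)$, so $H = G_2 G_1$; applying inversion gives $H = G_1 G_2$ as well. Under these identifications the product $\gamma g$ — i.e.\ $(e,\gamma)\cdot(g,e)$ — equals $\bigl(\alpha_\gamma(g),\,\beta_g(\gamma)\bigr) = (\alpha_\gamma(g),e)\cdot(e,\beta_g(\gamma))$, which is exactly $\alpha_\gamma(g)\beta_g(\gamma)$ in $H$, as required. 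The main obstacle is purely bookkeeping: getting the associativity verification to close cleanly, since one must apply the relations $(\ref{EqMatched})$ at the correct base points and in the correct order, and it is easy to mismatch a $\beta_{\alpha_s(g)}$ with a $\beta_{\alpha_g(s)}$; I would carry it out carefully once and present it compactly. There is no conceptual difficulty beyond that, since the formula for $H$ is forced by the desired normal form.
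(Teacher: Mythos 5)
Your proposal is correct and is essentially the paper's own proof: the paper also builds $H$ as the product space ($G_1\times G_2$ there, with the mirrored multiplication $(r,g)(s,h)=(\beta_h(r)s,\,g\alpha_r(h))$, which is your formula after swapping the coordinate order) and identifies $G_1,G_2$ with the two coordinate axes. The only difference is that the paper dismisses the verifications as routine, whereas you spell out the associativity and inverse checks — which do close exactly as you describe.
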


\begin{proof}
Consider the l.c. space $H=G_1\times G_2$ and define a product on $H$ by the formula:
$$(r,g)(s,h)=(\beta_h( r )s,g\alpha_r(h))\quad\text{for all }r,s\in G_1,\,\,g,h\in G_2.$$
It is routine to check that this multiplication turns $H$ into a l.c. group. Moreover, we may identify $G_1$ with a closed subgroup of $H$ by the map $G_1\ni r\mapsto (r,1)\in G_1\times G_2$ and $G_2$ with a closed subgroup of $H$ by the map $G_2\ni g\mapsto (1,g)\in G_1\times G_2$. After these identifications, we have $H=G_1G_2$, $G_1\cap G_2=\{e\}$, and for all $\gamma\in G_1,g\in G_2$, $\gamma g=\alpha_\gamma(g)\beta_g(\gamma)$.
\end{proof}

\subsection{The bicrossed product construction}
We first construct the bicrossed product from a compact matched pair and then study its representation theory. Along the way we prove some straight forward consequences e.g., amenability, $K$-amenability and Haagerup property of the dual of the bicrossed product. We also compute the intrinsic group and the spectrum of the maximal C*-algebra of the 
bicrossed product.

Let $(\Gamma,G)$ be a matched pair of a countable discrete group $\Gamma$ and a compact group $G$. Associated to the continuous action $\beta$ of the compact group $G$ on the countable infinite set $\Gamma$, we have a magic unitary $v^{\gamma\cdot G}=(v_{rs})_{r,s\in\gamma\cdot G}\in M_{\vert\gamma\cdot G\vert}(\C)\ot C(G)$  for every $\gamma\cdot G\in\Gamma/G$, where $v_{rs}=1_{A_{r,s}}$ and $A_{r,s}=\{g\in G\,:\,\beta_g(r)=s\}$. 

We define the C*-algebra $A_m=\Gamma\,_{\alpha,f}\ltimes C(G)$ to be the full crossed product and the C*-algebra  $A=\Gamma _\alpha\ltimes C(G)$ to be the reduced crossed product. With abuse of notation, we denote by $\alpha$ the canonical injective maps from $C(G)$ to $A_m$ and from $C(G)$ to $A$. We also denote by $u_\gamma$, $\gamma\in\Gamma$, the canonical unitaries viewed in either $A_m$ or $A$. Observe that $A_m$ is the enveloping C*-algebra of the unital *-algebra
$$\mathcal{A}={\rm Span}\{u_\gamma\alpha(u^x_{ij})\,:\,\gamma\in\Gamma, x\in{\rm Irr}(G),1\leq i,j\leq{\rm dim}(x)\}.$$

Let $\lambda\,:\, A_m\rightarrow A$ be the canonical surjection. Since the action $\alpha$ on $(G,\mu)$ is  $\mu$-preserving and $\mu$ is a probability measure, so there exists a unique faithful trace $\tau$ on $A$ such that
\begin{align*}
\tau(u_\gamma\alpha(F))=\delta_{\gamma,e}\int Fd\mu, \text{  }\gamma\in \Gamma, F\in C(G).
\end{align*}

\begin{theorem}\label{ThmBicrossed}
There exists a unique unital $*$-homomorphism $\Delta_m\,:\,A_m\rightarrow A_m\ot A_m$ such that
$$\Delta_m\circ\alpha=(\alpha\ot\alpha)\circ\Delta_G\quad\text{and}\quad\Delta_m(u_\gamma)=\sum_{r\in \gamma\cdot G}u_\gamma\alpha(v_{\gamma,r})\ot u_r,\,\,\forall\gamma\in\Gamma.$$
Moreover, $\Gq=(A_m,\Delta_m)$ is a compact quantum group and we have:
\begin{enumerate}
\item The Haar state of $\Gq$ is $h=\tau\circ\lambda$, hence $\Gq$ is Kac.
\item The elements $V^{\gamma\cdot G}=\sum_{r,s\in\gamma\cdot G}e_{r,s}\ot u_r\alpha(v_{r,s})\in M_{\vert\gamma \cdot G\vert}(\C)\ot A_m$, for $\gamma\in\Gamma$, are pairwise non-equivalent irreducible unitary representations of $\Gq$ such that $\overline{V^{\gamma\cdot G}}\simeq V^{\gamma^{-1}\cdot G}$ and any irreducible unitary representation of $\Gq$ is a equivalent to a subrepresentation of $V^{\gamma\cdot G}\ot v^x$ for some $\gamma\cdot G\in\Gamma/G$ and $x\in{\rm Irr}(G)$, where $v^x=(\id\ot\alpha)(u^x)$. 
\item We have $C_m(\Gq)=A_m$, $C(\Gq)=A$, $\PolG=\mathcal{A}$, $\lambda$ is the canonical surjection and ${\rm L}^\infty(\Gq)$ is the von Neumann algebraic crossed product.
\item The counit $\varepsilon_\Gq\,:\, C_m(\Gq)\rightarrow\C$ is the unique unital $*$-homomorphism such that $\varepsilon_\Gq(\alpha(F))=F(e)$ for all $F\in C(G)$ and $\varepsilon_\Gq(u_\gamma)=1$ for all $\gamma\in\Gamma$.
\end{enumerate}
\end{theorem}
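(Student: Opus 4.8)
The plan is to first establish the existence of $\Delta_m$ by the universal property of the full crossed product $A_m = \Gamma\,_{\alpha,f}\ltimes C(G)$: I would check that the prescribed formulas define a covariant pair, i.e. that $\alpha' := (\alpha\ot\alpha)\circ\Delta_G : C(G)\to A_m\ot A_m$ is a $*$-homomorphism and that $U_\gamma := \sum_{r\in\gamma\cdot G} u_\gamma\alpha(v_{\gamma,r})\ot u_r$ are unitaries in $A_m\ot A_m$ satisfying $U_\gamma\,\alpha'(F)\,U_\gamma^* = \alpha'(\alpha_\gamma^{-1}(F))$ and $U_\gamma U_\delta = U_{\gamma\delta}$. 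Unitarity and the multiplicativity $U_\gamma U_\delta = U_{\gamma\delta}$ will use precisely the magic-unitary relations $(1)$–$(5)$ from Section 2.1 together with the matched-pair cocycle identities \eqref{EqMatched}: for instance $U_\gamma U_\delta = \sum_{r,s} u_\gamma\alpha(v_{\gamma,r})u_\delta\alpha(v_{\delta,s})\ot u_r u_s$, and one rewrites $\alpha(v_{\gamma,r})u_\delta = u_\delta\alpha(\alpha_\delta^{-1}(v_{\gamma,r}))$ with $\alpha_\delta^{-1}(1_{A_{\gamma,r}}) = 1_{\beta_{(\cdot)}(\delta)^{-1}\cdots}$; the Zappa–Sz\'ep relations convert this sum into the single term indexed by $\gamma\delta$. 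The covariance relation with $\alpha'$ reduces to checking it on coefficients $u^x_{ij}$ and uses relation $(5)$, which is exactly the comultiplication formula for the magic unitary. Once covariance is verified, $\Delta_m$ exists and is unique because $\mathcal A$ is dense in $A_m$ and generated by $\alpha(C(G))$ and the $u_\gamma$'s.

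Next I would verify coassociativity $(\Delta_m\ot\id)\Delta_m = (\id\ot\Delta_m)\Delta_m$ on the generators: on $\alpha(C(G))$ it follows from coassociativity of $\Delta_G$; on $u_\gamma$ both sides unfold to $\sum_{r,s} u_\gamma\alpha(v_{\gamma,r})\ot u_r\alpha(v_{r,s})\ot u_s$ using relation $(5)$ again (applied to express $\alpha(v_{\gamma,r})$'s image) and the chain rule for $\beta$ hidden in the orbit structure $\gamma\cdot G \supset r\cdot G = s\cdot G$. Then I would establish that $(A_m,\Delta_m)$ is a compact quantum group by exhibiting the density of $\Delta_m(A_m)(1\ot A_m)$ and $\Delta_m(A_m)(A_m\ot 1)$ in $A_m\ot A_m$; here it is cleaner to directly produce the Haar state. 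The natural candidate is $h = \tau\circ\lambda$, and I would check left/right invariance $(\id\ot h)\Delta_m(a) = h(a)1 = (h\ot\id)\Delta_m(a)$ on a spanning set $u_\gamma\alpha(u^x_{ij})$: since $\tau(u_\delta\alpha(F)) = \delta_{\delta,e}\int F\,d\mu$ and $\mu$ is $\alpha$-invariant and $G$-translation invariant, the sums collapse to the required scalar. Faithfulness of $\tau$ on $A$ then forces $A = C(\Gq)$, $A_m = C_m(\Gq)$, and identifies $\lambda$ with the canonical surjection; the von Neumann algebra generated in the GNS representation of $h$ is by construction the crossed product von Neumann algebra. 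Kac-ness is immediate from $h = h\circ S$ being a trace (it is $\tau$).

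For part $(2)$, I would first check each $V^{\gamma\cdot G}\ot v^x$ is a unitary representation: $V^{\gamma\cdot G}$ being a representation follows from the computation $\Delta_m$ applied blockwise, combined with the fact (Section 2.1) that $(1_{A_{r,s}})_{r,s\in\gamma\cdot G}$ is simultaneously a magic unitary and a representation of $G$; and $v^x = (\id\ot\alpha)(u^x)$ is a representation since $\alpha:C(G)\to A_m$ intertwines $\Delta_G$ with $\Delta_m\circ\alpha$. Irreducibility and completeness I would obtain by a dimension/orthogonality count against the Haar state: the coefficients of $V^{\gamma\cdot G}\ot v^x$ are $u_r\alpha(v_{r,s}u^x_{ij})$, and computing $h$ of products of such coefficients using $\tau$ shows they form an orthogonal family whose span is all of $\mathcal A$ (because the $1_{A_{r,s}}$ together with $\{u^x_{ij}\}$ span $C(G)$ by Peter–Weyl and the magic-unitary block structure, and $u_\gamma = \sum_{r} u_\gamma\alpha(v_{\gamma,r}) = \sum_r (V^{\gamma\cdot G})_{\gamma,r}$ lies in the span). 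By Woronowicz's Peter–Weyl theory, an orthogonal family of matrix coefficients spanning $\PolG$ must be a complete set of irreducibles, giving $(2)$ and completing $(3)$. Finally $(4)$: the counit is the unique character with $(\varepsilon\ot\id)\Delta_m = \id = (\id\ot\varepsilon)\Delta_m$, and one checks the stated formula $\varepsilon_\Gq(\alpha(F)) = F(e)$, $\varepsilon_\Gq(u_\gamma)=1$ is consistent with $\Delta_m$ (using $v_{\gamma,r}(e) = 1_{A_{\gamma,r}}(e) = \delta_{\gamma,r}$ since $\beta_e = \id$), and uniqueness follows as before from density of $\mathcal A$.

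I expect the main obstacle to be the verification that the $U_\gamma$ are unitary and multiplicative inside $A_m\ot A_m$ — this is where the magic-unitary identities $(1)$–$(5)$ and the Zappa–Sz\'ep cocycle relations \eqref{EqMatched} must be combined carefully, and where an off-diagonal orbit bookkeeping error is easiest to make; everything downstream (coassociativity, Haar state, Peter–Weyl count) is comparatively mechanical once the covariant pair is in hand.
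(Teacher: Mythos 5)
Your proposal is correct and follows essentially the same route as the paper: construct $\Delta_m$ from a covariant pair via the universal property of the full crossed product (using the magic-unitary relations and the Zappa--Sz\'ep identities for unitarity and multiplicativity of the $U_\gamma$), exhibit $h=\tau\circ\lambda$ as the invariant state on the spanning set of coefficients, and obtain irreducibility and completeness of the $V^{\gamma\cdot G}\ot v^x$ by an orthogonality computation against $h$. The only cosmetic differences are that the paper invokes Wang's criterion (Definition 2.1$'$ of \cite{Wa95a}) rather than checking coassociativity and the density conditions by hand, and performs the orthogonality count on characters rather than on individual matrix coefficients.
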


The compact quantum group $\Gq$ associated to the compact matched pair $(\Gamma,G)$ in Theorem \ref{ThmBicrossed} is called the \textit{bicrossed product}.
\begin{proof}
The uniqueness of $\Delta_m$ is obvious. To show the existence, it suffices to check that $\Delta_m$ satisfies the universal property of $A_m$.

Let us check that $\gamma\mapsto\Delta_m(u_\gamma)$ is a unitary representation of $\Gamma$. Let $\gamma\in\Gamma$. We first check that $\Delta_m(u_\gamma)$ is unitary. Observe that, for all $g\in G$ and $\gamma\in\Gamma$, we have
$$1=\beta_g(\gamma^{-1}\gamma)=\beta_{\alpha_\gamma(g)}(\gamma^{-1})\beta_g(\gamma).$$
Hence, $(\beta_g(\gamma))^{-1}=\beta_{\alpha_\gamma(g)}(\gamma^{-1})$. From this relation it is easy to check that $\Gamma^{-1}\cdot G=\{r^{-1}\,:\, r\in\gamma\cdot G\}$ and $\alpha_{\gamma}(v_{\gamma,r^{-1}})=v_{\gamma^{-1},r}$ for all $r\in\Gamma$. It follows that
\begin{align*}
\Delta_m(u_\gamma)^*&=\sum_{r\in\gamma\cdot G}\alpha(v_{\gamma,r})u_{\gamma^{-1}}\ot u_{r^{-1}}=\sum_{r\in\gamma\cdot G}u_{\gamma^{-1}}\alpha(\alpha_\gamma(v_{\gamma,r}))\ot u_{r^{-1}}\\
&=\sum_{r\in\gamma^{-1}\cdot G}u_{\gamma^{-1}}\alpha(v_{\gamma^{-1},r})\ot u_{r}=\Delta_m(u_{\gamma^{-1}}).
\end{align*} 

Let $\gamma_1,\gamma_2\in\Gamma$. We have
$$
\Delta_m(u_{\gamma_1})\Delta_m(u_{\gamma_2})
=\sum_{r\in\gamma_1\cdot G,s\in\gamma_2\cdot G}u_{\gamma_1}\alpha(v_{\gamma_1,r})u_{\gamma_2}\alpha(v_{\gamma_2,s})\ot u_{rs}=\sum_{r,s}u_{\gamma_1\gamma_2}\alpha\left(\alpha_{\gamma_2^{-1}}(v_{\gamma_1,r})v_{\gamma_2,s}\right)\ot u_{rs}.
$$
Observe that $\alpha_{\gamma_2^{-1}}(v_{\gamma_1,r})v_{\gamma_2,s}=1_{B_{\gamma_1,\gamma_2,r,s}}$, where
$$B_{\gamma_1,\gamma_2,r,s}=\{g\in G\,:\,\beta_{\alpha_{\gamma_2}(g)}(\gamma_1)=r\text{ and }\beta_g(\gamma_2)=s\}\subset A_{\gamma_1\gamma_2,rs}=\{g\in G\,:\,\beta_g(\gamma_1\gamma_2)=rs\},$$
since $\beta_{\alpha_{\gamma_2}(g)}(\gamma_1)\beta_g(\gamma_2)=\beta_g(\gamma_1\gamma_2)$. In particular, $B_{\gamma_1,\gamma_2,r,s}=\emptyset$ whenever $rs\notin\gamma_1\gamma_2\cdot G$; hence
$$
\Delta_m(u_{\gamma_1})\Delta_m(u_{\gamma_2})=\sum_{t\in\gamma_1\gamma_2\cdot G,r\in\gamma_1\cdot G}u_{\gamma_1\gamma_2}\alpha\left(1_{B_{\gamma_1,\gamma_2,r,r^{-1}t}}\right)\ot u_t=\sum_{t\in\gamma_1\gamma_2\cdot G}u_{\gamma_1\gamma_2}\alpha(F_t)\ot u_t,$$
where $F_t=\sum_r1_{B_{\gamma_1,\gamma_2,r,r^{-1}t}}=1_{\sqcup_rB_{\gamma_1,\gamma_2,r,r^{-1}t}}=1_{A_{\gamma_1\gamma_2,t}}$, and  $A_{\gamma_1\gamma_2,t}=\{g\in G\,:\,\gamma_1\gamma_2\cdot g=t\}$. Consequently, $1_{A_{\gamma_1\gamma_2,t}}=v_{\gamma_1\gamma_2,t}$ and $\Delta_m(u_{\gamma_1})\Delta_m(u_{\gamma_2})=\Delta_m(u_{\gamma_1\gamma_2})$. Since $\Delta_m(u_e)=1$, it follows that $\gamma\mapsto\Delta_m(u_\gamma)$ is a unitary representation of $\Gamma$.

Let us now check that the relations of the crossed product are satisfied. For $\gamma\in\Gamma$ and $F\in{\rm Pol}(G)$ we have:
\begin{eqnarray*}
\Delta_m(u_\gamma)\Delta_m(\alpha(F))\Delta_m(u_\gamma^*)&=&
\sum_{r,s}(u_\gamma\ot u_r)(\alpha\ot\alpha)\left((v_{\gamma,r}\ot 1)\Delta_G(F)\right)(u_{\gamma^{-1}}\alpha(v_{\gamma^{-1},s})\ot u_s)\\
&=&\sum_{r,s}(u_\gamma\ot u_r)(\alpha\ot\alpha)\left((v_{\gamma,r}\alpha_{\gamma^{-1}}(v_{\gamma^{-1},s})\ot 1)\Delta_G(F)\right)(u_{\gamma^{-1}}\ot u_s)\\
&=&\sum_{r,s}(\alpha\ot\alpha)\left((\alpha_\gamma(v_{\gamma,r})v_{\gamma^{-1},s}\ot 1)(\alpha_\gamma\ot\alpha_r)(\Delta_G(F))\right)(1\ot u_{rs})\\
&=&\sum_{r,t}(\alpha\ot\alpha)\left((\alpha_\gamma(v_{\gamma,r})v_{\gamma^{-1},r^{-1}t}\ot 1)(\alpha_\gamma\ot\alpha_r)(\Delta_G(F))\right)(1\ot u_{t})\\
&=&\sum_{t}(\alpha\ot\alpha)(H_t)(1\ot u_{t}),
\end{eqnarray*}
where $H_t=\sum_r(\alpha_\gamma(v_{\gamma,r})v_{\gamma^{-1},r^{-1}t}\ot 1)(\alpha_\gamma\ot\alpha_r)(\Delta_G(F))$. 

Observe that $\alpha_\gamma(v_{\gamma,r})v_{\gamma^{-1},r^{-1}t}=1_{B_{\gamma,r,t}}$, where
$$B_{\gamma,r,t}=\{g\in G\,:\,\beta_{\alpha_{\gamma^{-1}}(g)}(\gamma)=r\text{ and }\beta_g(\gamma^{-1})=r^{-1}t\}.$$
Since $\beta_{\alpha_{\gamma^{-1}}(g)}(\gamma)\beta_g(\gamma^{-1})=\beta_g(\gamma\gamma^{-1})=\beta_g(e)=e$, we deduce that $B_{\gamma,r,t}=\emptyset$ whenever $t\neq e$, and it is easy to see that $\sqcup_{r\in\gamma\cdot G}B_{\gamma,r,e}=G$. Hence, $H_t=0$ for $t\neq e$.  Again for $g\in B_{\gamma,r,e}$ and $h\in G$, one has $H_e(g,h)=F(\alpha_{\gamma^{-1}}(g)\alpha_{r^{-1}}(h))=F(\alpha_{\gamma^{-1}}(g)\alpha_{\beta_g(\gamma^{-1})}(h))=F(\alpha_{\gamma^{-1}}(gh))$. It follows that $H_e=\Delta_G(\alpha_\gamma(F))$. Consequently, $\Delta_m(u_\gamma)\Delta_m(\alpha(F))\Delta_m(u_\gamma^*)=(\alpha\ot\alpha)(H_e)$.  This completes the proof of the existence of $\Delta_m$.

It is clear that $v^x$ $($as defined in the statement$)$ is unitary and since $(\alpha\ot\alpha)\Delta_G=\Delta_m\circ\alpha$, we have $\Delta_m(v^x_{ij})=\sum_k v^x_{ik}\ot v^x_{kj}$. Observe that $V^{\gamma\cdot G}=D_\gamma(\id\ot\alpha)(v^{\gamma\cdot G})\in M_{\vert\gamma\cdot G\vert}(\C)\ot\mathcal{A}$, where $D_\gamma$ is the diagonal matrix with entries $u_r$, $r\in\gamma\cdot G$. Hence, $V^{\gamma\cdot G}$ is unitary. Moreover,
\begin{eqnarray*}
\Delta_m(V^{\gamma\cdot G}_{rs})&=&\Delta_m(u_r\alpha(v_{rs}))
=\sum_{t\in r\cdot G=\gamma\cdot G}(u_r\alpha(v_{rt})\ot u_t)(\alpha\ot\alpha)(\Delta_G(v_{rs}))\\
&=&\sum_{t,z\in\gamma\cdot G}u_r\alpha(v_{rt}v_{rz})\ot u_t\alpha(v_{zs})
=\sum_{t\in\gamma\cdot G}u_r\alpha(v_{rt})\ot u_t\alpha(v_{ts})=\sum_{t\in\gamma\cdot G}V^{\gamma\cdot G}_{rt}\ot V^{\gamma\cdot G}_{ts}.
\end{eqnarray*}
It follows from \cite[Definition 2.1']{Wa95a} that $\Gq$ is a compact quantum group and $V^{\gamma\cdot G}$, $v^x$ are unitary representations of $\Gq$ for all $\gamma\cdot G\in\Gamma/G$ and $x\in{\rm Irr}(G)$.

$(1)$. Since $\sum_s V^{\gamma\cdot G}_{rs}=u_r$, the linear span of the coefficients of the representations $V^{\gamma\cdot G}\ot v^x$ for $\gamma\in \Gamma/G$ and $x\in \text{Irr}(G)$ is equal to $\mathcal{A}$. Hence, it suffices to check the invariance of $h$ on the coefficients of $V^{\gamma\cdot G}\ot v^x$. We have
$$h(V^{\gamma\cdot G}_{rs} v^x_{ij}))=h(u_r\alpha(v_{rs}v^x_{ij}))=\delta_{r,e}\int_Gv_{es}v^x_{ij}d\mu=\delta_{r,e}\delta_{s,e}\int_Gv^x_{ij}d\mu=\delta_{r,e}\delta_{s,e}\delta_{x,1},$$
since $v_{es}=\delta_{s,e}1$ and $v^x$ is irreducible. Hence, if $x\neq 1$, we have
$$(\id\ot h)\Delta_m(V^{\gamma\cdot G}_{rs} v^x_{ij})=\sum_{t,k} V^{\gamma\cdot G}_{rt}v^x_{ik}h(V^{\gamma\cdot G}_{ts}v^x_{kj})=0=\sum_{t,k} h(V^{\gamma\cdot G}_{rt}v^x_{ik})V^{\gamma\cdot G}_{ts}v^x_{kj}=(h\ot\id)\Delta_m(V^{\gamma\cdot G}_{rs} v^x_{ij}).$$
And, if $x=1$, we have $(\id\ot h)\Delta_m(V^{\gamma\cdot G}_{rs})=\sum_{t} V^{\gamma\cdot G}_{rt}h(V^\gamma_{ts})=\delta_{\gamma,e}1=(h\ot\id)\Delta_m(V^{\gamma\cdot G}_{rs})$. It follows that $h$ is the Haar state. 

$(2)$. For the unitary representation $V^{\gamma\cdot G}$ (of $\Gq$ or $G$), we denote by $\chi(\gamma\cdot G)=\sum_{r\in\gamma\cdot G}u_r\alpha(v_{rr})$ its character. One has, for $\gamma,\gamma'\in\Gamma$,
\begin{eqnarray*}
h(\chi(\gamma\cdot G)^*\chi(\gamma'\cdot G))&=&\sum_{r\in\gamma\cdot G,s\in\gamma'\cdot G}h(\alpha(v_{rr})u_{r^{-1}s}\alpha(v_{ss}))
=\sum_{r\in\gamma\cdot G,s\in\gamma'\cdot G}h(u_{r^{-1}s}\alpha(\alpha_{s^{-1}r}(v_{rr})v_{ss}))\\
&=&\delta_{\gamma\cdot G,\gamma'\cdot G}\sum_{r\in\gamma\cdot G}\int_Gv_{rr}d\mu=\delta_{\gamma\cdot G,\gamma'\cdot G}\int_G(\sum_rv_{rr})d\mu=\delta_{\gamma\cdot G,\gamma'\cdot G},
\end{eqnarray*}
since $\int_G(\sum_rv_{rr})d\mu$ is equal to the dimension of the invariant vectors in the representation $(v_{rs})_{r,s\in\gamma\cdot G}$ which is $1$ since the action of $G$ on $\gamma\cdot G$ is transitive. This shows that the representations $V^{\gamma\cdot G}$ are irreducible and pairwise non-isomorphic for $\gamma\cdot G\in \Gamma/G$. Since the linear span of the coefficients of $V^{\gamma\cdot G}\ot v^x$ is equal to $\mathcal{A}$ and hence dense in $A_m$, it follows that any irreducible representation of $\Gq$ is equivalent to some subrepresentation of $V^{\gamma\cdot G}\ot v^x$. Finally, the bicrossed product relations imply that $(\beta_g(\gamma))^{-1}=\beta_{\alpha_\gamma(g)}(\gamma^{-1})$ for all $g\in G$, $\gamma\in\Gamma$ hence, $\gamma^{-1}\cdot G=(\gamma\cdot G)^{-1}$ and, the linear map $s_\gamma\,:\,\C\rightarrow \C^{\vert\gamma\cdot G\vert}\ot\C^{\vert\gamma^{-1}\cdot G\vert}$, $s_\gamma(1)=\sum_{r\in\gamma\cdot G}e_r\ot e_{r^{-1}}$ is well defined. Using the relations $u_r\alpha(v_{rs})u_r^*=\alpha(v_{rs}\circ\alpha_{r^{-1}})=\alpha(v_{r^{-1}s^{-1}})$ for all $r,s\in\gamma\cdot G$ it is easy to check that $s_\gamma\in{\rm Mor}(1,V^{\gamma\cdot G}\ot V^{\gamma^{-1}\cdot G})$ which implies that $\overline{V^{\gamma\cdot G}}\simeq V^{\gamma^{-1}\cdot G}$.

$(3)$. We have already shown that ${\rm Pol}(\Gq)=\mathcal{A}$. It follows that $C_m(\Gq)=A_m$. Since $\lambda$ is surjective and $\tau$ is faithful on $A$, it follows that $C(\Gq)=A$ and ${\rm L}^\infty(\Gq)$ is the bicommutant of $A$ in $\mathcal{B}(\ell^2(\Gamma)\ot{\rm L}^2(G))$ i.e., it is the von Neumann algebraic crossed product. Finally, since $\lambda$ is the identity on $\mathcal{A}={\rm Pol}(\Gq)$, it follows that $\lambda$ is the canonical surjection.

$(4)$. The fact that $\varepsilon_\Gq(\alpha(F))=F(e)$ for all $F\in C(G)$ is obvious since $\alpha$ intertwines the colmultiplications. Fix $\gamma\in\Gamma$. Since $V^{\gamma\cdot G}$ is irreducible, we have that $(\id\ot\varepsilon_\Gq)(V^{\gamma\cdot G})=1$. Hence,
$$1=\sum_{r,s\in\gamma.G}e_{r,s}\varepsilon_\Gq(u_r)v_{r,s}(e)=\sum_{r\in\gamma.G}e_{r,r}\varepsilon_\Gq(u_r).$$
It follows that $\varepsilon_\Gq(u_\gamma)=1$.
\end{proof}

\begin{remark}\label{RmkNonTrivialMatched}
Let $\Gq$ be the bicrossed product coming from a compact matched pair $(\Gamma,G)$ as above. From the definition, it is easy to check that $C_m(\Gq)$ is commutative if and only if the action $\alpha$ is trivial and $\Gamma$ is abelian. Moreover, $\Gq$ is cocommutative if and only if the action $\beta$ is trivial and $G$ is abelian. 
\end{remark}

\begin{remark}\label{RmkFull}
The following observation is well known. Let $\alpha\,:\,\Gamma\curvearrowright A$ be an action of the countable group $\Gamma$ on the unital C*-algebra $A$ and let $C$ be the full crossed product which is generated by the unitaries $u_\gamma$, $\gamma\in\Gamma$, and by the copy $\alpha(A)$ of the C*-algebra $A$. If $A$ has a character $\varepsilon\in A^*$ such that $\varepsilon(\alpha_\gamma(a))=\varepsilon(a)$ for all $a\in A$ and $\gamma\in\Gamma$, then the C*-subalgebra $B\subset C$ generated by $\{u_\gamma\,:\,\gamma\in\Gamma\}$ is canonically isomorphic to $C^*(\Gamma)$. Indeed, it suffices to check that $B$ satisfies the universal property of $C^*(\Gamma)$. Let $v\,:\,\Gamma\rightarrow\mathcal{U}(H)$ be a unitary representation of $\Gamma$ on $H$. Consider the unital $*$-homomorphism $\pi\,:\, A\rightarrow\mathcal{B}(H)$ given by $\pi(a)=\varepsilon(a)\id_{H}$, $a\in A$. We have
$v_\gamma\pi(a)v_{\gamma^{-1}}=\varepsilon(a)\id_H=\varepsilon(\alpha_\gamma(a))\id_H=\pi(\alpha_\gamma(a))$. Hence, we obtain a representation of $C$ that we can restrict to $B$ to get the universal property.

Let $(\Gamma,G)$ be a matched pair.  Since the map $\varepsilon\,:\,C(G)\rightarrow\C$ by $F\mapsto F(e)$ is a $\alpha$-invariant character, it follows from the preceding observation that the C*-subalgebra of $A_m$ generated by $u_\gamma$, $\gamma\in\Gamma$, is canonically isomorphic to $C^*(\Gamma)$.
\end{remark}

\vspace{0.2cm} 

We now give some obvious consequences of the preceding result concerning amenability, $K$-amenability and the Hagerup property. The first assertion of the following corollary is already known \cite{DQV02} but we include an easy proof for the convenience of the reader. We refer to \cite{Ve04} for the definition of K-amenability of discrete quantum groups.

\begin{corollary}\label{CorMatched}
The following holds:
\begin{enumerate}
\item $\Gq$ is co-amenable if and only if $\Gamma$ is amenable.
\item If $\Gamma$ is $K$-amenable, then $\widehat{\Gq}$ is $K$-amenable.
\item If $\widehat{\Gq}$ has the Haagerup property, then $\Gamma$ has the Haagerup property.
\item If the action of $\Gamma$ on ${\rm L}^\infty(G)$ is compact and $\Gamma$ has the Haagerup property, then $\widehat{\Gq}$ has the Haagerup property.
\end{enumerate}
\end{corollary}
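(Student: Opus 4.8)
The plan is to reduce each of the four assertions to a standard permanence property of crossed products, using Theorem \ref{ThmBicrossed}, which identifies $C_m(\Gq)=A_m$, $C(\Gq)=A$ and $\rL^\infty(\Gq)$ with the full, reduced and von Neumann algebraic crossed products of $\al\,:\,\Gamma\actson C(G)$ (so that $\lambda$ is the canonical surjection $A_m\recht A$ and $h=\tau\circ\lambda$), together with Remark \ref{RmkFull}, which supplies the distinguished copy of $C^*(\Gamma)$ inside $C_m(\Gq)$ on which $h$ restricts to the canonical trace $\tau_\Gamma$ of $\Gamma$ (since $h(u_\gamma)=\tau(\lambda(u_\gamma))=\delta_{\gamma,e}$).

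For $(1)$: if $\Gamma$ is amenable, the canonical surjection from the full to the reduced crossed product of $\al$ is an isomorphism, so $\lambda\,:\,C_m(\Gq)\recht C(\Gq)$ is an isomorphism, and transporting the bounded counit of $C_m(\Gq)$ along $\lambda$ shows $\Gq$ is co-amenable. Conversely, co-amenability of $\Gq$ forces $\lambda$ to be injective, hence $h=\tau\circ\lambda$ is faithful on $A_m=C_m(\Gq)$; restricting $h$ to the copy of $C^*(\Gamma)$ from Remark \ref{RmkFull} shows $\tau_\Gamma$ is faithful on $C^*(\Gamma)$, which forces $C^*(\Gamma)=C^*_r(\Gamma)$, i.e. $\Gamma$ is amenable. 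For $(2)$: by the characterisation of $K$-amenability for discrete quantum groups in \cite{Ve04}, it suffices to show that the canonical surjection $\lambda\,:\,C_m(\Gq)\recht C(\Gq)$ is a $KK$-equivalence; since this is the natural morphism $A_m=\Gamma\,_{\al,f}\ltimes C(G)\recht\Gamma\,_{\al}\ltimes C(G)=A$, Cuntz's criterion for $K$-amenability of the group $\Gamma$ (applied to the $\Gamma$-C*-algebra $C(G)$) gives exactly this.

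For $(3)$: since $\Gq$ is Kac (Theorem \ref{ThmBicrossed}$(1)$), $h$ is a trace and $\rL^\infty(\Gq)$ is a finite von Neumann algebra, namely the von Neumann algebraic crossed product of $\al\,:\,\Gamma\actson\rL^\infty(G)$, containing $\rL(\Gamma)$. If $\widehat{\Gq}$ has the Haagerup property, then by \cite{DFSW13} so does $\rL^\infty(\Gq)$; and $\rL(\Gamma)$ is the range of the unique $h$-preserving conditional expectation $E\,:\,\rL^\infty(\Gq)\recht\rL(\Gamma)$, determined on the dense subalgebra by $u_\gamma\alpha(F)\mapsto\left(\int_G F\,d\mu\right)u_\gamma$. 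Composing the approximating completely positive maps for $\rL^\infty(\Gq)$ with $E$ on the left and the inclusion $\rL(\Gamma)\hookrightarrow\rL^\infty(\Gq)$ on the right, and noting that the induced operators on $\rL^2(\rL(\Gamma))$ are compressions of compact operators (so still compact) while the $\|\cdot\|_2$–convergence persists since $E$ is $\|\cdot\|_2$–contractive, one obtains the Haagerup property for $\rL(\Gamma)$, equivalently for $\Gamma$.

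For $(4)$: I would run the same chain backwards. Assuming $\Gamma$ has the Haagerup property and the action on $\rL^\infty(G)$ is compact, apply Jolissaint's theorem \cite{Jo07} to this trace-preserving compact action of $\Gamma$ on the abelian — hence Haagerup — von Neumann algebra $\rL^\infty(G)$ to conclude that the crossed product $\rL^\infty(\Gq)$ has the Haagerup property; since $\Gq$ is Kac, \cite{DFSW13} then upgrades this to the Haagerup property of $\widehat{\Gq}$. The only substantial input in the whole corollary is Jolissaint's theorem used here; every other step is a formal transfer along the functoriality of crossed products, the essential point being Theorem \ref{ThmBicrossed}'s identification of the completions of $\Gq$ with the corresponding crossed products of $\Gamma\actson C(G)$.
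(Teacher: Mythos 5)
Your proposal is correct and follows essentially the same route as the paper: faithfulness of $h=\tau\circ\lambda$ on the canonical copy of $C^*(\Gamma)$ from Remark \ref{RmkFull} for (1), Cuntz's Theorem 2.1(c) for (2), passage of the Haagerup property from $\widehat{\Gq}$ to ${\rm L}^\infty(\Gq)$ and then to the expected subalgebra ${\rm L}(\Gamma)$ via \cite{DFSW13} for (3), and Jolissaint's \cite[Corollary 3.4]{Jo07} combined with \cite{DFSW13} (using that $\Gq$ is Kac) for (4). You merely spell out details the paper leaves implicit.
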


\begin{proof}
$(1)$. If $\Gamma$ is amenable, then we trivially have that $\lambda$ is an isomorphism; hence, $\Gq$ is co-amenable. Conversely, if $\Gq$ is co-amenable, then the Haar state $h=\tau\circ\lambda$ is faithful on $A_m$. Since $h(u_\gamma)=\delta_{\gamma,e}$, $\gamma\in \Gamma$, we conclude from Remark \ref{RmkFull}, that the canonical trace on $C^*(\Gamma)$ has to be faithful. Hence, $\Gamma$ is amenable.

$(2)$. It is an immediate consequence of \cite[Theorem 2.1 (c)]{Cu83}.

$(3)$. It follows from \cite[Theorem 6.7]{DFSW13}, since ${\rm L}(\Gamma)$ is a von Neumann subalgebra of ${\rm L}^\infty(\Gq)$.

$(4)$. This is a direct consequence of \cite[Corollary 3.4]{Jo07} and \cite[Theorem 6.7]{DFSW13}.
\end{proof}

We end this section with a description of the ${\rm Int}(\Gq)$ and $\chi(\Gq)$ (see Section \ref{SectionCQG}) in terms of the matched pair $(G,\Gamma)$. It will be used to distinguish various explicit examples in Section \ref{section-examples}.

Observe that the relations in Equation $(\ref{EqMatched})$ imply that $\Gamma^\beta=\{\gamma\in\Gamma\,:\,\beta_g(\gamma)=\gamma\,\,\forall g\in G\}$ and $G^\alpha=\{g\in G\,:\,\alpha_\gamma(g)=g\,\,\forall\gamma\in\Gamma\}$ are respectively subgroups of $\Gamma$ and $G$. Moreover, since $\beta$ is continuous, $G^\beta$ is closed, hence compact. Thus, when $(\Gamma,G)$ is a compact matched pair, the relations in Equation $(\ref{EqMatched})$ imply that the associations 
$$\gamma\cdot\omega=\omega\circ\alpha_{\gamma}\quad\text{and}\quad g\cdot\mu=\mu\circ\beta_g,\quad\text{for all }\gamma\in\Gamma,g\in G,\omega\in{\rm Sp}(G),\mu\in{\rm Sp}(\Gamma),$$
define two actions by group homomorphisms, namely: $(i)$ right action of $\Gamma^\beta$ on ${\rm Sp}(G)$ that we still denote by $\alpha$, and $(ii)$ left action of $G^\alpha$ on ${\rm Sp}(\Gamma)$ that we still denote by $\beta$. Also, $\beta$ is a continuous action by homeomorphisms.

\begin{proposition}\label{PropInt}
There are canonical group isomorphisms:
$${\rm Int}(\Gq)\simeq{\rm Sp}(G)\rtimes _\alpha\Gamma^\beta\quad\text{and}\quad\chi(\Gq)\simeq G^\alpha \,_\beta\ltimes{\rm Sp}(\Gamma).$$
The second isomorphism is moreover a homeomorphism.
\end{proposition}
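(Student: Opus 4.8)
The plan is to identify explicitly what the one-dimensional unitary representations of $\Gq$ are, and more generally what the characters of $C_m(\Gq)$ are, using the representation-theoretic description in Theorem~\ref{ThmBicrossed} together with the universal property of $A_m = C_m(\Gq)$. A character $\chi$ of $A_m = \Gamma\,_{\alpha,f}\ltimes C(G)$ is the same thing as a pair consisting of a character $\omega$ of $C(G)$ (equivalently an element $\omega\in{\rm Sp}(G)$) and a one-dimensional unitary representation of $\Gamma$ that is compatible with the covariance relation $\chi(u_\gamma)\,\chi(\alpha(F))\,\chi(u_\gamma)^* = \chi(\alpha(\alpha_\gamma F))$; since everything is scalar, the left side equals $\chi(\alpha(F)) = \omega(F)$, so the constraint is exactly $\omega = \omega\circ\alpha_\gamma$ for every $\gamma$ in the support of the representation. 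In other words, a character of $A_m$ is a pair $(\mu,\omega)$ where $\omega\in{\rm Sp}(G)$, $\mu\in{\rm Sp}(\Gamma)$, and $\omega$ is fixed by $\alpha_\gamma$ whenever $\mu(u_\gamma)\neq 1$... more cleanly: the character restricted to $C^*(\Gamma)\subset A_m$ (Remark~\ref{RmkFull}) is some $\mu\in{\rm Sp}(\Gamma)$, and the compatibility forces $\mu$ to be supported on $\Gamma^{\rm Stab}_\omega := \{\gamma : \omega\circ\alpha_\gamma = \omega\}$. This already shows $\chi(\Gq)$ is in bijection with $\{(\mu,\omega) : \omega\in{\rm Sp}(G),\ \mu\in{\rm Sp}(\Gamma),\ \mu \text{ vanishes off }\Gamma^{\rm Stab}_\omega\}$; I will then reorganize this as the Zappa--Sz\'ep--type product $G^\alpha\,_\beta\ltimes{\rm Sp}(\Gamma)$.

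\textbf{From the character set to the group structure.} Next I would compute the group law on $\chi(\Gq)$ induced by $\Delta_m$ and match it with the semidirect product $G^\alpha\,_\beta\ltimes{\rm Sp}(\Gamma)$. Using $\Delta_m(u_\gamma) = \sum_{r\in\gamma\cdot G} u_\gamma\alpha(v_{\gamma,r})\ot u_r$ and $\Delta_m\circ\alpha = (\alpha\ot\alpha)\circ\Delta_G$, for two characters $\chi_i \leftrightarrow (\mu_i,\omega_i)$ the product $(\chi_1\ot\chi_2)\circ\Delta_m$ evaluated on $\alpha(F)$ gives the convolution $\omega_1 \star \omega_2$ in ${\rm Sp}(G)$, i.e.\ ordinary multiplication of the two characters of $C(G)$; since ${\rm Sp}(G)$ is the spectrum of $C^*(G)$ it is a group and these are viewed as $1$-dimensional unitary representations of $G$, and multiplication there is the relevant operation. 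Evaluated on $u_\gamma$ one gets $\sum_{r\in\gamma\cdot G}\mu_1(u_\gamma)\,\omega_1(v_{\gamma,r})\,\mu_2(u_r)$; because $\omega_1$ is $\Gamma^{\rm Stab}_{\omega_1}$-invariant and $v_{\gamma,r} = 1_{A_{\gamma,r}}$ with $A_{\gamma,r} = \{g : \gamma\cdot g = r\}$, this telescopes. The upshot should be that $\chi(\Gq)$ carries the product of $G^\alpha$ (via $\omega$, once one checks that the constraint ``$\mu$ supported on $\Gamma^{\rm Stab}_\omega$'' together with non-degeneracy pins $\omega$ to lie in a copy of $G^\alpha$—more precisely one must identify which $\omega$'s can occur, and here the roles of $G^\alpha, \Gamma^\beta$ enter) acting on ${\rm Sp}(\Gamma)$ via $\beta$, which is exactly the Zappa--Sz\'ep structure from Proposition~\ref{PropRecMatched} applied to the ``dual'' matched pair $(G^\alpha,{\rm Sp}(\Gamma))$ with actions $\beta$ (and the trivial $\alpha$). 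The isomorphism ${\rm Int}(\Gq)\simeq{\rm Sp}(G)\rtimes_\alpha\Gamma^\beta$ is then the special case where one restricts to genuinely $1$-dimensional representations: ${\rm Int}(\Gq)$ consists of unitaries $u\in\mathcal U(A_m)$ with $\Delta_m(u) = u\ot u$, and by Theorem~\ref{ThmBicrossed}(2) the $1$-dimensional irreducibles are precisely $V^{\gamma\cdot G}\ot v^x$ with $|\gamma\cdot G|=1$ and $\dim x = 1$, i.e.\ $\gamma\in\Gamma^\beta$ and $x\in{\rm Int}(G) = {\rm Sp}(G)$; the remaining check is that the group operation among these is the semidirect product with $\Gamma^\beta$ acting on ${\rm Sp}(G)$ by $\gamma\cdot\omega = \omega\circ\alpha_\gamma$, which follows from the same computation of $\Delta_m$ on $u_\gamma\alpha(u^x)$.

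\textbf{Topology.} Finally, for the homeomorphism statement, I would note that $\chi(\Gq)$ carries the weak* topology from $C_m(\Gq)^* = A_m^*$. A net of characters $(\mu_i,\omega_i)$ converges weak* iff it converges on the generators $u_\gamma$ and $\alpha(F)$, $F\in C(G)$; convergence on the $\alpha(F)$'s is exactly weak* convergence $\omega_i\to\omega$ in ${\rm Sp}(G)$, which (since $G$ is compact and $C(G)$ is involved) restricts on $G^\alpha$ to the compact-group topology, and convergence on the $u_\gamma$'s is pointwise convergence $\mu_i\to\mu$ in ${\rm Sp}(\Gamma)$. On the other hand the product topology on $G^\alpha\,_\beta\ltimes{\rm Sp}(\Gamma)$ is the product of these two, and the earlier observation that $\beta$ acts by homeomorphisms on ${\rm Sp}(\Gamma)$ ensures the semidirect product topology is well-behaved; so the bijection is a homeomorphism. (For ${\rm Int}(\Gq)$ one does not need topology, though it is discrete as a subgroup of $\mathcal U(A_m)$ consisting of the countably many $1$-dimensional representations, consistent with ${\rm Sp}(G) = {\rm Int}(G)$ and $\Gamma^\beta$ being countable.)

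\textbf{Main obstacle.} The routine parts are the universal-property identification of characters and the convergence bookkeeping; the part requiring care is verifying that the convolution product on the set of compatible pairs $(\mu,\omega)$ really is the asserted semidirect product---in particular correctly disentangling which copy of ${\rm Sp}(G)$ versus $G^\alpha$ appears in the two statements, and checking that the support condition on $\mu$ relative to $\Gamma^{\rm Stab}_\omega$ is automatically preserved under the product (this uses the cocycle relations~\eqref{EqMatched} for $\alpha,\beta$ and the magic-unitary relations (1)--(5) for the $1_{A_{r,s}}$). I would do this computation carefully on the dense $*$-subalgebra $\mathcal A = {\rm Pol}(\Gq)$ and then invoke continuity/universality to conclude.
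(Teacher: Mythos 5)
Your overall strategy — identify the one-dimensional representations and the characters of $A_m$ via the universal property, read off the group law from $\Delta_m$, and get the homeomorphism from continuity of the components plus compactness — is exactly the paper's, and your treatment of ${\rm Int}(\Gq)$ and of the topology is fine. But there is a genuine error in your identification of $\chi(\Gq)$, precisely at the point you yourself flag as unresolved (``one must identify which $\omega$'s can occur''). You claim the covariance relation only constrains $\omega$ ``for every $\gamma$ in the support of the representation,'' and you parametrize characters by pairs $(\mu,\omega)$ with $\mu$ ``vanishing off'' a stabilizer $\Gamma^{\rm Stab}_\omega$. This does not make sense and is not the right condition: $\mu=\chi|_{C^*(\Gamma)}$ is a group homomorphism $\Gamma\to S^1$, so $\chi(u_\gamma)$ is a unimodular scalar for \emph{every} $\gamma\in\Gamma$ and never vanishes; there is no support condition to impose. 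The correct (and much simpler) statement is the one-line argument the paper uses: since $\chi$ is scalar-valued, $\chi(\alpha(\alpha_\gamma(F)))=\chi(u_\gamma\alpha(F)u_\gamma^*)=\vert\chi(u_\gamma)\vert^2\,\chi(\alpha(F))=\chi(\alpha(F))$ for \emph{all} $\gamma\in\Gamma$ and $F\in C(G)$; writing $\chi\circ\alpha$ as evaluation at a point $g\in G$ (the spectrum of the commutative C*-algebra $C(G)$ is $G$ itself, not ${\rm Sp}(G)$ in the paper's notation — you conflate these two at the start), this forces $F(\alpha_{\gamma^{-1}}(g))=F(g)$ for all $F$, hence $g\in G^\alpha$, while $\mu$ ranges over all of ${\rm Sp}(\Gamma)$ with no constraint. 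So $\chi(\Gq)$ is in bijection with $G^\alpha\times{\rm Sp}(\Gamma)$ outright; pursuing your stabilizer-dependent parametrization would produce the wrong underlying set and would not assemble into the clean semidirect product $G^\alpha\,_\beta\ltimes{\rm Sp}(\Gamma)$.

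Once that is corrected, the rest of your computation goes through as you sketch it: $(\rho(g,\omega)\cdot\rho(h,\mu))(u_\gamma\alpha(F))=\sum_{r\in\gamma\cdot G}\omega(\gamma)\,1_{A_{\gamma,r}}(g)\,\mu(r)\,F(gh)$ collapses (since $1_{A_{\gamma,r}}(g)=\delta_{r,\beta_g(\gamma)}$) to $\omega(\gamma)\,\mu(\beta_g(\gamma))\,F(gh)$, which is the semidirect product law — no appeal to a Zappa--Sz\'ep structure or to the relations in Equation (\ref{EqMatched}) beyond this is needed. For ${\rm Int}(\Gq)$ your identification of the one-dimensional irreducibles as $u_\gamma\alpha(\omega)$ with $\gamma\in\Gamma^\beta$ and $\omega\in{\rm Sp}(G)$ (here ${\rm Sp}(G)$ correctly means one-dimensional unitary representations of the group $G$) matches the paper, and the crossed-product relations give the semidirect product law directly. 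Keeping the two different objects called ``characters'' apart — points of $G$ (characters of $C(G)$) versus elements of ${\rm Sp}(G)$ (characters of the group $G$) — is the essential bookkeeping here, and is where your write-up goes astray.
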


\begin{proof}
The irreducible representation $V^{\gamma.G}$ of $\Gq$ is of dimension $1$ $\Leftrightarrow$ $\vert\gamma\cdot G\vert=1$ $\Leftrightarrow$ $\gamma\in\Gamma^\beta$. By assertion $(2)$ of Theorem \ref{ThmBicrossed}, there is a bijective map 
$$\pi\,:\,{\rm Sp}(G)\rtimes _\alpha\Gamma^\beta\rightarrow{\rm Int}(\Gq)\,:\,(\omega,\gamma)\mapsto u_\gamma\alpha(\omega)\in C_m(\Gq),\quad\omega\in{\rm Sp}(G),\gamma\in\Gamma^\beta.$$
The relations of the crossed product and the group law in the right semi-direct product imply that $\pi$ is a group homomorphism.

Let $(g,\mu)\in G^\alpha\times{\rm Sp}(\Gamma)$. Since $g\in G^\alpha$, the unital $*$-homomorphism $C(G)\rightarrow\C$ given by $F\mapsto F(g)$ and the unitary representation $\mu\,:\,\Gamma\rightarrow S^1$ give a covariant representation. Hence, we get a unique $\rho(g,\mu)\in\chi(\Gq)$ such that $\rho(g,\mu)(u_\gamma\alpha(F))=\mu(\gamma)F(g)$ for all $\gamma\in\Gamma$, $F\in C(G)$. It defines a map $\rho\,:\,G^\alpha \,_\beta\ltimes{\rm Sp}(\Gamma)\rightarrow\chi(\Gq)$ which is obviously injective.

For all $g,h\in G^\alpha$,$\gamma\in\Gamma$ and $F\in C(G)$, one has
\begin{eqnarray*}
(\rho(g,\omega)\cdot\rho(h,\mu))(u_\gamma\alpha(F))&=&
(\rho(g,\omega)\ot\rho(h,\mu))(\Delta_m(u_\gamma\alpha(F)))
=\sum_{r\in\gamma\cdot G}\omega(\gamma)v_{\gamma,r}(g)\mu(r)F(gh)\\
&=&\omega(\gamma)\mu(\beta_g(\gamma))F(gh)=(\rho(gh,\omega\cdot\mu\circ\beta_g))(u_\gamma\alpha(F)).
\end{eqnarray*}
Hence, $\rho$ is a group homomorphism. 

Let us check that $\rho$ is surjective. Let $\chi\in\chi(\Gq)$, then $\chi\circ\alpha\in{\rm Sp}(C(G))$. Let $g\in G$ be such that $\chi(\alpha(F))=F(g)$ for all $F\in C(G)$. Actually $g\in G^\alpha$. Indeed, for all $\gamma\in\Gamma$ and all $F\in C(G)$, one has
$$F(\alpha_{\gamma^{-1}}(g))=\chi(\alpha(\alpha_{\gamma}(F)))=\chi(u_{\gamma}\alpha(F)u_\gamma^*)=\chi(\alpha(F))=F(g);$$
now use the fact that $C(G)$ separates points of $G$ to establish $g\in G^{\alpha}$. 
Define $\omega=(\gamma\mapsto\chi(u_\gamma))\in{\rm Sp}(\Gamma)$. Consequently, $\chi=\rho(g,\omega)$ and $\rho$ is surjective.

Finally, the map $\rho^{-1}\,:\,\chi(\Gq)\rightarrow G^\alpha\,_\beta\ltimes{\rm Sp}(\Gamma)$ is continuous, since $p_1\circ\rho^{-1}\,:\,\chi(\Gq)\rightarrow{\rm Sp}(C(G))=G$ by $\chi\mapsto\chi\circ\alpha$ and $p_2\circ\rho^{-1}\,:\,\chi(\Gq)\rightarrow{\rm Sp}(\Gamma)$ by $\chi\mapsto(\gamma\mapsto\chi(u_\gamma)),$ are obviously continuous, where $p_1$ and $p_2$ are the canonical projections. By compactness, $\rho$ is an homeomorphism.
\end{proof}


\section{Property $(T)$ and bicrossed product}\label{section-relativeT}
This section is dedicated to the relative co-property $(T)$ of the pair $(G,\Gq)$ and Kazhdan
property of the dual of the bicrossed product $\Gq$ constructed in Section 3. The results in this section generalize classical
results on relative property $(T)$ for inclusion of groups of the form $(H, \Gamma\ltimes H)$, where $H$ and $\Gamma$ are discrete groups and $H$ is abelian \cite{CT11}.

\subsection{Relative property $(T)$ for compact bicrossed product}

\begin{definition}
Let $G$ and $\Gq$ be two compact quantum groups with an injective unital $*$-homomorphism $\alpha\,:\,C_m(G)\rightarrow C_m(\Gq)$ such that $\Delta_{\Gq}\circ\alpha=(\alpha\ot\alpha)\circ\Delta_G$. We say that the pair $(G,\Gq)$ has the relative co-property $(T)$, if  for every representation $\pi\,:\,C_m(\Gq)\rightarrow\mathcal{B}(H)$ we have $\varepsilon_\Gq\prec\pi\implies\varepsilon_G\subset\pi\circ\alpha$.
\end{definition}

Observe that, by \cite[Proposition 2.3]{Ky11}, $\widehat{\Gq}$ has the property $(T)$ in the sense of \cite{Fi10} if and only if the pair $(\Gq,\Gq)$ has the relative co-property $(T)$ (with $\alpha=\id$). Also, if $\Lambda,\Gamma$ are countable discrete groups and $\Lambda<\Gamma$, then the pair $(\widehat{\Lambda},\widehat{\Gamma})$ has the relative co-property $(T)$ if and only if the pair $(\Lambda,\Gamma)$ has the relative property $(T)$ in the classical sense.

Let $(\Gamma,G)$ be a matched pair of a countable discrete group $\Gamma$ and a compact group $G$. Let $\Gq$ be the bicrossed product. In the following result, we characterize the relative co-property $(T)$ of the pair $(G,\Gq)$ in terms of the action $\alpha$ of $\Gamma$ on $C(G)$. This is a non-commutative version of \cite[Theorem 1]{CT11} and the proof is similar. We will use freely the notations and results of Section \ref{section-bicrossed}.

\begin{theorem}\label{ThmRelT}
The following are equivalent:
\begin{enumerate}
\item The pair $(G,\Gq)$ does not have the relative co-property $(T)$.
\item There exists a sequence $(\mu_n)_{n\in\N}$ of Borel probability measures on $G$ such that
\begin{enumerate}
\item $\mu_n(\{e\})=0$ for all $n\in\N$;
\item $\mu_n\rightarrow\delta_e$ weak*;
\item $\Vert\alpha_\gamma(\mu_n)-\mu_n\Vert\rightarrow 0$ for all $\gamma\in\Gamma$.
\end{enumerate}
\end{enumerate}
\end{theorem}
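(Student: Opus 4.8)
The plan is to convert the operator-algebraic statement of relative co-property $(T)$ into a statement about asymptotically invariant states on $C(G)$, using the crossed-product structure $C_m(\Gq) = \Gamma \,_{\alpha,f}\ltimes C(G)$ from Theorem \ref{ThmBicrossed}, and then pass between states and measures by Riesz representation. I would first unwind the definitions. For the implication $(2)\Rightarrow(1)$: given measures $\mu_n$ as in the statement, consider the GNS representations $\pi_n$ of the states $\omega_n = u_e\alpha(\,\cdot\,) \mapsto \int_G \cdot\, d\mu_n$ on $C(G)$ — more precisely, I would induce each $\mu_n$ up to a state on $C_m(\Gq)$ by composing with the conditional-expectation-type map or, more concretely, form the cyclic representation associated to the positive-definite function on $\Gamma$ with values in $C(G)$ given by $\gamma \mapsto \mathbf{1}_{\{\gamma=e\}}$ twisted by $\mu_n$; the asymptotic invariance condition $\Vert \alpha_\gamma(\mu_n) - \mu_n\Vert \to 0$ guarantees that the vector state implementing $\mu_n$ is almost invariant under the $\Gamma$-part, and the weak* convergence $\mu_n \to \delta_e$ forces the representation to weakly contain $\varepsilon_\Gq$. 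But $\mu_n(\{e\}) = 0$ means the restriction to $\alpha(C(G))$ has no invariant vector converging to evaluation at $e$ in the appropriate sense, obstructing $\varepsilon_G \subset \pi\circ\alpha$; here one must take a weak* cluster point of the $\mu_n$ in the larger compactification / use the separability hypotheses to extract a genuine representation. This direction should be the more routine one.

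For $(1)\Rightarrow(2)$, suppose the pair does not have relative co-property $(T)$, so there is a representation $\pi: C_m(\Gq)\to\mathcal{B}(H)$ with $\varepsilon_\Gq \prec \pi$ but $\varepsilon_G \not\subset \pi\circ\alpha$. By almost-containment, pick unit vectors $\xi_n \in H$ with $\Vert \pi(a)\xi_n - \varepsilon_\Gq(a)\xi_n\Vert \to 0$ for $a$ ranging over a countable generating set, in particular for $a = u_\gamma$ (giving $\Vert \pi(u_\gamma)\xi_n - \xi_n\Vert \to 0$) and for $a = \alpha(F)$, $F\in C(G)$. Define $\mu_n$ to be the Borel probability measure on $G$ representing the state $F \mapsto \langle \pi(\alpha(F))\xi_n,\xi_n\rangle$ on $C(G)$ (Riesz). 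Then: the $u_\gamma$-almost-invariance together with the covariance relation $\pi(u_\gamma)\pi(\alpha(F))\pi(u_\gamma)^* = \pi(\alpha(\alpha_\gamma(F)))$ gives $\vert \int F\, d(\alpha_\gamma\mu_n) - \int F\, d\mu_n\vert \to 0$ for each fixed $F$, i.e. weak* asymptotic invariance; upgrading this to norm asymptotic invariance $\Vert \alpha_\gamma(\mu_n)-\mu_n\Vert\to 0$ is the delicate point and is exactly where I'd invoke the Parthasarathy–Steerneman result alluded to in the introduction, or a direct argument that a weak* asymptotically invariant sequence of probability measures can be replaced by a norm asymptotically invariant one (this is the classical trick from \cite{CT11}). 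Condition (b), $\mu_n \to \delta_e$ weak*, should follow from $\varepsilon_\Gq \prec \pi$ restricted to $\alpha(C(G))$: since $\varepsilon_\Gq\circ\alpha = \varepsilon_G = $ evaluation at $e$, the almost-invariant vectors $\xi_n$ make $\langle \pi(\alpha(F))\xi_n,\xi_n\rangle \to F(e)$. For condition (a), $\mu_n(\{e\}) = 0$: if some subsequence had $\mu_n(\{e\}) \geq c > 0$, one could split off the atom at $e$, and the corresponding piece of the vector $\xi_n$ would, after normalization, be an almost-invariant vector for $\pi\circ\alpha$ converging to an actual invariant vector realizing $\varepsilon_G \subset \pi\circ\alpha$ — contradiction; so one replaces $\mu_n$ by $\mu_n$ conditioned off $\{e\}$ (renormalizing), which does not spoil (b) or (c) since $\mu_n(\{e\})\to 0$ anyway once we know $\mu_n\to\delta_e$ but... wait, $\delta_e(\{e\}) = 1$, so I must be more careful: the point is that we can first perturb $\mu_n$ by a small amount to kill the atom at $e$ while keeping weak* convergence to $\delta_e$ and norm asymptotic invariance, using that $\delta_e$ itself is $\alpha$-invariant ($\alpha_\gamma(e)=e$ by \eqref{EqMatched}) so we have room to smear the atom into a small $\Gamma$-quasi-invariant blob near $e$.

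\textbf{Main obstacle.} The crux is the passage from \emph{weak*} asymptotic $\Gamma$-invariance (which comes for free from the almost-invariant vectors and covariance) to \emph{norm} asymptotic invariance $\Vert\alpha_\gamma(\mu_n)-\mu_n\Vert_{TV}\to 0$, and the simultaneous arrangement that $\mu_n(\{e\})=0$ while $\mu_n\to\delta_e$ — these two requirements pull against each other and must be reconciled by a careful perturbation/averaging argument (a Namioka-type or convexity trick: replace $\mu_n$ by averages over a Følner-like exhaustion is unavailable since $\Gamma$ need not be amenable, so instead one uses that the $\xi_n$ are almost invariant to directly estimate total variation, as in \cite{PS85}, \cite{CT11}). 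Everything else is bookkeeping with the crossed-product relations and Riesz representation, all of which is legitimate under the standing separability assumptions.
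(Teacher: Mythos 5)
Your overall architecture (the Riesz correspondence between vector states on $\alpha(C(G))$ and Borel measures on $G$, the covariance relations in the crossed product, and almost invariant vectors) matches the paper's, but both directions have genuine gaps at exactly the points you flag as delicate, and in each case the paper's resolution is different from what you propose. In $(1)\Rightarrow(2)$ you have misplaced the difficulty: norm asymptotic invariance does \emph{not} require Parthasarathy--Steerneman or any upgrading from weak* invariance (which is false in general anyway). Writing $\rho=\pi\circ\alpha$, the covariance relation gives, uniformly in $F$,
\[
\left\vert\int_GF\,d\alpha_\gamma(\mu_n)-\int_GF\,d\mu_n\right\vert=\left\vert\langle\rho(F)\pi(u_\gamma)\xi_n,\pi(u_\gamma)\xi_n\rangle-\langle\rho(F)\xi_n,\xi_n\rangle\right\vert\leq 2\Vert F\Vert\,\Vert\pi(u_\gamma)\xi_n-\xi_n\Vert,
\]
and taking the supremum over $\Vert F\Vert\leq 1$ yields $\Vert\alpha_\gamma(\mu_n)-\mu_n\Vert\leq 2\Vert\pi(u_\gamma)\xi_n-\xi_n\Vert\rightarrow 0$ outright. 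The step you actually need and do not supply is condition (a): no perturbation or ``smearing of the atom'' is required, because the spectral projection $E(\{e\})$ of the projection-valued measure of $\rho$ \emph{is} the orthogonal projection onto $K_\pi=\{\xi\,:\,\rho(F)\xi=F(e)\xi\text{ for all }F\}$, so the hypothesis $\varepsilon_G\not\subset\pi\circ\alpha$, i.e.\ $K_\pi=\{0\}$, forces $\mu_n(\{e\})=\langle E(\{e\})\xi_n,\xi_n\rangle=0$ \emph{exactly}, for every $n$. Your proposed fix (conditioning off $\{e\}$, renormalizing, then smearing) runs into precisely the tension you notice mid-sentence and never resolve.

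In $(2)\Rightarrow(1)$, which you call the more routine direction, you omit the step that makes the construction possible at all: to build a covariant representation on ${\rm L}^2(G,\mu_n)$ one must implement $\alpha_\gamma$ by unitaries, and these are defined via the Radon--Nikodym derivatives $d\alpha_\gamma(\mu_n)/d\mu_n$, which exist only if $\alpha_\gamma(\mu_n)\sim\mu_n$. The given $\mu_n$ need not be quasi-invariant, so the paper first proves a Claim replacing $\mu_n$ by $\nu_n=f*\mu_n=\sum_\gamma f(\gamma)\alpha_\gamma(\mu_n)$ for a strictly positive $f\in\ell^1(\Gamma)$ of norm one; this makes all the $\alpha_\gamma(\nu_n)$ mutually equivalent while preserving (a) (because $\alpha_\gamma(e)=e$), (b) (dominated convergence) and (c). With quasi-invariance in hand one forms $\pi_n$ on ${\rm L}^2(G,\mu_n)$, observes $K_{\pi_n}=\{0\}$ directly from $\mu_n(\{e\})=0$, takes $\pi=\oplus_n\pi_n$, and checks that the constant vectors $1\in{\rm L}^2(G,\mu_n)$ are almost invariant using the elementary inequality $0\leq 1-\sqrt{t}\leq\sqrt{1-t}$; no weak* cluster point in a larger compactification is involved. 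As written, your argument for this implication does not produce a representation of $C_m(\Gq)$.
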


\begin{proof}
For a representation $\pi\,:\,C_m(\Gq)\rightarrow\mathcal{B}(H)$, we have $\varepsilon_G\subset\pi\circ\alpha$ if and only if $K_\pi\neq\{0\}$, where $$K_\pi=\{\xi\in H\,:\,\pi\circ\alpha(F)\xi=F(e)\xi\text{ for all }F\in C(G)\}.$$
Define $\rho=\pi\circ\alpha\,:\,C(G)\rightarrow\mathcal{B}(H)$, and for all $\xi,\eta\in H$, let $\mu_{\xi,\eta}$ be the unique complex Borel  measure on $G$ such that $\int_G Fd\mu_{\xi,\eta}=\langle\rho(F)\xi,\eta\rangle$ for all $F\in C(G)$. Let $\mathcal{B}(G)$ be the collection of Borel subsets of $G$ and $E\,:\,\mathcal{B}(G)\rightarrow\mathcal{B}(H)$ be the projection-valued measure associated to $\rho$ i.e., for all $B\in\mathcal{B}(G)$, the projection $E(B)\in\mathcal{B}(H)$ is the unique operator such that $\langle E(B)\xi,\eta\rangle=\mu_{\xi,\eta}(B)$ for all $\xi,\eta\in H$.

Observe that a vector $\xi\in H$ satisfies $\rho(F)\xi=F(e)\xi$ for all $F\in C(G)$, if and only if $\mu_{\xi,\eta}=\langle\xi,\eta\rangle\delta_e$ for all $\eta\in H$, which in turn is true if and only if $\langle E(\{e\})\xi,\eta\rangle=\langle\xi,\eta\rangle$ for all $\eta\in H$. Hence, $E(\{e\})$ is the orthogonal projection onto $K_\pi$.

$(1)\implies (2)$. Suppose that the pair $(G,\Gq)$ \textit{does not} have the relative co-property $(T)$. Let $\pi\,:\,C_m(\Gq)\rightarrow\mathcal{B}(H)$ be a representation such that $\varepsilon_\Gq\prec\pi$ and $K_\pi=\{0\}$. Hence, $\mu_{\xi,\eta}(\{e\})=\langle E(\{e\})\xi,\eta\rangle=0$ for all $\xi,\eta\in H$.

Since $\varepsilon_\Gq\prec\pi$, let $(\xi_n)_{n\in\N}$ be a sequence of unit vectors in $H$ such that $\Vert\pi(x)\xi_n-\varepsilon_\Gq(x)\xi_n\Vert\rightarrow 0$ for all $x\in C_m(\Gq)$. Define $\mu_n=\mu_{\xi_n,\xi_n}$. Then, we have $\mu_n(\{e\})=0$ for all $n\in\N$.  Since $\mu_n$ is a probability measure, so $
\vert\mu_n(F)-\delta_e(F)\vert=\vert\int_G(F-F(e))d\mu_n\vert\leq\Vert F-F(e)\Vert_{L^1(\mu_n)}\leq\Vert F-F(e)\Vert_{L^2(\mu_n)}$,  
for all $F\in C(G)$. Moreover,
$$\Vert F-F(e)\Vert_{L^2(\mu_n)}^2=\Vert\rho(F-F(e)1)\xi_n\Vert^2=\Vert\pi(\alpha(F))\xi_n-\varepsilon_\Gq(\alpha(F))\xi_n\Vert^2\rightarrow 0.
$$
Hence, $\mu_n\rightarrow \delta_e$ weak*. Finally, for all $\gamma\in\Gamma$ and all $F\in C(G)$, we have:
\begin{align*}
\int_GFd\alpha_\gamma(\mu_n)&=\int_G\alpha_{\gamma^{-1}}(F) d\mu_n=\langle\rho(\alpha_{\gamma^{-1}}(F))\xi_n,\xi_n\rangle=\langle \pi(u_\gamma)^*\rho(F)\pi(u_\gamma)\xi_n,\xi_n\rangle\\
&=\langle\rho(F)\pi(u_\gamma)\xi_n,\pi(u_\gamma)\xi_n\rangle.
\end{align*}
It follows that
\begin{eqnarray*}
\left\vert\int_GFd\alpha_\gamma(\mu_n)-\int_GFd\mu_n\right\vert&=&\left\vert\langle\rho(F)\pi(u_\gamma)\xi_n,\pi(u_\gamma)\xi_n\rangle-\langle\rho(F)\xi_n,\xi_n\rangle\right\vert\\
&\leq&\left\vert\langle\rho(F)(\pi(u_\gamma)\xi_n-\xi_n),\pi(u_\gamma)\xi_n\rangle\right\vert+\left\vert\langle\rho(F)\xi_n,\pi(u_\gamma)\xi_n-\xi_n\rangle\right\vert\\
&\leq&2\Vert F\Vert\,\Vert\pi(u_\gamma)\xi_n-\xi_n\Vert, \text{ for all } F\in C(G) \text{ and }\gamma\in\Gamma.
\end{eqnarray*}
Hence, $\Vert\alpha_\gamma(\mu_n)-\mu_n\Vert\leq 2\Vert\pi(u_\gamma)\xi_n-\xi_n\Vert=2\Vert\pi(u_\gamma)\xi_n-\varepsilon_\Gq(u_\gamma)\xi_n\Vert\rightarrow 0$ $($see $(4)$ of Theorem \ref{ThmBicrossed}$)$.

$(2)\implies(1)$. We first prove the following claim. 

\textbf{Claim.} \textit{If $(2)$ holds, then there exists a sequence $(\nu_n)_{n\in\N}$ of Borel probability measures on $G$ satifying $(a)$, $(b)$ and $(c)$ and such that
$\alpha_\gamma(\nu_n)\sim\nu_n$ for all $\gamma\in\Gamma$, $n\in\N$.}

\textit{Proof of the claim.} Denote by $\ell^1(\Gamma)_{1,+}$ the set of positive $\ell^1$ functions on $\Gamma$ with $\Vert f\Vert_1=1$. For $\mu$ a Borel probability measure on $G$  and $f\in \ell^1(\Gamma)_{1,+}$, define the Borel probability measure $f*\mu$ on $G$ by the convex combination
$$f*\mu=\sum_{\gamma\in\Gamma}f(\gamma)\alpha_\gamma(\mu).$$
Observe that for all $\gamma\in\Gamma$, we have $\delta_\gamma*\mu=\alpha_\gamma(\mu)$ and $\alpha_\gamma(f*\mu)=f_\gamma*\mu$, where $f_\gamma\in \ell^1(\Gamma)_{1,+}$ is defined by $f_\gamma(r)=f(\gamma^{-1}r)$, $r\in\Gamma$.

Moreover, if $f\in \ell^1(\Gamma)_{1,+}$ is such that $f(\gamma)>0$ for all $\gamma\in\Gamma$, then since $(f*\mu)(E)=\sum_\gamma f(\gamma)\mu(\alpha_{\gamma^{-1}}(E))$ $(E$ is Borel subset of $G)$, so we have that $(f*\mu)(E)=0$ if and only if $\mu(\alpha_\gamma(E))=0$ for all $\gamma\in\Gamma$. This last condition does not depend on $f$. Hence, if $f\in \ell^1(\Gamma)_{1,+}$ is such that $f>0$, then since $f_\gamma(r)>0$ for all $\gamma,r\in\Gamma$, it follows that $f*\mu\sim \alpha_\gamma(f*\mu)=f_\gamma*\mu$ for all $\gamma\in\Gamma$ as they have the same null sets: the Borel subsets $E$ of $G$ such that $\mu(\alpha_s(E))=0$ for all $s\in\Gamma$.

Therefore, since $\alpha_\gamma(e)=e$ for all $\gamma\in\Gamma$, so 
\begin{align*}
(f*\mu)(\{e\})=\sum_{\gamma}f(\gamma)\mu(\alpha_{\gamma^{-1}}(\{e\}))=\sum_\gamma f(\gamma)\mu(\{e\})=\mu(\{e\}), \text{ for all }f\in \ell^1(\Gamma)_{1,+}.
\end{align*}
Let $(\mu_n)_{n\in\N}$ be a sequence of Borel probability on $G$ satisfying $(a)$, $(b)$ and $(c)$. For all $f\in \ell^1(\Gamma)_{1,+}$ with finite support we have,
\begin{equation}\label{EqProba}
\Vert f*\mu_n-\mu_n\Vert\leq\sum_\gamma f(\gamma)\Vert \delta_\gamma*\mu_n-\mu_n\Vert=\sum_\gamma f(\gamma)\Vert \alpha_\gamma(\mu_n)-\mu_n\Vert\rightarrow 0.
\end{equation}
Since such functions are dense in $\ell^1(\Gamma)_{1,+}$ (in the $\ell^1$-norm), it follows that $\Vert f*\mu_n-\mu_n\Vert\rightarrow 0$ for all $f\in \ell^1(\Gamma)_{1,+}$.

Let $\xi\in \ell^1(\Gamma)_{1,+}$ be any function such that $\xi>0$ and define $\nu_n=\xi*\mu_n$. By the preceding discussion, we know that $\alpha_\gamma(\nu_n)\sim\nu_n$ for all $\gamma\in\Gamma$ and $\nu_n(\{e\})=\mu_n(\{e\})=0$ for all $n\in\N$. Moreover, by Equation $(\ref{EqProba})$, 
\begin{align*}
\Vert \alpha_\gamma(\nu_n)-\nu_n\Vert=\Vert \xi_\gamma*\mu_n-\xi*\mu_n\Vert\leq\Vert\xi_\gamma*\mu_n-\mu_n\Vert+\Vert\mu_n-\xi*\mu_n\Vert\rightarrow 0, \text{ for all }\gamma\in \Gamma.
\end{align*}

Finally, since $\mu_n\rightarrow\delta_e$ weak* and $\alpha_\gamma(e)=e$, one has $\vert\mu_n(F\circ\alpha_\gamma)-F(e)\vert\rightarrow 0$ for all $\gamma\in\Gamma$  and for all $F\in C(G)$. Hence, for all $F\in C(G)$, the dominated convergence theorem implies that
$$\vert \nu_n(F)-\delta_e(F)\vert=\left\vert\sum_\gamma f(\gamma)(\mu_n(F\circ\alpha_\gamma)-F(e))\right\vert\leq\sum_\gamma f(\gamma)\vert\mu_n(F\circ\alpha_\gamma)-F(e)\vert\rightarrow 0.$$
It follows that $\nu_n\rightarrow\delta_e$ weak* and this finishes the proof of the claim.\hfill{$\Box$} 

We now finish the proof of the Theorem. Let $(\mu_n)_{n\in\N}$ be a sequence of Borel probability measures on $G$ as prescribed in the Claim. For $n\in\N$ and $\gamma\in\Gamma$, let $h_n(\gamma)=\frac{d\alpha_\gamma(\mu_n)}{d\mu_n}$; then $0\leq h_n(\gamma)\leq 1$, $\mu_n$ a.e., and by uniqueness of the Radon-Nikodym derivatives and since $\alpha$ is an action, we have for all $n\in\N$, $h_n(\gamma,g)h_n(\gamma^{-1},\alpha_{\gamma^{-1}}(g))=1$, $\mu_n$ a.e. $g\in G$, and for all $\gamma\in\Gamma$. Define $H_n={\rm L}^2(G,\mu_n)$ and let $u_n\,:\,\Gamma\rightarrow\mathcal{U}(H_n)$ be the unitary representations defined by $(u_n(\gamma)\xi)(g)=\xi(\alpha_{\gamma^{-1}}(g))h_n(\gamma,g)^{\frac{1}{2}}$ for $\gamma\in\Gamma,\,g\in G,\,\xi\in H_n$. Also consider the representations $\rho_n\,:\,C(G)\rightarrow\mathcal{B}(H_n)$, defined by $\rho_n(F)\xi(g)=F(g)\xi(g)$, for $\xi\in H_n$, $g\in G$ and $F\in C(G)$. Observe that the projection valued measure associated to $\rho_n$ is given by $(E_n(B)\xi)(g)=1_B(g)\xi(g)$ for all $B\in\mathcal{B}(G)$, $\xi\in H_n$ and $g\in G$. Using the identity $h_n(\gamma,\cdot)h_n(\gamma^{-1},\alpha_{\gamma^{-1}}(\cdot))=1$, we find $u_n(\gamma)\rho_n(F)u_n(\gamma^{-1})=\rho_n(\alpha_\gamma(F))$ for all $\gamma\in\Gamma,\,F\in C(G),\,g\in G$.
Therefore, by the universal property of $A_m$, for each $n\in \mathbb{N}$ there is a unital $*$-homomorphism $\pi_n\,:\,A_m\rightarrow\mathcal{B}(H_n)$ such that $\pi_n(u_\gamma)=u_n(\gamma)$ and $\pi_n\circ\alpha=\rho_n$ for all $n\in\N$. Since $\mu_n(\{e\})=0$, we have $E_n(\{e\})=0$ for all $n\in\N$. Hence, $K_{\pi_n}=\{0\}$ for all $n\in\N$. Consequently, on defining $H=\oplus_{n} H_n$ and $\pi=\oplus_{n}\pi_n\,:\, C_m(\Gq)\rightarrow\mathcal{B}(H)$, it follows that $K_\pi=\{0\}$ as well. Hence, it suffices to show that $\varepsilon_\Gq\prec\pi$.

Define the unit vectors $\xi_n=1\in {\rm L}^2(G,\mu_n)\subset H$, $n\in \N$. Observe that $(\mu_n-\alpha_\gamma(\mu_n))(F)=\int_GF(1-h_n(\gamma))d\mu_n$ for all $F\in C(G)$. Hence, $\Vert \mu_n-\alpha_\gamma(\mu_n)\Vert=\Vert 1-h_n(\gamma)\Vert_{L^1(G,\mu_n)}\rightarrow 0$ for all $\gamma\in\Gamma$. Moreover, as  $0\leq 1-\sqrt{t}\leq\sqrt{1-t}$ for all $0\leq t\leq 1$, it follows that 
$$\Vert\pi(u_\gamma)\xi_n-\xi_n\Vert_H^2=\Vert u_n(\gamma)1-1\Vert_{H_n}^2=\int_G(1-h_n(\gamma)^{\frac{1}{2}})^2d\mu_n
\leq\int_G(1-h_n(\gamma))d\mu_n=\Vert 1-h_n(\gamma)\Vert_{L^1(G,\mu_n)}\rightarrow 0$$
for all $\gamma\in \Gamma$.  Since $\mu_n\rightarrow\delta_e$ weak*, for all $F\in C(G)$, we also have that, 
$$\Vert\pi(\alpha(F))\xi_n-F(e)\xi_n\Vert_H^2=\Vert\rho_n(F)1-F(e)1\Vert_{H_n}^2=\int_G\vert F(g)-F(e)\vert^2d\mu_n\rightarrow 0.$$
Consequently, for all $x=u_\gamma\alpha(F)\in C_m(\Gq)$, we have
\begin{eqnarray*}
\Vert\pi(x)\xi_n-\varepsilon_\Gq(x)\xi_n\Vert&= &\Vert\pi(u_\gamma)\pi(\alpha(F))\xi_n-F(e)\xi_n\Vert\\
&\leq&\Vert\pi(u_\gamma)(\pi(\alpha(F))\xi_n-F(e)\xi_n)\Vert+\vert F(e)\vert\,\Vert\pi(u_\gamma)\xi_n-\xi_n\Vert\\
&\leq&\Vert\pi(\alpha(F))\xi_n-F(e)\xi_n\Vert+\vert F(e)\vert\,\Vert\pi(u_\gamma)\xi_n-\xi_n\Vert\rightarrow 0.\\
\end{eqnarray*}
By linearity and the triangle inequality, we have $\Vert\pi(x)\xi_n-\varepsilon_\Gq(x)\xi_n\Vert\rightarrow 0$ for all $x\in\mathcal{A}$. The proof is complete by density of $\mathcal{A}$ in $C_m(\Gq)$.
\end{proof}

\subsection{Property (T)}
Now we discuss property $(T)$ of $\Gq$. Let $G^\alpha$ be the set of fixed points in $G$ under the action $\alpha$ of $\Gamma$. It is a closed subset of $G$, and, by the relations in Equation $(\ref{EqMatched})$ it is also a subgroup of $G$.
\begin{theorem}\label{ThmPropT}
The following holds:
\begin{enumerate}
\item If $\widehat{\Gq}$ has property $(T)$, then $\Gamma$ has property $(T)$ and $G^\alpha$ is finite. 
\item If $\widehat{\Gq}$ has property $(T)$ and $\alpha$ is compact\footnote{We only need to assume that the closure of the image of $\Gamma$ in the group of homeomorphisms of $G$ is compact for some Hausdorff group topology for which the evaluation map at $e$ is continuous.} then $\Gamma$ has $(T)$ and $G$ is finite.
\item If $\Gamma$ has property $(T)$ and $G$ is finite, then $\widehat{\Gq}$ has property $(T)$.
\end{enumerate}
\end{theorem}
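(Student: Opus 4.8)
The plan is to unwind the hypothesis "$\widehat{\Gq}$ has property $(T)$" into its operational form --- for every representation $\pi\colon C_m(\Gq)\recht\bh$, $\varepsilon_\Gq\prec\pi$ forces $\varepsilon_\Gq\subset\pi$ --- and play it against the concrete model $C_m(\Gq)=A_m=\Gamma\,_{\al,f}\ltimes C(G)$ of Theorem \ref{ThmBicrossed} together with the characterization of relative co-property $(T)$ of $(G,\Gq)$ in Theorem \ref{ThmRelT}. I will first record the reduction used for parts (1) and (2): if $\widehat{\Gq}$ has $(T)$ and $\varepsilon_\Gq\prec\pi$, then $\varepsilon_\Gq\subset\pi$, and feeding the invariant vector into $\al(F)$ and using $\varepsilon_\Gq\circ\al=\varepsilon_G$ (Theorem \ref{ThmBicrossed}(4)) yields $\varepsilon_G\subset\pi\circ\al$; hence $(G,\Gq)$ has relative co-property $(T)$, so Theorem \ref{ThmRelT} becomes an obstruction: it suffices to exhibit measures $(\mu_n)$ as in Theorem \ref{ThmRelT}(2) to reach a contradiction.

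For part (1), to see that $\Gamma$ has $(T)$ I take any unitary representation $v\colon\Gamma\recht\cU(H)$ with almost invariant vectors and form the covariant pair $\pi(u_\gamma)=v_\gamma$, $\pi(\al(F))=F(e)\id_H$ on the full crossed product $A_m$; covariance holds since $\al_\gamma(e)=e$ by $(\ref{EqMatched})$, and $\varepsilon_\Gq\prec\pi$ since $\varepsilon_\Gq(u_\gamma\al(F))=F(e)$. Property $(T)$ of $\widehat{\Gq}$ then produces a nonzero $\pi$-invariant vector, which by $\pi(u_\gamma)\xi=\varepsilon_\Gq(u_\gamma)\xi=\xi$ is $v$-invariant. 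For finiteness of $G^\al$: it is an intersection of fixed-point sets of the homeomorphisms $\al_\gamma$, hence closed, hence a compact subgroup of $G$; were it infinite it would be non-discrete, so $e$ would not be isolated in it, and choosing $g_n\recht e$ in $G^\al$ with $g_n\neq e$ and setting $\mu_n=\delta_{g_n}$ gives $\mu_n(\{e\})=0$, $\mu_n\recht\delta_e$ weak$^*$, and $\al_\gamma(\mu_n)=\mu_n$ exactly, contradicting relative co-$(T)$ of $(G,\Gq)$ via Theorem \ref{ThmRelT}.

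For part (2), $\Gamma$ has $(T)$ by part (1), so it remains to prove $G$ finite; suppose not. Compactness of $\al$ provides a compact group $L$ containing the image of $\Gamma$ densely and acting on $G$ by homeomorphisms with $l\cdot e=e$ for all $l\in L$ (because $\Gamma$ fixes $e$ and evaluation at $e$ is continuous), the evaluation maps $l\mapsto l\cdot g$ being continuous. For $g\in G$ let $\mu^g$ be the push-forward of the normalized Haar measure $m_L$ of $L$ along $l\mapsto l\cdot g$; then $\mu^g$ is $L$-invariant, hence $\al_\gamma(\mu^g)=\mu^g$ for all $\gamma\in\Gamma$, and $\mu^g(\{e\})=0$ whenever $g\neq e$ since each $l\in L$ is injective and fixes $e$. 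Choosing $g_n\recht e$, $g_n\neq e$ (possible as $G$ is an infinite compact group, hence non-discrete), a tube-lemma argument from compactness of $L$ and continuity of the action shows $l\cdot g_n\recht e$ uniformly in $l\in L$, whence $\int_G F\,d\mu^{g_n}=\int_L F(l\cdot g_n)\,dm_L(l)\recht F(e)$ for every $F\in C(G)$; so $\mu_n=\mu^{g_n}$ verifies (a),(b),(c) of Theorem \ref{ThmRelT}(2), contradicting relative co-$(T)$ of $(G,\Gq)$. I expect this averaging construction --- and specifically the verification that $l\cdot g_n\recht e$ uniformly in $l$, which is exactly what the compactness hypothesis (in its footnote form) is designed to supply --- to be the only genuinely delicate point.

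For part (3), finiteness of $G$ makes $C(G)=\bigoplus_{g\in G}\C\,1_{\{g\}}$ finite dimensional, so for any representation $\pi$ of $A_m$ the range projections of $\pi(\al(1_{\{g\}}))$ split $H=\bigoplus_{g\in G}H_g$. Since $u_\gamma\al(1_{\{g\}})u_\gamma^{*}=\al(\al_\gamma(1_{\{g\}}))=\al(1_{\{\al_\gamma(g)\}})$ and $\al_\gamma(e)=e$, the subspace $H_e$ is invariant under every $\pi(u_\gamma)$, giving a unitary representation $v^e$ of $\Gamma$ on $H_e$. Now let $\pi$ satisfy $\varepsilon_\Gq\prec\pi$ with almost invariant unit vectors $\xi_n$. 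Applying $\varepsilon_\Gq\prec\pi$ to $\al(1_{\{e\}})$ and using $\varepsilon_\Gq(\al(1_{\{e\}}))=1_{\{e\}}(e)=1$ shows $\xi_n^e:=\pi(\al(1_{\{e\}}))\xi_n$ obeys $\Vert\xi_n^e-\xi_n\Vert\recht 0$, hence $\Vert\xi_n^e\Vert\recht 1$ and, by the triangle inequality with $\Vert\pi(u_\gamma)\xi_n-\xi_n\Vert\recht 0$, also $\Vert v^e_\gamma\xi_n^e-\xi_n^e\Vert\recht 0$ for every $\gamma$; thus $v^e$ has almost invariant vectors and, by property $(T)$ of $\Gamma$, a nonzero invariant vector $\xi\in H_e$. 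Then $\pi(u_\gamma)\xi=\xi$ and $\pi(\al(F))\xi=F(e)\xi$, i.e. $\pi(x)\xi=\varepsilon_\Gq(x)\xi$ for all $x$ in the dense $*$-algebra $\mathcal{A}$, so $\varepsilon_\Gq\subset\pi$ and $\widehat{\Gq}$ has $(T)$; the remaining verifications (the decomposition of $H$, the density argument) are routine.
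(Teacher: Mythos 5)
Your proof is correct. Part (3) and the ``$\Gamma$ has $(T)$'' half of part (1) coincide with the paper's argument: the paper packages the latter as a surjection $C_m(\Gq)\rightarrow C^*(\Gamma)$ intertwining the comultiplications together with a citation of \cite[Proposition 6]{Fi10}, which is exactly your covariant pair $(v,\,F\mapsto F(e)\id)$ unpacked, and its proof of (3) uses the same projection $P=\pi(\alpha(\delta_e))$ and the same passage of almost invariant vectors to the $\Gamma$-invariant subspace. Where you genuinely diverge is in the two finiteness statements. You first record that property $(T)$ of $\widehat{\Gq}$ forces relative co-property $(T)$ of the pair $(G,\Gq)$ (correct: an invariant vector for $\pi$ is in particular $\varepsilon_G$-invariant for $\pi\circ\alpha$ since $\varepsilon_\Gq\circ\alpha=\varepsilon_G$), and then contradict Theorem \ref{ThmRelT} with explicit measures --- Dirac masses $\delta_{g_n}$ at fixed points $g_n\rightarrow e$ for (1), and the averages $\mu^{g_n}=(l\mapsto l\cdot g_n)_*m_L$ for (2), whose exact $\Gamma$-invariance makes condition (c) trivial. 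The paper instead builds the obstructing covariant representations by hand, on $\ell^2(\N)$ and on ${\rm L}^2(K\times\N)$ respectively, and runs the almost-invariant-vector argument directly; those representations are precisely what the $(2)\Rightarrow(1)$ direction of Theorem \ref{ThmRelT} would manufacture from your measures, so the two routes are equivalent, but yours is shorter and makes the role of the invariant measures transparent, at the cost of leaning on the full strength of Theorem \ref{ThmRelT}. Two minor remarks. First, the step you single out as delicate --- uniform convergence $l\cdot g_n\rightarrow e$ via a tube lemma --- requires joint continuity of $L\times G\rightarrow G$; this holds under the main hypothesis but is not literally supplied by the footnote's minimal assumption (which only gives continuity of evaluation at $e$, needed to conclude $l\cdot e=e$ for all $l\in L$). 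It is also avoidable: separate continuity of each $l$ plus dominated convergence already gives $\int_L F(l\cdot g_n)\,dm_L(l)\rightarrow F(e)$, which is how the paper argues. Second, in (1) you implicitly use that $G^\alpha$ is second countable to extract a sequence $g_n\rightarrow e$ from non-discreteness; this is covered by the paper's standing assumptions.
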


\begin{proof}
$(1)$. Let $\rho\,:\,C(G)\rightarrow C^*(\Gamma)$ be the unital $*$-homomorphism defined by $\rho(F)=F(e)1$ and consider the canonical unitary representation of $\Gamma$ given by $\Gamma\ni\gamma\mapsto\lambda_\gamma\in C^*(\Gamma)$. For all $\gamma\in\Gamma$ and $F\in C(G)$, we have $\rho(\alpha_\gamma(F))=\alpha_\gamma(F)(e)1=F(\alpha_{\gamma^{-1}}(e))1=F(e)1=\lambda_\gamma\rho(F)\lambda_\gamma^*$. Hence, there exists a unique unital $*$-homomorphism $\pi\,:\,C_m(\Gq)\rightarrow C^*(\Gamma)$ such that $\pi\circ\alpha=\rho$ and $\pi(u_\gamma)=\lambda_\gamma$ for all $\gamma\in\Gamma$.
Observe that $\pi$ is surjective and, for all $F\in C(G)$,
$$(\pi\ot\pi)\Delta_{\Gq}(\alpha(F))=(\rho\ot\rho)(\Delta_G(F))=\Delta_G(F)(e,e)1\ot 1=F(e)1\ot 1=\Delta_{\widehat{\Gamma}}(\pi(\alpha(F))).$$
Moreover, since for all $\gamma,r\in\Gamma$ one has $1_{A_{\gamma,r}}(e)=\delta_{\gamma,r}$, we find, for all $\gamma\in\Gamma$,
$$(\pi\ot\pi)\Delta_{\Gq}(u_\gamma)=\sum_{r\in\gamma\cdot G}\pi(u_\gamma\alpha(v^\gamma_{\gamma,r}))\ot\pi(u_r)
=\sum_{r\in\gamma\cdot G}\lambda_\gamma1_{A_{\gamma,r}}(e)\ot\lambda_r=\lambda_\gamma\ot\lambda_\gamma=\Delta_{\widehat{\Gamma}}(\pi(u_\gamma)).$$
So $\pi$ intertwines the comultiplications and property $(T)$ for $\Gamma$ follows from \cite[Proposition 6]{Fi10}.

To show that $G^\alpha$ is finite it suffices, since $G^\alpha$ is closed in $G$ hence compact, to show that $G^\alpha$ is discrete. Let $(g_n)$ be any sequence in $G^\alpha$ such that $g_n\rightarrow e$. Consider the unital $*$-homomorphism $\rho\,:\,C(G)\rightarrow\mathcal{B}(\ell^2(\N))$ defined by $(\rho(F)\xi)(n)=F(g_n)\xi(n)$, for all $\xi\in \ell^2(\mathbb{N})$, and the trivial representation of $\Gamma$ on $\ell^2(\N)$. Since $g_n\in G^\alpha$ for all $n\in\N$ it gives a covariant representation. Hence, there exists a unital $*$-homomorphism $\pi\,:\,C_m(\Gq)\rightarrow \mathcal{B}(\ell^2(\N))$ such that $\pi(u_\gamma \alpha(F))=\rho(F)$ for all $\gamma\in\Gamma$ and $F\in C(G)$. Define $\xi_n=\delta_n\in \ell^2(\N)$. One has $\Vert\pi(u_\gamma\alpha(F))\xi_n-\varepsilon_\Gq(u_\gamma\alpha(F))\xi_n\Vert=\vert F(g_n)-F(e)\vert\rightarrow 0$ for all $F\in C(G)$.
Hence, $\pi$ has almost invariant vectors. By property $(T)$, $\pi$ has a non-zero invariant vector and for such a vector $\xi\in \ell^2(\N)$ we have $F(g_n)\xi(n)=F(e)\xi(n)$ for all $F\in C(G)$ and all $n\in\N$. Let $n_0\in\N$ for which $\xi(n_0)\neq 0$. We have $F(g_{n_0})=F(e)$ for all $F\in C(G)$, which implies that $g_{n_0}=e$ and shows that $G^\alpha$ must be discrete.

$(2)$. It suffices to show that $G$ is finite. The proof is similar to $(1)$. Let $g_n\in G$ be any sequence such that $g_n\rightarrow e$. We view $\alpha$ as a group homomorphism $\alpha\,:\, \Gamma\rightarrow H(G)$, $\gamma\mapsto\alpha_\gamma$, where $H(G)$ is the group of homeomorphisms of $G$ and we write $K=\overline{\alpha(\Gamma)}\subset H(G)$. By assumptions, $K$ is a compact group and we denote by $\nu$ the Haar probability on $K$. Note that, since $\alpha_\gamma(e)=e$ for all $\gamma\in\Gamma$, by continuity of the evaluation at $e$ and density, we also have $x(e)=e$ for all $x\in K$. We define a covariant representation $(\rho,v)$, $\rho\,:\, C(G)\rightarrow\mathcal{B}({\rm L}^2(K\times\N))$ and $v\,:\,\Gamma\rightarrow\mathcal{U}({\rm L}^2(K\times\N))$ by $(\rho(F)\xi)(x,n)=F(x(g_n))\xi(x,n)$ and $(v_\gamma\xi)(x,n)=\xi(\alpha_{\gamma^{-1}}x,n)$. By the universal property of $C_m(\Gq)$, we get a unital $*$-homomorphism $\pi\,:\, C_m(\Gq)\rightarrow\mathcal{B}({\rm L}^2(K\times\N))$ such that $\pi(u_\gamma\alpha(F))=v_\gamma\rho(F)$ for all $\gamma\in\Gamma$ and $F\in C(G)$. Define, for $k\in\N$,  the vector $\xi_k(x,n)=\delta_{k,n}$. Since $\nu$ is a probability it follows that $\xi_k$ is a unit vector in ${\rm L}^2(K\times\N)$. Moreover, for all $\gamma\in\Gamma$ and $F\in C(G)$,
$$\Vert\pi(u_\gamma\alpha(F))\xi_k-\varepsilon_{\Gq}(u_\gamma\alpha(F))\xi_k\Vert^2=\int_K\vert F(\alpha_{\gamma^{-1}}x(g_k))-F(e)\vert^2d\nu(x)\rightarrow 0,$$
where the convergence follows from the dominated convergence Theorem since, by continuity, we have $F(\alpha_{\gamma^{-1}})x(g_k))\rightarrow F(e)$ for all $\gamma\in\Gamma$, $x\in K$ and $F\in C(G)$ and the domination is obvious since $\nu$ is a probability. By property $(T)$, there exists a non-zero $\xi\in{\rm L}^2(K\times\N))$ such that $F(e)\xi=\varepsilon_\Gq(\alpha(F))\xi=\pi(\alpha(F))\xi=\rho(F)\xi$ for all $F\in C(G)$. Define $Y:=\{x\in K\,:\,\sum_{n\in\N}\vert\xi(x,n)\vert^2>0\}$ and, for $F\in C(G)$, $X_F:=\{x\in K\,:\,\sum_{n\in\N}\vert F(x(g_n))\xi(x,n)-F(e)\xi(x,n)\vert^2\neq 0\}$. The condition on $\xi$ means that $\nu(Y)>0$ and, for all $F\in C(G)$, $\nu(X_F)=0$. Let $F_k\in C(G)$ be a dense sequence and $X=\cup_{k\in\N} X_{F_k}$ then $\nu(X)=0$ so $\nu(Y\setminus X)>0$. Hence, $Y\setminus X\neq \emptyset$. Let $x\in Y\setminus X$, we have $\sum_n\vert\xi(x,n\vert^2>0$ and, for all $k,n\in\N$, $F_k(x(g_n))\xi(x,n)=F_k(e)\xi(x,n)$. By density and continuity, $F(x(g_n))\xi(x,n)=F(e)\xi(x,n)$ for all $n\in\N$ and $F\in C(G)$. Since $\sum_n\vert\xi(x,n\vert^2>0$, there exists $n_0\in\N$ such that $\xi(x,n_0)\neq 0$ which implies that $F(x(g_{n_0}))=F(e)$ for all $F\in C(G)$. Hence, $x(g_{n_0})=e$ which implies that $g_{n_0}=e$. Hence $G$ must be discrete and, by compactness, $G$ is finite.

$(3)$. Let  $\pi\,:\,C_m(\Gq)=\Gamma{}_{\alpha,f}\ltimes C(G)\rightarrow\mathcal{B}(H)$ be a unital $*$-homomorphism and $K$ be the closed subspace $H$ given by $C(G)$-invariant vectors i.e.
$K=\{\xi\in H\,:\,\pi\circ\alpha(F)\xi=F(e)\xi\text{ for all }F\in C(G)\}.$
Then $P=\pi(\alpha(\delta_e))$ is the orthogonal projection onto $K$ which is an invariant subspace of the unitary representation $\gamma\mapsto\pi(u_\gamma)$ since $\pi(u_\gamma)P\pi(u_{\gamma})^*=\pi(\alpha(\delta_{\alpha_g(e)}))=\pi(\alpha(\delta_e))=P$ for all $\gamma\in\Gamma$. Let $\gamma\mapsto v_\gamma$ be the unitary representation of $\Gamma$ on $K$ obtained by restriction. 

Suppose that $\varepsilon_\Gq\prec\pi$  and let $\xi_n\in H$ be a sequence of unit vectors such that $\Vert\pi(x)\xi_n-\varepsilon_\Gq(x)\xi_n\Vert\rightarrow 0$ for all $x\in C_m(\Gq)$. Since $G$ is finite (hence $\widehat{G}$ has property $(T)$), so $K\neq\{0\}$. Moreover, since $\vert\,\Vert P\xi_n\Vert-1\vert\leq\Vert P\xi_n-\xi_n\Vert$, we have $\Vert P\xi_n\Vert\rightarrow 1$ and hence we may and will assume that $P\xi_n\neq 0$ for all $n$. Let $\eta_n=\frac{P\xi_n}{\Vert P\xi_n\Vert}\in K$. We have $\Vert v_\gamma\eta_n-\eta_n\Vert=\frac{1}{\Vert P\xi_n\Vert}\Vert P(v_\gamma\xi_n-\xi_n)\Vert\leq\frac{\Vert\pi(u_\gamma)\xi_n-\xi_n\Vert}{\Vert P\xi_n\Vert}\rightarrow 0$. Hence, $\gamma\mapsto v_\gamma$ has almost invariant vectors. Since $\Gamma$ has property $(T)$, let $\xi\in K$ be a non-zero invariant vector. Then, for all $x\in C_m(\Gq)$ of the form $x=u_\gamma\alpha(F)$, we have $\pi(x)\xi=F(e)\pi(u_\gamma)\xi=F(e)\xi=\varepsilon_\Gq(x)\xi$. By linearity, continuity, and density of $\mathcal{A}$ in $C_m(\Gq)$, we have $\pi(x)\xi=\varepsilon_\Gq(x)\xi$ for all $x\in C_m(\Gq)$.
\end{proof}
We mention that the third assertion of the previous theorem appears in \cite{CN15} when $\beta$ is supposed to be the trivial action.

\begin{remark} The compactness assumption on $\alpha$ in assertion $2$ of the preceding Corollary can not be removed. Indeed, for $n\geq 3$, the semi-direct product $H={\rm SL}_n(\Z)\ltimes\Z^n$ (for the linear action of ${\rm SL}_n(\Z)$ on $\Z^n$) has property $(T)$ and $H$ may be viewed as the dual of the bicrossed product associated to the matched pair $({\rm SL}_n(\Z),\mathbb{T}^n)$ with the non-compact action $\alpha\,:\,{\rm SL}_n(\Z)\curvearrowright\mathbb{T}^n$ given by viewing $\mathbb{T}^n=\widehat{\Z^n}$ and dualizing the linear action ${\rm SL}_n(\Z)\curvearrowright\mathbb{Z}^n$ and the action $\beta$ being trivial. In this example, the compact group $G=\mathbb{T}^n$ is infinite.
\end{remark}

\section{Relative Haagerup property and bicrossed product}\label{section-relativeH}

In this section, we study the relative co-Haagerup property of the pair $(G,\Gq)$ constructed in Section 3. The main result in this section also generalizes the characterization of relative Haagerup property of the pair $(H, \Gamma\ltimes H)$, where $H$ and $\Gamma$ are discrete groups and $H$ is abelian \cite{CT11}. We refer to Section \ref{section-AP} for the definitions of the Fourier transform and the Haagerup property.

\begin{definition}
Let $G$ and $\Gq$ be two compact quantum groups with an injective unital $*$-homomor-\\ phism $\alpha\,:\,C_m(G)\rightarrow C_m(\Gq)$ such that $\Delta_{\Gq}\circ\alpha=(\alpha\ot\alpha)\circ\Delta_G$. We say that the pair $(G,\Gq)$ has the \textit{relative co-Haagerup property}, if there exists a sequence of states $\omega_n\in C_m(\Gq)^*$ such that $\omega_n\rightarrow\varepsilon_\Gq$ in the weak* topology and $\widehat{\omega_n\circ\alpha}\in c_0(\widehat{G})$ for all $n\in\N$.
\end{definition}

Observe that, for any compact quantum group $G$, the dual $\widehat{G}$ has the Haagerup property if and only if the pair $(G,G)$ has the co-Haagerup property. Moreover, it is clear that if $\Lambda,\Gamma$ are discrete groups with $\Lambda<\Gamma$, then the pair $(C^*(\Lambda),C^*(\Gamma))$ has the relative co-Haagerup property if and only if the pair $(\Lambda,\Gamma)$ has the relative Haagerup property in the classical sense.

Let $(\Gamma,G)$ be a matched pair of a discrete group $\Gamma$ and a compact group $G$. Let $\Gq$ be the bicrossed product. In the following theorem, we characterize the relative co-Haagerup property of the pair $(G,\Gq)$ in terms of the action $\alpha$ of $\Gamma$ on $C(G)$. This is a non commutative version of \cite[Theorem 4]{CT11} and the proof is similar in spirit.
However, one of the argument of the classical case does not work in our context since $\alpha_\gamma$ is not a group homomorphism and substitutive ideas are required. Actually, for a general automorphism $\pi\in{\rm Aut}(C(G))$, there is no guarantee that $\widehat{\nu}\in C^*_r(G)\Rightarrow\widehat{\pi(\nu)}\in C^*_r(G)$. However, in the event of automorphisms coming from the action $\alpha$ given by a matched pair the aforesaid statement turns out to be true. We provide details of this idea in the next lemma. We will freely use the notations and results of Section \ref{section-bicrossed}.

\begin{lemma}\label{LemC0}
Let $\nu$ be a complex Borel measure on $G$. If $\widehat{\nu}\in C^*_r(G)$, then $\widehat{\alpha_\gamma(\nu)}\in C^*_r(G)$ for all $\gamma\in\Gamma$.
\end{lemma}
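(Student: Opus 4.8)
The plan is to push the problem into the bicrossed product $\Gq$: inside $\Gq$ the homeomorphism $\alpha_\gamma$ becomes conjugation by the unitary $u_\gamma$, and $u_\gamma$ has \emph{finite} spectral support in $\widehat{\Gq}$ (it is a sum of coefficients of the finite–dimensional representation $V^{\gamma\cdot G}$), which is exactly what will make the argument run.

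First I would lift $\nu$ to a functional on $C_m(\Gq)$. Let $E\colon C_m(\Gq)\to\alpha(C(G))$ be the canonical conditional expectation (one may take $E=E_{C(\Gq)}\circ\lambda$, where $E_{C(\Gq)}$ is the usual faithful expectation of the reduced crossed product $C(\Gq)=\Gamma_\alpha\ltimes C(G)$ onto $\alpha(C(G))$), and set $\widetilde\nu:=\nu\circ\alpha^{-1}\circ E\in C_m(\Gq)^*$, so that $\widetilde\nu(u_r\alpha(F))=\delta_{r,e}\,\nu(F)$ and $\widetilde\nu\circ\alpha=\nu$. Using Theorem \ref{ThmBicrossed}(2) and the elementary fact that $e\in\gamma'\cdot G$ only when $\gamma'\cdot G=\{e\}$, one computes $(\id\otimes\widetilde\nu)(V^{\gamma'\cdot G}\otimes v^x)=0$ unless $\gamma'\cdot G=\{e\}$, in which case it equals $(\id\otimes\nu)(u^x)$ for $x\in{\rm Irr}(G)$; hence $\widehat{\widetilde\nu}\,p_z=0$ for $z\notin{\rm Irr}(G)\subset{\rm Irr}(\Gq)$ while $\widehat{\widetilde\nu}\,p_x=\widehat\nu\,p_x$ for $x\in{\rm Irr}(G)$. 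Consequently $\widehat\nu\in C^*_r(G)$ if and only if $\widehat{\widetilde\nu}\in c_0(\widehat{\Gq})$, and the same equivalence holds for $\alpha_\gamma(\nu)$. Finally the crossed–product relation $u_\gamma^*\alpha(F)u_\gamma=\alpha(\alpha_{\gamma^{-1}}(F))$ together with $\alpha_\gamma(\nu)(F)=\nu(\alpha_{\gamma^{-1}}(F))$ gives $\widetilde{\alpha_\gamma(\nu)}=\widetilde\nu\circ\Ad(u_\gamma^*)$. So everything reduces to the statement: if $\omega\in C_m(\Gq)^*$ has $\widehat\omega\in c_0(\widehat{\Gq})$, then $\widehat{\omega\circ\Ad(u_\gamma^*)}\in c_0(\widehat{\Gq})$.

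I would prove this last statement, in fact for conjugation by any unitary $w\in{\rm Pol}(\Gq)$ (and $u_\gamma=\sum_s V^{\gamma\cdot G}_{\gamma,s}$ is one). Write $w=\sum_{z\in F}a_z$ with $F\subset{\rm Irr}(\Gq)$ finite and $a_z$ a coefficient of $u^z$; then $\omega\circ\Ad(w^*)=\sum_{y,z\in F}\omega\big(a_y^*(\cdot)a_z\big)$, and since $c_0(\widehat{\Gq})$ is a closed linear subspace it is enough to handle a single term $\eta=\omega\big(c^*(\cdot)d\big)$ with $c,d$ matrix coefficients of $u^{y'},u^{z'}$ ($y',z'\in F$). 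Writing $c^*=(S\otimes\id)(u^{\overline{y'}})$ and $d=(S'\otimes\id)(u^{z'})$ for bounded functionals $S,S'$, a direct manipulation of matrix coefficients gives
$$\widehat\eta\,p_z=(\id_{H_z}\otimes\eta)(u^z)=\big(S\otimes\id_{H_z}\otimes S'\otimes\omega\big)\big(u^{\overline{y'}}\otimes u^z\otimes u^{z'}\big),$$
and, decomposing the tensor product representation $u^{\overline{y'}}\otimes u^z\otimes u^{z'}$ into irreducibles and using the slice–map bound, $\|\widehat\eta\,p_z\|\le\|S\|\,\|S'\|\,\max\{\|\widehat\omega\,p_v\|:v\subset\overline{y'}\otimes z\otimes z'\}$. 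Fixing $\varepsilon>0$, the set $B_\varepsilon=\{v:\|\widehat\omega\,p_v\|\ge\varepsilon\}$ is finite, and $v\subset\overline{y'}\otimes z\otimes z'$ forces $z\subset y'\otimes v\otimes\overline{z'}$ by Frobenius reciprocity; so only finitely many $z$ can have a constituent of $\overline{y'}\otimes z\otimes z'$ lying in $B_\varepsilon$, and for all remaining $z$ one gets $\|\widehat\eta\,p_z\|<\|S\|\,\|S'\|\,\varepsilon$. Hence $\widehat\eta\in c_0(\widehat{\Gq})$, and summing over the finitely many $y,z\in F$ completes the proof.

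The real point — and the only obstacle worth flagging — is that the conclusion genuinely fails for an arbitrary measure–preserving homeomorphism of $G$ (one can arrange $\widehat\nu\in c_0$ while $\widehat{\phi_*\nu}\notin c_0$ for a suitably wild $\phi$), so one cannot stay inside $C(G)$; the bicrossed product must be used, and it is precisely compactness of $G$, via finiteness of the $\beta$–orbits, that forces $u_\gamma$ to have finite spectral support in $\widehat{\Gq}$. The remaining ingredient, the Peter–Weyl/Frobenius–reciprocity estimate above, is routine and parallels the Claim in the proof of Theorem \ref{wab}.
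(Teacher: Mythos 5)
Your proof is correct, but it takes a genuinely different route from the paper's. The paper stays at the level of functions on $G$: computing $h(\alpha(u^{\overline{y}}_{lk})u_\gamma^{-1}\alpha(u^x_{ij})u_\gamma)$ with the Haar state of $\Gq$ and invoking the fusion rules of Theorem \ref{FusionRules}, it shows that the pullback $u^x_{ij}\circ\alpha_\gamma$ is a finite linear combination of coefficients of the \emph{single} irreducible $\alpha_{\gamma^{-1}\cdot G}(x)$, so that $F\mapsto F\circ\alpha_\gamma$ permutes the isotypic components of $C(G)$ according to the bijection $x\mapsto\alpha_{\gamma^{-1}\cdot G}(x)$ of ${\rm Irr}(G)$; the $c_0$ condition is then preserved by a one-line preimage argument. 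You instead lift $\nu$ to $\widetilde\nu\in C_m(\Gq)^*$ through the conditional expectation, check that $\widehat{\widetilde\nu}$ carries exactly the blocks of $\widehat\nu$ and that $\alpha_\gamma$ becomes $\Ad(u_\gamma)$, and then prove the more general statement that conjugation by any unitary in ${\rm Pol}(\Gq)$ preserves $c_0(\widehat{\Gq})$, via the slice-map estimate and Frobenius reciprocity. Both arguments ultimately rest on the same structural fact --- finiteness of the $\beta$-orbits makes $u_\gamma$ a sum of coefficients of the finite-dimensional representation $V^{\gamma\cdot G}$ --- but yours buys generality (no need for the explicit fusion rules, only the finite spectral support of $u_\gamma$; the $\Ad$-stability of $c_0(\widehat{\Gq})$ is reusable elsewhere), while the paper's yields the sharper structural conclusion that $\alpha_\gamma$ induces an actual permutation of ${\rm Irr}(G)$. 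Your closing caveat about general measure-preserving homeomorphisms matches the remark the authors make just before stating the lemma.
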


\begin{proof}
For $\gamma\in\Gamma$ define $G_\gamma={\rm Stab}_G(\gamma):=\{g\in G\,:\,\beta_g(\gamma)=\gamma\}$. Note that $G_\gamma$ is a compact open subgroup of $G$ with index $[G:G_\gamma]=\vert\gamma\cdot G\vert$. For $gG_\gamma\in G/G_\gamma$ we denote by $E_{g G_\gamma}$ the completely bounded map $E_{g G_\gamma}:=(\id\ot\omega_{1_{G_\gamma},1_{g G_\gamma}})\Delta_{\widehat{G}}\,:\, C^*_r(G)\rightarrow M(C^*_r(G))$, where $\Delta_{\widehat{G}}$ is the comultiplication on $C^*_r(G)$, for $K\subset G$ a borel set, $1_K\in {\rm L}^2(G)$ denotes the characteristic function of $K$ and, for $\xi,\eta\in {\rm L}^2(G)$, $\omega_{\xi,\eta}\in\mathcal{B}({\rm L}^2(G))^*$ denotes the functional $T\mapsto\langle T\xi,\eta\rangle$. For $F\in C(G)$, we denote by $\lambda(F):=\int_GF(x)\lambda_xd\mu(x)\in C^*_r(G)$ the convolution operator by $G$. Note that, for all $g,x\in G$ and $\gamma\in \Gamma$ one has $\langle \lambda_x 1_{G_\gamma},1_{g G_\gamma}\rangle=\mu(xG_\gamma\cap g G_\gamma)=\mu(G_\gamma)1_{g G_\gamma}(x)$ hence, for all $F\in C(G)$ and for all Borel complex measure $\nu$ on $G$, one has
$$
E_{g G_\gamma}(\lambda(F))
=\mu(G_\gamma)\int_{g G_\gamma} F(x)\lambda_x d\mu(x)=\mu(G_\gamma)\lambda(1_{gG_\gamma}F)\text{ and }E_{g G_\gamma}(\widehat{\nu})=\mu(G_\gamma)\int_{g G_\gamma}\lambda_xd\nu(x).
$$
The formula $E_{g G_\gamma}(\lambda(F))=\mu(G_\gamma)\lambda(1_{gG_\gamma}F)$ implies in particular that $E_{g G_\gamma}$ is actually a cb map from $C^*_r(G)$ to $C^*_r(G)$.


Let $\xi,\eta\in {\rm L}^2(G)$. For $x\in g G_\gamma$ one has, since $\mu$ is $\alpha$-invariant,
$$
\langle\lambda_{\alpha_\gamma(x)}\xi,\eta\rangle=\int_G\xi(\alpha_\gamma(x)^{-1}y)\overline{\eta(y)}d\mu(y)
=\int_G\xi(\alpha_\gamma(x)^{-1}\alpha_\gamma(y))\overline{\eta(\alpha_\gamma(y))}d\mu(y).
$$
By the bicrossed-product relations we have, for all $g\in G$ and $x\in gG_\gamma$, $\beta_x(\gamma)=\beta_g(\gamma)$ and $\alpha_{\beta_g(\gamma)}(x^{-1}y)=\alpha_{\beta_x(\gamma)}(x^{-1})\alpha_\gamma(y)=\alpha_\gamma(x)^{-1}\alpha_\gamma(y)$ for all $y\in G$ hence, for all $g\in G$ and $x\in gG_\gamma$,
\begin{eqnarray*}
\langle\lambda_{\alpha_\gamma(x)}\xi,\eta\rangle&=&\int_G\xi\circ\alpha_{\beta_g(\gamma)}(x^{-1}y)\overline{\eta\circ\alpha_\gamma(y)}d\mu(y)= \langle \lambda_x\xi\circ\alpha_{\beta_g(\gamma)},\eta\circ\alpha_\gamma\rangle=\langle w_\gamma\lambda_x w_{\beta_g(\gamma)}^*\xi,\eta\rangle,
\end{eqnarray*}
where $\gamma\mapsto w_\gamma$ is the unitary representation of $\Gamma$ on ${\rm L}^2(G)$ defined by $w_\gamma\xi=\xi\circ\alpha_{\gamma}^{-1}$. It follows that, for all $\gamma\in\Gamma$, $g\in G$ and $x\in g G_\gamma$, $\lambda_{\alpha_\gamma(x)}=w_\gamma\lambda_x w_{\beta_g(\gamma)}^*$ and, for any complex Borel measure $\nu$ on $G$,
\begin{eqnarray*}
\widehat{\alpha_\gamma(\nu)}&=&\int_G\lambda_{\alpha_\gamma(x)}d\nu(x)=\sum_{g G_\gamma\in G/G_\gamma}\int_{g G_\gamma}\lambda_{\alpha_\gamma(x)}d\nu(x)=\sum_{g G_\gamma\in G/G_\gamma}\int_{g G_\gamma}w_\gamma\lambda_x w_{\beta_g(\gamma)}^*d\nu(x)\\
&=&\frac{1}{\mu(G_\gamma)}w_\gamma\sum_{g G_\gamma\in G/G_\gamma}E_{g G_\gamma}(\widehat{\nu})w_{\beta_g(\gamma)}^*.
\end{eqnarray*}
Since $\mu$ is $\alpha$-invariant, the same computation shows that, for any $F\in C(G)$, one has
\begin{eqnarray*}
\lambda(F\circ\alpha_{\gamma}^{-1})&=&\int_GF(\alpha_\gamma^{-1}(x))\lambda_xd\mu(x)=\int_GF(x)\lambda_{\alpha_\gamma(x)}d\mu(x)
=\sum_{g G_\gamma\in G/G_\gamma}\int_{g G_\gamma}F(x)w_\gamma\lambda_x w_{\beta_g(\gamma)}^*d\mu(x)\\
&=&\frac{1}{\mu(G_\gamma)}w_\gamma\sum_{g G_\gamma\in G/G_\gamma}E_{g G_\gamma}(\lambda(F))w_{\beta_g(\gamma)}^*.
\end{eqnarray*}
Now, suppose that $\nu$ is a complex Borel measure on $G$ such that $\widehat{\nu}\in C^*_r(G)$ and let $F_n\in C(G)$ be a sequence such that $\lambda(F_n)\rightarrow\widehat{\nu}$ in norm. By continuity $E_{g G_\gamma}(\lambda(F_n))\rightarrow E_{g G_\gamma}(\widehat{\nu})$ for all $g\in G$. Hence, it follows from the computations above that $\lambda(F_n\circ\alpha_{\gamma}^{-1})\rightarrow\widehat{\alpha_\gamma(\nu)}$ in norm so that $\widehat{\alpha_\gamma(\nu)}\in C^*_r(G)$.
\end{proof}

\begin{theorem}\label{ThmRelH}
The following are equivalent:
\begin{enumerate}
\item The pair $(G,\Gq)$ has the relative co-Haagerup property.
\item There exists a sequence $(\mu_n)_{n\in\N}$ of Borel probability measures on $G$ such that
\begin{enumerate}
\item $\widehat{\mu}_n\in C^*_r(G)$ for all $n\in\N$;
\item $\mu_n\rightarrow\delta_e$ weak*;
\item $\Vert\alpha_\gamma(\mu_n)-\mu_n\Vert\rightarrow 0$ for all $\gamma\in\Gamma$.
\end{enumerate}
\end{enumerate}
\end{theorem}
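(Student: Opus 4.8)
The plan is to mimic the structure of the proof of Theorem \ref{ThmRelT}, replacing the condition ``$\mu_n(\{e\})=0$'' with the condition ``$\widehat{\mu}_n\in C^*_r(G)$'' throughout, and using Lemma \ref{LemC0} at the crucial place where the classical argument (which uses that $\alpha_\gamma$ is a group automorphism) breaks down. The key dictionary is: for a state $\omega\in C_m(\Gq)^*$, restricting to $C(G)$ via $\alpha$ gives a state on $C(G)$, hence (by Riesz) a Borel probability measure $\mu$ on $G$; the condition $\widehat{\omega\circ\alpha}\in c_0(\widehat{G})$ says exactly that $x\mapsto \int_G u^x_{ij}\,d\mu\in c_0(\mathrm{Irr}(G))$ for all $i,j$, i.e. $\widehat{\mu}\in C^*_r(G)$ (using the identification of $c_0(\widehat{G})$ for the classical compact group $G$ with $C^*_r(G)$ as noted in Section \ref{section-AP}). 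The condition $\omega_n\to\varepsilon_\Gq$ weak* unpacks, on the generators $u_\gamma\alpha(F)$, into: $\omega_n(u_\gamma)\to 1$ for all $\gamma$ and $\omega_n(\alpha(F))\to F(e)$ for all $F$, i.e. $\mu_n\to\delta_e$ weak*; and the asymptotic invariance $\|\alpha_\gamma(\mu_n)-\mu_n\|\to 0$ is extracted from a GNS/almost-invariance argument exactly as in Theorem \ref{ThmRelT}.

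For the direction $(1)\Rightarrow(2)$, I would take the states $\omega_n$ witnessing relative co-Haagerup, perform the GNS construction $(\pi_n,H_n,\xi_n)$ for each, set $\pi=\bigoplus_n\pi_n$ and $\xi_n\in H_n\subset H$, so that $\pi$ has $\xi_n$ as ``almost $\varepsilon_\Gq$-invariant'' vectors in the sense $\langle\pi(x)\xi_n,\xi_n\rangle\to\varepsilon_\Gq(x)$. Define $\mu_n$ by $\int_G F\,d\mu_n=\langle\pi(\alpha(F))\xi_n,\xi_n\rangle=\omega_n(\alpha(F))$. Then: (a) $\widehat{\mu}_n\in C^*_r(G)$ is precisely the hypothesis $\widehat{\omega_n\circ\alpha}\in c_0(\widehat G)$; (b) $\mu_n\to\delta_e$ weak* follows from $\omega_n\to\varepsilon_\Gq$ restricted to $\alpha(C(G))$; (c) for the asymptotic invariance one computes, as in Theorem \ref{ThmRelT}, $\int_G F\,d\alpha_\gamma(\mu_n)=\langle\pi(\alpha(F))\pi(u_\gamma)\xi_n,\pi(u_\gamma)\xi_n\rangle$ and bounds $|\int F\,d\alpha_\gamma(\mu_n)-\int F\,d\mu_n|\le 2\|F\|\cdot\|\pi(u_\gamma)\xi_n-\xi_n\|$, while $\|\pi(u_\gamma)\xi_n-\xi_n\|^2=2-2\mathrm{Re}\,\omega_n(u_\gamma)\to 0$ since $\omega_n(u_\gamma)\to\varepsilon_\Gq(u_\gamma)=1$ (here $\xi_n$ need not be cyclic, but the Cauchy–Schwarz estimate $\|\pi(u_\gamma)\xi_n-\xi_n\|\to 0$ holds because $\omega_n(u_\gamma^*u_\gamma)=1$ and $\omega_n(u_\gamma)\to 1$). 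Note one does not even get the state on all of $C_m(\Gq)$ from a measure in general, but $\omega_n$ itself is given, so this direction is straightforward.

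For $(2)\Rightarrow(1)$, I would first run the same ``smoothing'' Claim as in Theorem \ref{ThmRelT}: replace $(\mu_n)$ by $\nu_n=\xi*\mu_n$ with $\xi\in\ell^1(\Gamma)_{1,+}$ strictly positive, obtaining $\alpha_\gamma(\nu_n)\sim\nu_n$ for all $\gamma$, while preserving $(b)$ and $(c)$. The only new point is to check that $(a)$ is preserved, i.e. $\widehat{\nu}_n\in C^*_r(G)$: since $\nu_n=\sum_\gamma\xi(\gamma)\alpha_\gamma(\mu_n)$ is a finite-plus-$\ell^1$ convex combination and $C^*_r(G)$ is a norm-closed subspace of $M(C^*_r(G))$, it suffices that each $\widehat{\alpha_\gamma(\mu_n)}\in C^*_r(G)$ — which is exactly Lemma \ref{LemC0}. (The infinite tail is handled by norm convergence $\sum_{\gamma}\xi(\gamma)\|\widehat{\alpha_\gamma(\mu_n)}\|\le\sum_\gamma\xi(\gamma)<\infty$, so $\widehat{\nu}_n$ is a norm limit of elements of $C^*_r(G)$.) Then, as in Theorem \ref{ThmRelT}, put $h_n(\gamma)=\frac{d\alpha_\gamma(\nu_n)}{d\nu_n}$, form $H_n=L^2(G,\nu_n)$, the unitary representation $u_n(\gamma)$ and the covariant $\rho_n\colon C(G)\to\mathcal B(H_n)$, obtain $\pi_n\colon C_m(\Gq)\to\mathcal B(H_n)$ with $\pi_n(u_\gamma)=u_n(\gamma)$, $\pi_n\circ\alpha=\rho_n$, and set $\omega_n=\langle\pi_n(\cdot)1,1\rangle$. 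Then $\omega_n\to\varepsilon_\Gq$ weak* is checked on $\mathcal A$ exactly as in Theorem \ref{ThmRelT} (using $\|\pi_n(u_\gamma)1-1\|^2\le\|1-h_n(\gamma)\|_{L^1(\nu_n)}=\|\alpha_\gamma(\nu_n)-\nu_n\|\to 0$ and $\nu_n\to\delta_e$), and $\widehat{\omega_n\circ\alpha}\in c_0(\widehat G)$ because $\omega_n\circ\alpha$ corresponds to the measure $\nu_n$ (as $\langle\rho_n(F)1,1\rangle=\int_G F\,d\nu_n$) and $\widehat{\nu}_n\in C^*_r(G)$.

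The main obstacle — and the reason Lemma \ref{LemC0} was proved first — is precisely the preservation of condition $(a)$ under the averaging in the Claim: in the classical abelian case of \cite{CT11} one uses that $\alpha_\gamma$ is a group automorphism of $H_0$, so it acts on the dual group and trivially sends $c_0$ to $c_0$; here $\alpha_\gamma$ is merely a measure-preserving homeomorphism of $G$ and there is no a priori reason a matrix coefficient $u^x_{ij}\circ\alpha_\gamma$ should again be a matrix coefficient of a single irreducible — but the matched-pair structure forces $u^x_{ij}\circ\alpha_\gamma$ to be a finite linear combination of coefficients of the \emph{single} irreducible $\alpha_{\gamma^{-1}\cdot G}(x)$, with $\alpha_{\gamma^{-1}\cdot G}$ a bijection of $\mathrm{Irr}(G)$, which is exactly what makes the $c_0$ condition stable. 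Everything else is a faithful transcription of the proof of Theorem \ref{ThmRelT}.
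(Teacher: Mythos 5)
Your proposal is correct and is essentially the paper's own proof: the same GNS argument for $(1)\Rightarrow(2)$, the same smoothing claim $\nu_n=\xi*\mu_n$ for $(2)\Rightarrow(1)$ with Lemma \ref{LemC0} invoked at exactly the point where the non-homomorphism nature of $\alpha_\gamma$ would otherwise break the stability of condition $(a)$, and the same $\ell^1$-approximation to handle the infinite convex combination. Your diagnosis of why Lemma \ref{LemC0} is the crux matches the paper's own remark preceding that lemma.
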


\begin{proof}
$(1)\Longrightarrow (2)$. Let $\omega_n\in C_m(\Gq)^{*}$ be a sequence of states such that $\omega_n\rightarrow\varepsilon_\Gq$ in the weak* topology and $\widehat{\omega_n\circ\alpha}\in C^*_r(G)$. For each $n$ view $\omega_{n}\circ\alpha\in C(G)^*$ as a Borel probability measure $\mu_n$ on $G$. By hypothesis, $\widehat{\mu}_n\in C^*_r(G)$ for all $n\in\N$ and $\mu_n\rightarrow\delta_{e}$ in the weak* topology. Writing $(H_n,\pi_n,\xi_n)$ the GNS construction of $\omega_n$ and doing the same computation as in the proof of $(1)\Longrightarrow (2)$ of Theorem \ref{ThmRelT}, we find $\left\vert\int_GFd\alpha_\gamma(\mu_n)-\int_GFd\mu_n\right\vert\leq\Vert F\Vert\,\Vert\pi_n(u_\gamma)\xi_n-\xi_n\Vert=\Vert F\Vert\sqrt{2(1-{\rm Re}(\omega_n(u_\gamma))}$. Hence, $\Vert\alpha_\gamma(\mu_n)-\mu_n\Vert\leq \sqrt{2(1-{\rm Re}(\omega_n(u_\gamma))}\rightarrow \sqrt{2(1-{\rm Re}(\varepsilon_\Gq(u_\gamma))}=0$.

$(2)\implies(1)$. We first prove the following claim.

\textbf{Claim.} \textit{If $(2)$ holds, then there exists a sequence $(\nu_n)_{n\in\N}$ of Borel probability measures on $G$ satifying $(a)$, $(b)$ and $(c)$ and such that
$\alpha_\gamma(\nu_n)\sim\nu_n$ for all $\gamma\in\Gamma$, $n\in\N$.}

\textit{Proof of the claim.} By the proof of the claim in Theorem \ref{ThmRelT}, it suffices to check that whenever $\nu$ is a complex Borel measure on $G$ and $f\in \ell^1(G)$, we have $\widehat{\nu}\in C^*_r(G)\Rightarrow \widehat{f*\nu}\in C^*_r(G)$. 

Now suppose that $\widehat{\nu}\in C^*_r(G)$ and $f\in c_c(\Gamma)$, then $f*\nu=\sum f(\gamma)\alpha_\gamma(\mu)$ is a finite sum and by Lemma \ref{LemC0} we find that $\widehat{f*\mu}=\sum f(\gamma)\widehat{\alpha_\gamma(\mu)}\in C^*_r(G)$.

Suppose that $\widehat{\nu}\in C^*_r(G)$ and $f\in \ell^1(\Gamma)$. Let $f_n\in c_c(\Gamma)$ be such that $\Vert f-f_n\Vert_1\rightarrow 0$. Since for all $g\in \ell^1(\Gamma)$ and all $\nu\in C(G)^*$ the estimate $\Vert f*\nu\Vert\leq\Vert f\Vert_1\,\Vert\nu\Vert$ hold, we find
$$\Vert\widehat{f*\nu}-\widehat{f_n*\nu}\Vert_{\mathcal{B}({\rm L}^2(G))}=\Vert\widehat{(f-f_n)*\nu}\Vert_{\mathcal{B}({\rm L}^2(G))}\leq\Vert(f-f_n)*\nu\Vert_{C(G)^*}\leq\Vert\nu\Vert_{C(G)^*}\Vert f-f_n\Vert_1\rightarrow 0.$$
Consequently, as $\widehat{f_n*\nu}\in C^*_r(G)$ for all $n$, it follows that $\widehat{f*\nu}\in C^*_r(G)$.\hfill{$\Box$}

\vspace{0.2cm}

We can now finish the proof of the Theorem. Let $(\mu_n)_{n\in\N}$ be a sequence of Borel probability measures on $G$ as in the Claim. As in the proof of Theorem \ref{ThmRelT}, we construct a representation $\pi\,:\, C_m(\Gq)\rightarrow\mathcal{B}(H)$ with a sequence of unit vector $\xi_n\in H$ such that $\Vert\pi(x)\xi_n-\varepsilon_\Gq(x)\xi_n\Vert\rightarrow 0$ for all $x\in C_m(\Gq)$ and $\int Fd\mu_n=\omega_{\xi_n}\circ\pi\circ\alpha(F)$, for all $F\in C(G)$. It follows that the sequence of states $\omega_n=\omega_{\xi_n}\circ\pi\in C_m(\Gq)^*$ satisfies $\omega_n\rightarrow\varepsilon_\Gq$ weak* and $\widehat{\omega_n\circ\alpha}=\widehat{\mu_n}\in C_r^*(G)$ for all $n\in\N$.
\end{proof}

\section{Crossed product quantum group}\label{section-crossed} 

This section deals with a matched pair of a discrete group and a compact quantum group that arises in a crossed product, where the discrete group acts on the compact quantum group via quantum automorphisms. This section is longer and has four subsections. First, we analyze the
quantum group structure and the representation theory of such crossed products which was initially studied by Wang in \cite{Wa95b}, but unlike Wang we do not rely on free products which allows us to shorten the proofs. We also obtain some obvious consequences related to amenability and $K$-amenability and the computation of the intrinsic group and the spectrum of the full C*-algebra of a crossed product quantum group. The subsections deal with weak amenability, rapid decay, (relative) property $(T)$ and (relative) Haagerup property.

Let $G$ be a compact quantum group, $\Gamma$ a discrete group acting on $G$ i.e., $\alpha\,:\,\Gamma\curvearrowright G$ be an action by quantum automorphisms. We will denote by the same symbol $\alpha$ the action of $\Gamma$ on $C_m(G)$ or $C(G)$. Let $A_m=\Gamma{}_{\alpha,m}\ltimes C_m(G)$ be the full crossed product and $A=\Gamma{}_{\alpha}\ltimes C(G)$ be the reduced crossed product. By abuse of notation, we still denote by $\alpha$ the canonical injective map from $C_m(G)$ to $A_m$ and from $C(G)$ to $A$. We also denote by $u_\gamma$, for $\gamma\in\Gamma$, the canonical unitaries viewed in either $A_m$ or $A$. This will be clear from the context and cause no confusion. 

By the universal property of the full crossed product, we have a unique surjective unital $*$-homomorphism $\lambda\,:\, A_m\rightarrow A$ such that $\lambda(u_\gamma)=u_\gamma$ and $\lambda(\alpha(a))=\alpha(\lambda_G(a))$ for all $\gamma\in\Gamma$ and for all $a\in C_m(G)$. Finally, we denote by $\omega\in A^*$, the dual state of $h_G$ i.e., $\omega$ is the unique (faithful) state such that
$$\omega(u_\gamma\alpha(a))=\delta_{e,\gamma}h_G(a)\quad\text{for all }a\in C(G),\gamma\in\Gamma.$$

Again by the universal property of the full crossed product, there exists a unique unital $*$-homomorphism $\Delta_m\,:\, A_m\rightarrow A_m\ot A_m$ such that $\Delta_m(u_\gamma)=u_\gamma\ot u_\gamma$ and $\Delta_m\circ\alpha=(\alpha\ot\alpha)\circ\Delta_G$.

The following theorem is due to Wang \cite{Wa95b}. We include a short proof. 

\begin{theorem}\label{ThmCrossedProduct}
$\Gq=(A_m,\Delta_m)$ is a compact quantum group and the following holds.
\begin{enumerate}
\item The Haar state of $\Gq$ is $h=\omega\circ\lambda$, hence, $\Gq$ is Kac if and only if $G$ is Kac.
\item For all $\gamma\in\Gamma$ and all $x\in{\rm Irr}(G)$, $u^x_\gamma=(1\ot u_\gamma) (\id\ot\alpha)(u^x)\in\mathcal{B}(H_x)\ot A_m$ is an irreducible representation of $\Gq$ and the set $\{u^x_\gamma\,:\,\gamma\in\Gamma, x\in{\rm Irr}(G)\}$ is a complete set of irreducible representations of $\Gq$.
\item One has $C_m(\Gq)=A_m$, $C(\Gq)= A$, ${\rm Pol}(\Gq)={\rm Span}\{u_\gamma\alpha(a)\,:\,\gamma\in\Gamma,a\in{\rm Pol}(G)\}$, $\lambda$ is the canonical surjection from $C_m(\Gq)$ to $C(\Gq)$ and ${\rm L}^\infty(\Gq)$ is the von Neumann algebraic crossed product.
\end{enumerate}
\end{theorem}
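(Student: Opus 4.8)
The plan is to mimic the proof of Theorem~\ref{ThmBicrossed}: exhibit a family of finite-dimensional unitary corepresentations of $(A_m,\Delta_m)$ whose matrix coefficients span a dense $*$-subalgebra, and then invoke \cite[Definition 2.1']{Wa95a}. First one verifies that $\Delta_m$ is coassociative: both $(\Delta_m\ot\id)\Delta_m$ and $(\id\ot\Delta_m)\Delta_m$ send $u_\gamma$ to $u_\gamma\ot u_\gamma\ot u_\gamma$, while on $\alpha(C_m(G))$ they equal $(\alpha\ot\alpha\ot\alpha)$ composed with $(\Delta_G\ot\id)\Delta_G$ and $(\id\ot\Delta_G)\Delta_G$ respectively, which agree by coassociativity of $\Delta_G$; since the $u_\gamma$ together with $\alpha(C_m(G))$ generate $A_m$ and $\Delta_m$ is a $*$-homomorphism, coassociativity follows. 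Put $\mathcal{A}={\rm Span}\{u_\gamma\alpha(a)\,:\,\gamma\in\Gamma,\ a\in{\rm Pol}(G)\}$; this is a dense unital $*$-subalgebra of $A_m$, and $\Delta_m(u_\gamma\alpha(u^x_{ij}))=\sum_k u_\gamma\alpha(u^x_{ik})\ot u_\gamma\alpha(u^x_{kj})$ shows $\Delta_m$ maps $\mathcal{A}$ into the algebraic tensor product $\mathcal{A}\ot\mathcal{A}$.

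Next I would establish that each $u^x_\gamma=(1\ot u_\gamma)(\id\ot\alpha)(u^x)$ is a unitary corepresentation: it is unitary as a product of the unitaries $1\ot u_\gamma$ and $(\id\ot\alpha)(u^x)$, and $(\id\ot\Delta_m)(u^x_\gamma)=(u^x_\gamma)_{12}(u^x_\gamma)_{13}$ follows by substituting $\Delta_m(u_\gamma)=u_\gamma\ot u_\gamma$ and $\Delta_m\circ\alpha=(\alpha\ot\alpha)\circ\Delta_G$, using the corepresentation identity $(\id\ot\Delta_G)(u^x)=(u^x)_{12}(u^x)_{13}$ and the fact that $1\ot1\ot u_\gamma$ commutes with everything supported on the first two legs. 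The coefficients of $u^x_\gamma$ are exactly the $u_\gamma\alpha(u^x_{ij})$, so they span $\mathcal{A}$; hence $\Gq=(A_m,\Delta_m)$ is a compact quantum group. For irreducibility and inequivalence, $T\in{\rm Mor}(u^x_\gamma,u^y_s)$ forces (after cancelling the unitaries $1\ot u_\gamma$, $1\ot u_s$ and using injectivity of $\alpha$) both $\gamma=s$ and $(T\ot1)u^x=u^x(T\ot1)$; equivalently, a character computation gives $\dim{\rm Mor}(u^x_\gamma,u^y_s)=h\big(\chi(u^x_\gamma)^*\chi(u^y_s)\big)=\delta_{\gamma,s}\,h_G(\chi_x^*\chi_y)=\delta_{\gamma,s}\delta_{x,y}$, where $\chi_x=\chi(u^x)$ and one uses $h=\omega\circ\lambda$ from part~$(1)$ and the crossed-product relation $\alpha(a)u_\gamma=u_\gamma\alpha(\alpha_{\gamma^{-1}}(a))$. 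So the $u^x_\gamma$ are mutually inequivalent irreducibles, and by the standard Peter--Weyl argument (their coefficients span the dense set $\mathcal{A}$, hence are $L^2$-dense) they exhaust ${\rm Irr}(\Gq)$; in particular ${\rm Pol}(\Gq)=\mathcal{A}$, which is assertion~$(2)$ and part of~$(3)$.

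For $(1)$, one checks that $h:=\omega\circ\lambda$ annihilates the coefficients of every nontrivial $u^x_\gamma$: $h(u_\gamma\alpha(u^x_{ij}))=\omega(u_\gamma\alpha(\lambda_G(u^x_{ij})))=\delta_{e,\gamma}h_G(\lambda_G(u^x_{ij}))=\delta_{e,\gamma}\delta_{x,1}$. Since the Haar state of $\Gq$ is the unique state with this vanishing property on ${\rm Pol}(\Gq)=\mathcal{A}$ and $h$ is a state agreeing with it on the dense set $\mathcal{A}$, we get $h=\omega\circ\lambda$. Then $\Gq$ is Kac iff $h$ is a trace iff $\omega$ is a trace on $A$; since $\omega$ is the dual state of the $\alpha$-invariant state $h_G$, computing $\omega$ on $(u_\gamma\alpha(a))(u_s\alpha(b))=u_{\gamma s}\alpha(\alpha_{s^{-1}}(a)b)$ and comparing the two orders shows $\omega$ is a trace iff $h_G$ is, i.e. $\Gq$ is Kac iff $G$ is Kac. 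For the rest of $(3)$: $C_m(\Gq)$ is the enveloping C$^*$-algebra of ${\rm Pol}(\Gq)=\mathcal{A}$, and any $*$-representation of $\mathcal{A}$ restricts to a unitary representation of $\Gamma$ and a $*$-representation of ${\rm Pol}(G)$ (which extends to $C_m(G)$) satisfying the covariance relation, hence is a covariant representation of $(\Gamma,C_m(G),\alpha)$ and extends to the full crossed product $A_m$; thus $C_m(\Gq)=A_m$. Since $\lambda$ is surjective and $\omega$ is faithful on $A$, the GNS representation of $h=\omega\circ\lambda$ factors through $\lambda$ and coincides with the faithful GNS representation of $\omega$, so $C(\Gq)=A$ with $\lambda$ the canonical surjection; and the weak closure of $A$ in this representation on $\ell^2(\Gamma)\ot{\rm L}^2(G)$ is by definition the von Neumann algebraic crossed product.

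The mechanics are essentially routine; the only points needing care are keeping the leg numbering straight in the corepresentation identity for $u^x_\gamma$ and verifying that the character/Haar-state computation genuinely reduces to the corresponding orthogonality relations in $G$ (which uses that quantum automorphisms automatically preserve $h_G$). The identification of ${\rm L}^\infty(\Gq)$ with the von Neumann crossed product, once $C(\Gq)=A$ and the GNS space of $\omega$ are pinned down, is standard and goes exactly as in Theorem~\ref{ThmBicrossed}(3).
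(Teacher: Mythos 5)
Your proposal is correct and follows essentially the same route as the paper: the same family $u^x_\gamma=(1\ot u_\gamma)(\id\ot\alpha)(u^x)$ of corepresentations, density of their coefficients to invoke Wang's criterion, the character/orthogonality computation $h(\chi(u^x_r)^*\chi(u^y_s))=\delta_{r,s}\delta_{x,y}$ for irreducibility and completeness, and the identification ${\rm Pol}(\Gq)=\mathcal{A}$ together with faithfulness of $\omega$ on $A$ for part $(3)$. The only (harmless) variations are that you verify $h=\omega\circ\lambda$ via the vanishing-on-nontrivial-coefficients characterization rather than checking bi-invariance directly, and that you supply the short traciality argument for the Kac claim, which the paper asserts without proof.
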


\begin{proof}
$(1)$. Write $\mathcal{A}={\rm Span}\{u_\gamma\alpha(a)\,:\,\gamma\in\Gamma,a\in{\rm Pol}(G)\}$. Since, by definition of $A_m$, $\mathcal{A}$ is dense in $A_m$ it suffices to show the invariance of $h$ on $\mathcal{A}$ and one has
\begin{align*} 
(\id\ot h)(\Delta_m(u_\gamma\alpha(u^x_{ij})))&=\sum_ku_\gamma\alpha(u^x_{ik})h(u_\gamma \alpha(u^x_{kj}))=\delta_{\gamma, e}\delta_{x, 1}\\&=h(u_\gamma\alpha(u^x_{ij}))=(h\ot\id)(\Delta_m(u_\gamma\alpha(u^x_{ij}))), \text{ }\gamma\in \Gamma, x\in \text{Irr}(G).
\end{align*}

$(2)$. By the definition of $\Delta_m$, it is obvious that $u^x_\gamma$ is a unitary representation of $\Gq$ for all $\gamma\in\Gamma$ and $x\in{\rm Irr}(G)$. The representations $u_\gamma^x$, for $\gamma\in\Gamma$ and $x\in{\rm Irr}(G)$, are irreducible and pairwise non-equivalent since
\begin{align*}
h(\chi(u^x_r)^*\chi(u_s^y))&=h(\alpha(\chi(\overline{x}))u_{r^{-1}s}\alpha(\chi(y)))=h(u_{r^{-1}s}\alpha(\alpha_{r^{-1}s}(\chi(\overline{x}))\chi(y)))=\delta_{r,s}h_G(\chi(\overline{x})\chi(y))\\
&=\delta_{r,s}\delta_{x,y}.
\end{align*}
Finally, $\{u_\gamma^x\,:\,\gamma\in\Gamma,\,x\in{\rm Irr}(G)\}$ is a complete set of irreducibles since the linear span of the coefficients of the $u_\gamma^x$ is $\mathcal{A}$, which is dense in $C_m(G)$.

$(3)$. We established in $(2)$ that $\mathcal{A}={\rm Pol}(\Gq)$. Since, by definition, $A_m$ is the enveloping C*-algebra of $\mathcal{A}$, we have $C_m(\Gq)=A_m$. Since $\lambda\,:\,A_m\rightarrow A$ is surjective and $\omega$ is faithful on $A$, we have $C(\Gq)=A$. Moreover, since $\lambda$ is identity on $\mathcal{A}={\rm Pol}(G)$, it follows that $\lambda$ is the canonical surjection. Finally, ${\rm L}^\infty(\Gq)$ is, by definition, the bicommutant of $C(\Gq)=A$ which is also the von Neumann algebraic crossed product.
\end{proof}

\begin{remark}\label{RmkAnti}
Observe that the counit satisfies $\varepsilon_{\Gq}(u_{\gamma}\alpha(a))=\varepsilon_{G}(a)$ for any $\gamma\in \Gamma$ and $a\in$ Pol$(G)$. This follows from the uniqueness of the counit with respect to the equation $(\varepsilon\otimes\id)\circ\Delta=\id=(\id\otimes \varepsilon)\circ\Delta$ and also the fact that $\varepsilon_G\circ \alpha_\gamma(a)=\varepsilon_G(a)$, for any $\gamma\in \Gamma$ and $a\in$ Pol$(G)$. Similarly, $S_\Gq(u_\gamma\alpha(a))=u_{\gamma^{-1}}\alpha(S_{G}(\alpha_{\gamma^{-1}}(a)))$. Hence, for any $\gamma\in \Gamma$, we have $\alpha_\gamma\circ S_G=S_G\circ \alpha_\gamma$.   
\end{remark}

\begin{remark}\label{RmkFusion}

From Section \ref{SectionCQG}, we have a group homomorphism $\Gamma\rightarrow S({\rm Irr}(G))$, $\gamma\mapsto\alpha_\gamma$, where $\alpha_\gamma(x)$, for $x\in{\rm Irr}(G)$, is the class of the irreducible representation $(\id\ot\alpha_{\gamma})(u^x)$. Let $\gamma\cdot x\in{\rm Irr}(\Gq)$ be the class of $u_\gamma^x$. Observe that, we have $\gamma\ot x\ot\gamma^{-1}=\alpha_\gamma(x)$ and $\gamma\cdot x=\gamma\ot x$, by viewing $\Gamma\subset{\rm Irr}(\Gq)$ and ${\rm Irr}(G)\subset{\rm Irr}(\Gq)$. Hence, the fusion rules of $\Gq$ are described as follows:
\begin{align*}
r\cdot x\ot s\cdot y=rs\cdot \alpha_{s^{-1}}(x)\ot y=\bigoplus_{\underset{t\subset\alpha_{s^{-1}}(x)\ot y}{t\in{\rm Irr}(G)}}r s\cdot t, \quad\text{ for all }r,s\in\Gamma,\,\,x,y\in{\rm Irr}(G).
\end{align*}
Moreover, we have $\overline{\gamma\cdot x}=\gamma^{-1}\cdot \alpha_\gamma(\overline{x})$ for all $\gamma\in\Gamma$ and $x\in{\rm Irr}(G)$.
\end{remark}

\begin{corollary}\label{Amenablestuff}
The following hold.
\begin{enumerate}
\item $\Gq$ is co-amenable if and only if $G$ is co-amenable and $\Gamma$ is amenable.
\item If $G$ is co-amenable and $\Gamma$ is K-amenable, then $\widehat{\Gq}$ is K-amenable.
\end{enumerate}
\end{corollary}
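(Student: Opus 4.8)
The plan is to prove both statements by passing to the underlying C*-algebras and using the structural description of $\Gq$ from Theorem \ref{ThmCrossedProduct}, in particular that $C_m(\Gq)=\Gamma{}_{\alpha,m}\ltimes C_m(G)$, $C(\Gq)=\Gamma{}_\alpha\ltimes C(G)$, and that $\lambda\,:\,C_m(\Gq)\rightarrow C(\Gq)$ is the canonical surjection intertwining the comultiplications. Recall also (Remark \ref{RmkAnti}) that $\varepsilon_\Gq(u_\gamma\alpha(a))=\varepsilon_G(a)$, and that the Haar state of $\Gq$ is $h=\omega\circ\lambda$ with $\omega$ the dual state of $h_G$.

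\textbf{Statement (1).} For the forward direction, assume $\Gq$ is co-amenable, i.e.\ $\lambda$ is an isomorphism and equivalently the counit $\varepsilon_\Gq$ is bounded on $C(\Gq)$. Restricting $\varepsilon_\Gq$ to the copy $\alpha(C(G))\subset C(\Gq)$ and observing that $\alpha(C(G))$ together with the comultiplication gives back $C(G)$ with $\Delta_G$ (since $\Delta_m\circ\alpha=(\alpha\ot\alpha)\circ\Delta_G$), we see that $\varepsilon_G$ is bounded on the reduced algebra $C(G)$, so $G$ is co-amenable. For amenability of $\Gamma$: the C*-subalgebra $B\subset C(\Gq)$ generated by $\{u_\gamma\,:\,\gamma\in\Gamma\}$ is, by the observation in Remark \ref{RmkFull} applied to the $\alpha$-invariant character $\varepsilon_G$ on $C(G)$ (which is $\alpha$-invariant because $\varepsilon_G\circ\alpha_\gamma=\varepsilon_G$), a quotient of $C^*(\Gamma)$; and since $\omega(u_\gamma)=\delta_{e,\gamma}$, the faithful Haar state $h=\omega\circ\lambda$ restricts on $B$ to (the image of) the canonical trace on $C^*(\Gamma)$, forcing that trace to be faithful on the reduced object, hence $\Gamma$ is amenable. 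For the converse, assume $G$ is co-amenable and $\Gamma$ amenable. Then $\lambda_G\,:\,C_m(G)\rightarrow C(G)$ is an isomorphism, so $\Gamma{}_{\alpha,m}\ltimes C_m(G)\cong\Gamma{}_{\alpha,m}\ltimes C(G)$, and amenability of $\Gamma$ gives $\Gamma{}_{\alpha,m}\ltimes C(G)=\Gamma{}_\alpha\ltimes C(G)$; composing, $\lambda\,:\,C_m(\Gq)\rightarrow C(\Gq)$ is an isomorphism, so $\Gq$ is co-amenable.

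\textbf{Statement (2).} Assume $G$ is co-amenable and $\Gamma$ is $K$-amenable. Co-amenability of $G$ means $C_m(G)=C(G)$, and by the above the full and reduced crossed products of $\Gamma$ with $C(G)$ agree with the corresponding crossed products for the action on $C_m(G)$. The $K$-amenability of a discrete group, via \cite[Theorem 2.1 (c)]{Cu83} (the same tool used in Corollary \ref{CorMatched}(2)), transfers to $K$-amenability of the crossed product $\widehat{\Gq}$ in the quantum-group sense of \cite{Ve04}: the point is that $C_m(\Gq)$ and $C(\Gq)$ are the full and reduced crossed products of $\Gamma$ acting on one and the same nuclear-or-not C*-algebra $C(G)$, so the $KK$-equivalence of $C^*(\Gamma)$ and $C^*_r(\Gamma)$ implemented by $K$-amenability induces, after taking crossed products, a $KK$-equivalence between $C_m(\Gq)$ and $C(\Gq)$ compatible with the canonical surjection $\lambda$, which is exactly $K$-amenability of $\widehat{\Gq}$.

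\textbf{Main obstacle.} The routine parts are the co-amenability equivalences in (1), which are essentially bookkeeping with crossed products and the characterization of co-amenability via boundedness of the counit. The delicate point is (2): one must check that the $K$-amenability machinery for discrete groups — the $KK^{\Gamma}$-equivalence of $\mathbb C$ with itself twisted appropriately, or directly the statement that crossed products by $K$-amenable groups preserve the $KK$-class — applies verbatim with the coefficient algebra $C(G)$ being noncommutative, and that the resulting equivalence is of the precise form demanded by the definition of $K$-amenability of a discrete quantum group (compatibility with the comultiplications / the fundamental unitary). I expect this to reduce cleanly to \cite[Theorem 2.1 (c)]{Cu83} exactly as in the proof of Corollary \ref{CorMatched}(2), since there $\Gq$ was also a crossed product of $\Gamma$ by a (commutative) C*-algebra; the only thing to verify is that co-amenability of $G$ is what makes ``full = reduced'' on the coefficient side so that the two crossed products $C_m(\Gq)$ and $C(\Gq)$ differ only through the group, which is precisely the setting of Cuntz's theorem.
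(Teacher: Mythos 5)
Your proposal is correct and follows essentially the same route as the paper: Remark \ref{RmkFull} together with faithfulness of the Haar state on $C_m(\Gq)$ for the forward direction of (1), the standard identifications $C_m(G)=C(G)$ and full $=$ reduced crossed product for amenable $\Gamma$ for the converse, and \cite[Theorem 2.1 (c)]{Cu83} for (2). The only cosmetic difference is that you detect co-amenability of $G$ via boundedness of $\varepsilon_G$ on $C(G)$, where the paper uses faithfulness of $h_G\circ\lambda_G$ on $C_m(G)$ (equivalent characterizations), and you should phrase the amenability step as in the paper — since $\lambda$ is an isomorphism, the subalgebra generated by the $u_\gamma$ is the \emph{full} $C^*(\Gamma)$ carrying a faithful canonical trace, which is what yields amenability (faithfulness on the reduced object alone would be vacuous).
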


\begin{proof}
$(1)$. Let $G$ be co-amenable and $\Gamma$ be amenable. Then as $C_m(G)=C(G)$ and since the full and the reduced crossed products are the same for actions of amenable groups, it follows from the previous theorem that $\Gq$ is co-amenable. Now, if $\Gq_m$ is co-amenable, its Haar state is faithful on $A_m$. In particular, $h\circ\lambda\circ\alpha=h_G\circ\lambda_G$ must be be faithful on $C_m(G)$ which implies that $G$ is co-amenable. Since $h(u_\gamma)=\delta_{\gamma,e}$, $\gamma\in \Gamma$, we conclude, from Remark \ref{RmkFull} (since the counit $\varepsilon_G$ is an $\alpha$ invariant character on $C_m(G)$), that the canonical trace on $C^*(\Gamma)$ has to be faithful. Hence, $\Gamma$ is amenable.

$(2)$. Follows from \cite[Theorem 2.1 (c)]{Cu83} since $C_m(G)=C(G)$.
\end{proof}

Note that, from the action $\alpha\,:\,\Gamma\curvearrowright C_m(G)$ by quantum automorphisms, we have a natural action, still denoted $\alpha$, of $\Gamma$ on $\chi(G)$ by group automorphisms and homeomorphisms. The set of fixed points $\chi(G)^\alpha=\{\chi\in\chi(G),:\,\chi\circ\alpha_\gamma=\chi\text{ for all }\gamma\in\Gamma\}$ is a closed subgroup. Also note that we have a natural action by group automorphisms, still denoted $\alpha$, of $\Gamma$ on ${\rm Int}(G)$.

\begin{proposition}\label{Prop-CrossedInt}
There are canonical group isomorphisms:
$${\rm Int}(\Gq)\simeq\Gamma\,_\alpha\ltimes{\rm Int}(G)\quad\text{and}\quad\chi(\Gq)\simeq\chi(G)^\alpha\times{\rm Sp}(\Gamma).$$
The second one is moreover an homeomorphism.
\end{proposition}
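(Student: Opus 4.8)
The plan is to construct the two isomorphisms explicitly using the description of ${\rm Irr}(\Gq)$ and the covariant-representation picture of $C_m(\Gq) = \Gamma \, _{\alpha,m}\ltimes C_m(G)$, exactly mirroring the proof of Proposition \ref{PropInt} but with the roles adapted to the crossed-product setting. For the first isomorphism, recall from Remark \ref{RmkFusion} that the irreducible representations of $\Gq$ are the classes $\gamma\cdot x$ of $u^x_\gamma = (1\ot u_\gamma)(\id\ot\alpha)(u^x)$, and $u^x_\gamma$ is $1$-dimensional precisely when $\dim(x)=1$, i.e. $x\in{\rm Int}(G)$. So the map $\pi\,:\,\Gamma\,_\alpha\ltimes{\rm Int}(G)\to{\rm Int}(\Gq)$, $(\gamma,u)\mapsto u_\gamma\alpha(u)$, is a bijection. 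To check it is a group homomorphism one computes, for $(\gamma_1,u_1),(\gamma_2,u_2)$, that $u_{\gamma_1}\alpha(u_1)u_{\gamma_2}\alpha(u_2) = u_{\gamma_1\gamma_2}\alpha(\alpha_{\gamma_2^{-1}}(u_1))\alpha(u_2) = u_{\gamma_1\gamma_2}\alpha(\alpha_{\gamma_2^{-1}}(u_1)u_2)$, which is exactly $\pi$ applied to the product $(\gamma_1,u_1)(\gamma_2,u_2) = (\gamma_1\gamma_2,\alpha_{\gamma_2^{-1}}(u_1)u_2)$ in the semidirect product (with $\Gamma$ acting on ${\rm Int}(G)$ via $\alpha$); this is the same bookkeeping as in the crossed-product relations $u_\gamma\alpha(a)u_\gamma^* = \alpha(\alpha_\gamma(a))$.

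For the second isomorphism, I would argue as in Proposition \ref{PropInt}. Given $(\chi,\omega)\in\chi(G)^\alpha\times{\rm Sp}(\Gamma)$, the condition $\chi\in\chi(G)^\alpha$ says $\chi\circ\alpha_\gamma=\chi$ for all $\gamma$, so the pair consisting of the unital $*$-homomorphism $\chi\,:\,C_m(G)\to\C$ and the $1$-dimensional unitary representation $\omega\,:\,\Gamma\to S^1$ forms a covariant representation of the dynamical system; by the universal property of the full crossed product it induces a unique $\rho(\chi,\omega)\in\chi(\Gq)$ with $\rho(\chi,\omega)(u_\gamma\alpha(a)) = \omega(\gamma)\chi(a)$. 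This defines $\rho\,:\,\chi(G)^\alpha\times{\rm Sp}(\Gamma)\to\chi(\Gq)$, visibly injective. Using $\Delta_m(u_\gamma)=u_\gamma\ot u_\gamma$ and $\Delta_m\circ\alpha = (\alpha\ot\alpha)\circ\Delta_G$ one checks that $\rho((\chi_1,\omega_1)\cdot(\chi_2,\omega_2))$ equals $\rho(\chi_1,\omega_1)\cdot\rho(\chi_2,\omega_2)$ (product in $\chi(\Gq)$ given by $g\cdot h = (g\ot h)\circ\Delta_m$); since there is no cocycle twisting here (unlike the bicrossed case, where $\beta$ produced the $\mu\circ\beta_g$ term) the relevant product on $\chi(G)^\alpha\times{\rm Sp}(\Gamma)$ is the direct product, which is why the statement asserts a direct product rather than a semidirect one. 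Surjectivity: given $\chi\in\chi(\Gq)$, restrict along $\alpha$ to get $\chi\circ\alpha\in\chi(G)$; the covariance relation $\chi(u_\gamma\alpha(a)u_\gamma^*)=\chi(\alpha(a))$ combined with $u_\gamma\alpha(a)u_\gamma^* = \alpha(\alpha_\gamma(a))$ forces $\chi\circ\alpha\circ\alpha_\gamma = \chi\circ\alpha$, so $\chi\circ\alpha\in\chi(G)^\alpha$; and $\gamma\mapsto\chi(u_\gamma)$ lies in ${\rm Sp}(\Gamma)$ since $\gamma\mapsto u_\gamma$ is a group homomorphism into the unitaries of $C_m(\Gq)$ and $\chi$ is multiplicative. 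Then $\chi=\rho(\chi\circ\alpha,\chi(u_\cdot))$.

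Finally, for the homeomorphism claim I would observe that $\rho^{-1}$ is continuous: its two components $\chi\mapsto\chi\circ\alpha\,:\,\chi(\Gq)\to\chi(G)$ and $\chi\mapsto(\gamma\mapsto\chi(u_\gamma))\,:\,\chi(\Gq)\to{\rm Sp}(\Gamma)$ are both weak*-continuous (precomposition with a fixed $*$-homomorphism, and pointwise evaluation at the fixed elements $u_\gamma$, respectively), hence $\rho^{-1}$ into the product topology is continuous; since $\chi(\Gq)$ is compact (being a weak*-closed subset of the unit ball of $C_m(\Gq)^*$, as noted in Section \ref{SectionCQG}) and the target is Hausdorff, $\rho^{-1}$ is a homeomorphism, so $\rho$ is too.

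I do not expect a serious obstacle: everything is a direct transcription of the argument for Proposition \ref{PropInt}, and in fact the crossed-product case is \emph{easier} because the second action $\beta$ is trivial, so no cocycle term $\mu\circ\beta_g$ appears and the semidirect product collapses to a direct product. The only point requiring a little care is the verification that $\chi\circ\alpha$ genuinely lands in $\chi(G)^\alpha$ rather than merely in $\chi(G)$ — this is exactly where the covariance relation of the crossed product is used — and, symmetrically, confirming that the group law on the left-hand side matches the $\Delta_m$-product on the right under $\pi$ and $\rho$.
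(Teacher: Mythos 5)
Your proof is correct and follows essentially the same route as the paper: the paper also identifies ${\rm Int}(\Gq)$ via the one-dimensional irreducibles $u^x_\gamma$ with $\dim(x)=1$, builds $\rho$ from the covariant representation $(\chi,\mu)$ with $\chi\in\chi(G)^\alpha$, proves surjectivity by the same covariance computation $\chi\circ\alpha_\gamma(a)=\omega(u_\gamma\alpha(a)u_\gamma^*)=\chi(a)$, and gets the homeomorphism from continuity of $\rho^{-1}$ plus compactness. The only cosmetic difference is that the paper writes the first bijection as $(\gamma,u)\mapsto\alpha(u)u_\gamma$ rather than $u_\gamma\alpha(u)$, which merely corresponds to the other (isomorphic) convention for the semidirect product.
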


\begin{proof}
The proof is the same as the proof of Proposition \ref{PropInt}. The dimension of the irreducible representation $(\id\ot\alpha)(u^x)(1\ot u_\gamma)$ is equal to the dimension of $x$ and such representations, for $x\in{\rm Irr}(G)$ and $\gamma\in \Gamma$, form a complete set of irreducibles of $\Gq$. Hence we get a bijection
$$\pi\,:\,\Gamma\,_\alpha\ltimes{\rm Int}(G)\rightarrow {\rm Int}(\Gq)\,:\,(\gamma,u)\mapsto \alpha(u)u_\gamma\in C_m(\Gq).$$
Moreover, the relations in the crossed product and the group law in the semi-direct product imply that it is a group homomorphism.

\vspace{0.2cm}

Let $(\chi,\mu)\in\chi(G)^\alpha\times{\rm Sp}(\Gamma)$. Since $\chi\circ\alpha_\gamma=\chi$ for all $\gamma\in\Gamma$, the pair $(\chi,\mu)$ gives a covariant representation in $\C$, hence a unique character $\rho(\chi,\mu)\in\chi(\Gq)$ such that $\rho(\chi,\mu)(u_\gamma\alpha(a))=\mu(\gamma)\chi(a)$ for all $\gamma\in\Gamma$, $a\in C_m(G)$. It defines a map $\rho\,:\,\chi(G)^\alpha\times{\rm Sp}(\Gamma)\rightarrow\chi(\Gq)$ which is obviously injective. A direct computation shows that $\rho$ is a group homomorphism. Let us show that $\rho$ is surjective. Let $\omega\in\chi(\Gq)$, then $\chi:=\omega\circ\alpha\in \chi(G)$ and, for all $a\in C_m(G)$, $\chi\circ\alpha_\gamma(a)=\omega(u_\gamma\alpha(a)u_\gamma^*)=\omega(u_\gamma)\omega(\alpha(a))\omega(u_\gamma^*)=\chi(a)$. Hence, $\chi\in \chi(G)^\alpha$ and we have $\omega=\rho(\chi,\mu)$, where $\mu=(\gamma\mapsto\omega(u_\gamma))$. Moreover, as in the proof of Proposition \ref{PropInt}, it is easy to see that the map $\rho^{-1}$ is continuous, hence $\rho$ also, by compactness.

\end{proof}
\subsection{Weak amenability}

This subsection deals with weak amenability of $\widehat\Gq$ constructed in Section \ref{section-crossed}. We first prove an intermediate technical result to construct finite rank u.c.p. maps from $C(G)$ to itself using compactness of the action and elements of $\ell^{\infty}(\widehat{G})$ of finite support. Using this construction, we estimate the Cowling-Haagerup constant of $C(\Gq)$ and show that 
$C(\Gq)$ is weakly amenable when both $\Gamma$ and $\widehat{G}$ are weakly amenable and when the action is compact. This enables us to compute Cowling-Haagerup constants in some explicit examples given in Section \ref{section-examples}. We freely use the notations and definitions of Section \ref{section-AP}.

\begin{lemma}\label{LemAverage}
Suppose that the action $\alpha\,:\,\Gamma\curvearrowright G$ is compact. Denote by $H<{\rm Aut}(G)$ the compact group obtained by taking the closure of the image of $\Gamma$ in {\rm Aut}$(G)$. If $a\in \ell^\infty(\widehat{G})$ has finite support, then the linear map $\Psi\,:\,C(G)\rightarrow C(G)$, defined by $\Psi(z)=\int_H(h^{-1}\circ m_a\circ h)(z)dh$ has finite dimensional rank and $\Vert\Psi\Vert_{cb}\leq\Vert m_a\Vert_{cb}$, where $dh$ denotes integration with respect to the normalized Haar measure on $H$.
\end{lemma}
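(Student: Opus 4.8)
The plan is to exploit two structural facts: for $a$ of finite support $m_a$ is a finite rank map whose range lies in a finite sum of spectral subspaces of $C(G)$, and conjugating $m_a$ by an element of $\operatorname{Aut}(G)$ only permutes the spectral subspaces involved. Compactness of $\alpha$ will then force all the conjugates $h^{-1}\circ m_a\circ h$, $h\in H$, to have range inside one fixed finite dimensional subspace, which gives the finite rank assertion; the cb-norm estimate is afterwards a routine averaging argument.

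Concretely, for $x\in\operatorname{Irr}(G)$ write $\mathcal C(x)\subset\operatorname{Pol}(G)$ for the linear span of the coefficients of $u^x$, so that $\operatorname{Pol}(G)=\bigoplus^{\mathrm{alg}}_{x}\mathcal C(x)$. From $(\id\ot m_a)(u^x)=u^x(ap_x\ot 1)$ one reads off $m_a(\mathcal C(x))\subset\mathcal C(x)$ for every $x$, and $m_a|_{\mathcal C(x)}=0$ whenever $ap_x=0$; by density of $\operatorname{Pol}(G)$ and continuity of $m_a$ this gives $m_a(C(G))\subset E_a:=\bigoplus_{x\in\operatorname{supp}(a)}\mathcal C(x)$, a finite dimensional space. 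Next, for $h\in\operatorname{Aut}(G)$ realized on $C(G)$ via Remark \ref{RmkAut}, $(\id\ot h)(u^x)$ is an irreducible unitary representation in the class $h(x)$, hence $h(\mathcal C(x))=\mathcal C(h(x))$; consequently $h^{-1}\circ m_a\circ h$ annihilates $\mathcal C(x)$ for $x\notin h^{-1}(\operatorname{supp}(a))$ and has range in $\bigoplus_{x\in\operatorname{supp}(a)}\mathcal C(h^{-1}(x))$. Since $\alpha$ is compact, $H$ is a compact group and, by Proposition \ref{PropContinuity}, every orbit of $H$ on $\operatorname{Irr}(G)$ is finite; therefore $F:=\bigcup_{h\in H}h^{-1}(\operatorname{supp}(a))$, being contained in the union of the finitely many orbits through $\operatorname{supp}(a)$, is finite. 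Thus every $h^{-1}\circ m_a\circ h$ with $h\in H$ has range in the fixed finite dimensional subspace $E_F:=\bigoplus_{y\in F}\mathcal C(y)$.

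Finally I would handle the integral and conclude. The action $H\times C(G)\to C(G)$ is jointly norm-continuous (inversion is continuous on the compact group $H$, composition separately continuous, hence continuous), so $h\mapsto(h^{-1}\circ m_a\circ h)(z)$ is a norm-continuous $C(G)$-valued function on $H$ and the Bochner integral $\Psi(z)=\int_H(h^{-1}\circ m_a\circ h)(z)\,dh$ exists; $\Psi$ is linear and $\|\Psi(z)\|\le\int_H\|(h^{-1}\circ m_a\circ h)(z)\|\,dh\le\|m_a\|\,\|z\|$ because $h,h^{-1}$ are isometric. As $E_F$ is closed and every integrand lies in it, $\Psi(C(G))\subset E_F$, so $\Psi$ has finite rank. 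For the cb-norm, fix $n\ge 1$ and $z\in M_n(C(G))$ with $\|z\|\le 1$; pulling the finite matrix sum through the integral gives
\begin{equation*}
(\Psi\ot\id_n)(z)=\int_H(h^{-1}\ot\id_n)\bigl((m_a\ot\id_n)\bigl((h\ot\id_n)(z)\bigr)\bigr)\,dh ,
\end{equation*}
and since $h\ot\id_n$ and $h^{-1}\ot\id_n$ are $*$-automorphisms of $M_n(C(G))$, hence isometric, while $\|(m_a\ot\id_n)((h\ot\id_n)(z))\|\le\|m_a\|_{cb}\|(h\ot\id_n)(z)\|=\|m_a\|_{cb}\|z\|$, we obtain $\|(\Psi\ot\id_n)(z)\|\le\|m_a\|_{cb}$ for all $n$ and $z$, i.e. $\|\Psi\|_{cb}\le\|m_a\|_{cb}$. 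The only genuinely non-formal point is the use of compactness of $\alpha$ (through Proposition \ref{PropContinuity}) to upgrade ``each conjugate $h^{-1}\circ m_a\circ h$ lives on a finite set of blocks'' to ``all of them live on the same finite set of blocks $F$''; the remaining steps are bookkeeping with spectral subspaces and the elementary averaging estimates above.
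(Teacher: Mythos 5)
Your proof is correct and follows essentially the same route as the paper's: both use compactness of the action (via Proposition \ref{PropContinuity}) to obtain finite orbits on $\operatorname{Irr}(G)$, hence a common finite-dimensional spectral subspace containing the ranges of all the conjugates $h^{-1}\circ m_a\circ h$, and then conclude with the identical averaging estimate for the cb-norm. The only cosmetic difference is that the paper decomposes $m_a$ into rank-one maps $\omega(\cdot)y$ and tracks where $h^{-1}(y)$ lands, whereas you track the coefficient subspaces $\mathcal{C}(x)$ directly.
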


\begin{proof}
First observe that $\Psi$ is well defined since, for all $z\in C(G)$, the map $H\ni h\mapsto (h^{-1}\circ m_a\circ h)(z)\in C(G)$ is continuous. Moreover, the linearity of $\Psi$ is obvious. Since $a$ has finite support, the map $m_a$ is of the form $m_a(\cdot)=\omega_1(\cdot)y_1+\cdots+\omega_n(\cdot)y_n$, where $\omega_i\in C(G)^*$ and $y_i\in{\rm Pol}(G)$. Hence, to show that $\Psi$ has finite rank, it suffices to show that the map $\Psi_1(z)=\int_H(h^{-1}\circ\varphi\circ h)(z)dh$, $z\in C(G)$, has finite dimensional rank when $\varphi(\cdot)=\omega(\cdot)y$, with $\omega\in C(G)^*$ and $y\in{\rm Pol}(G)$. 

In this case, we have $\Psi_1(z)=\int_H\omega(h(z))h^{-1}(y)dh$, $z\in C(G)$. Write $y$ as a finite sum $y=\sum_{i=1}^N\sum_{k,l}\lambda_{i,k,l}u^{x_i}_{kl}$, where $F=\{x_1,\cdots,x_N\}\subset{\rm Irr}(G)$. Since $H$ is compact, the action of $H$ on ${\rm Irr}(G)$ has finite orbits. Writing $h\cdot x$ for the action of $h\in H$ on $x\in{\rm Irr}(G)$, the set $H\cdot F=\{h\cdot x\,:\,h\in H,x\in F\}\subset{\rm Irr}(G)$ is finite and, for all $h\in H$, $h^{-1}(y)\in\mathcal{F}$, where $\mathcal{F}$ is the finite dimensional subspace of $C(G)$ generated by the coefficients of the irreducible representations $x\in H\cdot F$. Hence, the map $h\mapsto \omega(h(z))h^{-1}(y)$ takes values in $\mathcal{F}$, for all $z\in C(G)$. It follows that $\Psi_1(z)=\int_H\omega(h(z))h^{-1}(y)dh\in\mathcal{F}$ for all $z\in C(G)$. Hence, $\Psi$ has finite dimensional rank.

Now we proceed to show that $\Vert\Psi\Vert_{cb}\leq\Vert m_a\Vert_{cb}$. For $n\in\N$, denote by $\Psi_n$ the map
\begin{align*}
\Psi_n=\id\ot\Psi\,:\,M_n(\C)\ot C(G)\rightarrow M_n(\C)\ot C(G).
\end{align*}
Observe that $\Psi_n(X)=\int_H(\id\ot (h^{-1}\circ m_a\circ h))(X)dh$ for all $X\in M_n(\C)\ot C(G)$. Hence, for $n\in \mathbb{N}$, one has 
\begin{align*}
\Vert\Psi_n(X)\Vert\leq \int_H\Vert(\id\ot (h^{-1}\circ m_a\circ h))(X)\Vert dh\leq\Vert X\Vert\int_H\Vert (h^{-1}\circ m_a\circ h)\Vert_{cb}dh\leq\Vert X\Vert\,\Vert m_a\Vert_{cb}.
\end{align*}
It follows that $\Vert\Psi\Vert_{cb}\leq\Vert m_a\Vert_{cb}$.
\end{proof}

\begin{theorem}\label{ThmWA}
We have ${\rm max}(\Lambda_{cb}(\Gamma),\Lambda_{cb}(C(G)))\leq \Lambda_{cb}(C(\Gq))$. Moreover, if the action $\Gamma\curvearrowright G$ is compact, then $\Lambda_{cb}(C(\Gq))\leq\Lambda_{cb}(\Gamma)\Lambda_{cb}(\widehat{G})$.
\end{theorem}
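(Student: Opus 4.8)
The lower bound is the easier half. Since $C^*(\Gamma)$ sits inside $C(\Gq)$ as the closed span of the $u_\gamma$, with a norm-one conditional expectation onto it (given by $u_\gamma\alpha(a)\mapsto h_G(a)u_\gamma$, which is $\omega$-preserving and completely positive), weak amenability and the Cowling--Haagerup constant pass to this subalgebra, giving $\Lambda_{cb}(\Gamma)\le\Lambda_{cb}(C(\Gq))$. Likewise $C(G)=\alpha(C(G))\subset C(\Gq)$ with the canonical conditional expectation $E(u_\gamma\alpha(a))=\delta_{\gamma,e}\alpha(a)$, which is completely positive and norm one, so $\Lambda_{cb}(C(G))\le\Lambda_{cb}(C(\Gq))$. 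Taking the maximum gives the first inequality.

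For the upper bound I would assume $\Lambda_{cb}(\Gamma)$ and $\Lambda_{cb}(\widehat{G})$ are both finite and the action is compact, write $H<\Aut(G)$ for the compact closure of $\Gamma$ with Haar measure $dh$, and build approximating multipliers on $C(\Gq)$ out of ingredients on the two factors. Fix $\eps>0$. Choose finitely supported $a_i\in\ell^\infty(\widehat{G})$ converging pointwise to $1$ with $\sup_i\|m_{a_i}\|_{cb}\le\Lambda_{cb}(\widehat{G})+\eps$, and finitely supported functions $\varphi_j\in\ell^1(\Gamma)$ (i.e.\ Herz--Schur multipliers of $\Gamma$) converging pointwise to $1$ with $\sup_j\|M_{\varphi_j}\|_{cb}\le\Lambda_{cb}(\Gamma)+\eps$, where $M_{\varphi_j}\colon C^*(\Gamma)\to C^*(\Gamma)$ is the associated completely bounded map sending $u_\gamma\mapsto\varphi_j(\gamma)u_\gamma$. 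Using Lemma \ref{LemAverage}, replace $m_{a_i}$ by the averaged finite-rank map
$$\Psi_i(z)=\int_H (h^{-1}\circ m_{a_i}\circ h)(z)\,dh,\qquad \|\Psi_i\|_{cb}\le\|m_{a_i}\|_{cb},$$
which is now $H$-equivariant: $h\circ\Psi_i=\Psi_i\circ h$ for all $h\in H$, and in particular $\alpha_\gamma\circ\Psi_i=\Psi_i\circ\alpha_\gamma$ for all $\gamma\in\Gamma$. One checks $\Psi_i$ still converges to $\id_{C(G)}$ pointwise: since the $a_i$ converge pointwise to $1$, $m_{a_i}(u^x_{kl})\to u^x_{kl}$ for each coefficient, and averaging over the finite $H$-orbit of $x$ preserves this (the relevant coefficients live in a fixed finite-dimensional $H$-invariant subspace, on which pointwise convergence is uniform convergence), so $\Psi_i\to\id$ uniformly on $\Pol(G)$ coefficient-blocks and hence pointwise on $C(G)$. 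Now define, on $\Pol(\Gq)=\mathcal{A}=\Span\{u_\gamma\alpha(a)\}$, the finite-rank map
$$T_{i,j}(u_\gamma\alpha(a))=\varphi_j(\gamma)\,u_\gamma\,\alpha(\Psi_i(a)).$$
The crucial point is that $T_{i,j}$ is completely bounded with $\|T_{i,j}\|_{cb}\le\|M_{\varphi_j}\|_{cb}\,\|\Psi_i\|_{cb}$: by $H$-equivariance of $\Psi_i$, the map $u_\gamma\alpha(a)\mapsto u_\gamma\alpha(\Psi_i(a))$ is exactly the reduced-crossed-product extension $\id_\Gamma\ltimes\Psi_i$ of $\Psi_i$, which is completely bounded with the same cb-norm as $\Psi_i$ (equivariant completely bounded maps of a $C^*$-algebra extend to the reduced crossed product without norm increase), and $u_\gamma\alpha(a)\mapsto\varphi_j(\gamma)u_\gamma\alpha(a)$ is the Herz--Schur multiplier $M_{\varphi_j}\otimes\id$ on the crossed product, completely bounded with cb-norm $\le\|M_{\varphi_j}\|_{cb}$. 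Since $T_{i,j}$ is the composition of these two, $\|T_{i,j}\|_{cb}\le(\Lambda_{cb}(\Gamma)+\eps)(\Lambda_{cb}(\widehat{G})+\eps)$, uniformly in $i,j$. Moreover $T_{i,j}$ has finite rank: $\Psi_i$ has finite-dimensional range spanned by coefficients of finitely many $x\in\Irr(G)$, and $\varphi_j$ has finite support, so $T_{i,j}$ takes values in a finite-dimensional subspace of $\Pol(\Gq)$. Finally, a diagonal argument in $(i,j)$ produces a net converging pointwise to $\id_{C(\Gq)}$: on a generator $u_\gamma\alpha(a)$ one has $T_{i,j}(u_\gamma\alpha(a))-u_\gamma\alpha(a)=\varphi_j(\gamma)u_\gamma\alpha(\Psi_i(a)-a)+(\varphi_j(\gamma)-1)u_\gamma\alpha(a)\to 0$ as $i\to\infty$ then $j\to\infty$; passing to a suitable subnet (using separability to reduce to sequences) gives a single sequence of finite-rank cb maps $m_{b_k}$ on $C(\Gq)$ converging pointwise to $1$ with $\sup_k\|m_{b_k}\|_{cb}\le(\Lambda_{cb}(\Gamma)+\eps)(\Lambda_{cb}(\widehat{G})+\eps)$. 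Letting $\eps\to 0$ yields $\Lambda_{cb}(C(\Gq))\le\Lambda_{cb}(\Gamma)\Lambda_{cb}(\widehat{G})$.

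The main obstacle, and the step deserving the most care, is identifying $T_{i,j}$ as a genuine completely bounded multiplier on $C(\Gq)$ with the claimed norm — i.e.\ showing that the $H$-equivariance supplied by Lemma \ref{LemAverage} is exactly what is needed to extend $\Psi_i$ to the reduced crossed product without cb-norm increase, and that this extension composes cleanly with the group-side Herz--Schur multiplier. I would make the first point precise either by citing the standard fact that a $\Gamma$-equivariant completely bounded map on $A$ induces one on $\Gamma\ltimes_r A$ of the same cb-norm (tensoring with the identity on $\ell^2(\Gamma)$ via the regular representation), or, to stay self-contained in the quantum-group language, by realizing $\id_\Gamma\ltimes\Psi_i$ directly as $m_{\tilde a_i}$ for an appropriate finitely supported $\tilde a_i\in\ell^\infty(\widehat{\Gq})$ built from $a_i$ using the description of $\Irr(\Gq)$ in Theorem \ref{ThmCrossedProduct} and the fusion rules in Remark \ref{RmkFusion}, so that the whole argument runs inside the formalism of Section \ref{section-AP}. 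The rest — pointwise convergence of $\Psi_i$, finite rank of $T_{i,j}$, the diagonal/subnet extraction — is routine.
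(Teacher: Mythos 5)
Your proposal is correct and follows essentially the same route as the paper: conditional expectations for the lower bound, and for the upper bound the averaged maps $\Psi_i$ of Lemma \ref{LemAverage} composed with a Herz--Schur multiplier on the $\Gamma$-side, extended to the crossed product via $\Gamma$-equivariance. The paper asserts the cb-norm-preserving extension $\widetilde{\Psi}_i$ without further comment, so your explicit flagging of that step (and the two ways to justify it) only adds detail to the same argument.
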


\begin{proof}
The first inequality is obvious by the existence of conditional expectations from $C(\Gq)$ to $C^*_r(\Gamma)$ and from $C(\Gq)$ to $C(G)$. Let us prove the second inequality. We may and will assume that $\Gamma$ and $\widehat{G}$ are weakly amenable. Fix $\epsilon>0$.

Let $a_i\in\ell^{\infty}(\widehat{G})$ be a sequence of finitely supported elements such that $\underset{i}\sup\Vert m_{a_i}\Vert_{cb}\leq\Lambda_{cb}(\widehat{G})+\epsilon$ and $m_{a_i}$ converges pointwise in norm to identity. Consider the maps $\Psi_i$ associated to $a_i$ as in Lemma \ref{LemAverage}. Observe that the sequence $\Psi_i$ converges pointwise in norm to identity. Indeed, for $x\in C(G)$,
\begin{eqnarray*}
\Vert\Psi_i(x)-x\Vert&=&\Vert\int_H((h^{-1}\circ m_{a_i}\circ h)(x)-x)dh\Vert
=\Vert\int_H(h^{-1}(m_{a_i}(h(x))-h(x))dh\Vert\\
&\leq&\int_H\Vert m_{a_i}(h(x))-h(x)\Vert dh.
\end{eqnarray*}
Now the right hand side of the above expression is converging to $0$ for all $x\in C(G)$ by the dominated convergence theorem, since $\Vert m_{a_i}(h(x))-h(x)\Vert\rightarrow_i 0$ for all $x\in C(G)$ and all $h\in H$, and
\begin{align*}
\Vert m_{a_i}(h(x))-h(x)\Vert\leq(\Vert m_{a_i}\Vert_{cb}+1)\Vert x\Vert\leq(\Lambda_{cb}(\widehat{G})+\epsilon+1)\Vert x\Vert\quad\text{for all }i\text{ and all }x\in C(G).
\end{align*}

By definition, the maps $\Psi_i$ are $\Gamma$-equivariant i.e., $\Psi_i\circ\alpha_\gamma=\alpha_\gamma\circ\Psi_i$. Hence, for all $i$, there is a unique linear extension $\widetilde{\Psi}_i\,:\, C(\Gq)\rightarrow C(\Gq)$ such that $\widetilde{\Psi}_i(u_\gamma\alpha(x))=u_\gamma\alpha(\Psi_i(x))$ for all $x\in C(G)$ and all $\gamma\in\Gamma$. Moreover, $\Vert \widetilde{\Psi}_i\Vert_{cb}\leq\Vert\Psi_i \Vert_{cb}\leq\Vert m_{a_i}\Vert_{cb}\leq\Lambda_{cb}(\widehat{G})+\epsilon$.

Consider a sequence of finitely supported maps $\psi_j\,:\,\Gamma\rightarrow\C$ going pointwise to $1$ and such that $\sup\Vert m_{\psi_j}\Vert_{cb}\leq(\Lambda_{cb}(\Gamma)+\epsilon)$, and denote by $\widetilde{\psi_j}\,:\,C(\Gq)\rightarrow C(\Gq)$ the unique linear extension such that $\widetilde{\psi_j}(u_\gamma\alpha(x))=\psi_j(\gamma)u_\gamma\alpha(x)$. Then, we have $\Vert \widetilde{\psi}_j\Vert_{cb}\leq\Vert m_{\psi_j} \Vert_{cb}\leq\Lambda_{cb}(\Gamma)+\epsilon$.

Define the maps $\varphi_{i,j}=\widetilde{\psi}_j\circ\widetilde{\Psi}_i\,:\, C(\Gq)\rightarrow C(\Gq)$. Then for all $i,j$ we have $\Vert\varphi_{i,j}\Vert_{cb}\leq(\Lambda_{cb}(\Gamma)+\epsilon)(\Lambda_{cb}(\widehat{G})+\epsilon)$.  Since $\varphi_{i,j}(u_\gamma\alpha(x))=\psi_j(\gamma)u_\gamma\alpha(\Psi_i(x))$, it is clear that $\varphi_{i,j}$ has finite dimensional rank, and $(\varphi_{i,j})_{i,j}$ is going pointwise in norm to identity. Since $\epsilon$ was arbitrary, the proof is complete.
\end{proof}

\subsection{Rapid Decay}

In this subsection we study property $(RD)$ for crossed products. We use the notion of property $(RD)$ developed in \cite{BVZ14} and recall the definition below. Since for a discrete quantum subgroup $\widehat{G}<\widehat{\Gq}$, i.e. such that there exists a faithful unital $*$-homomorphism $C_m(G)\rightarrow C_m(\Gq)$ which intertwines the comultiplications, property $(RD)$ for $\widehat{\Gq}$ implies property $(RD)$ for $\widehat{G}$ and, since for a crossed product $\widehat{\Gq}$ coming from an action $\Gamma\curvearrowright G$ of a discrete group $\Gamma$ on a compact quantum group $G$, both $\Gamma$ and $\widehat{G}$ are discrete quantum subgroups of $\widehat{\Gq}$, it follows that property $(RD)$ for $\widehat{\Gq}$ implies property $(RD)$ for $\Gamma$ and $\widehat{G}$. Hence, we will only concentrate on proving the converse.

For a compact quantum group $G$ and $a\in C_c(\widehat{G})$ we define its Fourier transform as:
\begin{align*}
\mathcal{F}_G(a)=(h_{\widehat{G}}\ot 1)(V(a\ot 1))=\sum_{x\in {\rm Irr}(G)} {\rm dim}_q(x)({\rm Tr}_x\ot\id)((Q_x\otimes 1)u^x(ap_x\ot 1))\in{\rm Pol}(G),
\end{align*}
and its ``Sobolev 0-norm'' by $\|a\|^2_{G,0}=\sum_{x\in {\rm Irr}(G)}\frac{{\rm dim}_q(x)^2}{{\rm dim}(x)}{\rm Tr}_x(Q_x^\ast(a^\ast a)p_x Q_x)$.

Let $\alpha\,:\,\Gamma\curvearrowright G$ be an action by quantum automorphisms and denote by $\Gq$ the crossed product. Recall that ${\rm Irr}(\Gq)=\{\gamma\cdot x\,:\,\gamma\in\Gamma\text{ and }x\in{\rm Irr}(G)\}$, where $\gamma\cdot x$ is the equivalence class of
\begin{align*}
u_\gamma^x=(1\ot u_\gamma)(\id\ot\alpha)(u^x)\in\mathcal{B}(H_x)\ot C(\Gq).
\end{align*}
Let $V_{\gamma\cdot x}\,:\,H_{\gamma\cdot x}\rightarrow H_x$ be the unique unitary such that $u^{\gamma\cdot x}=(V_{\gamma\cdot x}^*\ot 1) u_\gamma^x (V_{\gamma\cdot x}\ot 1)$.

\begin{lemma}\label{LemQ_x}
 For any $\gamma\in \Gamma$ and $x\in {\rm Irr}(G)$, one has $Q_{\gamma\cdot x}=V_{\gamma\cdot x}^\ast Q_x V_{\gamma\cdot x}$ and ${\rm dim}_q(\gamma\cdot x)={\rm dim}_q(x)$.
\end{lemma}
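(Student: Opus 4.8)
The plan is to exploit the uniqueness characterisation of $Q_z$ recalled in Section~\ref{SectionCQG}: for a compact quantum group and $z\in{\rm Irr}$, $Q_z$ is the unique positive invertible element of ${\rm Mor}(u^z,u^z_{cc})$ with ${\rm Tr}_z(Q_z)={\rm Tr}_z(Q_z^{-1})>0$, where $u^z_{cc}=(\id\ot S^2)(u^z)$. Thus it suffices to check that $V_{\gamma\cdot x}^\ast Q_x V_{\gamma\cdot x}$ enjoys exactly these three properties with respect to the chosen representative $u^{\gamma\cdot x}$ of $\Gq$.

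First I would compute $S_\Gq^2$ on ${\rm Pol}(\Gq)$. By Remark~\ref{RmkAnti}, $S_\Gq(u_\gamma\alpha(a))=u_{\gamma^{-1}}\alpha(S_G(\alpha_{\gamma^{-1}}(a)))$ and $\alpha_\gamma\circ S_G=S_G\circ\alpha_\gamma$; composing the first formula with itself and using $\alpha_\gamma\circ\alpha_{\gamma^{-1}}={\rm id}$ gives $S_\Gq^2(u_\gamma\alpha(a))=u_\gamma\alpha(S_G^2(a))$ for all $\gamma\in\Gamma$, $a\in{\rm Pol}(G)$. Applying $\id\ot S_\Gq^2$ coefficientwise to $u_\gamma^x=(1\ot u_\gamma)(\id\ot\alpha)(u^x)$ then yields $(u_\gamma^x)_{cc}=(1\ot u_\gamma)(\id\ot\alpha)(u^x_{cc})$, where I write $w_{cc}=(\id\ot S_\Gq^2)(w)$ for representations $w$ of $\Gq$.

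Next, starting from $(Q_x\ot 1)u^x=u^x_{cc}(Q_x\ot 1)$ in $\mathcal{B}(H_x)\ot C_m(G)$, I would apply $\id\ot\alpha$ (which fixes $Q_x\ot 1$, as $\alpha(1)=1$) and left-multiply by $1\ot u_\gamma$ (which commutes with $Q_x\ot 1$); combining with the previous identity this gives $(Q_x\ot 1)u_\gamma^x=(u_\gamma^x)_{cc}(Q_x\ot 1)$, i.e.\ $Q_x\in{\rm Mor}(u_\gamma^x,(u_\gamma^x)_{cc})$. Since $u^{\gamma\cdot x}=(V_{\gamma\cdot x}^\ast\ot 1)u_\gamma^x(V_{\gamma\cdot x}\ot 1)$ forces also $(u^{\gamma\cdot x})_{cc}=(V_{\gamma\cdot x}^\ast\ot 1)(u_\gamma^x)_{cc}(V_{\gamma\cdot x}\ot 1)$, conjugating the previous intertwining relation by the unitary $V_{\gamma\cdot x}$ shows $V_{\gamma\cdot x}^\ast Q_x V_{\gamma\cdot x}\in{\rm Mor}(u^{\gamma\cdot x},(u^{\gamma\cdot x})_{cc})$.

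Finally, $V_{\gamma\cdot x}^\ast Q_x V_{\gamma\cdot x}$ is positive and invertible, with inverse $V_{\gamma\cdot x}^\ast Q_x^{-1}V_{\gamma\cdot x}$; and since $V_{\gamma\cdot x}$ is unitary and ${\rm Tr}_{\gamma\cdot x}$, ${\rm Tr}_x$ are the ordinary operator traces, one has ${\rm Tr}_{\gamma\cdot x}(V_{\gamma\cdot x}^\ast Q_x V_{\gamma\cdot x})={\rm Tr}_x(Q_x)={\rm Tr}_x(Q_x^{-1})={\rm Tr}_{\gamma\cdot x}((V_{\gamma\cdot x}^\ast Q_x V_{\gamma\cdot x})^{-1})>0$. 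By the uniqueness of $Q_{\gamma\cdot x}$ this gives $Q_{\gamma\cdot x}=V_{\gamma\cdot x}^\ast Q_x V_{\gamma\cdot x}$, whence ${\rm dim}_q(\gamma\cdot x)={\rm Tr}_{\gamma\cdot x}(Q_{\gamma\cdot x})={\rm Tr}_x(Q_x)={\rm dim}_q(x)$. The only mildly delicate point is the first step, namely tracking which $\alpha_\gamma$ appears where when composing the antipode formula with itself; everything after that is formal and the conclusion is immediate from the uniqueness statement.
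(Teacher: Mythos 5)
Your proposal is correct and follows essentially the same route as the paper: both arguments reduce the claim to the uniqueness characterisation of $Q_{\gamma\cdot x}$ as the invertible element of ${\rm Mor}(u^{\gamma\cdot x},u^{\gamma\cdot x}_{cc})$ with ${\rm Tr}_{\gamma\cdot x}(Q_{\gamma\cdot x})={\rm Tr}_{\gamma\cdot x}(Q_{\gamma\cdot x}^{-1})>0$, derive $S_\Gq^2(u_\gamma\alpha(a))=u_\gamma\alpha(S_G^2(a))$ from Remark~\ref{RmkAnti} to see that $Q_x\in{\rm Mor}(u^x_\gamma,(u^x_\gamma)_{cc})$, and then conjugate by the unitary $V_{\gamma\cdot x}$. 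Your write-up merely makes explicit the intermediate steps (the composition of the antipode formula with itself and the trace verification) that the paper leaves to the reader.
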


\begin{proof}
Since $V_{\gamma\cdot x}$ is unitary, it suffices to show the first assertion. Recall that $Q_{\gamma\cdot x}$ is uniquely determined by the properties that it is invertible, ${\rm Tr}_{\gamma\cdot x}(Q_{\gamma\cdot x})={\rm Tr}_{\gamma\cdot x}(Q_{\gamma\cdot x}^{-1})>0$ and that $Q_{\gamma\cdot x}\in {\rm Mor}(u^{\gamma\cdot x}, u^{\gamma\cdot x}_{cc})$, where $u^{\gamma\cdot x}_{cc}=(\id\otimes S_{\Gq}^2)(u^{\gamma\cdot x})$. It is obvious that $Q:=V_{\gamma\cdot x}^\ast Q_x V_{\gamma\cdot x}$ is invertible and that ${\rm Tr}_{\gamma\cdot x}(Q)={\rm Tr}_{\gamma\cdot x}(Q^{-1})>0$. Hence, we will be done once we show that $Q\in {\rm Mor}(u^{\gamma\cdot x}, u^{\gamma\cdot x}_{cc})$. To this end, we first note that we have, by Remark \ref{RmkAnti}, for any $\gamma\in \Gamma$ and $a\in {\rm Pol}(G)$, $S_{\Gq}^2(u_{\gamma}\alpha(a))=u_\gamma\alpha(S_G^2(a))$. Thus, $(\id\otimes S_{\Gq}^2)(u^x_\gamma)=(1\otimes u_{\gamma})(\id\otimes \alpha)((\id\otimes S_G^2)(u^x))$. It follows that $Q_x\in {\rm Mor}(u^x_\gamma, (u^x_\gamma)_{cc})$ hence $Q\in {\rm Mor}(u^{\gamma\cdot x}, u^{\gamma\cdot x}_{cc})$.  
\end{proof}

\begin{lemma}\label{LemFourier}
Let $a\in C_c(\widehat{\Gq})$ and write $a=\sum_{\gamma\in S,x\in T}ap_{\gamma\cdot x}$, where $S\subset\Gamma$ and $T\subset{\rm Irr}(G)$ are finite subsets. For $\gamma\in S$, define $a_\gamma\in C_c(\widehat{G})$ by $a_\gamma=\sum_{x\in T}V_{\gamma\cdot x}ap_{\gamma\cdot x}V_{\gamma\cdot x}^*p_x$.
The following holds.
\begin{enumerate}
\item $\mathcal{F}_\Gq(a)=\sum_{\gamma\in S}u_\gamma\alpha(\mathcal{F}_G(a_\gamma))$.
\item $\Vert a\Vert_{\Gq,0}^2=\sum_{\gamma\in S}\Vert a_\gamma\Vert_{G,0}^2$.
\end{enumerate}
\end{lemma}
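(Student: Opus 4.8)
The plan is to unwind the definitions of $\mathcal{F}_\Gq$ and $\|\cdot\|_{\Gq,0}$ on the finitely supported element $a$, split the sum over ${\rm Irr}(\Gq)=\{\gamma\cdot x : \gamma\in\Gamma, x\in{\rm Irr}(G)\}$ into the double sum over $\gamma\in S$ and $x\in T$, and then compare each block with the corresponding block for $G$ via the unitary $V_{\gamma\cdot x}$. The key algebraic input is the relation $u^{\gamma\cdot x}=(V_{\gamma\cdot x}^*\ot 1)u_\gamma^x(V_{\gamma\cdot x}\ot 1)$ together with Lemma \ref{LemQ_x}, which says $Q_{\gamma\cdot x}=V_{\gamma\cdot x}^* Q_x V_{\gamma\cdot x}$ and ${\rm dim}_q(\gamma\cdot x)={\rm dim}_q(x)$. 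Since the representation ${\rm dim}(\gamma\cdot x)={\rm dim}(x)$ as well, all the scalar prefactors ${\rm dim}_q(\gamma\cdot x)^2/{\rm dim}(\gamma\cdot x)$ appearing in the Sobolev norm and the factor ${\rm dim}_q(\gamma\cdot x)$ in the Fourier transform will automatically transfer.

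For (1): starting from $\mathcal{F}_\Gq(a)=\sum_{\gamma\in S, x\in T}{\rm dim}_q(\gamma\cdot x)({\rm Tr}_{\gamma\cdot x}\ot\id)\bigl((Q_{\gamma\cdot x}\ot 1)u^{\gamma\cdot x}(ap_{\gamma\cdot x}\ot 1)\bigr)$, I would substitute $u^{\gamma\cdot x}=(V_{\gamma\cdot x}^*\ot 1)u_\gamma^x(V_{\gamma\cdot x}\ot 1)$ and $Q_{\gamma\cdot x}=V_{\gamma\cdot x}^*Q_xV_{\gamma\cdot x}$, and use the trace identity ${\rm Tr}_{\gamma\cdot x}(V_{\gamma\cdot x}^* T V_{\gamma\cdot x})={\rm Tr}_x(T)$ to rewrite each term as ${\rm dim}_q(x)({\rm Tr}_x\ot\id)\bigl((Q_x\ot 1)u_\gamma^x (V_{\gamma\cdot x}ap_{\gamma\cdot x}V_{\gamma\cdot x}^*\ot 1)\bigr)$. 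Then plug in $u_\gamma^x=(1\ot u_\gamma)(\id\ot\alpha)(u^x)$; since $u_\gamma$ is a scalar-free slot in the second leg, it factors out as $u_\gamma\,(\cdot)$, and one gets $u_\gamma\,\alpha\bigl({\rm dim}_q(x)({\rm Tr}_x\ot\id)((Q_x\ot 1)u^x(V_{\gamma\cdot x}ap_{\gamma\cdot x}V_{\gamma\cdot x}^*p_x\ot 1))\bigr)$. Summing over $x\in T$ and recognizing $a_\gamma=\sum_{x\in T}V_{\gamma\cdot x}ap_{\gamma\cdot x}V_{\gamma\cdot x}^*p_x$, this is exactly $u_\gamma\,\alpha(\mathcal{F}_G(a_\gamma))$, and summing over $\gamma\in S$ finishes (1). (One must be mildly careful that $\alpha$ is applied to ${\rm Pol}(G)$ and commutes with finite sums and with the slice maps, which is immediate.)

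For (2): expanding $\|a\|_{\Gq,0}^2=\sum_{\gamma\in S, x\in T}\frac{{\rm dim}_q(\gamma\cdot x)^2}{{\rm dim}(\gamma\cdot x)}{\rm Tr}_{\gamma\cdot x}(Q_{\gamma\cdot x}^*(a^*a)p_{\gamma\cdot x}Q_{\gamma\cdot x})$, the blocks $p_{\gamma\cdot x}$ are mutually orthogonal, so $(a^*a)p_{\gamma\cdot x}=(ap_{\gamma\cdot x})^*(ap_{\gamma\cdot x})$ and the double sum genuinely decouples block by block. Substituting $Q_{\gamma\cdot x}=V_{\gamma\cdot x}^*Q_xV_{\gamma\cdot x}$, using ${\rm dim}_q(\gamma\cdot x)={\rm dim}_q(x)$, ${\rm dim}(\gamma\cdot x)={\rm dim}(x)$, and the unitary invariance of the trace, each term becomes $\frac{{\rm dim}_q(x)^2}{{\rm dim}(x)}{\rm Tr}_x\bigl(Q_x^*(V_{\gamma\cdot x}ap_{\gamma\cdot x}V_{\gamma\cdot x}^*)^*(V_{\gamma\cdot x}ap_{\gamma\cdot x}V_{\gamma\cdot x}^*)p_x Q_x\bigr)$; since $V_{\gamma\cdot x}ap_{\gamma\cdot x}V_{\gamma\cdot x}^*=a_\gamma p_x$ (as $p_x$ is central and kills the other summands of $a_\gamma$), summing over $x\in T$ gives $\|a_\gamma\|_{G,0}^2$, and summing over $\gamma\in S$ gives (2).

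The only genuine obstacle is bookkeeping: making sure the unitaries $V_{\gamma\cdot x}$ act only on the Hilbert-space leg, that $u_\gamma$ really commutes past the $\alpha$-image in the right way (it does, because the first leg carries no $\Gamma$-part), and that the central projections $p_x$, $p_{\gamma\cdot x}$ are used correctly so that the finite sums defining $a_\gamma$ genuinely localize. None of this is deep; it is a careful but routine verification once Lemma \ref{LemQ_x} and the intertwiner relation for $u^{\gamma\cdot x}$ are in hand.
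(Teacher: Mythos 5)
Your proposal is correct and follows essentially the same route as the paper: substitute $u^{\gamma\cdot x}=(V_{\gamma\cdot x}^*\ot 1)u_\gamma^x(V_{\gamma\cdot x}\ot 1)$ and $Q_{\gamma\cdot x}=V_{\gamma\cdot x}^*Q_xV_{\gamma\cdot x}$ from Lemma \ref{LemQ_x}, use the trace identity ${\rm Tr}_{\gamma\cdot x}(V_{\gamma\cdot x}^*AV_{\gamma\cdot x}B)={\rm Tr}_x(AV_{\gamma\cdot x}BV_{\gamma\cdot x}^*)$ to move everything to the block $\mathcal{B}(H_x)$, factor out $u_\gamma$ and $\alpha$, and recognize $\mathcal{F}_G(a_\gamma)$. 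Your expanded verification of assertion (2), which the paper dismisses as ``a similar computation,'' is the intended argument and is carried out correctly.
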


\begin{proof}
Observe that, since $V_{\gamma\cdot x}$ is unitary, ${\rm Tr}_{\gamma\cdot x}(V_{\gamma\cdot x}^*AV_{\gamma\cdot x}B)={\rm Tr}_{x}(AV_{\gamma\cdot x}BV_{\gamma\cdot x}^*)$ for all $\gamma\in\Gamma$, all $x\in{\rm Irr}(G)$ and all $A\in\mathcal{B}(H_{ x})$, $B\in\mathcal{B}(H_{\gamma\cdot x})$. Hence,
\begin{eqnarray*}
\mathcal{F}_\Gq(a)&=&\sum_{\gamma\in S,x\in T}{\rm dim}_q(\gamma\cdot x)({\rm Tr}_{\gamma\cdot x}\ot\id)((Q_{\gamma\cdot x}\otimes 1)u^{\gamma\cdot x}(ap_{\gamma\cdot x}\ot 1))\\
&=&\sum_{\gamma\in S,x\in T}{\rm dim}_q(x)({\rm Tr}_{\gamma\cdot x}\ot\id)((V_{\gamma\cdot x}^*\ot 1)(Q_x\otimes 1)(V_{\gamma\cdot x}\ot 1)(V_{\gamma\cdot x}^*\ot 1) u_\gamma^x (V_{\gamma\cdot  x}\ot 1)(ap_{\gamma\cdot x}\ot 1))\\
&=&\sum_{\gamma\in S,x\in T}{\rm dim}_q(x)({\rm Tr}_{x}\ot\id)((Q_x\ot 1)u_\gamma^x (V_{\gamma\cdot  x}ap_{\gamma\cdot x}V_{\gamma\cdot x}^*\ot 1))\\
&=&\sum_{\gamma\in S}u_\gamma\alpha\left(\sum_{x\in T}{\rm dim}_q(x)({\rm Tr}_{x}\ot\id)((Q_x\ot 1)u^x (V_{\gamma\cdot  x}ap_{\gamma\cdot x}V_{\gamma\cdot x}^*\ot 1))\right)=\sum_{\gamma\in S}u_\gamma\alpha(\mathcal{F}_G(a_\gamma)).
\end{eqnarray*}
This shows assertion $1$. Assertion $2$ follows from a similar computation using again Lemma \ref{LemQ_x}.
\end{proof}

A function $l:\text{Irr}(G)\rightarrow [0,\infty)$ is called a \textit{length function on} ${\rm Irr}(G)$ if $l(1)=0$, $l(\overline{x})=l(x)$ and that $l(x)\leq l(y)+l(z)$ whenever $x\subset y\otimes z$.

\begin{lemma}\label{Length_L_0}
Let $\alpha\,:\,\Gamma\curvearrowright G$ be an action of $\Gamma$ on $G$ by quantum automorphisms and let $l$ be a length function on ${\rm Irr}(G)$ which is $\alpha$-invariant, i.e., $l(x)=l(\alpha_\gamma(x))$ for all $\gamma\in \Gamma$ and $x\in$ \emph{Irr}$(G)$. Let   $l_\Gamma$ be a length function on $\Gamma$. Let $\Gq$ be the crossed product. The function $l_0\,:\,{\rm Irr}(\Gq)\rightarrow[0,\infty)$, defined by $l_0(\gamma\cdot x)=l_\Gamma(\gamma)+l(x)$ is a length function on ${\rm Irr}(\Gq)$.
\end{lemma}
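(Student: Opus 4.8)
The plan is to verify the three defining properties of a length function directly from the definitions, using the fusion rules of $\Gq$ described in Remark \ref{RmkFusion}.

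First I would check that $l_0(1) = 0$. The trivial representation of $\Gq$ is $e \cdot 1$ (viewing $1 \in {\rm Irr}(G)$ as the trivial representation of $G$ and $e \in \Gamma$), so $l_0(e \cdot 1) = l_\Gamma(e) + l(1) = 0 + 0 = 0$ since $l_\Gamma$ and $l$ are length functions.

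Next I would check the conjugation-invariance $l_0(\overline{\gamma\cdot x}) = l_0(\gamma\cdot x)$. By Remark \ref{RmkFusion} we have $\overline{\gamma\cdot x} = \gamma^{-1}\cdot\alpha_\gamma(\overline{x})$, so $l_0(\overline{\gamma\cdot x}) = l_\Gamma(\gamma^{-1}) + l(\alpha_\gamma(\overline{x}))$. Now $l_\Gamma(\gamma^{-1}) = l_\Gamma(\gamma)$ since $l_\Gamma$ is a length function on $\Gamma$, and $l(\alpha_\gamma(\overline{x})) = l(\overline{x}) = l(x)$ using first the $\alpha$-invariance of $l$ and then the fact that $l$ is a length function on ${\rm Irr}(G)$. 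Hence $l_0(\overline{\gamma\cdot x}) = l_\Gamma(\gamma) + l(x) = l_0(\gamma\cdot x)$.

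Finally I would check subadditivity along tensor products: if $t \cdot z \subset (r\cdot x) \ot (s\cdot y)$ then $l_0(t\cdot z) \le l_0(r\cdot x) + l_0(s\cdot y)$. By the fusion rules in Remark \ref{RmkFusion}, $(r\cdot x)\ot(s\cdot y) = \bigoplus_{w \subset \alpha_{s^{-1}}(x)\ot y,\, w\in{\rm Irr}(G)} rs\cdot w$, so $t\cdot z \subset (r\cdot x)\ot(s\cdot y)$ forces $t = rs$ and $z \subset \alpha_{s^{-1}}(x)\ot y$. Then
\begin{align*}
l_0(t\cdot z) = l_\Gamma(rs) + l(z) &\le l_\Gamma(r) + l_\Gamma(s) + l(\alpha_{s^{-1}}(x)) + l(y)\\
&= l_\Gamma(r) + l_\Gamma(s) + l(x) + l(y) = l_0(r\cdot x) + l_0(s\cdot y),
\end{align*}
where the inequality uses that $l_\Gamma$ is a length function on $\Gamma$ and $l$ is a length function on ${\rm Irr}(G)$ applied to $z \subset \alpha_{s^{-1}}(x)\ot y$, and the second equality uses the $\alpha$-invariance of $l$. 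This completes the verification. There is no real obstacle here; the only point requiring care is reading off the correct form of the conjugate and the tensor product from the fusion rules, and remembering to invoke $\alpha$-invariance of $l$ in exactly the two places ($\overline{\gamma\cdot x}$ involves $\alpha_\gamma(\overline{x})$, and the fusion product involves $\alpha_{s^{-1}}(x)$).
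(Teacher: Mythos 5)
Your proof is correct and follows essentially the same route as the paper: verify the three axioms directly, using Remark \ref{RmkFusion} for the form of the conjugate $\overline{\gamma\cdot x}=\gamma^{-1}\cdot\alpha_\gamma(\overline{x})$ and for the fusion rule forcing $t=rs$ and $z\subset\alpha_{s^{-1}}(x)\ot y$, and invoking the $\alpha$-invariance of $l$ in exactly the two places you identify. No issues.
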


\begin{proof}
We have $l_0(1)=l_\Gamma(e)+l(1)=0$ and, by Remark \ref{RmkFusion},
\begin{align*}
l_0(\overline{\gamma\cdot x})=l_0(\gamma^{-1}\cdot  \alpha_\gamma(\overline{x}))=l_\Gamma(\gamma^{-1})+l(\alpha_\gamma(\overline{x}))=l_\Gamma(\gamma)+l(\overline{x})=l_0(\gamma\cdot x).
\end{align*}
Again, from Remark \ref{RmkFusion},  $\gamma\cdot x\subset r\cdot y\ot s\cdot z$ if and only if $\gamma=rs$ and $x\subset\alpha_{\gamma^{-1}}(y)\ot z$. Hence, 
\begin{eqnarray*}
l_0(\gamma\cdot x)&=&l_\Gamma(\gamma)+l(x)\leq l_\Gamma(r)+ l_\Gamma(s)+l(\alpha_{\gamma^{-1}}(y))+l(z)\\
&=&l_\Gamma(r)+l(y)+ l_\Gamma(s)+l(z)=l_0(r\cdot y)+l_0(s\cdot z).
\end{eqnarray*}
\end{proof}

Given a length function $l:\text{Irr}(G)\rightarrow [0,\infty)$, consider the element $L=\sum_{x\in \text{Irr}(G)}l(x)p_x$ which is affilated to $c_0(\widehat{G})$. Let $q_n$ denote the spectral projections of $L$ associated to the interval $[n,n+1)$. We say that \textit{$(\widehat{G},l)$ has property $($RD$)$}, if there exists a polynomial $P\in \mathbb{R}[X]$ such that for every $k\in \mathbb{N}$ and $a\in q_kc_c(\widehat{G})$, we have $\|\mathcal{F}(a)\|_{C(G)}\leq P(k)\|a\|_{G,0}$. Finally, \textit{$\widehat{G}$ is said to have  Property $($RD$)$} if there exists a length function $l$ on ${\rm Irr}(G)$ such that $(\widehat{G},l)$ has property $(RD)$.

We prove property $(RD)$ for the dual of a crossed product in the following Theorem. In case the action of the group is trivial, i.e., when the crossed product reduces to a tensor product, this result is proved in \cite[Lemma 4.5]{CF14}. For semi-direct products of classical groups, this result is due to Jolissaint \cite{Jo90}.

\begin{theorem}\label{ThmRD}
Let  $\alpha\,:\,\Gamma\curvearrowright G$  be an action by quantum automorphisms. Let $l$ be a $\alpha$-invariant length function on ${\rm Irr}(G)$. If $(\widehat{G},l)$ has property $(RD)$ and $\Gamma$ has property $(RD)$, then $(\widehat{\Gq},l_0)$ has property $(RD)$, where $\Gq$ is the crossed product and $l_{0}$ is as in Lemma \ref{Length_L_0}.
\end{theorem}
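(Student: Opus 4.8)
The plan is to reduce property $(RD)$ for $(\widehat{\Gq},l_0)$ to the two hypotheses — property $(RD)$ for $(\widehat G,l)$ and property $(RD)$ for $\Gamma$ — via the decomposition in Lemma \ref{LemFourier}. Recall that $(\widehat{\Gq},l_0)$ has $(RD)$ means: there is a polynomial $P$ such that for every $k$ and every $a\in q_k c_c(\widehat{\Gq})$ (spectral projection of $L_0=\sum l_0(\gamma\cdot x)p_{\gamma\cdot x}$ for $[k,k+1)$) one has $\|\mathcal F_\Gq(a)\|_{C(\Gq)}\le P(k)\|a\|_{\Gq,0}$. First I would fix $k$ and $a\in q_k c_c(\widehat\Gq)$, write $a=\sum_{\gamma\in S}\sum_{x\in T}ap_{\gamma\cdot x}$ with $S\subset\Gamma$, $T\subset{\rm Irr}(G)$ finite, and form the elements $a_\gamma\in c_c(\widehat G)$ as in Lemma \ref{LemFourier}. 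By that lemma, $\mathcal F_\Gq(a)=\sum_{\gamma\in S}u_\gamma\alpha(\mathcal F_G(a_\gamma))$ and $\|a\|_{\Gq,0}^2=\sum_{\gamma\in S}\|a_\gamma\|_{G,0}^2$.

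Next I would estimate $\|\mathcal F_\Gq(a)\|_{C(\Gq)}$. Since $\mathcal F_\Gq(a)=\sum_{\gamma}u_\gamma\alpha(b_\gamma)$ with $b_\gamma=\mathcal F_G(a_\gamma)\in{\rm Pol}(G)$, this is an element of the reduced crossed product $C(\Gq)=\Gamma{}_\alpha\ltimes C(G)$ with ``Fourier coefficients'' supported on the finite set $S$. The standard Haagerup-type inequality for reduced crossed products (this is exactly where property $(RD)$ of $\Gamma$ enters, applied to the function $\gamma\mapsto\|b_\gamma\|$ on $\Gamma$, using a conditional-expectation/operator-valued version — the $C^*(\Gamma)$-module $\ell^2$-type bound) gives
\[
\Big\|\sum_{\gamma\in S}u_\gamma\alpha(b_\gamma)\Big\|_{C(\Gq)}\le Q(m)\Big(\sum_{\gamma\in S}\|b_\gamma\|_{C(G)}^2\Big)^{1/2},
\]
where $Q$ is the $(RD)$-polynomial for $\Gamma$ and $m$ bounds $l_\Gamma(\gamma)$ for $\gamma\in S$. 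Then I apply $(RD)$ for $(\widehat G,l)$ to each $b_\gamma=\mathcal F_G(a_\gamma)$: decomposing $a_\gamma=\sum_j q_j^G a_\gamma$ over spectral projections of $L=\sum l(x)p_x$ and using the triangle inequality plus Cauchy–Schwarz over the (at most $k+1$ relevant) values of $j$, one gets $\|b_\gamma\|_{C(G)}\le P_G(n)\cdot C_k\cdot\|a_\gamma\|_{G,0}$ for a suitable polynomial, where $n$ bounds $l(x)$ on $T$. The crucial bookkeeping point is that $l_0(\gamma\cdot x)=l_\Gamma(\gamma)+l(x)\in[k,k+1)$ forces $l_\Gamma(\gamma)\le k+1$ and $l(x)\le k+1$ for all relevant $\gamma,x$, so both $m$ and $n$ are $\le k+1$; hence the product $Q(m)P_G(n)C_k$ is polynomial in $k$.

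Putting it together: $\|\mathcal F_\Gq(a)\|_{C(\Gq)}\le Q(k{+}1)\big(\sum_\gamma \|b_\gamma\|_{C(G)}^2\big)^{1/2}\le R(k)\big(\sum_\gamma\|a_\gamma\|_{G,0}^2\big)^{1/2}=R(k)\|a\|_{\Gq,0}$, with $R$ a polynomial built from $P_G$, $Q$, and the $\sqrt{k+1}$-type factors. The main obstacle I expect is justifying the operator-valued Haagerup inequality for the reduced crossed product — i.e. that property $(RD)$ for the discrete group $\Gamma$ upgrades from the scalar case to coefficients in the C*-algebra $C(G)$ (or in $\mathcal B(H)\ot C(G)$ to handle matrix amplifications needed for a clean $\|\cdot\|_{C(\Gq)}$ bound). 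This is essentially the ``$(RD)$ is stable under taking crossed products by $(RD)$ groups'' argument of Jolissaint in the classical setting, adapted via the module-valued $\ell^2$-norm; I would either invoke it directly from \cite{Jo90}/\cite{BVZ14} or reprove the needed inequality by running the classical Haagerup estimate with $C(G)$-valued coefficients, using that $\alpha$ acts by isometries so $\|\alpha(b_\gamma)\|=\|b_\gamma\|$. Everything else — the two Fourier decompositions and the length-function splitting — is routine given Lemmas \ref{LemQ_x}, \ref{LemFourier} and \ref{Length_L_0}.
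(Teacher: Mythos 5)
Your proposal is correct and follows essentially the same route as the paper: decompose via Lemma \ref{LemFourier}, control $\Vert\mathcal{F}_G(a_\gamma)\Vert_{C(G)}$ by $(RD)$ of $(\widehat{G},l)$, and feed the resulting scalar coefficients into the convolution estimate for $\Gamma$, with the bookkeeping $l_\Gamma(\gamma),\,l(x)<k+1$. The operator-valued Haagerup inequality you single out as the main obstacle is exactly what the paper establishes, in the form you describe as your fallback: it tests $\mathcal{F}_\Gq(a)$ against an arbitrary $y=\sum_s u_s\alpha(b_s)\in{\rm Pol}(\Gq)$, uses that each $\alpha_{t^{-1}\gamma}$ is a Haar-state-preserving automorphism to bound $\Vert\alpha_{t^{-1}\gamma}(\mathcal{F}_G(a_\gamma))b_{\gamma^{-1}t}\Vert_{2,h_G}\leq\Vert\mathcal{F}_G(a_\gamma)\Vert_{C(G)}\Vert b_{\gamma^{-1}t}\Vert_{2,h_G}$, and thereby reduces to the scalar inequality $\Vert\psi*\phi\Vert_{\ell^2(\Gamma)}\leq Q(k)\Vert\psi\Vert\,\Vert\phi\Vert$ given by $(RD)$ of $\Gamma$.
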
 

\begin{proof}
Let $l_\Gamma$ be any length function on $\Gamma$ for which $(\Gamma,l_\Gamma)$ has property (RD) and let $l_0$ be the length function on ${\rm Irr}(\Gq)$ defined by $l_0(\gamma\cdot x)=l_\Gamma(\gamma)+ l(x)$, for $\gamma\in \Gamma$ and $x\in \text{Irr}(G)$. Let $L_0=\sum_{\gamma\in\Gamma,x\in{\rm Irr}(G)}l_0(\gamma\cdot x)p_{\gamma\cdot x}=\sum_{\gamma\in\Gamma,x\in{\rm Irr}(G)}(l_\Gamma(\gamma)+l(x))p_{\gamma\cdot x}$ and $L=\sum_{x\in{\rm Irr}(G)}l(x)p_x$. Finally, let $p_n$ and $q_n$ be the spectral projections of respectively $L_0$ and $L$ associated to the interval $[n,n+1)$. Let $a\in c_c(\widehat{\Gq})$ and write $a=\sum_{\gamma\in S,x\in T}ap_{\gamma\cdot x}$, where $S\subset\Gamma$ and $T\subset{\rm Irr}(G)$ are finite subsets. Now suppose that $a\in p_kc_c(\widehat{\Gq})$. Since $p_k=\sum_{\gamma\in\Gamma,x\in{\rm Irr}(G),k\leq l_\Gamma(\gamma)+l(x)<k+1}p_{\gamma\cdot x}$, we must have
$$S\subset\{\gamma\in\Gamma\,:\, l_\Gamma(\gamma)<k+1\}\quad\text{and}\quad T\subset\{x\in{\rm Irr}(G)\,:\,l(x)< k+1\}.$$
It follows that, for all $\gamma\in S$, the element $a_\gamma$ defined in Lemma \ref{LemFourier} is in $q_Kc_c(\widehat{G})$, where  $q_K=\sum_{j=0}^k q_j$.

Let $P_1$ and $P_2$ be polynomials witnessing $(RD)$ respectively for $(\widehat{G},l)$ and $(\Gamma,l_\Gamma)$. Let, for $i=1,2$, $C_i\in\R_+$ and $N_i\in\N$ be such that $P_i(k)\leq C_i(k+1)^{N_i}$ for all $k\in\N$. Then, for all $b\in q_Kc_c(\widehat{G})$,
\begin{eqnarray*}
\Vert\mathcal{F}_G(b)\Vert&\leq& \sum_{j\leq k}\Vert\mathcal{F}_G(bq_j)\Vert\leq\sum_{j\leq k}P_1(j)\Vert bq_j\Vert_{G,0}\leq\sum_{j\leq k}C_1(j+1)^{N_1}\Vert bq_j\Vert_{G,0}\\
&\leq& C_1(k+1)^{N_1}\sum_{j\leq k}\Vert bq_j\Vert_{G,0}= C_1(k+1)^{N_1+1}\Vert b\Vert_{G,0}.
\end{eqnarray*}
Similarly, $\Vert\psi*\phi\Vert_{\ell^2(\Gamma)}\leq C_2(k+1)^{N_2+1}\|\psi\|_{\ell^2(\Gamma)}\|\phi\|_{\ell^2(\Gamma)}$ for all $\phi$ in $\ell^2(\Gamma)$ and all functions $\psi$ on $\Gamma$ (finitely) supported on words of $l_\Gamma$-length less than equal to $k$.

Let $y$ be a finite sum $y=\sum_s u_s\alpha(b_s)\in{\rm Pol}(\Gq)$. We have $\Vert y\Vert_{2,h_\Gq}^2=\sum_s\Vert b_s\Vert_{2,h_G}^2$ and, by Lemma \ref{LemFourier} and the preceding discussion,

\begin{eqnarray*}
\|\mathcal{F}_\Gq(a)y\|^2_{2,h_\Gq}&=&\|\sum_{\gamma\in S,s}u_{\gamma s}\alpha(\alpha_{s^{-1}}(\mathcal{F}_G(a_\gamma))b_s)\|^2_{2,h_\Gq}=\|\sum_{\gamma\in S,t}u_{t}\alpha(\alpha_{t^{-1}\gamma}(\mathcal{F}_G(a_\gamma))b_{\gamma^{-1}t})\|^2_{2,h_\Gq}\\
&=&\sum_t\Vert\sum_{\gamma\in S}\alpha_{t^{-1}\gamma}(\mathcal{F}_G(a_\gamma))b_{\gamma^{-1}t}\Vert_{2,h_G}^2\leq\sum_t\left(\sum_{\gamma\in S}\Vert\alpha_{t^{-1}\gamma}(\mathcal{F}_G(a_\gamma))b_{\gamma^{-1}t}\Vert_{2,h_G}\right)^2\\
&\leq & C_1^2(k+1)^{2(N_1+1)}\sum_t\left(\sum_{\gamma\in S}\Vert a_\gamma\Vert_{G,0}\Vert b_{\gamma^{-1}t}\Vert_{2,h_G}\right)^2=C_1^2(k+1)^{2(N_1+1)}\Vert\psi*\phi\Vert^2_{l^2(\Gamma)},
\end{eqnarray*}

where $\psi,\phi\in \ell^2(\Gamma)$ are defined by $\psi(\gamma)=\|a_\gamma\|_{G,0}$ and $\phi(s)=\|b_s\|_{2,h_G}$ where $\gamma,s\in \Gamma$. We note that $\|\psi\|^2_{\ell^2(\Gamma)}=\sum_{\gamma\in S} \|a_\gamma\|^2_{G,0}=\|a\|^2_{\Gq,0}$ and $\|\phi\|^2_{\ell^2(\Gamma)}=\sum_s\|b_s\|_{2,h_G}^2=\|y\|^2_{2,h_\Gq}$. But since $\psi$ is supported on $S$ i.e., on elements of $\Gamma$ of length less than equal to $k$, we have 
\begin{align*}
\|\mathcal{F}_\Gq(a)y\|^2_{2,h_\Gq}\leq (C_1C_2)^2(k+1)^{2(N_1+N_2+2)}\|\psi\|^2_{l^2(\Gamma)}\|\phi\|^2_{\ell^2(\Gamma)}= P(k)^2\|a\|^2_{\Gq,0}\|y\|^2_{2,h_\Gq},
\end{align*}
where $P(x)=C_1C_2(x+1)^{N_1+N_2+2}$. As $y$ is arbitrary, the proof is complete.
\end{proof}

\begin{remark}\label{RmkRD}
There may not exist an $\alpha$-invariant length function on ${\rm Irr}(G)$. However, if $\Gamma\curvearrowright G$ is compact, then the action $\alpha\,:\,\Gamma\curvearrowright{\rm Irr}(G)$ has finite orbits. Hence, for any length function $l$ on ${\rm Irr}(G)$, the length function $l_\alpha$ defined by $l_\alpha(x)= \sup_{\gamma\in \Gamma}\, l(\alpha_\gamma(x))$, for $x\in{\rm Irr}(G)$, is $\alpha$-invariant. Hence, $\widehat{\Gq}$ has $(RD)$ whenever $\Gamma$ and $\widehat{G}$ have $(RD)$.
\end{remark}

\subsection{Property (T)}

We characterize relative co-property $(T)$ of the pair $(G,\Gq)$ in a similar way we did characterize relative co-property $(T)$ for bicrossed product. We study the the property $(T)$ for $\widehat{\Gq}$.

When $\pi\,:\, A\rightarrow\mathcal{B}(H)$ is a unital $*$-homomorphism from a unital C*-algebra $A$, we denote by $\widetilde{\pi}\,:\,A^{**}\rightarrow\mathcal{B}(H)$ its unique normal extension. Also, we view any state $\omega\in A^*$ as a normal state on $A^{**}$. Observe that if $(H,\pi,\xi)$ is the GNS construction for the state $\omega$ on $A$, then $(H,\widetilde{\pi},\xi)$ is the GNS construction for the normal state $\omega$ on $A^{**}$.

Let $M=C_m(G)^{**}$ and $p_0\in M$ be the unique central projection such that $p_0xp_0=\widetilde{\varepsilon}_G(x)p_0$ for all $x\in M$.

In the following theorem, we characterize the relative co-property $(T)$ of the pair $(G,\Gq)$ in terms of the action $\alpha$ of $\Gamma$ on $G$. The proof is similar to the proof of Theorem \ref{ThmRelT} but technically more involved.

\begin{theorem}\label{ThmRelT2}
The following are equivalent:
\begin{enumerate}
\item The pair $(G,\Gq)$ does not have the relative co-property $(T)$.
\item There exists a sequence $(\omega_n)_{n\in\N}$ of states on $C_m(G)$ such that
\begin{enumerate}
\item $\omega_n(p_0)=0$  for all $n\in\N$;
\item $\omega_n\rightarrow\varepsilon_G$ { weak*};
\item $\Vert\alpha_\gamma(\omega_n)-\omega_n\Vert\rightarrow 0$ for all $\gamma\in\Gamma$.
\end{enumerate}
\end{enumerate}
\end{theorem}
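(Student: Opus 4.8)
The plan is to follow the strategy of the proof of Theorem~\ref{ThmRelT}, replacing the spectral/measure-theoretic picture on the commutative C*-algebra $C(G)$ by the corresponding statements on $M = C_m(G)^{**}$, with the central projection $p_0$ (the support of $\widetilde{\varepsilon}_G$) playing the role of the projection $E(\{e\})$ onto the subspace of $\varepsilon_G$-eigenvectors. For a representation $\pi\,:\,C_m(\Gq)\rightarrow\mathcal{B}(H)$, set $\rho = \pi\circ\alpha\,:\,C_m(G)\rightarrow\mathcal{B}(H)$ and consider its normal extension $\widetilde{\rho}\,:\,M\rightarrow\mathcal{B}(H)$. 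The key observation, exactly as in the commutative case, is that $\varepsilon_G\subset\pi\circ\alpha$ if and only if $\widetilde{\rho}(p_0)\neq 0$: a unit vector $\xi$ satisfies $\rho(a)\xi = \varepsilon_G(a)\xi$ for all $a\in C_m(G)$ precisely when $\langle\widetilde{\rho}(p_0)\xi,\xi\rangle = 1$, so $\widetilde{\rho}(p_0)$ is the orthogonal projection onto $K_\pi$.

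For $(1)\Rightarrow(2)$: assume there is $\pi$ with $\varepsilon_\Gq\prec\pi$ and $K_\pi=\{0\}$, i.e.\ $\widetilde{\rho}(p_0)=0$; let $(\xi_n)$ be unit vectors with $\Vert\pi(x)\xi_n-\varepsilon_\Gq(x)\xi_n\Vert\rightarrow 0$ for all $x\in C_m(\Gq)$. Put $\omega_n = \omega_{\xi_n}\circ\pi\circ\alpha = \omega_{\xi_n}\circ\widetilde{\rho}\in C_m(G)^*$, a state; then $\omega_n(p_0) = \langle\widetilde{\rho}(p_0)\xi_n,\xi_n\rangle = 0$, which is $(a)$. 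For $(b)$, note that for $a\in C_m(G)$ we have $|\omega_n(a) - \varepsilon_G(a)| = |\langle(\rho(a)-\varepsilon_G(a))\xi_n,\xi_n\rangle|\leq\Vert\pi(\alpha(a))\xi_n - \varepsilon_\Gq(\alpha(a))\xi_n\Vert\rightarrow 0$, using $\varepsilon_\Gq\circ\alpha = \varepsilon_G$. For $(c)$, as in Theorem~\ref{ThmRelT}, $\alpha_\gamma(\omega_n)(a) = \omega_n(\alpha_{\gamma^{-1}}(a)) = \langle\rho(a)\pi(u_\gamma)\xi_n,\pi(u_\gamma)\xi_n\rangle$ using $\pi(u_\gamma)\rho(a)\pi(u_\gamma)^* = \rho(\alpha_\gamma(a))$, and a two-term splitting gives $\Vert\alpha_\gamma(\omega_n)-\omega_n\Vert\leq 2\Vert\pi(u_\gamma)\xi_n-\xi_n\Vert = 2\Vert\pi(u_\gamma)\xi_n-\varepsilon_\Gq(u_\gamma)\xi_n\Vert\rightarrow 0$ by Theorem~\ref{ThmCrossedProduct}(the counit sends $u_\gamma\mapsto 1$).

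For $(2)\Rightarrow(1)$: this is the delicate direction. The commutative proof ran a convolution/averaging trick to upgrade the $\omega_n$ to states whose $\Gamma$-translates are \emph{equivalent}, then built an explicit representation on $\bigoplus_n {\rm L}^2(G,\mu_n)$ using Radon--Nikodym cocycles $h_n(\gamma) = d\alpha_\gamma(\mu_n)/d\mu_n$. In the quantum case one must replace this by the GNS data of $\omega_n$: let $(H_n,\pi_n,\xi_n)$ be the GNS construction of the state $\omega_n$ on $C_m(G)$. The substitute for the cocycle is the spatial implementation of $\alpha_\gamma$ on the cyclic $C(G)$-module; the averaging step (convolving with a strictly positive $\ell^1(\Gamma)$-function, as in the Claim of Theorem~\ref{ThmRelT}) should be carried out at the level of states to arrange that the GNS vector of $\omega_n$ is ``quasi-equivalent'' to its $\Gamma$-translates, which is what allows one to define unitaries $u_n(\gamma)$ on $H_n$ implementing $\pi_n\circ\alpha_\gamma \cong \mathrm{Ad}(u_n(\gamma))\circ\pi_n$. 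One then invokes the universal property of $A_m = C_m(\Gq)$ to assemble a representation $\pi = \bigoplus_n\pi_n$, checks $\widetilde{\pi_n}(p_0)=0$ from $\omega_n(p_0)=0$ (so $K_\pi=\{0\}$), and verifies $\varepsilon_\Gq\prec\pi$ using that $\Vert u_n(\gamma)\xi_n-\xi_n\Vert\to 0$ (from $\Vert\alpha_\gamma(\omega_n)-\omega_n\Vert\to 0$, after the averaging) and $\Vert\pi_n(\alpha(a))\xi_n - \varepsilon_G(a)\xi_n\Vert\to 0$ (from $\omega_n\to\varepsilon_G$ weak*, which on states controls the $L^2$-distance to the eigenvector). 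The main obstacle will be making the Radon--Nikodym argument work noncommutatively: one cannot speak of ``$\alpha_\gamma(\omega_n)$ absolutely continuous w.r.t.\ $\omega_n$'' naively, so the role of quasi-invariance must be played either by passing to an appropriate von Neumann algebra completion where the translated states become normal and mutually absolutely continuous via their support projections, or by directly constructing the implementing unitaries from the closeness $\Vert\alpha_\gamma(\omega_n)-\omega_n\Vert\to 0$ together with the averaging, controlling norms of the implementers. Once the implementing unitaries and the bound $\Vert u_n(\gamma)\xi_n-\xi_n\Vert\to 0$ are in hand, the remainder of the argument is a routine copy of the end of the proof of Theorem~\ref{ThmRelT}.
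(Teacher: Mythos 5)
Your direction $(1)\Rightarrow(2)$ is complete and coincides with the paper's argument: the identification of $\widetilde{\rho}(p_0)$ with the projection onto $K_\pi$, the choice $\omega_n=\omega_{\xi_n}\circ\pi\circ\alpha$, and the three estimates are all exactly as in the paper's proof. Your overall plan for $(2)\Rightarrow(1)$ is also the paper's plan (convolution averaging at the level of states, GNS data of the $\omega_n$, covariant representations, direct sum). However, there is a genuine gap precisely at the point you yourself flag as ``the main obstacle'': you never actually construct the implementing unitaries $u_n(\gamma)$, and you assert $\Vert u_n(\gamma)\xi_n-\xi_n\Vert\to 0$ ``from $\Vert\alpha_\gamma(\omega_n)-\omega_n\Vert\to 0$'' without justification. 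For a general unitary implementing $\alpha_\gamma$ this implication is false -- closeness of the vector states $\omega_{u_n(\gamma)\xi_n}$ and $\omega_{\xi_n}$ controls $u_n(\gamma)\xi_n$ only up to a phase, and an uncontrolled phase would destroy the almost-invariance of the $\xi_n$. So the two hinges of the hard direction (existence of the covariant unitaries, and the quantitative vector estimate) are both left open.

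The paper closes this gap as follows. First, the averaging claim is proved in terms of \emph{support projections} in $M=C_m(G)^{**}$: for $f\in\ell^1(\Gamma)_{1,+}$ strictly positive one computes $s(f*\omega)=\vee_{\gamma}\alpha_\gamma(s(\omega))$, so after convolution $\alpha_\gamma(s(\omega_n))=s(\omega_n)$ for all $\gamma$ (and $\omega_n(p_0)$, $\Vert\alpha_\gamma(\omega_n)-\omega_n\Vert$, and the weak* limit are preserved). One then cuts down to the corner $M_n=s(\omega_n)Ms(\omega_n)$, on which $\omega_n$ is faithful and normal, realizes $M_n$ in standard form on its GNS space $H_n$, and -- since the $\Gamma$-action restricts to $M_n$ by invariance of the support -- takes the \emph{standard implementation} $u_n(\gamma)$ of $\alpha_\gamma$. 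Because $u_n(\gamma)\xi_n$ then lies in the self-dual positive cone together with $\xi_n$, Takesaki's inequality \cite[Theorem 1.2]{Ta00} gives $\Vert u_n(\gamma)\xi_n-\xi_n\Vert^2\leq\Vert\omega_{u_n(\gamma)\xi_n}-\omega_{\xi_n}\Vert=\Vert\alpha_\gamma(\omega_n)-\omega_n\Vert$, which is exactly the Powers--St{\o}rmer-type estimate your sketch needs. Finally, $K_{\pi_n}=\{0\}$ follows because $p_0$ is central and $\omega_n(p_0)=0$ forces $s(\omega_n)p_0s(\omega_n)=0$. With these three ingredients supplied, the rest of your outline (universal property of $A_m$, direct sum, density of $\mathcal{A}$) goes through as in Theorem \ref{ThmRelT}.
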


\begin{proof}
For a representation $\pi\,:\,C_m(\Gq)\rightarrow\mathcal{B}(H)$, we have $\varepsilon_G\subset\pi\circ\alpha$ if and only if $K_\pi\neq\{0\}$, where
\begin{align*}
K_\pi=\{\xi\in H\,:\,\pi\circ\alpha(a)\xi=\varepsilon_G(a)\xi\text{ for all }a\in C_m(G)\}.
\end{align*}
Let $\rho=\pi\circ\alpha\,:\,C_m(G)\rightarrow\mathcal{B}(H)$ and observe that the orthogonal projection onto $K_\pi$ is the projection $\widetilde{\rho}(p_0)$. Indeed, for all $\xi\in H$, $a\in C_m(G)$, we have $\pi\circ\alpha(a)\widetilde{\rho}(p_0)\xi=\widetilde{\rho}(ap_0)\xi=\varepsilon_G(a)\widetilde{\rho}(p_0)\xi$, which implies that ${\rm Im}(\widetilde{\rho}(p_0))\subset K_\pi$. Moreover, if $\xi\in K_\pi$, we have
$\widetilde{\rho}(a)\xi=\widetilde{\varepsilon}_G(a)\xi$ for all $a\in C_m(G)$. Since $C_m(G)$ is $\sigma$-weakly dense in $M$ and the representations $\widetilde{\rho}$ and $\widetilde{\varepsilon}_G$ are normal, it follows that the equation $\widetilde{\rho}(a)\xi=\widetilde{\varepsilon}_G(a)\xi$ is valid for all $a\in M$. Hence, for $a=p_0$ we get $\widetilde{\rho}(p_0)\xi=\widetilde{\varepsilon}_G(p_0)\xi=\xi$, which in turn implies that $K_\pi\subset {\rm Im}(\widetilde{\rho}(p_0))$.

$(1)\implies (2)$. Suppose that the pair $(G,\Gq)$ does not have the relative co-property $(T)$. Let $\pi\,:\,C_m(\Gq)\rightarrow\mathcal{B}(H)$ be a representation such that $\varepsilon_\Gq\prec\pi$ and $K_\pi=\{0\}$. Denote by $\omega_{\xi,\eta}\in C_m(G)^*$ the functional given by $\omega_{\xi,\eta}(a)=\langle\pi\circ\alpha(a)\xi,\eta\rangle$. Hence, $\omega_{\xi,\eta}(p_0)=\langle \widetilde{\rho}(p_0)\xi,\eta\rangle=0$ for all $\xi,\eta\in H$.

Since $\varepsilon_\Gq\prec\pi$, let $(\xi_n)_{n\in\N}$ be a sequence of unit vectors in $H$ such that $\Vert\pi(x)\xi_n-\varepsilon_\Gq(x)\xi_n\Vert\rightarrow 0$ for all $x\in C_m(\Gq)$. Define $\omega_n=\omega_{\xi_n,\xi_n}$. Then, we have $\omega_n(p_0)=0$ for all $n\in\N$. For all $a\in C_m(G)$ we have, $
\vert\omega_n(a)-\varepsilon_G(a)\vert=\vert\langle\pi(\alpha(a))\xi_n-\varepsilon_G(a)\xi_n,\xi_n\rangle\vert\leq\Vert\pi(\alpha(a))\xi_n-\varepsilon_\Gq(\alpha(a))\xi_n\Vert\rightarrow 0$. Moreover, exactly as in the proof of Theorem \ref{ThmRelT}, we find $\Vert\alpha_\gamma(\omega_n)-\omega_n\Vert\leq 2\Vert\pi(u_\gamma)\xi_n-\xi_n\Vert=\Vert\pi(u_\gamma)\xi_n-\varepsilon_\Gq(u_\gamma)\xi_n\Vert\rightarrow 0$.

$(2)\implies(1)$. For a state $\omega\in C_m(G)^*=M_*$ we denote by $s(\omega)\in M$ its support. Recall that $s(\omega)\in M$ is the unique projection in $M$ such that $N_\omega=M(1-s(\omega))$, where $N_\omega$ is the $\sigma$-weakly closed left ideal defined by $N_\omega=\{x\in M\,:\,\omega(x^*x)=0\}$ and note that $\omega$ is faithful on $s(\omega)Ms(\omega)$. In the sequel, we still denote by $\alpha_\gamma$ the unique $*$-isomorphism of $M$ which extends $\alpha_\gamma\in\text{Aut}(C_m(G))$. We first prove the following claim. 

\textbf{Claim.} \textit{If $(2)$ holds, then there exists a sequence $(\omega_n)_{n\in\N}$ of states on $C_m(G)$ satifying $(a)$, $(b)$ and $(c)$ and such that
$\alpha_\gamma(s(\omega_n))=s(\omega_n)$ for all $\gamma\in\Gamma$, $n\in\N$.}

\textit{Proof of the claim.} Denote by $\ell^1(\Gamma)_{1,+}$ the set of positive $\ell^1$ functions $f$ on $\Gamma$ with $\Vert f\Vert_1=1$. For a state $\omega\in C_m(G)^*=M_*$ and $f\in \ell^1(\Gamma)_{1,+}$, define the state $f*\omega\in C_m(G)^*$ by the convex combination
\begin{align*}
f*\omega=\sum_{\gamma\in\Gamma}f(\gamma)\alpha_\gamma(\omega).
\end{align*}
Observe that, for all $\gamma\in\Gamma$ we have $\delta_\gamma*\omega=\alpha_\gamma(\omega)$ and $\alpha_\gamma(f*\omega)=f_\gamma*\omega$, where $f_\gamma\in \ell^1(\Gamma)_{1,+}$ is defined by $f_\gamma(r)=f(\gamma^{-1}r)$, $r\in\Gamma$. Moreover, if $f\in \ell^1(\Gamma)_{1,+}$ is such that $f(\gamma)>0$ for all $\gamma\in\Gamma$, then since $(f*\omega)(x^*x)=\sum_\gamma f(\gamma)\omega(\alpha_{\gamma^{-1}}(x^*x))$, we have that $(f*\omega)(x^*x)=0$ if and only if $\omega(\alpha_{\gamma^{-1}}(x^*x))=0$ for all $\gamma\in\Gamma$. It follows that
\begin{align*}
N_{f*\omega}=\cap_{\gamma\in\Gamma}\alpha_\gamma(N_\omega)=M\left(\wedge_{\gamma\in\Gamma}(1-\alpha_\gamma(s(\omega)))\right).
\end{align*}
Hence, $s(f*\omega)=1-\wedge_{\gamma\in\Gamma}(1-\alpha_\gamma(s(\omega)))=\vee_{\gamma\in\Gamma}\alpha_\gamma(s(\omega))$. Hence, we have $\alpha_\gamma(s(f*\omega))=s(f*\omega)$ for all $\gamma\in\Gamma$. Finally, since $\varepsilon_G\circ\alpha_\gamma=\varepsilon_G$, we deduce that, for all $\gamma\in\Gamma$, $\alpha_\gamma(p_0)$ is a central projection of $M$ satisfying $a\alpha_\gamma(p_0)=\alpha_\gamma(\alpha_{\gamma^{-1}}(a)p_0)=\varepsilon_G(\alpha_{\gamma^{-1}}(a))\alpha_\gamma(p_0)=\varepsilon_G(a)\alpha_\gamma(p_0)$, $\gamma\in \Gamma$. 
By uniqueness of such a projection, we find $\alpha_\gamma(p_0)=p_0$ for all $\gamma\in\Gamma$. Hence, for all $f\in \ell^1(\Gamma)_{1,+}$,
\begin{align*}
(f*\omega)(p_0)=\sum_{\gamma}f(\gamma)\omega(\alpha_{\gamma^{-1}}(p_0))=\sum_\gamma f(\gamma)\omega(p_0)=\omega(p_0).
\end{align*}
Let $(\omega_n)_{n\in\N}$ be a sequence of states on $C_m(G)$ satisfying $(a)$, $(b)$ and $(c)$. We have, for all $f\in \ell^1(\Gamma)_{1,+}$ with finite support
\begin{equation}\label{EqProba2}
\Vert f*\omega_n-\omega_n\Vert\leq\sum_\gamma f(\gamma)\Vert \delta_\gamma*\omega_n-\omega_n\Vert=\sum_\gamma f(\gamma)\Vert \alpha_\gamma(\omega_n)-\omega_n\Vert\rightarrow 0.
\end{equation}
Since such functions $f$ are dense in $\ell^1(\Gamma)_{1,+}$ (in the $\ell^1$-norm), it follows that $\Vert f*\omega_n-\omega_n\Vert\rightarrow 0$ for all $f\in \ell^1(\Gamma)_{1,+}$.

Let $\xi\in \ell^1(\Gamma)_{1,+}$ be any function such that $\xi>0$ and define $\nu_n=\xi*\omega_n$. By the preceding discussion, we know that $\alpha_\gamma(s(\nu_n))=s(\nu_n)$ for all $\gamma\in\Gamma$ and $\nu_n(p_0)=\omega_n(p_0)=0$ for all $n\in\N$. Moreover, by Equation $(\ref{EqProba2})$, we have $\Vert \alpha_\gamma(\nu_n)-\nu_n\Vert=\Vert \xi_\gamma*\omega_n-\xi*\omega_n\Vert\leq\Vert\xi_\gamma*\omega_n-\omega_n\Vert+\Vert\omega_n-\xi*\omega_n\Vert\rightarrow 0$
for all $\gamma\in\Gamma$. Since $\omega_n\rightarrow\varepsilon_G$ in the weak* topology and $\varepsilon_G\circ\alpha_\gamma=\varepsilon_G$, we have, $\vert\omega_n(\alpha_\gamma(a))-\varepsilon_G(a)\vert\rightarrow 0$ for all $a\in C_m(G)$ and all $\gamma\in\Gamma$. Hence, the Lebesgue dominated convergence theorem implies that, for all $a\in C_m(G)$,
$$\vert \nu_n(a)-\varepsilon_G(a)\vert=\left\vert\sum_\gamma f(\gamma)(\omega_n(\alpha_{\gamma^{-1}}(a))-\varepsilon_G(a))\right\vert\leq\sum_\gamma f(\gamma)\vert\omega_n(\alpha_{\gamma^{-1}}(a))-\varepsilon_G(a))\vert\rightarrow 0.$$
It follows that $\nu_n\rightarrow\varepsilon_G$ in the weak* topology and this completes the proof of the claim.\hfill{$\Box$}

\vspace{0.2cm}

We can now finish the proof of the Theorem. Let $(\omega_n)_{n\in\N}$ be a sequence of states on $C_m(G)$ as in the Claim. Let $M_n=s(\omega_n)Ms(\omega_n)$ and, since $\omega_n$ is faithful on $M_n$, view $M_n\subset\mathcal{B}(H_n)$ where $(H_n,\xi_n)$ is the GNS construction of the f.n.s. $\omega_n$ on $M_n$. Define $\rho_n\,:\, C_m(G)\subset M\rightarrow M_n\subset\mathcal{B}(H_n)$ by $a\mapsto s(\omega_n)as(\omega_n)$. By definition, the unique normal extension of $\rho_n$ is the map $\widetilde{\rho}_n\,:\, M\rightarrow M_n$, defined by $x\mapsto s(\omega_n)xs(\omega_n)$. Since $\alpha_\gamma(s(\omega_n))=s(\omega_n)$, the action $\alpha$ restricts to an action, still denoted by $\alpha$ of $\Gamma$ on $M_n$. Since $M_n\subset\mathcal{B}(H_n)$ is in standard form, we may consider the standard implementation (see [TakII Definition 1.6]) of the action of $\Gamma$ on $M_n$ to get a unitary representation $u_n\,:\,\Gamma\rightarrow\mathcal{U}(H_n)$ such that $\alpha_\gamma(x)=u_n(\gamma)xu_n(\gamma^{-1})$ for all $x\in M_n$ and $\gamma\in \Gamma$.

By the universal property of $A_m$, for $n\in\N$ there exists a unique unital $*$-homomorphism
\begin{align*}
\pi_n\,:\,A_m\rightarrow\mathcal{B}(H_n)\quad\text{such that}\quad\pi_n(u_\gamma)=u_n(\gamma)\quad\text{and}\quad\pi_n\circ\alpha=\rho_n.
\end{align*}

Since $\omega_n(p_0)=0$, we have $s(\omega_n)p_0s(\omega_n)=0$. Hence, $\widetilde{\rho}_n(p_0)=0$ and $K_{\pi_n}=\{0\}$  $\forall n\in\N$. It follows that, if we define $H=\oplus_{n} H_n$ and $\pi=\oplus_{n}\pi_n\,:\, C_m(\Gq)\rightarrow\mathcal{B}(H)$, then  $K_\pi=\{0\}$ as well. Hence, it suffices to show that $\varepsilon_\Gq\prec\pi$. Since $\xi_n$ is in the self-dual cone of $\omega_n$ and $u_n(\gamma)$ is the standard implementation of $\alpha_\gamma$, it follows from \cite[Theorem 1.14]{Ta00} that $u_n(\gamma)\xi_n$ is also in the self-dual cone of $\omega_n$ for all $n\in\N$. Hence, we may apply \cite[Theorem 1.2]{Ta00} to get $\Vert u_n(\gamma)\xi_n-\xi_n\Vert^2\leq\Vert\omega_{u_n(\gamma)\xi_n}-\omega_{\xi_n}\Vert$ for all $n\in\N$, $\gamma\in\Gamma$. Observe that $\omega_{u_n(\gamma)\xi_n}(x)=\alpha_\gamma(\omega_n)(x)$ and $\omega_{\xi_n}(x)=\omega_n(x)$ for all $x\in M$. Hence,
\begin{align*}
\Vert u_n(\gamma)\xi_n-\varepsilon_\Gq(u_\gamma)\xi_n\Vert=\Vert u_n(\gamma)\xi_n-\xi_n\Vert\leq\Vert\alpha_\gamma(\omega_n)-\omega_n\Vert^{\frac{1}{2}}\rightarrow 0.
\end{align*}

Since $\omega_n\rightarrow\varepsilon_G$ in the weak* topology, it follows that for all $x=u_\gamma\alpha(a)\in C_m(\Gq)$, we have
\begin{eqnarray*}
\Vert\pi(x)\xi_n-\varepsilon_\Gq(x)\xi_n\Vert&= &\Vert\pi(u_\gamma)\pi(\alpha(a))\xi_n-\varepsilon_G(a)\xi_n\Vert\\
&\leq&\Vert\pi(u_\gamma)(\pi(\alpha(a))\xi_n-\varepsilon_G(a)\xi_n)\Vert+\vert \varepsilon_G(a)\vert\,\Vert\pi(u_\gamma)\xi_n-\xi_n\Vert\\
&\leq&\Vert\pi(\alpha(a))\xi_n-\varepsilon_G(a)\xi_n\Vert+\vert \varepsilon_G(a)\vert\,\Vert u_n(\gamma)\xi_n-\xi_n\Vert\rightarrow 0.\\
\end{eqnarray*}
By linearity and the triangle inequality, we have $\Vert\pi(x)\xi_n-\varepsilon_\Gq(x)\xi_n\Vert\rightarrow 0$ for all $x\in\mathcal{A}$. We conclude the proof using the density of $\mathcal{A}$ in $C_m(\Gq)$.
\end{proof}

We now turn to Property (T).

\begin{theorem}\label{Thm-Crossed-T}
The following holds:
\begin{enumerate}
\item If $\widehat{\Gq}$ has property (T), then $\Gamma$ has property $T$ and  $\chi(G)^\alpha$ is finite.
\item If $\widehat{\Gq}$ has property $(T)$ and $\alpha$ is compact then $\widehat{G}$ and $\Gamma$ have property $(T)$.
\item If $\widehat{G}$ has property (T) and $\Gamma$ has property (T), then $\widehat{\Gq}$ has property (T).
\end{enumerate}
\end{theorem}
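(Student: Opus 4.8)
The plan is to reduce everything to the already-established relative co-property $(T)$ of the pair $(G,\Gq)$ (Theorem \ref{ThmRelT2}) together with the classical Kazhdan property of $\Gamma$, mimicking the structure of the proof of Theorem \ref{ThmPropT}(3). First I would translate ``$\widehat{G}$ has property $(T)$'' into the statement that the pair $(G,G)$ has the relative co-property $(T)$ (this is the remark right after the definition of relative co-property $(T)$, via \cite[Proposition 2.3]{Ky11}), and observe that the composition of relative co-$(T)$ inclusions is transitive. Concretely, for any representation $\pi\,:\,C_m(\Gq)\recht\bh$ with $\eps_\Gq\embed\pi$, the relative co-property $(T)$ of $(G,\Gq)$ gives $\eps_G\subset\pi\circ\alpha$, i.e. the space $K_\pi=\{\xi\in H\,:\,\pi\circ\alpha(a)\xi=\eps_G(a)\xi\ \forall a\in C_m(G)\}$ is nonzero. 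As in the proof of Theorem \ref{ThmPropT}(3), the projection $P=\widetilde{\pi\circ\alpha}(p_0)$ (using $M=C_m(G)^{**}$ and the central projection $p_0$ from Section 6.3) is the orthogonal projection onto $K_\pi$, and since $\alpha_\gamma(p_0)=p_0$ for all $\gamma$ (shown inside the proof of Theorem \ref{ThmRelT2}) together with $\pi(u_\gamma)\widetilde{\pi\circ\alpha}(x)\pi(u_\gamma)^*=\widetilde{\pi\circ\alpha}(\alpha_\gamma(x))$, the subspace $K_\pi$ is invariant under the unitary representation $\gamma\mapsto\pi(u_\gamma)$. Write $v\,:\,\Gamma\recht\cU(K_\pi)$ for the restriction.

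Next I would produce almost-invariant vectors for $v$. Let $\xi_n\in H$ be unit vectors with $\Vert\pi(x)\xi_n-\eps_\Gq(x)\xi_n\Vert\recht 0$ for all $x\in C_m(\Gq)$. Since $\Vert P\xi_n-\xi_n\Vert=\Vert\widetilde{\pi\circ\alpha}(p_0)\xi_n-\eps_\Gq(\alpha(p_0))\xi_n\Vert$ — here one uses that $p_0$ lies in $M$ and that $\widetilde\pi$ and $\widetilde{\eps}_\Gq$ are $\sigma$-weakly continuous, so the almost-invariance, which holds on the $\sigma$-weakly dense subalgebra $C_m(G)$, passes to $p_0$ — we get $\Vert P\xi_n\Vert\recht 1$, hence WLOG $P\xi_n\neq 0$; set $\eta_n=P\xi_n/\Vert P\xi_n\Vert\in K_\pi$. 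Then $\Vert v_\gamma\eta_n-\eta_n\Vert=\Vert P\xi_n\Vert^{-1}\Vert P(\pi(u_\gamma)\xi_n-\xi_n)\Vert\leq\Vert P\xi_n\Vert^{-1}\Vert\pi(u_\gamma)\xi_n-\eps_\Gq(u_\gamma)\xi_n\Vert\recht 0$, using $\eps_\Gq(u_\gamma)=1$. So $v$ has almost-invariant vectors, and since $\Gamma$ has property $(T)$, there is a nonzero $\xi\in K_\pi$ with $v_\gamma\xi=\xi$ for all $\gamma$. Finally, for $x=u_\gamma\alpha(a)\in\mathcal{A}=\PolG$ one computes $\pi(x)\xi=\pi(u_\gamma)\pi(\alpha(a))\xi=\eps_G(a)\pi(u_\gamma)\xi=\eps_G(a)\xi=\eps_\Gq(u_\gamma\alpha(a))\xi$ (using Remark \ref{RmkAnti} for the last equality), and by linearity, norm-continuity and density of $\mathcal{A}$ in $C_m(\Gq)$ this gives $\pi(x)\xi=\eps_\Gq(x)\xi$ for all $x\in C_m(\Gq)$. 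Thus $\eps_\Gq\subset\pi$, i.e. $(\Gq,\Gq)$ has the relative co-property $(T)$, which by \cite[Proposition 2.3]{Ky11} / \cite{Fi10} is exactly property $(T)$ for $\widehat{\Gq}$.

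The one genuinely delicate point — the ``main obstacle'' — is the passage from $C_m(G)$ to the bidual: verifying that $p_0$ is reached by the almost-invariance of the $\xi_n$ and that $P=\widetilde{\pi\circ\alpha}(p_0)$ is the projection onto $K_\pi$. This is handled exactly as in the opening paragraph of the proof of Theorem \ref{ThmRelT2} (normality of $\widetilde{\pi\circ\alpha}$ and $\widetilde{\eps}_G$, $\sigma$-weak density of $C_m(G)$ in $M$), so no new idea is needed beyond bookkeeping; everything else is a routine repetition of the argument for Theorem \ref{ThmPropT}(3) with $\delta_e\in C(G)$ replaced by $p_0\in C_m(G)^{**}$ and with the input ``$G$ finite $\Rightarrow$ $\widehat G$ has $(T)$'' replaced by the hypothesis that $\widehat G$ has $(T)$ (used to guarantee $K_\pi\neq\{0\}$ via the relative co-$(T)$ of $(G,\Gq)$, which in turn follows from relative co-$(T)$ of $(G,G)$ composed along $\alpha$).
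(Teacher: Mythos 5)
Your overall strategy is the same as the paper's: identify $K_\pi$ with the range of $P=\widetilde{\rho}(p_0)$ for $\rho=\pi\circ\alpha$, use $\alpha_\gamma(p_0)=p_0$ to see that $K_\pi$ is invariant under $\gamma\mapsto\pi(u_\gamma)$, get $K_\pi\neq\{0\}$ from property $(T)$ of $\widehat G$ applied to $\rho$, and then run the argument of Theorem \ref{ThmPropT}(3) with property $(T)$ of $\Gamma$. All of that is fine. The gap is precisely in the step you flag as delicate: the claim that $\Vert P\xi_n-\xi_n\Vert\to 0$ because ``the almost-invariance, which holds on the $\sigma$-weakly dense subalgebra $C_m(G)$, passes to $p_0$.'' It does not. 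The almost-invariance is a limit in $n$ for each fixed $a\in C_m(G)$, with no uniformity over bounded sets, while $\sigma$-weak approximation of $p_0$ by elements of $C_m(G)$ only controls $\rho(a_\lambda)\xi_n$ weakly for each fixed $n$; there is no legitimate interchange of the two limits. If your argument were valid it would apply to an arbitrary compact quantum group and an arbitrary representation with almost-invariant vectors, and would show $\widetilde\rho(p_0)\neq 0$ always, i.e.\ that relative co-property $(T)$ holds for every pair --- contradicting Theorems \ref{ThmRelT} and \ref{ThmRelT2}. Concretely, for a classical infinite compact $G$ with $\rho$ acting by multiplication on ${\rm L}^2(G,\mu_n)$, where $\mu_n\to\delta_e$ weak* and $\mu_n(\{e\})=0$, the constant unit vectors are almost invariant yet $P\xi_n=0$ for every $n$.

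The conclusion $\Vert P\xi_n\Vert\to 1$ is nevertheless true under the hypotheses, but it must be extracted from property $(T)$ of $\widehat G$ a second time rather than from normality. Since $p_0$ is central in $M=C_m(G)^{**}$ and $\alpha_\gamma(p_0)=p_0$, the complement $K_\pi^{\perp}$ is invariant under $\pi$; if $\Vert(1-P)\xi_n\Vert$ did not tend to $0$, then along a subsequence the normalized vectors $(1-P)\xi_n/\Vert(1-P)\xi_n\Vert$ would be almost invariant for $\rho$ restricted to $K_\pi^{\perp}$, and property $(T)$ of $\widehat G$ would produce a nonzero $\rho$-invariant vector in $K_\pi^{\perp}$, which is absurd. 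Equivalently, one invokes the quantum analogue of \cite[Proposition 1.1.9]{BDV08} that the paper itself appeals to in the remark on Kazhdan pairs following the theorem. With that repair your proof closes up and coincides with the paper's.
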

\begin{proof}
$(1)$. This is the same proof as of assertion $1$ of Theorem \ref{ThmPropT}. First, we use the counit on $C_m(G)$ and the universal property of $C_m(\Gq)$ to construct a surjective $*$-homomorphism $C_m(\Gq)\rightarrow C^*(\Gamma)$ which intertwines the comultiplications. We then use \cite[Proposition 6]{Fi10} to conclude that $\Gamma$ has property $(T)$. To end the proof of $(1)$, we show that $\chi(G)^\alpha$ is discrete. Let $\chi_n\in \chi(G)^\alpha$ be any sequence such that $\chi_n\rightarrow \varepsilon_G$ weak* in $C_m(G)^*$. We define a unital $*$-homomorphism $\chi\,:\, C_m(G)\rightarrow\mathcal{B}(l^2(\N))$ by $(\chi(a)\xi)(n)=\chi_n(a)\xi(n)$ for all $a\in C_m(G)$ and $\xi\in l^2(\N)$. Since $\chi_n\in{\rm Sp}(C_m(G))^\alpha$ we have $\chi\circ\alpha_\gamma=\chi$ for all $\gamma\in\Gamma$. Hence, considering the trivial representation of $\Gamma$ on $l^2(\N)$ we obtain a covariant representation so there exists a unique unital $*$-homomorphism $\pi\,:\, C_m(\Gq)\rightarrow\mathcal{B}(l^2(\N))$ such that $\pi(u_\gamma\alpha(a))=\chi(a)$ for all $a\in C_m(G)$ and all $\gamma\in\Gamma$. Since $\chi_n\rightarrow \varepsilon_G$ weak* the sequence of unit vectors defined by $\xi_n=\delta_n\in l^2(\N)$ is a sequence of almost invariant vectors. By property $(T)$ we have $\varepsilon_\Gq\subset\pi$ which easily implies that, for some $n\in\N$, $\chi_n=\varepsilon_G$.

$(2)$. We repeat again the proof of assertion $2$ of Theorem \ref{ThmPropT}. By $(1)$, it suffices to show that $\widehat{G}$ has Property $(T)$. Let $\rho\,:\,C_m(G)\rightarrow\mathcal{B}(H)$ with $\varepsilon_G\prec\pi$ and define the compact group $K=\overline{\alpha(\Gamma)}\subset{\rm Aut}(G)$ with its Haar probability $\nu$. Note that any $x\in{\rm Aut}(G)$, in particular any $x\in K$, satisfies $\varepsilon_G\circ x=\varepsilon_G$. Define the covariant representation $(\rho_\alpha,v)$, $\rho_\alpha\,:\, C_m(G)\rightarrow\mathcal{B}({\rm L}^2(K,H))$ and $v\,:\,\Gamma\rightarrow\mathcal{U}({\rm L}^2(K,H))$ by $(\rho_\alpha(a)\xi)(x)=\rho(x^{-1}(a))\xi(x)$ and $(v_\gamma\xi)(x)=\xi(\alpha_{\gamma^{-1}}x)$. By the universal property of $C_m(\Gq)$ we get a unital $*$-homomorphism $\pi\,:\, C_m(\Gq)\rightarrow\mathcal{B}({\rm L}^2(K,H))$ such that $\pi(u_\gamma\alpha(a))=v_\gamma\rho_\alpha(a)$. Let $\xi_n\in H$ be a sequence of unit vectors such that $\Vert\rho(a)\xi_n-\varepsilon_G(a)\xi_n\Vert\rightarrow 0$ for all $a\in C_m(G)$ and define the vectors $\eta_n(x)=\xi_n$ for all $x\in K$, $n\in\N$.  Since $\nu$ is a probability, $\eta_n$ is a unit vector in ${\rm L}^2(K,H)$ for all $n\in\N$. Moreover, for all $a\in C_m(G)$ and $\gamma\in \Gamma$,
$$\Vert\pi(u_\gamma\alpha(a))\eta_n-\varepsilon_{\Gq}(u_\gamma\alpha(a))\xi_n\Vert^2=\int_K\Vert\rho(x^{-1}(\alpha_\gamma(a)))\xi_n-\varepsilon_G(a)\xi_n\Vert^2d\nu(x)\rightarrow 0,$$
where the convergence follows from the dominated convergence Theorem, since
$$\Vert\rho(x^{-1}(\alpha_\gamma(a)))\xi_n-\varepsilon_G(a)\xi_n\Vert=\Vert\rho(x^{-1}(\alpha_\gamma(a)))\xi_n-\varepsilon_G(x^{-1}(\alpha_\gamma(a)))\xi_n\Vert\rightarrow 0,$$
for all $a\in C_m(G)$, $x\in K$ and $\gamma\in\Gamma$ and the domination hypothesis is obvious since $\nu$ is a probability. Hence, $\varepsilon_{\Gq}\prec\pi$ and it follows from Property $(T)$ that there exists a non-zero $\pi$-invariant vector $\xi\in{\rm L}^2(G,H)$. In particular, for all $a\in C_m(G)$, $\pi(\alpha(a)\xi=\varepsilon_G(a)\xi$. Hence, $\nu(Y)>0$ where $Y=\{x\in K\,:\,\Vert\xi(x)\Vert>0\}$ and, for all $a\in C_m(G)$, $\nu(X_a)=0$ where $X_a=\{x\in K\,:\,\rho(x^{-1}(a))\xi(x)\neq\varepsilon_G(a)\xi(x)\}$. As in the proof of assertion $2$ of Theorem \ref{ThmPropT}, we deduce from the separability of $C_m(G)$ that there exists $x\in K$ for which $\xi(x)\neq 0$ and $\rho(x^{-1}(a))\xi(x)=\varepsilon_G(a)\xi(x)$ for all $a\in C_m(G)$. It follows that the vector $\eta:=\xi(x)\in H$ is a non-zero $\rho$-invariant vector.

$(3)$. We use the notations introduced in the proof of Theorem \ref{ThmRelT2}. Let $\pi\,:\,C_m(\Gq)\rightarrow\mathcal{B}(H)$ be a representation and consider the representation $\rho=\pi\circ\alpha\,:\, C_m(G)\rightarrow\mathcal{B}(H)$ and the unitary representation $v_\gamma=\pi(u_\gamma)$ of $\Gamma$ on $H$. Let $K_\pi=\{\xi\in H\,:\,\rho(a)\xi=\varepsilon_G(a)\xi\text{ for all }a\in C_m(G)\}$ and recall that the orthogonal projection onto $K_\pi$ is $P=\widetilde{\rho}(p_0)$ and that $\alpha_\gamma(p_0)=p_0$ for all $\gamma\in\Gamma$. Hence,
$v_\gamma Pv_{\gamma^{-1}}=\widetilde{\rho}(\alpha_\gamma(p_0))=P$ for all $\gamma\in\Gamma$, and it follows that $K_\pi$ is an invariant subspace of $\gamma\mapsto v_\gamma$. Suppose that $\varepsilon_\Gq\prec\pi$. By property $(T)$ of $\widehat{G}$, the space $K_\pi$ is non-zero and we can argue exactly as in the proof of Theorem \ref{ThmPropT} to conclude the result.
\end{proof}

\begin{remark}
It follows from the proof of the first assertion of the previous theorem that $C^\ast(\Gamma)$ is a compact quantum subgroup of the compact quantum group $\Gq$. Now, an irreducible representation of $\Gq$ of the form $u^x_\gamma$ (with dimension say $m$), when restricted to the subgroup $C^\ast(\Gamma)$, decomposes as a direct sum of $m$ copies of $\gamma$. It now follows from \cite[Theorem 6.3]{Pa13} that $C^\ast(\Gamma)$ is a central subgroup (see \cite[Definition 6.1]{Pa13}). Furthermore, $\Gamma$ induces an action on the chain group $c(G)$ \cite[Definition 7.4]{Pa13} of $G$ and it follows from Remark \ref{RmkFusion} that the chain group (and hence the center, see \cite[Section 7]{Pa13}) of $\Gq$ is the semidirect product group $c(G)\rtimes \Gamma$. 
\end{remark}

\begin{remark}
\textbf{(Kazhdan Pair for $\Gq$)} Let $(E_1,\delta_1)$ be a Kazhdan pair for $G$ and $(E_2,\delta_2)$ be a Kazhdan Pair for $\Gamma$. Then it is not hard to show that $E=(E_1\cup E_2)\subset \text{Irr}(\Gq)$ and $\delta=\text{min}(\delta_1,\delta_2)$ is a Kazhdan pair for $\Gq$. Indeed, let $\pi:C_m(\Gq)\rightarrow \mathcal{B}(H)$ be a $\ast$-representation having a $(E,\delta)$-invariant (unit) vector $\xi$. Then restricting to the subalgebra $C_m(G)$ (and denoting the corresponding representation by $\pi_G$), we get an $(E_1,\delta_1)$ invariant vector and hence, there is an invariant vector $\eta\in H$. We may assume $\|\xi-\eta\|<1$ (this follows from a quantum group version of Proposition 1.1.9 of \cite{BDV08}, which can be proved in an exactly similar fashion). Now, by restricting $\pi$ to $\Gamma$, denoting the corresponding representation by $u$, we have that the closed linear $u$-invariant subspace generated by $u_\gamma\eta, \gamma\in \Gamma$ (which we denote by $H_\eta$), is a subspace of the space of $\pi_G$-invariant vectors (as $u_\gamma\pi_G(a)u_\gamma^{-1}=\pi_G(\alpha_\gamma(a))$). Let $P_{H_{\eta}}$ denote the orthogonal projection onto $H_{\eta}$. Now, the vector $P_{H_{\eta}}\xi$, which is non-zero, as $\|\xi-\eta\|<1$, is an $(E_2,\delta_2)$-invariant vector for the representation $u$, restricted to $H_\eta$. So, there exists an $u$-invariant vector $\eta_0\in H_\eta$. This vector is, of course then, $\pi$-invariant and hence, we are done.   
\end{remark}

\subsection{Haagerup property}

In this section, we study the relative co-Haagerup property of the pair $(G,\Gq)$ given by a crossed product and provide a characterization analogous to the bicrossed product case. We also extend a result of Jolissaint on Haagerup property for finite von Neumann algebra crossed product to a non-finite setting.  Thus, we can decide
whether $\rm{L}^\infty(\Gq)$ has the Haagerup property.  Finally, we provide sufficient conditions for $\widehat{\Gq}$ to posses the 
Haagerup property.

For the relative Haagerup property of crossed product, we obtain the following result similar to Theorem \ref{ThmRelH}. The proof is even simpler in the crossed product case, since $\alpha$ is an action by quantum automorphisms.

\begin{theorem}\label{Q-RelHaag}
The following are equivalent:
\begin{enumerate}
\item The pair $(G,\Gq)$ has the relative co-Haagerup property.
\item There exists a sequence $(\omega_n)_{n\in\N}$ of states on $C_m(G)$ such that
\begin{enumerate}
\item $\widehat{\omega}_n\in c_0(\widehat{G})$ for all $n\in\N$;
\item $\omega_n\rightarrow\varepsilon_G$ weak*;
\item $\Vert\alpha_\gamma(\omega_n)-\omega_n\Vert\rightarrow 0$ for all $\gamma\in\Gamma$.
\end{enumerate}
\end{enumerate}
\end{theorem}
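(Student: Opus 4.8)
The plan is to mirror the structure of Theorem \ref{ThmRelT2}, with the simplification that $\alpha$ now acts by genuine quantum automorphisms so that Fourier transforms transform well under the action. Let $M=C_m(G)^{**}$, let $p_0\in M$ be the central projection with $p_0xp_0=\widetilde\varepsilon_G(x)p_0$ as in Section 6, and recall the elementary identity $\widehat{\omega\circ\alpha_\gamma}=\widehat{\rho}_\gamma^{\,*}(\widehat\omega)$ for a suitable automorphism $\widehat{\rho}_\gamma$ of $c_0(\widehat G)$ induced by $\alpha_\gamma$ on ${\rm Irr}(G)$: since $\alpha_\gamma$ permutes ${\rm Irr}(G)$ by $x\mapsto\alpha_\gamma(x)$ with unitaries $W_{\gamma,x}\in{\rm Mor}((\id\ot\alpha_\gamma)(u^x),u^{\alpha_\gamma(x)})$, the support condition $\widehat\omega\in c_0(\widehat G)$ is visibly preserved by $\gamma$. (This is the crossed-product analogue of Lemma \ref{LemC0}, but it is now immediate because $\alpha_\gamma$ maps irreducibles to irreducibles, whereas in the bicrossed case one had to fight the fact that $\alpha_\gamma$ is not a homomorphism.)

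For $(1)\Rightarrow(2)$: given states $\omega_n'\in C_m(\Gq)^*$ with $\omega_n'\to\varepsilon_\Gq$ weak* and $\widehat{\omega_n'\circ\alpha}\in c_0(\widehat G)$, set $\omega_n=\omega_n'\circ\alpha\in C_m(G)^*$. Property (a) is immediate; (b) follows since $\varepsilon_\Gq\circ\alpha=\varepsilon_G$; and (c) is obtained exactly as in the first implication of Theorem \ref{ThmRelT2}: writing $(H_n,\pi_n,\xi_n)$ for the GNS triple of $\omega_n'$, one has $\alpha_\gamma(\omega_n)(a)=\langle\pi_n(\alpha(a))\pi_n(u_\gamma)\xi_n,\pi_n(u_\gamma)\xi_n\rangle$, whence $\|\alpha_\gamma(\omega_n)-\omega_n\|\le 2\|\pi_n(u_\gamma)\xi_n-\xi_n\|=\sqrt{2(1-{\rm Re}\,\omega_n'(u_\gamma))}\to 0$.

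For $(2)\Rightarrow(1)$, I would follow the two-step pattern of Theorem \ref{ThmRelT2}. First, the Claim: if a sequence $(\omega_n)$ satisfying (a), (b), (c) exists, then one satisfying additionally $\alpha_\gamma(s(\omega_n))=s(\omega_n)$ for all $\gamma$ exists. Convolve with a strictly positive $\xi\in\ell^1(\Gamma)_{1,+}$ and set $\nu_n=\xi*\omega_n=\sum_\gamma\xi(\gamma)\alpha_\gamma(\omega_n)$; exactly as in the proof of Theorem \ref{ThmRelT2}, $s(\nu_n)=\bigvee_\gamma\alpha_\gamma(s(\omega_n))$ is $\alpha$-invariant, $\nu_n(p_0)=\omega_n(p_0)$ is irrelevant here (we no longer need (a) from Theorem \ref{ThmRelT2}'s list — the relevant condition is $\widehat{\nu_n}\in c_0(\widehat G)$), and weak* convergence plus the asymptotic invariance pass through the convolution. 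The one new point is that $\widehat{\nu_n}\in c_0(\widehat G)$: for finitely supported $f$ this is a finite sum of the $\widehat{\alpha_\gamma(\omega_n)}$, each in $c_0(\widehat G)$ by the transformation rule above; for general $f\in\ell^1(\Gamma)_{1,+}$ approximate in $\ell^1$-norm and use $\|\widehat{(f-f_n)*\omega_n}\|\le\|f-f_n\|_1\|\omega_n\|\to 0$ together with the norm-closedness of $c_0(\widehat G)$ — this is the direct analogue of the argument in the Claim inside the proof of Theorem \ref{ThmRelH}. Second, with such a sequence in hand, I would reproduce verbatim the construction from the proof of Theorem \ref{ThmRelT2}: put $M_n=s(\omega_n)Ms(\omega_n)$, realize $M_n\subset\mathcal B(H_n)$ in standard form via the GNS construction of $\omega_n|_{M_n}$, use the standard implementation to get $u_n:\Gamma\to\mathcal U(H_n)$ with $\alpha_\gamma(x)=u_n(\gamma)xu_n(\gamma^{-1})$, and let $\pi_n:C_m(\Gq)\to\mathcal B(H_n)$ be the resulting covariant representation with $\pi_n\circ\alpha=\rho_n$, $\rho_n(a)=s(\omega_n)as(\omega_n)$. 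Set $H=\bigoplus_nH_n$, $\pi=\bigoplus_n\pi_n$. The vectors $\xi_n$ (the cyclic GNS vectors) are almost $\varepsilon_\Gq$-invariant — via \cite[Theorems 1.2, 1.14]{Ta00} one gets $\|u_n(\gamma)\xi_n-\xi_n\|^2\le\|\alpha_\gamma(\omega_n)-\omega_n\|\to0$, and $\|\rho_n(a)\xi_n-\varepsilon_G(a)\xi_n\|\to0$ from (b) — so $\omega_n':=\bigoplus\text{(state of }\xi_n)\circ\pi$, i.e. the vector states $\langle\pi(\cdot)\xi_n,\xi_n\rangle$, converge to $\varepsilon_\Gq$ weak*. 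Finally $\omega_n'\circ\alpha=\omega_n$ (since $\rho_n$ realizes the compression and $\xi_n$ is the cyclic vector for $\omega_n$ on $M_n$), so $\widehat{\omega_n'\circ\alpha}=\widehat{\omega_n}\in c_0(\widehat G)$, giving the relative co-Haagerup property.

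The main obstacle is the bookkeeping in the Claim guaranteeing that the convolved states $\nu_n$ still have Fourier transforms in $c_0(\widehat G)$; everything else is a direct transcription of Theorems \ref{ThmRelT2} and \ref{ThmRelH}. But even this obstacle is mild precisely because $\alpha$ acts by quantum automorphisms: the transformation rule $\widehat{\omega\circ\alpha_\gamma}=\widehat\rho_\gamma^{\,*}(\widehat\omega)$ with $\widehat\rho_\gamma$ a $*$-isomorphism of $c_0(\widehat G)$ permuting the blocks $\mathcal B(H_x)$ makes the $c_0$-condition manifestly $\Gamma$-stable, which is exactly the point where the bicrossed-product proof (Lemma \ref{LemC0}) required real work.
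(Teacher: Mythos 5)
Your proposal is correct and follows essentially the same route as the paper: the forward implication is the GNS estimate $\Vert\alpha_\gamma(\omega_n)-\omega_n\Vert\le\sqrt{2(1-\mathrm{Re}\,\omega_n'(u_\gamma))}$, and the converse combines the convolution Claim (where the only new point is that the $c_0(\widehat G)$ condition is stable under $\alpha_\gamma$ because $\alpha_\gamma$ permutes ${\rm Irr}(G)$, plus $\ell^1$-density and norm-closedness for general $f$) with the support-projection/standard-implementation construction from Theorem \ref{ThmRelT2}. The paper's proof is a direct transcription of exactly these two steps, so there is nothing to add.
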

\begin{proof}
$(1)\Rightarrow (2)$. The argument is exactly the same as the proof of $(1)\implies (2)$ of Theorem \ref{ThmRelH}.

$(2)\Rightarrow(1)$. We first prove the following claim.

\textbf{Claim.} \textit{If $(2)$ holds, then there exists a sequence $(\nu_n)_{n\in\N}$ of states on $C_m(G)$ satifying $(a)$, $(b)$ and $(c)$ and such that
$\alpha_\gamma(s(\nu_n))=s(\nu_n)$ for all $\gamma\in\Gamma$, $n\in\N$.}

\textit{Proof of the claim.} By the proof of the claim in Theorem \ref{ThmRelT2}, it suffices to check that, whenever $\nu$ is a state on $C_m(G)$ and $f\in \ell^1(\Gamma)$, we have $\widehat{\nu}\in c_0(\widehat{G})\Rightarrow \widehat{f*\nu}\in c_0(\widehat{G})$.

We first show that $\widehat{\nu}\in c_0(\widehat{G})\Rightarrow \widehat{\alpha_\gamma(\nu)}\in c_0(\widehat{G})$. Note that we still denote by $\alpha$ the action of $\Gamma$ on ${\rm Irr}(G)$ (see Remark \ref{RmkFusion}). Now let $\nu$ be a state on $C_m(G)$ such that $\widehat{\nu}\in c_0(\widehat{G})$ and let $\epsilon>0$. By assumptions, the set $F=\{x\in{\rm Irr}(G)\,:\,\Vert(\id\ot\nu)(u^x)\Vert_{\mathcal{B}(H_x)}\geq\epsilon\}$ is finite. Hence, the set
\begin{align*}
\{x\in{\rm Irr}(G)\,:\,\Vert(\id\ot\nu)(u^{\alpha_{\gamma^{-1}}(x)})\Vert_{\mathcal{B}(H_x)}\geq\epsilon\}=\{x\in{\rm Irr}(G)\,:\,\alpha_{\gamma^{-1}}(x)\in F\}=\alpha_\gamma(F)
\end{align*}
is also finite. Since $\widehat{\alpha_\gamma(\nu)}=\left((\id\ot\nu)(u^{\alpha_{\gamma^{-1}}(x)})\right)_{x\in{\rm Irr}(G)}$, it follows that $\widehat{\alpha_\gamma(\nu)}\in c_0(\widehat{G})$.

From this we can now conclude that for all $f\in \ell^1(\Gamma)$, we have $\widehat{\nu}\in c_0(\widehat{G})\Rightarrow \widehat{f*\nu}\in c_0(\widehat{G})$ as in the proof of the Claim in Theorem \ref{ThmRelH}.\hfill{$\Box$}

\vspace{0.2cm}

We can now finish the proof of the Theorem. Let $(\nu_n)_{n\in\N}$ be a sequence of states on $C_m(G)^*$ as in the Claim. As in the proof of Theorem \ref{ThmRelT2}, we construct a representation $\pi\,:\, C_m(\Gq)\rightarrow\mathcal{B}(H)$ with a sequence of unit vectors $\xi_n\in H$ such that $\Vert\pi(x)\xi_n-\varepsilon_\Gq(x)\xi_n\Vert\rightarrow 0$ for all $x\in C_m(\Gq)$ and $\nu_n=\omega_{\xi_n}\circ\pi\circ\alpha$. It follows that the sequence of states $\omega_n=\omega_{\xi_n}\circ\pi\in C_m(\Gq)^*$, satisfies $\omega_n\rightarrow\varepsilon_\Gq$ in the weak* topology and $\widehat{\omega_n\circ\alpha}=\widehat{\nu}_n\in c_0(\widehat{G})$ for all $n\in\N$.
\end{proof}

We now turn to the Haagerup property. We will need the following result which is of independent interest. This is the non-tracial version of \cite[Corollary 3.4]{Jo07} and the proof is similar. We include a proof for the convenience of the reader. We refer to \cite{CS13,OT13} for the Haagerup property for arbitrary von Neumann algebras.

\begin{proposition}\label{PropHaag}
Let $(M,\nu)$ be a von Neumann algebra with a f.n.s. $\nu$ and let $\alpha\,:\,\Gamma\curvearrowright M$ be an action which leaves $\nu$ invariant. If $\alpha$ is compact, $\Gamma$ and $M$ have the Haagerup property, then $\Gamma\ltimes M$ has the Haagerup property.
\end{proposition}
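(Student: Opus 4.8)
The statement to prove is: if $(M,\nu)$ carries a f.n.s. $\nu$, $\alpha\colon\Gamma\curvearrowright M$ is a compact, $\nu$-preserving action, and both $\Gamma$ and $M$ have the Haagerup property, then $\Gamma\ltimes M$ has the Haagerup property. My plan is to follow the strategy of Jolissaint's tracial argument (\cite[Corollary 3.4]{Jo07}), replacing the trace by the f.n.s.\ $\nu$ and using the dual weight $\widehat{\nu}=\nu\circ E$ on $N:=\Gamma\ltimes M$, where $E\colon N\to M$ is the canonical normal conditional expectation. Recall that for a von Neumann algebra with f.n.s.\ weight, the Haagerup property (in the sense of \cite{CS13,OT13}) is witnessed by a net of normal, $\widehat\nu$-preserving, completely positive maps $\Phi_i$ on $N$ which are ``$L^2$-compact'' (the induced operators $T_i$ on $L^2(N,\widehat\nu)$ are compact) and converge to $\mathrm{id}$ pointwise in $\|\cdot\|_{2,\widehat\nu}$. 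The construction of the $\Phi_i$ will be a composition of three families of maps: one coming from the Haagerup approximations on $M$, one coming from the Haagerup approximations on $\Gamma$, and an averaging over the compact group $H=\overline{\alpha(\Gamma)}\subset\mathrm{Aut}(M)$ to repair $\Gamma$-equivariance, exactly in the spirit of Lemma \ref{LemAverage} and Theorem \ref{ThmWA}.

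First I would fix a net $\phi_i\colon M\to M$ of normal u.c.p.\ $\nu$-preserving, $L^2$-compact maps converging to $\mathrm{id}_M$ pointwise in $\|\cdot\|_{2,\nu}$, and a net $\psi_j\colon C^*(\Gamma)\to\C$ (equivalently positive definite functions $\psi_j$ on $\Gamma$ with $\psi_j\in c_0(\Gamma)$, $\psi_j\to 1$ pointwise, $\psi_j(e)=1$). Since $\alpha$ is compact, $H$ is a compact second countable group with Haar probability measure $dh$, and the map $H\ni h\mapsto h(x)$ is continuous in $\|\cdot\|_{2,\nu}$ for each $x\in M$; also each $h\in H$ preserves $\nu$. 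Define
$$\Phi_i^M\colon M\to M,\qquad \Phi_i^M(x)=\int_H h^{-1}\bigl(\phi_i(h(x))\bigr)\,dh,$$
which is normal, u.c.p., $\nu$-preserving, $\Gamma$-equivariant (i.e.\ $\Phi_i^M\circ\alpha_\gamma=\alpha_\gamma\circ\Phi_i^M$), converges to $\mathrm{id}_M$ pointwise in $\|\cdot\|_{2,\nu}$ by dominated convergence, and whose $L^2$-implementation remains compact because it is a $\|\cdot\|$-limit of averages of compact operators (each $h^{-1}\phi_i h$ being conjugate to a compact operator by the standard implementation of $h$). By $\Gamma$-equivariance and $\nu$-invariance, $\Phi_i^M$ extends to a normal $\widehat\nu$-preserving u.c.p.\ map $\widetilde\Phi_i^M$ on $N$ with $\widetilde\Phi_i^M(u_\gamma x)=u_\gamma\,\Phi_i^M(x)$; on the $\Gamma$ side, $\psi_j$ defines a normal $\widehat\nu$-preserving u.c.p.\ multiplier $\widetilde\psi_j$ on $N$ with $\widetilde\psi_j(u_\gamma x)=\psi_j(\gamma)u_\gamma x$. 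Then set $\Phi_{i,j}=\widetilde\psi_j\circ\widetilde\Phi_i^M$.

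The remaining work is to check that $\{\Phi_{i,j}\}$ is a Haagerup approximation for $(N,\widehat\nu)$. Pointwise convergence to $\mathrm{id}_N$ in $\|\cdot\|_{2,\widehat\nu}$ on the dense $*$-subalgebra spanned by the $u_\gamma x$ is immediate from the two pointwise convergences and a triangle inequality, and extends by density. The composition is $\widehat\nu$-preserving and u.c.p.\ (hence normal, by construction). For $L^2$-compactness: the operator $T_{i,j}$ on $L^2(N,\widehat\nu)\cong\ell^2(\Gamma)\otimes L^2(M,\nu)$ induced by $\Phi_{i,j}$ is, up to the standard identifications, the ``matrix'' with $(\gamma,\gamma')$-block equal to $\delta_{\gamma,\gamma'}\,\psi_j(\gamma)\,T^M_{i,\gamma}$ where $T^M_{i,\gamma}$ is the $L^2$-operator of $\alpha_{\gamma^{-1}}\circ\Phi_i^M\circ\alpha_\gamma$ (conjugate to the compact $T^M_i$, hence compact with the same norm); since $\psi_j\in c_0(\Gamma)$ and each block is compact with uniformly bounded norm, $T_{i,j}$ is compact. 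This last verification — assembling the blocks and arguing that the diagonal-in-$\Gamma$ operator with $c_0$-decaying compact blocks is compact on the Hilbert space tensor product — is the main technical obstacle, together with being careful that in the non-tracial setting the ``$L^2$-implementation'' of a $\widehat\nu$-preserving normal u.c.p.\ map is well defined and contractive (this is exactly where one uses that the maps preserve the weight, via the GNS space of $\widehat\nu$ and the associated Kadison--Schwarz/Hilbert space contraction). Once these are in place, the conclusion is that $N=\Gamma\ltimes M$ has the Haagerup property. I expect the delicate points to be purely in handling the f.n.s.\ $\widehat\nu$ in place of a trace and in justifying that the averaging over $H$ does not destroy compactness — both of which are modeled on Lemma \ref{LemAverage} and on the modular-theoretic estimates already invoked in Theorems \ref{ThmRelT2} and \ref{Thm-Crossed-T} (e.g.\ \cite[Theorems 1.2 and 1.14]{Ta00}).
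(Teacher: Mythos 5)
Your proposal is correct and follows essentially the same route as the paper's proof: average the Haagerup approximants of $M$ over the compact closure $H=\overline{\alpha(\Gamma)}$ to obtain $\Gamma$-equivariant, $\nu$-preserving, $L^2$-compact maps, extend them to the crossed product, and compose with the $c_0$ multipliers furnished by the Haagerup property of $\Gamma$, so that the $L^2$-implementation becomes a tensor product $T_{\phi_j}\otimes T_{\Psi_i}$ of compact operators. The only immaterial divergence is the justification that averaging preserves $L^2$-compactness: you appeal to norm-approximation by averages of compact operators (which does work, since $h\mapsto T_{h^{-1}}T_{\psi}T_h$ is norm-continuous when $T_{\psi}$ is compact and the $T_h$ are unitaries converging strongly together with their adjoints), whereas the paper runs an Arzel\`a--Ascoli argument in $C(H,\mathcal{B})$.
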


\begin{proof}
Let $H<{\rm Aut}(M)$ be the closure of the image of $\Gamma$ in ${\rm Aut}(M)$. By assumption $H$ is compact. Let ${\rm L}^2(M)$ denote the GNS space of $\nu$. 

We first make an easy observation. Whenever $\psi\,:\, M\rightarrow M$ is a ucp, normal and $\nu$-preserving map, then for all $x\in M$, the map $H\ni h\mapsto h^{-1}\circ\psi\circ h(x)\in M$ is $\sigma$-weakly continuous. Hence, we can define $\Psi(x)=\int_Hh^{-1}\circ\psi\circ h(x)dh$, where $dh$ is the normalized Haar measure on $H$. By construction, the map $\Psi\,:\,M\rightarrow M$ is ucp, $\nu$-preserving, $\Gamma$-equivariant and normal. Moreover, for all $\xi\in {\rm L}^2(M)$, the map $H\ni h\mapsto T_{h^{-1}}\circ T_\psi\circ T_h\xi\in{\rm L}^2(M)$, where $T_h$ and $T_\psi$ are respectively the ${\rm L}^2$-extensions of $h$ and $\psi$, is norm continuous. Consequently, $\int_HT_{h^{-1}}\circ T_\psi\circ T_hdh\in\mathcal{B}({\rm L}^2(M))$ and by definition of $\Psi$ we have that the ${\rm L}^2$-extension of $\Psi$ is given by $T_{\Psi}=\int_HT_{h^{-1}}\circ T_\psi\circ T_hdh\in\mathcal{B}({\rm L}^2(M))$. Let $\mathcal{B}$ denote the unit ball of ${\rm L}^2(M)$. Consider the set $A=\{h\mapsto T_{h^{-1}}\circ T_\psi\circ T_h\xi\,:\,\xi\in\mathcal{B}\}\subset C(H,\mathcal{B})$. It is easy to check that $A$ is equicontinuous and, since $T_\psi$ is compact, the set $A(h)=\{f(h)\,:\,f\in A\}$ is precompact for all $h\in H$. By Ascoli's Theorem, $A$ is precompact in $C(H,\mathcal{B})$. Since the map $H\times C(H,\mathcal{B})\rightarrow\mathcal{B}$, defined by $(h,f)\mapsto f(h)$ is continuous, the image of $H\times\overline{A}$ is compact and contains $\mathcal{B}_\psi=\{T_{h^{-1}}\circ T_\psi\circ T_h(\mathcal{B}),h\in H\}$. Since the image of $\mathcal{B}$ under $T_\Psi$ is contained in the closed convex hull of $\overline{\mathcal{B}_\psi}$, it follows that $T_\Psi$ is compact.

We use the standard notations $N=\Gamma\rtimes M=\{u_\gamma x\,:\,\gamma\in\Gamma, x\in M\}''\subset\mathcal{B}(\ell^2(\Gamma)\ot {\rm L}^2(M))$. We write $\widetilde{\nu}$ for the dual state of $\nu$ on $N$. Let $\psi_i$ be a sequence of normal, ucp, $\nu$-preserving and ${\rm L}^2$-compact maps on $M$ which converge pointwise in $\Vert \cdot\Vert_{2,\nu}$ to identity. Consider the sequence of $\nu$-preserving, ucp, normal, ${\rm L}^2$-compact and $\Gamma$-equivariant maps $\Psi_i$ given by $\Psi_i(x)=\int_Hh^{-1}\circ\psi\circ h(x)dh$ for all $x\in M$. Note that $(\Psi_i)_i$ is still converging pointwise in $\Vert \cdot \Vert_{2,\nu}$ to identity since, by the dominated convergence Theorem we have,
$$\Vert\Psi_i(x)-x\Vert_{2,\nu}=\left\Vert\int_Hh^{-1}(\psi_i(h(x))-h(x))dh\right\Vert_{2,\nu}\leq\int_H\Vert \psi_i(h(x))-h(x)\Vert_{2,\nu}dh\rightarrow 0.$$
By the $\Gamma$-equivariance, we can consider the normal ucp $\widetilde{\nu}$-preserving maps on $N$ given by $\widetilde{\Psi_i}(u_\gamma x)=u_\gamma\Psi_i(x)$. Observe that the sequence $(\widetilde{\Psi}_i)$ is still converging pointwise in $\Vert.\Vert_{2,\widetilde{\nu}}$ to identity and the ${\rm L}^2$-extension of $\widetilde{\Psi}_i$ is given by $T_{\widetilde{\Psi}_i}=1\ot T_{\Psi_i}\in\mathcal{B}(\ell^2(\Gamma)\ot{\rm L}^2(M))$.

Let $\phi_i$ be a sequence of positive definite and $c_0$ functions on $\Gamma$ converging to $1$ pointwise and consider the normal ucp $\widetilde{\nu}$-preserving maps on $N$ given by $\widetilde{\phi}_i(u_\gamma x)=\phi_i(\gamma)u_\gamma x$. Observe that the sequence $(\widetilde{\phi}_i)$ is converging pointwise in $\Vert\cdot \Vert_{2,\widetilde{\nu}}$ to identity and the ${\rm L}^2$-extension of $\widetilde{\phi}_i$ is given by $T_{\widetilde{\phi}_i}=T_{\phi_i}\ot 1\in\mathcal{B}(\ell^2(\Gamma)\ot{\rm L}^2(M))$, where $T_{\phi_i}(\delta_\gamma)=\phi_i(\gamma)\delta_\gamma$ is a compact operator on $\ell^2(\Gamma)$.

Hence, if we define the sequence of normal, ucp, $\widetilde{\nu}$-preserving maps on $N$ by $\varphi_{i,j}=\widetilde{\phi_j}\circ\widetilde{\Psi}_i$, we have $\varphi_{i,j}(u_\gamma x)=\phi_j(\gamma)u_\gamma\Psi_i(x)$; the sequence $(\varphi_{i,j})$ is converging pointwise in $\Vert\cdot\Vert_{2,\widetilde{\nu}}$ to identity and the ${\rm L}^2$-extension of $\varphi_{i,j}$ is given by $T_{\varphi_{i,j}}=T_{\phi_j}\ot T_{\Psi_i}\in\mathcal{B}(\ell^2(\Gamma)\ot{\rm L}^2(M))$ is compact.
\end{proof}

\begin{corollary}
The following holds.
\begin{enumerate}
\item If ${\rm L}^\infty(\Gq)$ has the Haagerup property, then ${\rm L}^\infty(G)$ and $\Gamma$ both have the Haagerup property.
\item If ${\rm L}^\infty(G)$ has the Haagerup property, $\alpha\,:\,\Gamma\curvearrowright{\rm L}^\infty(G)$ is compact and $\Gamma$ has the Haagerup property, then ${\rm L}^\infty(\Gq)$ has the Haagerup property.
.\end{enumerate}
\end{corollary}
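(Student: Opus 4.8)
The plan is to deduce this corollary directly from Proposition \ref{PropHaag} together with Theorem \ref{ThmCrossedProduct}(3), which identifies ${\rm L}^\infty(\Gq)$ with the von Neumann algebraic crossed product $\Gamma\ltimes{\rm L}^\infty(G)$, the dual state $h$ on ${\rm L}^\infty(\Gq)$ restricting to $h_G$ on ${\rm L}^\infty(G)$ and being $\alpha$-invariant. Note first that $h_G$ is a f.n.s.\ state on ${\rm L}^\infty(G)$ and that the action $\alpha\,:\,\Gamma\curvearrowright{\rm L}^\infty(G)$ leaves $h_G$ invariant, since each $\alpha_\gamma$ comes from an element of ${\rm Aut}(G)$ and hence preserves the Haar state (see Remark \ref{RmkAut}). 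So Proposition \ref{PropHaag} applies verbatim with $M={\rm L}^\infty(G)$ and $\nu=h_G$.

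For assertion $(2)$: assuming ${\rm L}^\infty(G)$ and $\Gamma$ have the Haagerup property and that $\alpha\,:\,\Gamma\curvearrowright{\rm L}^\infty(G)$ is compact, Proposition \ref{PropHaag} gives that $\Gamma\ltimes{\rm L}^\infty(G)={\rm L}^\infty(\Gq)$ has the Haagerup property, and we are done.

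For assertion $(1)$: if ${\rm L}^\infty(\Gq)$ has the Haagerup property, then since ${\rm L}^\infty(G)$ is a von Neumann subalgebra of ${\rm L}^\infty(\Gq)$ and there is an $h$-preserving normal conditional expectation $E\,:\,{\rm L}^\infty(\Gq)\rightarrow{\rm L}^\infty(G)$ (the canonical one sending $u_\gamma x\mapsto\delta_{\gamma,e}x$), the Haagerup property passes to ${\rm L}^\infty(G)$ by the standard permanence result for the Haagerup property under expectation-complemented von Neumann subalgebras (e.g.\ \cite{CS13,OT13}). Likewise ${\rm L}(\Gamma)$ sits inside ${\rm L}^\infty(\Gq)$ as an expectation-complemented subalgebra (via $u_\gamma x\mapsto h_G(x)u_\gamma$), so ${\rm L}(\Gamma)$ has the Haagerup property, i.e.\ $\Gamma$ has the Haagerup property.

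The only genuinely nontrivial input is Proposition \ref{PropHaag} itself, which has already been established; everything here is bookkeeping, identifying the crossed product correctly and invoking the subalgebra permanence of the Haagerup property in the non-tracial setting. The one point deserving a line of care is that the conditional expectations above are indeed $h$-preserving and normal, which is immediate from the formula for $h$ in Theorem \ref{ThmCrossedProduct}(1); no further obstacle is expected.
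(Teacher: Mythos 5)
Your proof is correct and follows essentially the same route as the paper: assertion $(2)$ is exactly the paper's application of Proposition \ref{PropHaag} to $M={\rm L}^\infty(G)$ with the $\alpha$-invariant Haar state, and assertion $(1)$ uses the same two normal, Haar-state preserving conditional expectations $u_\gamma a\mapsto\delta_{\gamma,e}a$ and $u_\gamma a\mapsto h_G(a)u_\gamma$ together with the permanence of the Haagerup property under expectation-complemented subalgebras. No gaps.
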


\begin{proof}
$(1)$. Follows from the fact that there exists normal, faithful, Haar-state preserving conditional expectations from ${\rm L}(\Gq)$ to ${\rm L}(\Gamma)$ and to ${\rm L}^\infty(G)$. The former is given by $u_\gamma a\mapsto h_G(a)u_\gamma$ and the latter is given by $u_\gamma a\mapsto \delta_{\gamma,e}a$, $a\in {\rm L}^\infty(G)$ and $\gamma\in\Gamma$.

$(2)$. It is an immediate consequence of Proposition \ref{PropHaag}.
\end{proof}

\begin{theorem}\label{Thm-Crossed-H}
Suppose $\widehat{G}$ has the Haagerup property and $\Gamma$ has the Haagerup property, and further suppose that the action of $\Gamma$ on $G$ is compact. Then $\widehat{\Gq}$ has the Haagerup property. 
\end{theorem}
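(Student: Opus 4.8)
I would prove Theorem \ref{Thm-Crossed-H} by exhibiting a sequence of states on $C_m(\Gq)$ witnessing the Haagerup property of $\widehat{\Gq}$, built by combining a Haagerup approximation for $\widehat{G}$ with one for $\Gamma$, after first averaging the $\widehat{G}$-data over the compact group $H = \overline{\alpha(\Gamma)} \subset {\rm Aut}(G)$ to make it $\Gamma$-equivariant. This is exactly the same architecture as the weak amenability estimate in Theorem \ref{ThmWA} and the Haagerup permanence in Proposition \ref{PropHaag}, only now working with states rather than cb-multipliers or ucp $L^2$-compact maps.

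First I would invoke Theorem \ref{Q-RelHaag}: it suffices to produce a sequence of states $\omega_n$ on $C_m(G)$ with $\widehat{\omega}_n \in c_0(\widehat{G})$, with $\omega_n \to \varepsilon_G$ weak*, and with $\|\alpha_\gamma(\omega_n) - \omega_n\| \to 0$ for all $\gamma \in \Gamma$, together with the observation that relative co-Haagerup for $(G,\Gq)$ plus the Haagerup property of $\Gamma$ upgrades to the Haagerup property of $\widehat{\Gq}$ — but actually the cleanest route is to mirror the proof of Theorem \ref{Q-RelHaag}$(2)\Rightarrow(1)$ directly: start from states $\mu_n$ on $C_m(G)$ with $\widehat{\mu}_n \in c_0(\widehat{G})$ and $\mu_n \to \varepsilon_G$ weak* (these exist since $\widehat{G}$ has the Haagerup property), and define the averaged states $\omega_n = \int_H \alpha_h(\mu_n)\, dh$ where $dh$ is normalized Haar measure on $H$ and $\alpha_h$ denotes the action of $H$ on $C_m(G)$. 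Since the $H$-action on ${\rm Irr}(G)$ has finite orbits (Proposition \ref{PropContinuity}, compactness of $\alpha$), exactly as in the Claim inside the proof of Theorem \ref{Q-RelHaag} one checks $\widehat{\alpha_h(\mu_n)} \in c_0(\widehat{G})$ with a support set that is an $H$-saturation of a finite set, hence still finite, so $\widehat{\omega}_n \in c_0(\widehat{G})$; weak* convergence $\omega_n \to \varepsilon_G$ follows from $\varepsilon_G \circ \alpha_h = \varepsilon_G$ and dominated convergence; and $\omega_n$ is exactly $\Gamma$-invariant, so condition $(c)$ is trivially satisfied. This gives the relative co-Haagerup property of $(G,\Gq)$ via Theorem \ref{Q-RelHaag}.

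Next, from this relative approximation I would pass to the Haagerup property of $\widehat{\Gq}$ itself. Let $\omega_n$ be the states above and $\psi_j$ a sequence of normalized positive-definite $c_0$ functions on $\Gamma$ converging pointwise to $1$; set $\Omega_{n,j} \in C_m(\Gq)^*$ to be the state determined by $\Omega_{n,j}(u_\gamma \alpha(a)) = \psi_j(\gamma)\, \omega_n(a)$ for $\gamma \in \Gamma$, $a \in C_m(G)$ — this is a genuine state because it is the composition of the canonical conditional-expectation-type slice with a product of a positive-definite function on $\Gamma$ and a state on $C_m(G)$, both completely positive unital, and $\omega_n$ being $\Gamma$-invariant makes this compatible with the crossed-product relations (this is the same covariance check used in Remark \ref{RmkFull}). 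Using the fusion rules from Remark \ref{RmkFusion}, namely ${\rm Irr}(\Gq) = \{\gamma \cdot x\}$ with $u^x_\gamma = (1\ot u_\gamma)(\id\ot\alpha)(u^x)$, one computes $\widehat{\Omega}_{n,j}$ blockwise: $(\id \ot \Omega_{n,j})(u^x_\gamma) = \psi_j(\gamma)\, (\id\ot\omega_n)((\id\ot\alpha_\gamma)(u^x))$ up to the unitary conjugations $V_{\gamma\cdot x}$, so the block at $\gamma\cdot x$ vanishes whenever either $|\psi_j(\gamma)|$ is small or the $\alpha_\gamma$-translated block $(\id\ot\omega_n)(u^{\alpha_{\gamma^{-1}}(x)})$ is small; since $\psi_j \in c_0(\Gamma)$ cuts off all but finitely many $\gamma$, and for each such $\gamma$ the $\widehat{\alpha_\gamma(\omega_n)} = \widehat{\omega}_n$ (by $\Gamma$-invariance of $\omega_n$!) lies in $c_0(\widehat{G})$ cutting off all but finitely many $x$, we get $\widehat{\Omega}_{n,j} \in c_0(\widehat{\Gq})$ for all $n,j$. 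Finally $\Omega_{n,j} \to \varepsilon_\Gq$ weak* on $\PolG$ after a suitable diagonal choice $j = j(n) \to \infty$, by linearity and density, using $\psi_j(\gamma) \to 1$ and $\omega_n(a) \to \varepsilon_G(a)$; this yields the Haagerup property of $\widehat{\Gq}$.

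**Main obstacle.** The routine pieces — the $c_0$-preservation and the averaging — are handled verbatim as in Theorem \ref{Q-RelHaag} and Proposition \ref{PropHaag}. The one genuinely delicate point is verifying that $\Omega_{n,j}$ is a \emph{state} on the \emph{full} crossed product $C_m(\Gq) = \Gamma\, {}_{\alpha,m}\ltimes C_m(G)$: one must produce an actual completely positive unital map, which I would do by building a covariant representation — take the GNS triple $(H_n, \pi_n, \xi_n)$ of $\omega_n$, use $\Gamma$-invariance of $\omega_n$ to get a unitary $\Gamma$-representation $u_n$ on $H_n$ implementing $\alpha$ (here the exact invariance of $\omega_n$, not merely asymptotic, is essential, and is precisely what the $H$-averaging bought us), tensor with a GNS representation of $\psi_j$ on $\ell^2(\Gamma) \ot (\cdot)$, and read off $\Omega_{n,j}$ as a vector state of the resulting representation of $C_m(\Gq)$ — this is the crossed-product analogue of the construction in the proof of Theorem \ref{ThmRelT2}. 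Once the covariant representation is in hand the positivity is automatic and the rest is bookkeeping with the fusion rules of Remark \ref{RmkFusion}.
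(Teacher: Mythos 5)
Your proposal is correct and follows essentially the same route as the paper: average the Haagerup states for $\widehat{G}$ over the compact closure $H=\overline{\alpha(\Gamma)}$ to obtain exactly $\Gamma$-invariant states still satisfying the $c_0$ and weak* conditions, then combine them with a Haagerup approximation for $\Gamma$ to form crossed-product states on $C_m(\Gq)$ and check the two conditions blockwise via the fusion rules. The only difference is cosmetic — the paper delegates the construction of the crossed-product state $\tau_n\ltimes\nu_n$ and the final verification to a citation of Wang and to the reader, whereas you spell out the covariant-representation argument explicitly.
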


\begin{proof}
Since $\widehat{G}$ has the Haagerup property, this assures the existence of states $(\mu_n)_{n\in\mathbb{N}}$ on $C_m(G)$ such that $(1)$ $\widehat{\mu_n}\in c_0(\widehat{G})$ for all $n\in\N$ and $(2)$ $\mu_n\rightarrow \varepsilon_G$ weak$^\ast$. Our first task is to construct a sequence of $\alpha$-invariant states on $C_m(G)$ satisfying (1) and (2) above. This is similar to our arguments before (while dealing with property (T) and Haagerup property). Since the action of $\Gamma$ is compact, the closure of $\Gamma$ in Aut$(G)$ is compact, and we denote this subgroup by $H$. Letting $dh$ denote the normalized Haar measure on $H$, we define states $\nu_n\in C_m(G)^*$ by $\nu_n(a)=\int_H \mu_n(h^{-1}(a))dh$, for all $a\in C_m(G)$. It is easily seen that $\nu_n$ is invariant under the action of $\Gamma$ for each $n$. Now, since the action is compact, all orbits of the induced action on Irr$(G)$ are finite. We need this to show that $\mu_n$ satisfy (1) above. So, let $\epsilon>0$. As $\mu_n$ satisfied (1), the set 
$L=\{x\in \text{Irr}(G):\|(\id\ot\mu_n)(u^x)\|\geq \frac{\epsilon}{2}\}$ is finite and the set $K=H\cdot L\subset \text{Irr}(G)$ is also finite, as all the orbits are finite. For $h\in H\subset{\rm Aut}(G)$ and $x\in{\rm Irr}(G)$ write $V_{h,x}\in\mathcal{B}(H_x)$ to be the unique unitary such that $(\id\ot h^{-1})(u^x)=(V_{h,x}^*\ot 1)(\id\ot u^{h^{-1}(x)})(V_{h,x}\ot 1)$. If $x\notin K$ then, for all $h\in H$, $h^{-1}(x)\notin L$. Hence,  $\Vert(\id\ot\mu_n)(u^{h^{-1}(x)})\Vert<\frac{\epsilon}{2}$ for all $h\in H$ and it follows that
\begin{eqnarray*}
\Vert(\id\ot\nu_n)(u^x)\Vert&=&\left\Vert\int_H(\id\ot\mu_n)((\id\ot h^{-1})(u^x))dh\right\Vert\leq
\int_H\Vert V_{h,x}^*(\id\ot\mu_n)(u^{h^{-1}(x)})V_{h,x}\Vert dh\\
&\leq&\int_H\Vert(\id\ot\mu_n)(u^{h^{-1}(x)})\Vert dh\leq\frac{\epsilon}{2}<\epsilon\quad\text{for all }x\notin K.
\end{eqnarray*}

Hence, $\{x\in{\rm Irr}(G)\,:\,\Vert(\id\ot\nu_n)(u^x)\Vert\geq\epsilon\}\subset K$ is a finite set and (1) holds for $\nu_n$. To show that (2) holds, we first note that given any $a\in C_m(G)$, one has $\mu_n(h^{-1}(a))\rightarrow\varepsilon_G(h^{-1}(a))=\varepsilon_G(a)$ for all $h\in H$ (since $H$ acts on $G$ by quantum automorphisms). By the dominated convergence Theorem, we see that (2) holds for $\nu_n$. Now, since $\Gamma$ has the Haagerup property, we can construct states $\tau_n$ on $C^\ast(\Gamma)$ satisfying (1) and (2) above. And since the states $\mu_n$ on $C_m(G)$ are $\alpha$-invariant, we can construct the crossed product states $\phi_n=\tau_n\ltimes \mu_n$ on $C_m(\Gq)$ $($see \cite[Proposition and Definition 3.4]{Wa95b} and also \cite[Exercise 4.1.4]{BO08} for the case of c.c.p. maps$)$. The straightforward computations that need to be done to see that the sequence of states $(\phi_n)_{n\in \mathbb{N}}$ satisfy (1) and (2) above, are left to the reader. This then shows that $\widehat{\Gq}$ has the Haagerup property.
\end{proof}

\section{Examples}\label{section-examples}

For coherent reading, we have dedicated this section only to examples arising 
from both matched pairs and crossed products. It is to be noted that it is not hard to come up with examples of compact matched pairs 
of groups for which only one of the actions $\alpha$ or $\beta$ is non-trivial which means that the other is an action by group homomorphisms. 
However, it is harder to come up with examples for which both 
$\alpha$ and $\beta$ are non-trivial. We called such matched pairs \textit{non-trivial}. Starting out with a compact matched pair for 
which either $\alpha$ or $\beta$ is trivial, we describe a 
process to deform the original matched pair by what we call a \textit{crossed homomorphism} in such a way that we manufacture a 
new compact matched pair for which both actions are non-trivial. For pedagogical
reasons, we have made two subsections dealing with matched pairs: the first one $($Section $7.1.1)$, in 
which we describe how to perturb $\beta$ when it is trivial, followed by Section $7.1.2$ in which
we construe how to perturb $\alpha$ when it is trivial. It has to be noted that it is indeed possible to formalize our process of deformation in a unified way but, since such a formulation would increase the technicalities and would not produce any new explicit examples, we have chosen to separate the presentation in the two basic deformations described above. Our deformations are chosen carefully
so as to ensure that the geometric group theoretic properties $($that we have studied in detail 
throughout the paper$)$ passes from the initial bicrossed product to the one obtained after
the deformation very naturally. Such deformations also allow us to keep track of the invariants $\chi(\cdot)$
and $\rm{Int}(\cdot)$ of the associated compact quantum groups. These explicit constructions
allow us to exhibit: $(i)$ a pair of non-isomorphic non-trivial compact bicrossed products each of which has relative property $(T)$ but the dual does not have property $(T)$, $(ii)$ an infinite family of pairwise non-isomorphic non-trivial compact bicrossed products whose dual are non-amenable with the Haagerup property, $(iii)$ an infinite family
of pairwise non-isomorphic non-trivial compact bicrossed products whose duals have property $(T)$.

We also provide non-trivial examples of crossed products of a discrete group on a non-trivial compact quantum group in Section $7.2$. The action is coming from the conjugation action of a countable subgroup of $\chi(G)$ on the compact quantum group $G$. In this situation we completely understand weak amenability, $(RD)$, Haagerup property and property $(T)$ in terms of $G$ and $\Gamma$ and we also discuss explicit examples involving the free orthogonal and free unitary quantum groups.

\subsection{Examples of bicrossed products}\label{SectionEx}

In this section, we focus on deformation of actions in matched pairs when one of them is trivial. The analysis involved helps
to construct non-trivial examples. 

\subsubsection{From matched pairs with trivial $\beta$}

Let $\alpha$ be any action of a discrete group $\Gamma$ on a compact group $G$ by group homomorphisms. Taking $\beta$ to be the trivial action of $G$ on $\Gamma$, the relations  in Equation $(\ref{EqMatched})$ are satisfied and we get a compact matched pair. It is possible to upgrade this example in order to obtain a new compact matched pair $(\Gamma,\widetilde{G})$ for which the associated actions $\widetilde{\alpha}$ and $\widetilde{\beta}$ are both non-trivial.

Indeed, given an action $\alpha$ of the discrete group $\Gamma$ on the compact group $G$ and a continuous map $\chi\,:\,G\rightarrow \Gamma$,  we define a continuous map
$$G\times G\rightarrow G\,\,\,\text{by }(g,h)\mapsto g*h,\quad\text{where}\quad g*h=g\alpha_{\chi(g)}(h)\quad\text{for all }g,h\in G.$$

Observe that $e*g=g*e=g$ for all $g\in G$ if and only if $\chi(e)\in{\rm Ker}(\alpha)$. Moreover, it is easy to check that the map $(g,h)\mapsto g*h$ is associative if and only if $\chi(gh)^{-1}\chi(g)\chi(\alpha_{\chi(g)^{-1}}(h))\in{\rm Ker}(\alpha)$ for all $g,h\in G$. Finally, under the preceding hypothesis, the map $(g,h)\mapsto g*h$ turns $G$ into a compact group since the inverse of $g\in G$ exists and is given by $\alpha_{\chi(g)^{-1}}(g^{-1})$ and this inversion is a continuous map from $G$ to itself.

Hence it is natural to define a \textit{crossed homomorphism} as a continuous map $\chi\,:\,G\rightarrow\Gamma$ such that $\chi(e)=e$ and $\chi(gh)=\chi(g)\chi(\alpha_{\chi(g)^{-1}}(h))$ for all $g,h\in G$. Observe that the continuity of $\chi$, the compactness of $G$ and the discreteness of $\Gamma$ all together imply that the image of $\chi$ is finite. By the preceding discussion, any crossed homomorphism $\chi$ gives rise to a new compact group structure on $G$. We denote this compact group by $G_\chi$. Observe that, since the Haar measure on $G$ is invariant under $\alpha$, so the Haar measure on $G_\chi$ is equal to the Haar measure on $G$. Hence we have $G_\chi=G$ as probability spaces.

The group $G_\chi$ can also be defined as the graph of $\chi$ in the semi-direct product $H=\Gamma_\alpha\ltimes G$. Indeed, it is easy to check that the graph ${\rm Gr}(\chi)=\{(\chi(g),g)\,:\,g\in G\}$ of a continuous map $\chi\,:\,G\rightarrow\Gamma$, which is a closed subset of $H$, is a subgroup of $H$ if and only if $\chi$ is a crossed homomorphism. Moreover, the map $G_\chi\rightarrow {\rm Gr}(\chi)$, $g\mapsto(\chi(g),g)$, $g\in G$,  is an isomorphism of compact groups.

Since $G_\chi=G$ as topological spaces, $\alpha$ still defines an action of $\Gamma$ on the compact space $G_\chi$ by homeomorphisms. However, $\alpha$ may not be an action by group homomorphisms anymore. Actually, for $\gamma\in\Gamma$, $\alpha_\gamma$ is a group homomorphism of $G_\chi$ if and only if $\chi(g)^{-1}\gamma^{-1}\chi(\alpha_\gamma(g))\gamma\in{\rm Ker}(\alpha)$ for all $g\in G$ which happens for example if $\chi$ satisfies $\chi\circ\alpha_\gamma=\gamma\chi(\cdot)\gamma^{-1}$.

We define a continuous right action of $G_\chi$ on the discrete space $\Gamma$ by $\beta_g(\gamma)=\chi(\alpha_\gamma(g))^{-1}\gamma\chi(g)$ for all $\gamma\in\Gamma,g\in G$. It is an easy exercise to check that $\alpha$ and $\beta$ satisfy the relations in Equation $(\ref{EqMatched})$, hence, by Proposition \ref{PropRecMatched} we get a new compact matched pair $(\Gamma,G_\chi)$ with possibly non-trivial actions $\alpha$ and $\beta$. To see that the pair $(\Gamma,G_\chi)$ is matched without using Proposition \ref{PropRecMatched}, it suffices to view $\Gamma$ and $G_\chi$ as closed subgroups of $H=\Gamma_\alpha\ltimes G$ via the identification explained before and check that $\Gamma G_\chi=H$ and $\Gamma\cap G_\chi=\{e\}$. It is easy to check that the actions $\alpha$ and $\beta$ obtained by this explicit matching are the ones we did define.

Let $\Gq_\chi$ denote the bicrossed product associated with the matched pair $(\Gamma,G_\chi)$.

\begin{proposition}\label{PropExAlphaH}
If the action $\alpha\,:\,\Gamma\curvearrowright{\rm Irr}(G)$ has all orbits finite and the group $\Gamma$ has  the Haagrup property, then $\widehat{\Gq}_\chi$ has the Haagerup property for all crossed homomorphisms $\chi\,:\,G\rightarrow\Gamma$.
\end{proposition}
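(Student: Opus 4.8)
The plan is to reduce everything to Corollary \ref{CorMatched}(4), which already states that if the action of $\Gamma$ on $\mathrm{L}^\infty(G)$ is compact and $\Gamma$ has the Haagerup property, then $\widehat{\Gq}$ has the Haagerup property. So the real content is: (i) the bicrossed product $\Gq_\chi$ is built from the compact matched pair $(\Gamma, G_\chi)$, and the relevant action entering the hypothesis of Corollary \ref{CorMatched}(4) is the action $\alpha$ of $\Gamma$ on the compact \emph{space} $G_\chi = G$ by homeomorphisms (this is the action that induces the action of $\Gamma$ on $\mathrm{Irr}(\Gq_\chi)$, via $\alpha_{\gamma\cdot G}$, and on $\mathrm{L}^\infty(G_\chi) = \mathrm{L}^\infty(G)$ as a von Neumann algebra, since $G_\chi = G$ as probability spaces); and (ii) I must show this action is compact in the appropriate sense, i.e. the closure of the image of $\Gamma$ in the homeomorphism group of $G_\chi$ (equivalently, in $\mathrm{Aut}(\mathrm{L}^\infty(G_\chi))$ with the pointwise $\|\cdot\|_{2}$-topology) is compact. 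The key observation is that as \emph{topological spaces and probability spaces} $G_\chi = G$, and the $\Gamma$-action on the underlying space $G_\chi$ is literally the original action $\alpha$ of $\Gamma$ on $G$ by continuous maps — only the group law of the target has changed. Hence compactness of the action $\alpha\colon\Gamma\curvearrowright G_\chi$ as an action on a topological/measure space is exactly compactness of $\alpha\colon\Gamma\curvearrowright G$ on the topological space $G$.

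First I would recall that, by the hypothesis, the induced action of $\Gamma$ on $\mathrm{Irr}(G)$ has all orbits finite; by the discussion after Remark \ref{RmkAut} (the result of \cite{MP15}) this is equivalent to $\alpha\colon\Gamma\curvearrowright G$ being a compact action, i.e. the closure of $\Gamma$ in $\mathrm{Aut}(G)$ — and hence in the group of (measure-preserving) homeomorphisms of $G$ — is compact. Since $G$ here is a classical compact group, $\mathrm{Irr}(G)=\widehat{G}$ is just the Pontryagin-type dual and the condition is that every $\chi\in\mathrm{Sp}(G)$ has finite $\Gamma$-orbit; in any case the cited equivalence directly gives that $\overline{\alpha(\Gamma)}$ is compact in the homeomorphism group $H(G)$ of the topological space $G$. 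Now, because the underlying topological space of $G_\chi$ equals $G$, the map $\gamma\mapsto\alpha_\gamma\in H(G_\chi)=H(G)$ has the same (compact) closure. Therefore the action $\alpha\colon\Gamma\curvearrowright G_\chi$ is compact as an action on a compact space. Because $G_\chi=G$ as probability spaces (the Haar measures coincide since the Haar measure of $G$ is $\alpha$-invariant), the induced action of $\Gamma$ on $\mathrm{L}^\infty(G_\chi)=\mathrm{L}^\infty(G)$ is compact as well.

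Next I would invoke Corollary \ref{CorMatched}(4) applied to the compact matched pair $(\Gamma, G_\chi)$: since the action of $\Gamma$ on $\mathrm{L}^\infty(G_\chi)$ is compact and $\Gamma$ has the Haagerup property, the dual $\widehat{\Gq}_\chi$ of the associated bicrossed product has the Haagerup property. As this holds for \emph{every} crossed homomorphism $\chi\colon G\to\Gamma$ — the argument never used anything about $\chi$ beyond the fact that $G_\chi$ and $G$ agree as topological/probability spaces and that $\alpha$ still acts by homeomorphisms on $G_\chi$, both of which are part of the construction of $G_\chi$ — the proposition follows.

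\textbf{Main obstacle.} The only delicate point is the identification of ``the right action'' to which Corollary \ref{CorMatched}(4) should be applied, and checking that the change of group law in passing from $G$ to $G_\chi$ does not destroy compactness of the action. Once one notices that $G_\chi=G$ as a topological space and that the $\Gamma$-action on this underlying space is unchanged (the $*$ product enters only when one multiplies elements of $G_\chi$, never when $\Gamma$ acts), this is immediate; the possible subtlety that $\alpha_\gamma$ need not be a group homomorphism of $G_\chi$ is irrelevant, because compactness of the action is a statement about the topological/measure-theoretic dynamics, not about the algebraic structure of the target. I would make this explicit in the write-up to forestall confusion. I would also remark that, alternatively, one could argue directly at the level of measures using Theorem \ref{ThmRelT} / Theorem \ref{ThmRelH} machinery, but routing through Corollary \ref{CorMatched}(4) is by far the shortest route.
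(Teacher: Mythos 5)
Your proposal is correct and follows essentially the same route as the paper's proof: identify $G_\chi=G$ as compact/probability spaces so that the $\Gamma$-action on ${\rm L}^\infty(G_\chi)={\rm L}^\infty(G)$ is compact precisely when the action on ${\rm Irr}(G)$ has finite orbits, and then invoke Corollary \ref{CorMatched}(4) for the matched pair $(\Gamma,G_\chi)$. Your explicit remark that compactness of the action is a purely topological/measure-theoretic statement unaffected by the deformed group law is exactly the point the paper's proof relies on.
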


\begin{proof}
Recall that if $\alpha\,:\,\Gamma\curvearrowright G$ is an action by compact group automorphisms, then the action $\alpha\,:\,\Gamma\curvearrowright{\rm L}^\infty(G)$ is compact if and only if the image of $\Gamma$ in ${\rm Aut}(G)$ is precompact which in turn is equivalent to
 the associated action of $\Gamma$ on ${\rm Irr}(G)$ to have all orbits finite. Now let $\chi\,:\,G\rightarrow\Gamma$ be a crossed homomorphism. Since $G_\chi=G$ as compact spaces and as probability spaces, the action $\alpha\,:\,\Gamma\curvearrowright{\rm L}^\infty(G)$ is compact if and only if the action $\Gamma\curvearrowright {\rm L}^\infty(G_\chi)$ is compact and the former is equivalent to 
the action $\Gamma\curvearrowright{\rm Irr}(G)$ to have all orbits finite. Hence, the proof follows from assertion $4$ of Corollary \ref{CorMatched}.
\end{proof}

Observe that a continuous group homomorphism $\chi\,:\,G\rightarrow\Gamma$ is a crossed homomorphism if and only if $\chi\circ\alpha_\gamma=\chi$ for all $\gamma\in{\rm Im}(\chi)$.

Now we give a systematic way to construct explicit non-trivial examples of the situation considered in the first part of this section. So, consider a non-trivial action $\alpha$ of a countable discrete group $\Gamma$ on a compact group $G$ by group homomorphisms and let $\Lambda<\Gamma$ be a finite subgroup. Define the action $\alpha^\Lambda$ of $\Gamma$ on $G^\Lambda=\Lambda\times G$ by $\alpha^\Lambda_\gamma(r,g)=(r,\alpha_\gamma(g))$ and the $\alpha^\Lambda$-invariant group homomorphism $\chi\,:\,G^\Lambda\rightarrow\Gamma$ by $\chi(r,g)=r$, $r\in \Lambda, g\in G, \gamma\in\Gamma$. Thus, we get a compact matched pair $(\Gamma,G^\Lambda_\chi)$ where $G^\Lambda_\chi=\Lambda\times G$ as a compact space and the group law is given by $(r,g)\cdot(s,h)=(r,g)\alpha_{\chi(r,g)}(s,h)=(rs,g\alpha_r(h))$, $r,s\in \Lambda$ and $g,h\in G$. Hence, $G^\Lambda_\chi=\Lambda\,_\alpha\ltimes G$ as a compact group and the action $\beta$ of $G^\Lambda_\chi$ on $\Gamma$ is given by $\beta_{(r,g)}(\gamma)=r^{-1}\gamma r$, $r\in\Lambda, g\in G,\gamma\in \Gamma$. Hence, $\beta$ is non-trivial if and only if $\Lambda$ is not in the center of $\Gamma$.

One has $\left(G^\Lambda_\chi\right)^\alpha=\Lambda\times G^{\alpha}$ and, since the action $\beta$ of $\left(G^\Lambda_\chi\right)^\alpha$ on $\Gamma$ is by inner automorphisms, the associated action on ${\rm Sp}(\Gamma)$ is trivial. Hence, if we denote by $\Gq_\Lambda$ the associated bicrossed product, then Proposition \ref{PropInt} implies that $\chi(\Gq_\Lambda)\simeq \Lambda\times G^{\alpha}\times{\rm Sp}(\Gamma)$. We claim that there is a canonical group isomorphism $\pi\,:\,{\rm Sp}(G^\Lambda_\chi)\rightarrow{\rm Sp}(\Lambda)\times{\rm Sp}_{\Lambda}(G)$, where ${\rm Sp}_{\Lambda}(G)=\{\omega\in{\rm Sp}(G)\,:\,\omega\circ\alpha_r=\omega\text{ for all }r\in \Lambda\}$ is a subgroup of ${\rm Sp}(G)$. Indeed, denoting by $\iota_{G}\,:\,G\rightarrow G^\Lambda_\chi$, $g\mapsto(1,g)$ and $\iota_{\Lambda}\,:\,\Lambda\rightarrow G^\Lambda_\chi$, $r\mapsto(r,1)$ the two canonical injective (and continuous) group homomorphisms, we may define $\pi(\omega)=(\omega\circ\iota_\Lambda,\omega\circ\iota_{G})$. Using the relations in the semi-direct product and the fact that $\omega$ is invariant on conjugacy classes, we see that $\omega\circ\iota_{G}\in{\rm Sp}_{\Lambda}(G)$. Since $G^\Lambda_\chi$ is generated by $\iota_\Lambda(\Lambda)$ and $\iota_G(G)$, so $\pi$ is injective. The surjectivity of $\pi$ follows from the universal property of semi-direct products.

Observe that $\Gamma^\beta=C_\Gamma(\Lambda)$ is the centralizer of $\Lambda$ in $\Gamma$. Since, $\alpha_\gamma({\rm Sp}_{\Lambda}(G))={\rm Sp}_{\Lambda}(G)$ for every $\gamma\in C_\Gamma(\Lambda)$, so $\alpha$ induces a right action of $C_\Gamma(\Lambda)$ on ${\rm Sp}_{\Lambda}(G)$ and we have, by Proposition \ref{PropInt}, ${\rm Int}(\Gq_\Lambda)\simeq {\rm Sp}(\Lambda)\times\left({\rm Sp}_{\Lambda}(G)\rtimes_\alpha C_\Gamma(\Lambda)\right)$.

We will write $\Gq=\Gq_{\{1\}}$. We have thus proved the first assertion of the following theorem.

\begin{theorem}\label{ThmExAlpha}
Let $\Lambda<\Gamma$ be any finite subgroup. Then the following holds.
\begin{enumerate}
\item $\chi(\Gq_\Lambda)\simeq \Lambda\times G^{\alpha}\times{\rm Sp}(\Gamma)$ and ${\rm Int}(\Gq_\Lambda)\simeq {\rm Sp}(\Lambda)\times\left({\rm Sp}_{\Lambda}(G)\rtimes_\alpha C_\Gamma(\Lambda)\right)$.
\item The following conditions are equivalent.
\begin{itemize}
\item $(G,\Gq)$ has the relative property $(T)$.
\item $(G^\Lambda_\chi,\Gq_\Lambda)$ has the relative property $(T)$.
\end{itemize}
\item The following conditions are equivalent.
\begin{itemize}
\item $(G,\Gq)$ has the relative Haagerup property.
\item $(G^\Lambda_\chi,\Gq_\Lambda)$ has the relative Haagerup property.
\end{itemize}
\item If the action $\Gamma\curvearrowright{\rm Irr}(G)$ has all orbits finite and $\Gamma$ has the Haagerup property, then $\widehat{\Gq}_\Lambda$ has the Haagerup property.
\item If the action $\Gamma\curvearrowright{\rm Irr}(G)$ has all orbits finite and $\Gamma$ is weakly amenable, then $\widehat{\Gq}_\Lambda$ is weakly amenable and $\Lambda_{cb}(\widehat{\Gq}_\Lambda)\leq \Lambda_{cb}(\Gamma)$.
\end{enumerate}
\end{theorem}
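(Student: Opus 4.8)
The plan is to prove the five assertions in turn, relying heavily on the structural results already established. Assertion (1) has already been derived in the discussion preceding the statement, so nothing remains to be done there. For assertions (2) and (3), the key observation is that the bicrossed product $\Gq_\Lambda$ contains $\Gq = \Gq_{\{1\}}$ as a quantum subgroup in a controlled way: since $G^\Lambda_\chi = \Lambda{}_\alpha\ltimes G$ as a compact group and $\Lambda$ is finite, the inclusion $\iota_G : G \hookrightarrow G^\Lambda_\chi$ is a finite index inclusion, and this should propagate to a finite index inclusion of quantum subgroups $\widehat{\Gq} < \widehat{\Gq}_\Lambda$ compatible with the embeddings $\alpha : C_m(G)\to C_m(\Gq)$ and $\alpha : C_m(G^\Lambda_\chi)\to C_m(\Gq_\Lambda)$. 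One direction of each equivalence is then immediate from restriction/compression arguments (a sequence of measures on $G^\Lambda_\chi = \Lambda\times G$ witnessing the negation of relative co-property $(T)$, resp. the relative co-Haagerup property, can be pushed to $G$ via $\iota_G$, using that the $\Gamma$-action on $\Lambda\times G$ is $\mathrm{id}_\Lambda\times\alpha$). For the other direction, I would take a sequence $(\mu_n)$ on $G$ satisfying the conditions of Theorem \ref{ThmRelT} (resp. Theorem \ref{ThmRelH}) and form $\nu_n = \delta_1\otimes\mu_n$ on $\Lambda\times G = G^\Lambda_\chi$; one then checks directly that $\nu_n(\{e\}) = 0$ (resp. $\widehat{\nu_n}\in C^*_r(G^\Lambda_\chi)$ since $C^*_r(\Lambda\times G)\cong \C[\Lambda]\otimes C^*_r(G)$ and $\Lambda$ is finite), that $\nu_n\to\delta_e$ weak*, and that $\|\alpha^\Lambda_\gamma(\nu_n)-\nu_n\| = \|\alpha_\gamma(\mu_n)-\mu_n\|\to 0$ because $\alpha^\Lambda_\gamma$ acts trivially on the $\Lambda$-coordinate. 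Applying the relevant characterization theorem in the reverse direction finishes (2) and (3).

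For assertion (4), the cleanest route is to invoke Proposition \ref{PropExAlphaH} (or equivalently assertion (4) of Corollary \ref{CorMatched}) applied to the matched pair $(\Gamma, G^\Lambda_\chi)$: one needs that the induced action $\Gamma\curvearrowright{\rm Irr}(G^\Lambda_\chi)$ has all orbits finite whenever $\Gamma\curvearrowright{\rm Irr}(G)$ does. Since ${\rm Irr}(\Lambda{}_\alpha\ltimes G)$ is built from ${\rm Irr}(G)$-orbits under $\Lambda$ together with the (finitely many) irreducibles of the finite group $\Lambda$ acting on these, and the residual $\Gamma$-action commutes appropriately with the $\Lambda$-structure, finiteness of $\Gamma$-orbits on ${\rm Irr}(G)$ forces finiteness of $\Gamma$-orbits on ${\rm Irr}(G^\Lambda_\chi)$; here I would argue via the fact (used already in Proposition \ref{PropExAlphaH}) that finiteness of orbits on ${\rm Irr}$ is equivalent to compactness of the action on ${\rm L}^\infty$, and that ${\rm L}^\infty(G^\Lambda_\chi) = \C[\Lambda]\ltimes_\alpha {\rm L}^\infty(G)$ with $\Gamma$ acting only on the ${\rm L}^\infty(G)$-factor, so compactness of $\Gamma\curvearrowright{\rm L}^\infty(G)$ gives compactness of $\Gamma\curvearrowright{\rm L}^\infty(G^\Lambda_\chi)$. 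Then assertion (4) of Corollary \ref{CorMatched} applied to $(\Gamma,G^\Lambda_\chi)$, together with the Haagerup property of $\Gamma$, yields the Haagerup property of $\widehat{\Gq}_\Lambda$.

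For assertion (5) I would argue similarly but through weak amenability. Here I would use that $\widehat{G^\Lambda_\chi}$ is weakly amenable with $\Lambda_{cb}(\widehat{G^\Lambda_\chi}) = 1$: indeed $G^\Lambda_\chi$ is a crossed product of the (Kac, co-amenable) compact group $G$ by the \emph{finite} group $\Lambda$, so $\widehat{G^\Lambda_\chi}$ is amenable, hence $\Lambda_{cb}(\widehat{G^\Lambda_\chi}) = 1$. Then viewing $\Gq_\Lambda$ as (the dual of) the crossed product coming from the action $\Gamma\curvearrowright G^\Lambda_\chi$ — which is a matched pair with trivial $\alpha$-part twisted by $\beta$ — one applies the estimate of Theorem \ref{ThmWA} in the compact-action case, $\Lambda_{cb}(C(\Gq_\Lambda)) \le \Lambda_{cb}(\Gamma)\,\Lambda_{cb}(\widehat{G^\Lambda_\chi}) = \Lambda_{cb}(\Gamma)$, once one knows the action $\Gamma\curvearrowright G^\Lambda_\chi$ is compact, which is exactly the orbit-finiteness hypothesis established in the proof of (4). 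One should be slightly careful that Theorem \ref{ThmWA} is stated for crossed product quantum groups rather than for general bicrossed products; the main obstacle in the whole argument is precisely reconciling the bicrossed product $\Gq_\Lambda$ with a crossed product framework, or alternatively redoing the averaging argument of Lemma \ref{LemAverage} and Theorem \ref{ThmWA} directly for this bicrossed product. I expect this bookkeeping — identifying $\Gq_\Lambda$ with an honest crossed product of $\widehat{G^\Lambda_\chi}$-type data by $\Gamma$, or else transcribing the weak amenability proof — to be the technically fussiest part, though conceptually routine given everything already in the paper.
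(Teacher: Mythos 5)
Your outline matches the paper for assertions (1), (2) and (4), but there are two genuine gaps, one in (3) and a more serious one in (5).

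In (3), you justify $\widehat{\nu_n}=\widehat{\delta_e\ot\mu_n}\in C^*_r(G^\Lambda_\chi)$ by writing $C^*_r(\Lambda\times G)\cong\C[\Lambda]\ot C^*_r(G)$. But $G^\Lambda_\chi$ is the \emph{semidirect} product $\Lambda\,{}_\alpha\ltimes G$, not the direct product: only the underlying space is $\Lambda\times G$, and $C^*_r(\Lambda\ltimes_\alpha G)$ is a (twisted) crossed product, not a tensor product. The correct statement — that $\widehat{\delta_e\ot\mu}=1\ot\widehat{\mu}$ lies in $C^*_r(H)$ whenever $\widehat{\mu}\in C^*_r(G)$ — is still true, but it requires the explicit computations $\lambda^H_{(e,g)}=1\ot\lambda^G_g$ and $\lambda^H(1_{\{e\}}\ot F)=\frac{1}{\vert\Lambda\vert}(1\ot\lambda^G(F))$, which let you approximate $1\ot\widehat{\mu}$ by elements $\vert\Lambda\vert\,\lambda^H(1_{\{e\}}\ot F_n)$. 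Moreover you say nothing about the converse direction of (3): starting from $\mu_n$ on $G^\Lambda_\chi$ with $\widehat{\mu_n}\in C^*_r(G^\Lambda_\chi)$, one must show that the measure $\nu_n$ obtained by \emph{conditioning} on $\{e\}\times G$ (not by pushing forward along the projection onto $G$ — the projection would destroy the condition $\nu_n(\{e\})=0$ needed in (2), and one should note that conditioning is legitimate because $\mu_n(\{e\}\times G)\to 1$) satisfies $\widehat{\nu_n}\in C^*_r(G)$; this follows from $\widehat{\nu}=\frac{1}{\mu(\{e\}\times G)}V^*\widehat{\mu}V$ with $V\xi=\delta_e\ot\xi$ and the identity $V^*\lambda^H(F)V=\frac{1}{\vert\Lambda\vert}\lambda^G(F(e,\cdot))$, which gives $V^*C^*_r(H)V\subset C^*_r(G)$.

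In (5) you correctly identify the obstacle — $\Gq_\Lambda$ is a bicrossed product with non-trivial $\beta$, so the $\Gamma$-action on $G^\Lambda_\chi$ is not by group automorphisms and Theorem \ref{ThmWA} does not apply to it — but you do not resolve it, and the resolution is the actual content of the paper's proof. The idea is to pass to $G_0=\ker\beta=\bigcap_\gamma G_\gamma$, a closed normal subgroup of $G^\Lambda_\chi$ which is globally $\alpha$-invariant, of finite index (because $\Lambda$ is finite), and on which $\alpha$ \emph{is} an action by group automorphisms. The associated honest crossed product $\Gq_0$, with $C_m(\Gq_0)=\Gamma\,_{\alpha,f}\ltimes C(G_0)$, is then a finite-index quantum subgroup of $\Gq_\Lambda$ (one checks $C_m(\Gq_\Lambda/\Gq_0)=\alpha(C_m(G^\Lambda_\chi/G_0))$ using Theorem \ref{ThmBicrossed}). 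Theorem \ref{ThmWA} applied to $\Gq_0$ gives $\Lambda_{cb}(\widehat{\Gq}_0)\leq\Lambda_{cb}(\Gamma)\Lambda_{cb}(\widehat{G_0})=\Lambda_{cb}(\Gamma)$ since $G_0$ is a classical compact group, and then Theorem \ref{wab}(2) — the finite-index permanence of weak amenability proved in Section 2 — transfers the bound to $\widehat{\Gq}_\Lambda$. Without this reduction (or a from-scratch reworking of the averaging argument for bicrossed products, which you mention as an alternative but do not carry out), assertion (5) is not proved.
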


\begin{proof}
$(2)$. $(\Downarrow)$ Suppose that the pair $(G^\Lambda_\chi,\Gq_\Lambda)$ does not have the relative property $(T)$. Let $(\mu_n)$ be a sequence of Borel probability measures on $\Lambda\times G$ satisfying the conditions of assertion $2$ of Theorem \ref{ThmRelT}. Since $\{e\}\times G$ is open and closed in $\Lambda\times G$, we have $1_{\{e\}\times G}\in C(\Lambda\times G)$, and since $\mu_n\rightarrow\delta_{(e,e)}$ in the weak* topology we deduce that $\mu_n(\{e\}\times G)\rightarrow 1$. Hence, we may and will assume that  $\mu_n(\{e\}\times G)\neq 0$ for all $n\in\N$. Define a sequence $(\nu_n)$ of Borel probability measures on $G$ by $\nu_n(A)=\frac{\mu_n(\{e\}\times A)}{\mu_n(\{e\}\times G)}$, where $A\subset G$ is Borel. Then $\nu_n(\{e\})=\mu_n(\{(e,e)\})=0$ for all $n\in\N$ and it is easy to check that, for all $F\in C(G)$, $1_{\{e\}}\ot F\in C(\Lambda\times G)$ and $\int_{G}F d\nu_n=\frac{1}{\mu_n(\{e\}\times G)}\int_{\Lambda\times G}1_{\{e\}}\ot Fd\mu_n$. It follows from this formula and the fact that $\mu_n\rightarrow\delta_{(e,e)}$ in the weak* topology that we also have $\nu_n\rightarrow\delta_{e}$ in the weak* topology. Finally, the previous formula also implies that, for all $F\in C(G)$,
\begin{eqnarray*}
\vert\alpha_\gamma(\nu_n)(F)-\nu_n(F)\vert &=&
\frac{1}{\mu_n(\{e\}\times G)}\vert\alpha^\Lambda_\gamma(\mu_n)(1_{\{e\}}\ot F)-\mu_n(1_{\{e\}}\ot F)\vert\\
&\leq&\frac{\Vert1_{\{e\}}\ot F\Vert}{\mu_n(\{e\}\times G)}\Vert\alpha^\Lambda_\gamma(\mu_n)-\mu_n\Vert\leq\frac{\Vert F\Vert}{\mu_n(\{e\}\times G)}\Vert\alpha^\Lambda_\gamma(\mu_n)-\mu_n\Vert.
\end{eqnarray*}
Hence, $\Vert\alpha_\gamma(\nu_n)-\nu_n\Vert\leq\frac{\Vert\alpha^\Lambda_\gamma(\mu_n)-\mu_n\Vert}{\mu_n(\{e\}\times G)}\rightarrow 0$ and thus $(G,\Gq)$ does not have the relative property $(T)$.

$(\Uparrow)$ Now suppose that the pair $(G,\Gq)$ does not have the relative property $(T)$. Let $(\mu_n)$ be a sequence of Borel probability measures on $G$ satisfying the conditions of assertion $2$ of Theorem \ref{ThmRelT}. For each $n$ define the probability measure $\nu_n$ on $G^\Lambda_\chi=\Lambda\,_\alpha\ltimes G$ by $\nu_n=\delta_e\ot\mu_n$. We have $\nu_n(\{e,e\})=\mu_n(\{e\})=0$ and $\int_{G^\Lambda_\chi}Fd\nu_n=\int_GF(e,g)d\mu_n(g)$ for all $F\in C(G^\Lambda_\chi)$. Hence $\nu_n\rightarrow\delta_e$ in the weak* topology. Moreover, since for all $F\in C(G_\chi^\Lambda)$, we have
\begin{eqnarray*}
\vert\alpha_\gamma^\Lambda(\nu_n)(F)-\nu_n(F)\vert &=&
\left\vert\int_GF(e,\alpha_\gamma(g))d\mu_n(g)-\int_GF(e,g)d\mu_n\right\vert
=\vert\alpha_\gamma(\mu_n)(F_e)-\mu_n(F_e)\vert\\
&\leq& \Vert F_e\Vert\,\Vert\alpha_\gamma(\mu_n)-\mu_n\Vert
\leq\Vert F\Vert\,\Vert\alpha_\gamma(\mu_n)-\mu_n\Vert,
\end{eqnarray*}
where $F_e=F(e,\cdot)\in C(G)$, we have $\Vert\alpha^\Lambda_\gamma(\nu_n)-\nu_n\Vert\leq\Vert\alpha_\gamma(\mu_n)-\mu_n\Vert\rightarrow 0$.

$(3)$. By Theorem \ref{ThmRelH} and the proof of $(2)$, it suffices to prove the following claim.

\textbf{Claim.} \textit{Let $\alpha\,:\,\Lambda\curvearrowright G$ be an action of a finite group $\Lambda$ on a compact group $G$ by group automorphisms and define the compact group $H=\Lambda\,_\alpha\ltimes G$. The following holds.
\begin{enumerate}[(a)]
\item Let $\mu$ be a Borel probability measure on $G$ and define the Borel probability measure $\nu$ on $H$ by $\nu=\delta_e\ot\mu$. If $\widehat{\mu}\in C^*_r(G)$ then $\widehat{\nu}\in C^*_r(H)$.
\item Let $\mu$ be a Borel probability on $H$ such that $\mu(\{e\}\times G)\neq 0$ and define the Borel probability $\nu$ on $G$ by $\nu(A)=\frac{\mu(\{e\}\times A)}{\mu(\{e\}\times G)}$ for all $A\in\mathcal{B}(G)$. If $\widehat{\mu}\in C^*_r(H)$ then $\widehat{\nu}\in C^*_r(G)$.
\end{enumerate}}

\textit{Proof of the claim.} Let $\lambda^G$ and $\lambda^H$ denote the left regular representations of $G$ and $H$ respectively. For $F\in C(G)$ (resp. $F\in C(H)$), write
$\lambda^G(F)$ (resp. $\lambda^H(F)$) the convolution operator by $F$ on ${\rm L}^2(G,\mu_G)$ (resp. ${\rm L}^2(H,\mu_H)$), where $\mu_G$ (resp. $\mu_H$), is the Haar probability on $G$ (resp. $H$). Observe that $\mu_H=m\ot\mu_G$, where $m$ is the normalized counting measure on $\Lambda$.

$(a)$. Recall that, for all $F\in C(H)$, $\int_{H}Fd\nu=\int_GF(e,g)d\mu(g)$. Moreover, using the definition of the group law in $H$, we find that $\lambda^H_{(e,g)}=1\ot\lambda^G_g\in\mathcal{B}(l^2(\Lambda)\ot{\rm L}^2(G))$, for all $g\in G$. It follows that
$$\widehat{\nu}=\int_G\lambda^H_{(e,g)}d\mu(g)=\int_G(1\ot\lambda^G_{g})d\mu(g)=1\ot\widehat{\mu}\in M(C^*_r(H))\subset\mathcal{B}(l^2(\Lambda)\ot{\rm L}^2(G)).$$
Note that for all $F\in C(G)$, $1_{\{e\}}\ot F\in C(H)$, since $\Lambda$ is finite. We claim that $\lambda^H(1_{\{e\}}\ot F)=\frac{1}{\vert\Lambda\vert}(1\ot\lambda^G(F))$. Indeed, 
\begin{eqnarray*}
\lambda^H(1_{\{e\}}\ot F)&=&\int_H\delta_{r,e}F(g)\lambda^H_{(e,g)}d\mu_H(r,g)
=\int_H\delta_{r,e}F(g)(1\ot\lambda^G_g)d\mu_H(r,g)\\
&=&\int_G\left(\frac{1}{\vert\Lambda\vert}\sum_{r\in\Lambda}\delta_{r,e}F(g)(1\ot\lambda^G_g)\right)d\mu_G(g)=\frac{1}{\vert\Lambda\vert}(1\ot\lambda^G(F)).
\end{eqnarray*}
Suppose that $\widehat{\mu}\in C^*_r(G)$ and let $F_n\in C(G)$ be a sequence such that $\Vert\widehat{\mu}-\lambda^G(F_n)\Vert\rightarrow 0$. Hence, $1\ot\lambda^G(F_n)\rightarrow\widehat{\nu}$. Since $1\ot\lambda^G(F_n)=\vert\Lambda\vert\lambda^H(1_{\{e\}}\ot F_n)\in C^*_r(H)$ $\forall n\in\N$, we have $\widehat{\nu}\in C^*_r(H)$.

$(b).$ Recall that, for all $F\in C(G)$, $1_{\{e\}}\ot F\in C(\Lambda\times G)=C(H)$ and $\int_{G}Fd\nu=\frac{1}{\mu(\{e\}\times G)}\int_{\Lambda\times G}1_{\{e\}}\ot Fd\mu$. Using the definition of the group law in $H$, an easy computation shows that for all $r\in\Lambda$, $\xi\in {\rm L}^2(G)$, $\lambda^H_{(r,g)}(\delta_e\ot\xi)=\delta_r\ot\lambda^G_g(\xi\circ\alpha_{r^{-1}})$. It follows that,
\begin{eqnarray*}
\langle\widehat{\nu}\xi,\eta\rangle&=&\int_G\langle\lambda^G_g\xi,\eta\rangle d\nu(g)=\frac{1}{\mu(\{e\}\times G)}\int_{\Lambda\times G}\delta_{e,r}\langle\lambda^G_g\xi,\eta\rangle d\mu(r,g)\\
&=&\frac{1}{\mu(\{e\}\times G)}\int_{\Lambda\times G}\langle\lambda^H_{(r,g)}\delta_e\ot\xi,\delta_e\ot\eta\rangle d\mu(r,g)\quad\text{for all }\xi,\eta\in{\rm L}^2(G).
\end{eqnarray*}
Hence, $\widehat{\nu}=\frac{1}{\mu(\{e\}\times G)}V^*\widehat{\mu}V$, where $V\,:\,{\rm L}^2(G)\rightarrow l^2(\Lambda)\ot{\rm L}^2(G)={\rm L}^2(H)$ is the isometry defined by $V\xi=\delta_e\ot\xi$, $\xi\in{\rm L}^2(G)$. To end the proof it suffices to show that $V^* C^*_r(H)V\subset C^*_r(G)$. 

Let $F\in C(H)$ and define $F_e\in C(G)$ by $F_e(g)=F(e,g)$, $g\in G$. We will actually show that $V^*\lambda^H(F)V=\frac{1}{\vert \Lambda\vert}\lambda^G(F_e)$ and this will finish the argument. For $\xi,\eta\in{\rm L}^2(G)$, we have
\begin{eqnarray*}
\langle V^*\lambda^H(F)V\xi,\eta\rangle
&=&\langle\lambda^H(F)\delta_e\ot\xi,\delta_e\ot\eta\rangle
=\int_HF(r,g)\langle\lambda^H_{(r,g)}\delta_e\ot\xi,\delta_e\ot\eta\rangle d\mu_H(r,g)\\
&=&\int_H\delta_{r,e}F(e,g)\langle\lambda^G_g\xi,\eta\rangle d\mu_H(r,g)
=\int_G\frac{1}{\vert\Lambda\vert}\sum_{r\in \Lambda}\delta_{r,e}F(e,g)\langle\lambda^G_g\xi,\eta\rangle d\mu_G(g)\\
&=&\frac{1}{\vert\Lambda\vert}\int_GF(e,g)\langle\lambda^G_g\xi,\eta\rangle d\mu_G(g)
=\frac{1}{\vert\Lambda\vert}\langle\lambda^G(F_e)\xi,\eta\rangle.
\end{eqnarray*}
\hfill{$\Box$}

 $(4)$. It is easy to check that, if $\alpha\,:\,\Gamma\curvearrowright G$ is compact then $\alpha^\Lambda=\id\ot\alpha\,:\,\Gamma\curvearrowright \Lambda\times G$ is compact, for all finite group $\Lambda$. Hence, the proof follows from Proposition \ref{PropExAlphaH}.
 
$(5)$. Observe that, for a general compact matched pair $(\Gamma, G)$ with associated actions $\alpha$ and $\beta$, the continuity of $\beta$ forces each stabilizer subgroup $G_\gamma$, for $\gamma\in\Gamma$, to be open, hence finite index by compactness of $G$. Consider the closed normal subgroup $G_0=\cap_{\gamma\in \Gamma}G_\gamma={\rm Ker}(\beta)<G$. Equation \ref{EqMatched} implies that $G_0$ is globally invariant under $\alpha$ and the $\alpha$-action of $\Gamma$ on $G_0$ is by group automorphisms. Hence, we may consider the crossed product quantum group $\Gq_0$, with $C_m(\Gq_0)=\Gamma_{\alpha, f}\ltimes C(G_0)$, which is a quantum subgroup (in fact normal subgroup in the sense of Wang \cite{Wa09}) of the bicrossed product quantum group $\Gq$ with $C_m(\Gq)=\Gamma_{\alpha, f}\ltimes C(G)$. This is because $G_0$ is globally invariant under the action $\alpha$ of $\Gamma$ and hence, by the universal property, we have a surjective unital $\ast$-homomorphism $\rho:\Gamma_{\alpha, f}\ltimes C(G)\rightarrow \Gamma_{\alpha, f}\ltimes C(G_0)$ which is easily seen to intertwines the comultiplications. Since $\rho$ acts as identity on $C_m(\Gamma)$, it follows using Theorem \ref{ThmBicrossed}(2) that $C_m(\Gq/\Gq_0)=\alpha(C_m(G/G_0))$ (see Definition \ref{Defn:FinInd}). Hence, if we assume that $G_0$ is a finite index subgroup of $G$, then $\Gq_0$ is a finite index subgroup of $\Gq$. If we further assume that $\Gamma$ is weakly amenable and the action $\alpha$ of $\Gamma$ on $G$ is compact then the action $\alpha$ of $\Gamma$ restricted to $G_0$ is also compact and Theorem \ref{ThmWA} (with the fact that $\Gq_0$ is Kac) implies that $\widehat{\Gq}_0$ is weakly amenable with $\Lambda_{cb}(\widehat{\Gq}_0)\leq \Lambda_{cb}(\Gamma)$. Using part $(2)$ of Theorem \ref{wab}, we conclude that $\widehat{\Gq}$ is weakly amenable and $\Lambda_{cb}(\widehat{\Gq})\leq \Lambda_{cb}(\Gamma)$. In our case, with $G=G_\chi^\Lambda$, the finiteness of $\Lambda$ forces $G_0$ to be always of finite index in $G$. Since, by assumption, the action of $\Gamma$ on Irr$(G)$ has all orbits finite, we conclude, as in the proof of Proposition \ref{PropExAlphaH}, that the action $\alpha$ is compact.
\end{proof}

\begin{example}(\textbf{Relative Property (T)})\label{ExRelT}
Take $n\in\N$, $n\geq 2$, $\Gamma={\rm SL}_n(\Z)$, $G=\mathbb{T}^n$ and $\alpha$ the canonical action of ${\rm SL}_n(\Z)$ on $\mathbb{T}^n={\rm Sp}(\Z^n)$ coming from the linear action of ${\rm SL}_n(\Z)$ on $\Z^n$. Taking a finite subgroup $\Lambda<{\rm SL}_n(\Z)$, we manufacture a compact bicrossed product $\Gq_\Lambda$ with non-trivial actions $\alpha$ and $\beta$ $($described in the beginning of this section$)$ whenever $\Lambda$ is a non-central subgroup. Note that $\left(\mathbb{T}^n\right)^{{\rm SL}_n(\Z)}=\{e\}$ hence $\chi(\Gq_\Lambda)\simeq \Lambda\times{\rm Sp}({\rm SL}_n(\Z))$.

Suppose $n\geq 3$. In this case, $D({\rm SL}_n(\Z))={\rm SL}_n(\Z)$, where $D(F)$ denotes the derived subgroup of a group $F$. Since every element of ${\rm Sp}({\rm SL}_n(\Z))$ is trivial on commutators, we have ${\rm Sp}({\rm SL}_n(\Z))=\{1\}$, for all $n\geq 3$. It follows that $\chi(\Gq_\Lambda)\simeq \Lambda$. Hence, for all $n,m\geq 3$ and all finite subgroups $\Lambda<{\rm SL}_n(\Z)$, $\Lambda'<{\rm SL}_m(\Z)$, we have $\Gq_\Lambda\simeq\Gq_{\Lambda'}$ implies $\Lambda\simeq \Lambda'$.

However, for $n=2$, the group ${\rm Sp}({\rm SL}_2(\Z))$ is non-trivial. Actually, we have
\begin{eqnarray}\label{EqDualSL2}
{\rm Sp}({\rm SL}_2(\Z))\simeq\{(k,l)\in \Z/4\Z\times \Z/6\Z\,:\,k\equiv l\text{ mod }2\},
\end{eqnarray}
which is a finite group of order $12$. Indeed, by the well known isomorphism ${\rm SL}_2(\Z)\simeq\Z/4\Z\underset{\Z/2\Z}{*}\Z/6\Z$, it suffices to compute the group of $1$-dimensional unitary representations of an amalgamated free product $\Gamma_1\underset{\Sigma}{*}\Gamma_2$. It is easy to check that the map $\psi\,:\,{\rm Sp}(\Gamma_1\underset{\Sigma}{*}\Gamma_2)\rightarrow T$ defined by $\psi(\omega)=(\omega\vert_{\Gamma_1},\omega\vert_{\Gamma_2})$, where $T$ is the subgroup of ${\rm Sp}(\Gamma_1)\times{\rm Sp}(\Gamma_2)$ defined by $T=\{(\omega,\mu)\in{\rm Sp}(\Gamma_1)\times{\rm Sp}(\Gamma_2)\,:\,\omega\vert_\Sigma=\mu\vert_\Sigma\}$, is an isomorphism of compact groups. Hence, using the canonical identification ${\rm Sp}(\Z/m\Z)\simeq\Z/m\Z$, we obtain the isomorphism in Equation $(\ref{EqDualSL2})$.

Since the pair $(\Z^2,{\rm SL}_2(\Z)\ltimes\Z^2)$ has the relative property $(T)$, we deduce from Theorem \ref{ThmExAlpha} that, for any finite subgroup $\Lambda<{\rm SL}_2(\Z)$, the pair $(G^\Lambda_\chi,\Gq_\Lambda)$ has the relative property $(T)$. Identifying ${\rm SL}_2(\Z)$ with $\Z/4\Z\underset{\Z/2\Z}{*}\Z/6\Z$, one finds that every finite subgroup is conjugated to $\{1\}$ or $\Z/2\Z$ or $\Z/4\Z$ or $\Z/6\Z$. The only non-central subgroups are conjugated to $\Lambda_1=\Z/4\Z$ or $\Lambda_2=\Z/6\Z$. Hence, we get two non-trivial compact bicrossed products $\Gq_{\Lambda_i}$, $i=1,2$, such that $(G^{\Lambda_i}_\chi,\Gq_{\Lambda_i})$ has the relative property $(T)$ and $\widehat{\Gq_{\Lambda_i}}$ does not have property $(T)$ since ${\rm SL}_2(\Z)$ has the Haagerup property. Moreover, $\Gq_{\Lambda_1}$ and $\Gq_{\Lambda_2}$ are not isomorphic since $\vert\Lambda_1\vert\neq\vert\Lambda_2\vert$.
\end{example}

\begin{remark} (\textbf{Haagerup Property and Weak Amenability})
We depict here a procedure to construct compact bicrossed products with the Haagerup property and Weak Amenability. Suppose that $\Gamma$ is a countable subgroup of a compact group $G$ and consider the action $\alpha\,:\,\Gamma\curvearrowright G$ by inner automorphisms i.e. $\alpha_\gamma(g)=\gamma g\gamma^{-1}$ for all $\gamma\in\Gamma$, $g\in G$. Let $\Lambda<\Gamma$ be any finite subgroup and consider the matched pair $(G^\Lambda_\chi,\Gamma)$ introduced earlier in this section. Let $\Gq_\Lambda$ be the bicrossed product. Observe that, since the action $\alpha$ is inner, the associated action on ${\rm Irr}(G)$ is trivial. Indeed, for any unitary representation $\pi$ of $G$, the unitary $\pi(\gamma)$ is an intertwiner between $\alpha_\gamma(\pi)$ and $\pi$ for all $\gamma\in \Gamma$. Hence, if $\Gamma$ has the Haagerup property, then for any finite subgroup $\Lambda<\Gamma$ the bicrossed product $\widehat{\Gq}_\Lambda$ has the Haagerup property. Similarly, if $\Gamma$ is weakly amenable, then for any finite subgroup $\Lambda<\Gamma$ the bicrossed product $\widehat{\Gq}_\Lambda$ is weakly amenable and $\Lambda_{cb}(\widehat{\Gq}_\Lambda)\leq \Lambda_{cb}(\Gamma)$. 
\end{remark}
\subsubsection{From matched pair with trivial $\alpha$}
In this section, we consider the dual situation, i.e., starting with a matched pair with $\alpha$ being trivial and modifying it to some non-trivial action for a probably different matched pair.

Let $\beta$ be any continuous right action of the compact group $G$ on the discrete group $\Gamma$ by group automorphisms. Taking $\alpha$ to be the trivial action of $\Gamma$ on $G$, the relations in Equation $(\ref{EqMatched})$ are satisfied and we get a matched pair.

\begin{remark}
Note that if the group $\Gamma$ is finitely generated then the right semi-direct product group $H=\Gamma\rtimes_\beta G$ is virtually a direct product. In other words, there is a finite index subgroup of $H$ which is a direct product of a subgroup of $G$ (which acts trivially on $\Gamma$) and $\Gamma$.

Indeed, since $\Gamma$ is discrete and $\beta$ is continuous, the stabilizer subgroup 
$G_{\gamma}:= \{g\in G: \gamma\cdot g=\gamma\}$ is open in $G$ for all $\gamma\in\Gamma$. Since $G$ is compact, $G_\gamma$ has finite index in $G$. Now consider the subgroup $G_{\beta}=\cap_{\gamma \in \Gamma}G_{\gamma}$, which acts trivially on $\Gamma$. In case $\Gamma$ is finitely generated, it follows that $G_\beta$ is also finite index in $G$ and thus the direct product $\Gamma\times G_{\beta}$ is a finite index subgroup of $H$.

However, if the discrete group is not finitely generated then this need not be the case. For instance, let a compact group $K$  act on a finite group $F$ non-trivially. Let $K_n=K$ for $n\in \mathbb{N}$. One can then induce, in the natural way, an action of the compact group $G=\prod_{n\in\mathbb{N}} K_n$ on the discrete group $\Gamma=\oplus_{n\in \mathbb{N}}F_n$, where $F_n=F$ for all $n$. In this case, it is easy to see that the subgroup $G_\beta$ is not of finite index.
\end{remark}

Getting back to the process of modifying $\alpha$, we call a map $\chi\,:\,\Gamma\rightarrow G$ a \textit{crossed homomorphism} if $\chi(e)=e$ and $\chi(rs)=\chi(\beta_{\chi(s)^{-1}}(r))\chi(s)$ for all $r,s\in\Gamma$. Given a crossed homomorphism, we define a new discrete group $\Gamma_\chi$ which is equal to $\Gamma$ as a set and the group multiplication is given by $r*s=\beta_{\chi(s)}(r)s$ for all $r,s\in\Gamma$. As before, $\Gamma_\chi$ is canonically isomorphic to the graph ${\rm Gr}(\chi)=\{(\gamma,\chi(\gamma))\,:\,\gamma\in\Gamma\}$ of $\chi$, which is a subgroup of the right semi-direct product $H=\Gamma\rtimes_\beta G$ (since $\chi$ is a crossed homomorphism).

Observe that $\beta$ still defines a continuous right action of $G$ on the countable set $\Gamma_\chi$ and for $g\in G$, $\beta_g$ is a group homomorphism of $\Gamma_\chi$ if and only if $g^{-1}\chi(\gamma)^{-1}g\chi(\beta_g(\gamma))\in{\rm Ker}(\beta_g)$ for all $\gamma\in\Gamma$, which happens for example if $\chi\circ\beta_g=g^{-1}\chi(\cdot)g$. Moreover, the formula $\alpha_\gamma(g)=\chi(\gamma)g\chi(\beta_g(\gamma))^{-1}$, for all $\gamma\in\Gamma,g\in G$, defines an action of $\Gamma_\chi$ on the compact space $G$ by homeomorphisms and in addition $\alpha$ and $\beta$ satisfy the relations in Equation $(\ref{EqMatched})$. Consequently, we get a new matched pair $(\Gamma_\chi,G)$ with possibly non-trivial actions $\alpha$ and $\beta$. As before, one can describe this new matched pair explicitly by viewing $\Gamma_\chi$ and $G$ as closed subgroups of the right semi-direct product $H=\Gamma\rtimes_\beta G$.

Observe that a group homomorphism $\chi\,:\,\Gamma\rightarrow G$ is a crossed homomorphism if and only if $\chi=\chi\circ\beta_g$ for all $g\in{\rm Im}(\chi)$.

\begin{remark}\label{RmkExBetaH}
Suppose that the crossed homomorphism satisfies $\chi\circ\beta_g=\chi$ for all $g\in{\rm Im}(\chi)$  and let $_{\chi}\Gq$ be the associated bicrossed product. Then the following are equivalent.
\begin{enumerate}
\item $\Gamma$ has the Haagerup property.
\item $\widehat{_{\chi}\Gq}$ has the Haagerup property.
\end{enumerate}
Indeed, by Corollary \ref{CorMatched}, it suffices to show that the action $\alpha$ of $\Gamma_\chi$ on $G$ is compact when viewed as an action of $\Gamma_\chi$ on ${\rm L}^\infty(G)$. Since $\alpha_\gamma(g)=\chi(\gamma)g\chi(\gamma)^{-1}$ for $g\in G$ and $\gamma\in\Gamma_{\chi}$, $\alpha$ is an action by inner automorphisms, thus it is always compact since it is trivial on ${\rm Irr}(G)$. Indeed, for any unitary representation $u$ of $G$, the unitary $u(\chi(\gamma))$ is an intertwiner between $\alpha_\gamma(u)$ and $u$ for $\gamma\in \Gamma_\chi$. 
\end{remark}
A systematic way to construct explicit examples using the deformation above is to consider any countable discrete group $\Gamma_0$ which has a finite non-abelian quotient $G$ and take $\Gamma=\Gamma_0\times G$ with the right action of $G$ on $\Gamma$ given by $\beta_g(\gamma,h)=(\gamma,g^{-1}hg)$, $g,h\in G$ and $\gamma\in \Gamma_0$. Since $G$ is non-abelian, $\beta$ is non-trivial. Let $q\,:\,\Gamma_0\rightarrow G$ be the quotient map and define the morphism $\chi\,:\,\Gamma\rightarrow G$ by $\chi(\gamma,h)=q(\gamma)$, $\gamma\in \Gamma_0$, $h\in G$. Then, we obviously have $\chi\circ\beta_g=\chi$ for all $g\in G$ . Therefore, $\chi$ is a crossed homomorphism and the action $\alpha$ of $\Gamma_\chi$ on $G$ is given by $\alpha_{(\gamma,h)}(g)=q(\gamma)gq(\gamma^{-1})$, $\gamma\in\Gamma_0$, $h,g\in G$, which is also non-trivial since $G$ is non-abelian. Thus $(\Gamma_\chi,G)$ is a compact matched pair. Let $_{\chi}\Gq$ denote the bicrossed product.

\begin{proposition}\label{PropExBeta}
We have $\chi(_{\chi}\Gq)\simeq Z(G)\times{\rm Sp}(\Gamma_0)\times{\rm Sp}(G)$ and ${\rm Int}(_{\chi}\Gq)={\rm Sp}(G)\times\Gamma_0\times Z(G)$.
\end{proposition}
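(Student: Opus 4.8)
The plan is to apply Proposition \ref{PropInt} to the concrete compact matched pair $(\Gamma_\chi, G)$ constructed above, after identifying the three ingredients that appear in that statement: the fixed-point group $G^\alpha$ of the $\alpha$-action of $\Gamma_\chi$ on $G$, the subgroup $\Gamma_\chi^\beta$ of elements of $\Gamma_\chi$ fixed by every element of $G$ under $\beta$, and the induced actions on the Pontryagin duals ${\rm Sp}(G)$ and ${\rm Sp}(\Gamma_\chi)$. First I would compute $G^\alpha$. Since $\alpha_{(\gamma,h)}(g) = q(\gamma) g q(\gamma)^{-1}$, a point $g \in G$ is fixed by all of $\Gamma_\chi$ if and only if $g$ commutes with $q(\gamma)$ for all $\gamma \in \Gamma_0$; as $q$ is surjective this says exactly $g \in Z(G)$. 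Hence $G^\alpha = Z(G)$. Next, $\Gamma_\chi^\beta = \{(\gamma,h) : \beta_g(\gamma,h) = (\gamma,h)\text{ for all }g\in G\}$; since $\beta_g(\gamma,h) = (\gamma, g^{-1}hg)$, this forces $h \in Z(G)$, while $\gamma \in \Gamma_0$ is unconstrained, so $\Gamma_\chi^\beta = \Gamma_0 \times Z(G)$ as a set. (One should note the multiplication on $\Gamma_\chi$ restricted here: $(\gamma,h)*(\gamma',h') = \beta_{\chi(\gamma',h')}(\gamma,h)\cdot(\gamma',h') = (\gamma, q(\gamma')^{-1} h q(\gamma'))\cdot(\gamma',h')$, which on $\Gamma_0 \times Z(G)$ reduces to the direct product law since $h \in Z(G)$; so $\Gamma_\chi^\beta \cong \Gamma_0 \times Z(G)$ as a group.)

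Then I would read off the two actions. For ${\rm Int}(_\chi\Gq) \simeq {\rm Sp}(G) \rtimes_\alpha \Gamma_\chi^\beta$: the action of $\Gamma_\chi^\beta = \Gamma_0 \times Z(G)$ on ${\rm Sp}(G)$ is $\omega \mapsto \omega \circ \alpha_{(\gamma,h)}$, and since $\alpha_{(\gamma,h)}$ is the inner automorphism $\operatorname{Ad}(q(\gamma))$ of $G$, and characters of $G$ are invariant under conjugation (a one-dimensional representation is a class function), this action is trivial. Therefore ${\rm Int}(_\chi\Gq) \simeq {\rm Sp}(G) \times \Gamma_0 \times Z(G)$, which is the claimed formula. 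For $\chi(_\chi\Gq) \simeq G^\alpha \,_\beta\!\ltimes {\rm Sp}(\Gamma_\chi)$: here $G^\alpha = Z(G)$ acts on ${\rm Sp}(\Gamma_\chi)$ by $\mu \mapsto \mu \circ \beta_g$. I need to check $\beta_g$ is trivial on ${\rm Sp}(\Gamma_\chi)$, equivalently that $\beta_g$ differs from the identity automorphism of $\Gamma_\chi$ only by something killed by every character. Now $\beta_g(\gamma,h) = (\gamma, g^{-1}hg)$ for $g \in Z(G)$, so $g^{-1}hg = h$ and $\beta_g = \operatorname{id}$ on $\Gamma_\chi$ when $g \in Z(G) = G^\alpha$; hence the action on ${\rm Sp}(\Gamma_\chi)$ is literally trivial, and the crossed product is a direct product: $\chi(_\chi\Gq) \simeq Z(G) \times {\rm Sp}(\Gamma_\chi)$.

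The remaining point is to compute ${\rm Sp}(\Gamma_\chi)$ and show it equals ${\rm Sp}(\Gamma_0) \times {\rm Sp}(G)$. The cleanest route: $\Gamma_\chi$ is isomorphic to the graph ${\rm Gr}(\chi) = \{((\gamma,h),q(\gamma)) : \gamma\in\Gamma_0, h\in G\} \subset H = (\Gamma_0\times G)\rtimes_\beta G$, but it is more transparent to observe that the map $\Gamma_\chi \to \Gamma_0 \times G$, $(\gamma,h)\mapsto(\gamma, q(\gamma)^{-1}h)$ (or a similar shear) is a group isomorphism from $(\Gamma_\chi, *)$ onto the ordinary direct product $\Gamma_0 \times G$; I would verify this by a direct computation with the twisted multiplication $r*s = \beta_{\chi(s)}(r)s$ and the explicit forms of $\chi$ and $\beta$. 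Once $\Gamma_\chi \cong \Gamma_0\times G$ as abstract groups, ${\rm Sp}(\Gamma_\chi) \cong {\rm Sp}(\Gamma_0\times G) \cong {\rm Sp}(\Gamma_0)\times{\rm Sp}(G)$ since the spectrum of a group C*-algebra of a direct product is the product of the spectra (equivalently, one-dimensional representations of a direct product factor through each coordinate). Substituting into the two displays above yields $\chi(_\chi\Gq)\simeq Z(G)\times{\rm Sp}(\Gamma_0)\times{\rm Sp}(G)$ and ${\rm Int}(_\chi\Gq)\simeq{\rm Sp}(G)\times\Gamma_0\times Z(G)$. I expect the main obstacle to be the bookkeeping in verifying the shear isomorphism $\Gamma_\chi\cong\Gamma_0\times G$ and confirming that all the relevant group laws and actions untwist as claimed; this is routine but must be done carefully because the twisted multiplication on $\Gamma_\chi$ and the semidirect structure on $\Gamma_\chi^\beta$ interact, and one must also double-check the homeomorphism assertion for $\chi(_\chi\Gq)$, which follows automatically from Proposition \ref{PropInt} once the group isomorphism is established.
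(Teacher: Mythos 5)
Your proposal is correct and, for most of its length, follows the same route as the paper: you apply Proposition \ref{PropInt} after computing $G^\alpha=Z(G)$ and $\Gamma_\chi^\beta=\Gamma_0\times Z(G)$ and observing that both induced dual actions are trivial (the action on ${\rm Sp}(G)$ because $\alpha$ is inner, hence trivial on one-dimensional representations; the action on ${\rm Sp}(\Gamma_\chi)$ because $\beta_g$ is already the identity on $\Gamma_\chi$ for $g\in Z(G)$). These steps coincide exactly with the paper's. The one place you genuinely diverge is the identification ${\rm Sp}(\Gamma_\chi)\simeq{\rm Sp}(\Gamma_0)\times{\rm Sp}(G)$: the paper works directly at the level of characters, showing that $\omega\mapsto(\omega\circ\iota_{\Gamma_0},\omega\circ\iota_G)$ is bijective --- injective because the two copies generate $\Gamma_\chi$, surjective because $(r,g)\mapsto\omega_1(r)\omega_2(g)$ is multiplicative for the twisted law, the character $\omega_2$ killing the conjugation by $q(s)$. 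You instead propose to untwist $\Gamma_\chi$ into the honest direct product $\Gamma_0\times G$ by a shear and then take spectra; this is valid and in fact proves slightly more (an isomorphism of the groups themselves, not merely of their character groups). However, the specific shear you wrote, $(\gamma,h)\mapsto(\gamma,q(\gamma)^{-1}h)$, does not intertwine the multiplications: with the law $(r,g)(s,h)=(rs,\,q(s)^{-1}gq(s)h)$ the correct shear is $\Phi(\gamma,h)=(\gamma,q(\gamma)h)$, for which $\Phi\bigl((r,g)(s,h)\bigr)=(rs,\,q(r)q(s)q(s)^{-1}gq(s)h)=(rs,\,q(r)g\cdot q(s)h)=\Phi(r,g)\Phi(s,h)$, whereas your formula fails already for $g=h=e$ unless $G$ is abelian. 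Since you explicitly flagged the shear as something to be verified by direct computation, this is a correctable slip rather than a gap; with that formula fixed, your argument goes through and yields both stated isomorphisms, and the homeomorphism assertion follows from Proposition \ref{PropInt} as you say.
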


\begin{proof}
Note that $\Gamma_\chi=\Gamma_0\times G$ as a set and the group law is given by $(r,g)(s,h)=(rs,q(s)^{-1}gq(s)h)$ for all $r,s\in\Gamma_0$ and $g,h\in G$. Since the action $\beta$ of $G$ on $\Gamma_\chi$ is given by $\beta_g(s,h)=(s,g^{-1}hg)$, $s\in \Gamma_0, g,h\in G$, we have $\Gamma_\chi^\beta=\Gamma_0\times Z(G)$ and the action of $Z(G)$ on $\Gamma_\chi$ is trivial. 
Since the action $\alpha$ of $\Gamma_\chi^\beta$ on $G$ is given by $\alpha_{(r,g)}(h)=q(r)hq(r)^{-1}$, $r\in \Gamma_0, g,h\in G$, we find $G^\alpha=Z(G)$. Again, since the action $\alpha$ is by inner automorphisms, the associated action on ${\rm Sp}(G)$ is trivial. It follows from Proposition \ref{PropInt} that $\chi(_{\chi}\Gq)\simeq Z(G)\times{\rm Sp}(\Gamma_\chi)$ and ${\rm Int}(_{\chi}\Gq)={\rm Sp}(G)\times\Gamma_0\times Z(G)$. Let $\iota_{\Gamma_0}\,:\,\Gamma_0\rightarrow\Gamma_\chi$, $r\mapsto(r,1)$ and $\iota_G\,:\,G\rightarrow\Gamma_\chi$, $g\mapsto(1,g)$. Observe that $\iota_{\Gamma_0}$ and $\iota_G$ are group homomorphisms. To finish the proof, we claim that the map $\psi\,:\,{\rm Sp}(\Gamma_\chi)\rightarrow{\rm Sp}(\Gamma_0)\times{\rm Sp}(G)$, defined by $\omega\mapsto(\omega\circ\iota_{\Gamma_0},\omega\circ\iota_G)$, $\omega\in {\rm Sp}(\Gamma_\chi)$, is a group isomorphism. Indeed, it is obviously a group homomorphism. Since $\Gamma_\chi$ is generated by $\iota_{\Gamma_0}(\Gamma_0)$ and $\iota_G(G)$, so $\psi$ is injective. Let $\omega_1\in{\rm Sp}(\Gamma_0)$ and $\omega_2\in{\rm Sp}(G)$. Define the continuous map $\omega\,:\,\Gamma_\chi\rightarrow S^1$ by $\omega(r,g)=\omega_1(r)\omega_2(g)$, $r\in \Gamma_0, g\in G$. Then, for all $r,s\in\Gamma_0$, $g,h\in G$,
\begin{eqnarray*}
\omega((r,g)\cdot(s,h))&=&\omega(rs,q(s)^{-1}gq(s)h)=\omega_1(r)\omega_1(s)\omega_2(q(s)^{-1})\omega_2(g)\omega_2(q(s))\omega_2(h)\\
&=&\omega_1(r)\omega_2(g)\omega_1(s)\omega_2(h)=\omega(r,g)\omega(s,h).
\end{eqnarray*}
Hence, $\omega\in{\rm Sp}(\Gamma_\chi)$ and $\psi(\omega)=(\omega_1,\omega_2)$, so $\psi$ is surjective.
\end{proof}

\begin{example} (\textbf{Haagerup Property})
Observe that any finite non-abelian group $G$ provides an example with $\Gamma_0=\mathbb{F}_n$, where $n$ is bigger than the number of generators of $G$, so that $G$ is a quotient of $\Gamma_0$ in the obvious way. All bicrossed products obtained in this way are not co-amenable but their duals do have the Haagerup property by Remark \ref{RmkExBetaH}.

To get explicit examples we take, for $n\geq 4$, $G=A_n$ the alternating group which is simple, has only one irreducible representation of dimension $1$ (the trivial representation) so that $Z(G)=\{1\}$ and ${\rm Sp}(G)=\{1\}$. Moreover, viewing $A_n$ generated by the $n-2$ $3$-cycles, we have a surjection $\Gamma_0=\mathbb{F}_{n-2}\rightarrow A_n=G$. Associated to this data, we get a non-trivial compact bicrossed product $\Gq_n$ non co-amenable and whose dual has the Haagerup property and such that $\chi(\Gq_n)\simeq{\rm Sp}(\mathbb{F}_{n-2})=\mathbb{T}^{n-2}$. In particular $\Gq_n$ and $\Gq_m$ are not isomorphic for $n\neq m$. It shows the existence of an infinite family of pairwise non-isomorphic non-trivial compact bicrossed product whose dual are non amenable with the Haagerup property.
\end{example}

We now consider more explicit examples on property $(T)$.

\begin{example}\label{ExMatchedT} (\textbf{Property $(T)$})
Let $n\geq 3$ be a natural number and $p\geq 3$ be a prime number. Let $\F_p$ denote the finite field of order $p$. Define $\Gamma_0={\rm SL}_n(\Z)$, $G={\rm SL}_n(\F_p)$ and let $q\,:\,{\rm SL}_n(\Z)\rightarrow {\rm SL}_n(\F_p)$ be the canonical quotient map. We get a matched pair $(\Gamma_\chi,G)$ with both actions $\alpha$ and $\beta$ non-trivial and we denote the bicrossed product by $\Gq_{n,p}$. Since for $n,p\geq 3$, we have $D({\rm SL}_n(\Z))={\rm SL}_n(\Z)$ and $D({\rm SL}_n(\F_p))={\rm SL}_n(\F_p)$, we deduce as in Example \ref{ExRelT} that ${\rm Sp}({\rm SL}_n(\F_p))=\{1\}={\rm Sp}({\rm SL}_n(\Z))$. It follows from Proposition \ref{PropExBeta} that
$${\rm Int}(\Gq_{n,p})\simeq{\rm SL}_n(\Z)\times Z({\rm SL}_n(\F_p))\simeq{\rm SL}_n(\Z)\times \Z/d\Z\text{ and }\chi(\Gq_{n,p})=Z({\rm SL}_n(\F_p))\simeq\Z/d\Z,$$
where $d={\rm gcd}(n,p-1)$. In particular, the quantum groups $\Gq_{p}=\Gq_{p,p}$ for $p$ prime and $p\geq 3$, are pairwise non-isomorphic. They are non-commutative and non-cocommutative by Remark $\ref{RmkNonTrivialMatched}$. Moreover, assertion $2$ of Theorem \ref{ThmPropT} implies that $\widehat{\Gq}_{p}$ have property $(T)$. We record this in the form of a theorem.

\begin{theorem}\label{InfiniteExample}
There exists an infinite family of pairwise non isomorphic non-trivial compact bicrossed products whose duals have property $(T)$.
\end{theorem}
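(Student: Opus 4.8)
The plan is to assemble \Cref{InfiniteExample} from the construction already carried out in \Cref{ExMatchedT}, so the proof is essentially a matter of checking that the family $\{\Gq_p : p\geq 3 \text{ prime}\}$ does the job. First I would recall the setup: for each prime $p\geq 3$ put $\Gamma_0=\SL_p(\Z)$, $G=\SL_p(\F_p)$, $q\,:\,\SL_p(\Z)\to\SL_p(\F_p)$ the reduction map, $\beta$ the (non-trivial, since $G$ is non-abelian) conjugation action of $G$ on $\Gamma=\Gamma_0\times G$ and $\chi(\gamma,h)=q(\gamma)$ the crossed homomorphism; this produces the compact matched pair $(\Gamma_\chi,G)$ with both $\alpha$ and $\beta$ non-trivial, and $\Gq_p\colonequals\, _{\chi}\Gq$ is its bicrossed product. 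By \Cref{RmkNonTrivialMatched}, since $\alpha$ is non-trivial $C_m(\Gq_p)$ is non-commutative, and since $\beta$ is non-trivial $\Gq_p$ is non-cocommutative; in particular every $\Gq_p$ is a non-trivial (and infinite-dimensional) compact quantum group.

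Next I would record the computation of the invariant $\chi(\Gq_p)$. For $n,p\geq 3$ one has $D(\SL_n(\Z))=\SL_n(\Z)$ and $D(\SL_n(\F_p))=\SL_n(\F_p)$ (this is the standard fact used already in \Cref{ExRelT}), hence every one-dimensional unitary representation is trivial on commutators and equals $1$, so ${\rm Sp}(\SL_p(\Z))=\{1\}={\rm Sp}(\SL_p(\F_p))$. Feeding this into \Cref{PropExBeta} gives $\chi(\Gq_p)\simeq Z(\SL_p(\F_p))\times{\rm Sp}(\Gamma_\chi)$, and ${\rm Sp}(\Gamma_\chi)\simeq{\rm Sp}(\SL_p(\Z))\times{\rm Sp}(\SL_p(\F_p))=\{1\}$, so $\chi(\Gq_p)\simeq Z(\SL_p(\F_p))\simeq\Z/d\Z$ with $d=\gcd(p,p-1)=1$; thus actually $\chi(\Gq_p)$ is trivial for the diagonal choice $\Gq_p=\Gq_{p,p}$. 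This last point means $\chi(\cdot)$ will not separate the $\Gq_p$, so I would instead separate them with ${\rm Int}(\Gq_p)\simeq\Gamma_0\times Z(G)\simeq\SL_p(\Z)$: the groups $\SL_p(\Z)$, $p\geq 3$ prime, are pairwise non-isomorphic (for instance their abelianizations, or the existence of torsion elements of various prime orders, distinguish them), and since ${\rm Int}(\cdot)$ is an isomorphism invariant of compact quantum groups this forces $\Gq_p\not\simeq\Gq_{p'}$ for $p\neq p'$.

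Finally I would invoke property $(T)$: for $p\geq 3$, $\SL_p(\Z)$ has property $(T)$ (Kazhdan, since $p\geq 3$), hence $\Gamma=\Gamma_0\times G$ has property $(T)$ as $G$ is finite, hence $\Gamma_\chi$ has property $(T)$ (it is a group isomorphic to a finite-index overgroup situation; more directly $\Gamma_\chi\cong{\rm Gr}(\chi)$, which has finite index in $\Gamma\rtimes_\beta G$, and $\Gamma\rtimes_\beta G$ has $(T)$ because $\Gamma$ does and $G$ is finite, so $\Gamma_\chi$ has $(T)$). Since $G=\SL_p(\F_p)$ is finite, assertion $(3)$ of \Cref{ThmPropT} applies and gives that $\widehat{\Gq_p}$ has property $(T)$. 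This yields the claimed infinite family $\{\widehat{\Gq_p}\}_{p\geq 3 \text{ prime}}$ of pairwise non-isomorphic non-trivial discrete quantum groups with property $(T)$, completing the proof.

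The only genuinely delicate point is the separation of the $\Gq_p$: I expect the main obstacle to be verifying carefully which isomorphism invariant actually distinguishes them, since the naive candidate $\chi(\cdot)$ collapses to the trivial group. Using ${\rm Int}(\Gq_p)\simeq\SL_p(\Z)$ circumvents this cleanly, provided one justifies $\SL_p(\Z)\not\cong\SL_{p'}(\Z)$ for distinct primes — which follows, e.g., from comparing the sets of orders of finite-order elements, or from the fact that $\SL_n(\Z)$ determines $n$ up to the ambiguity $\SL_n(\Z)\cong\SL_m(\Z)\Rightarrow n=m$ for $n,m\geq 2$ (a standard rigidity statement). All the other ingredients — non-triviality via \Cref{RmkNonTrivialMatched}, the $(T)$ transfer via \Cref{ThmPropT}(3), and the ${\rm Sp}$/${\rm Int}$ computations via \Cref{PropExBeta} — are already in place in the excerpt.
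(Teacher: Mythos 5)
Your proposal is correct and follows essentially the same route as the paper's Example \ref{ExMatchedT}: the diagonal family $\Gq_p=\Gq_{p,p}$ built from $\Gamma_0={\rm SL}_p(\Z)$ and $G={\rm SL}_p(\F_p)$, non-triviality via Remark \ref{RmkNonTrivialMatched}, pairwise non-isomorphism via ${\rm Int}(\Gq_p)\simeq{\rm SL}_p(\Z)$ from Proposition \ref{PropExBeta}, and property $(T)$ for the duals via Theorem \ref{ThmPropT}. Your appeal to assertion $(3)$ of that theorem is in fact the right one (the paper's citation of assertion $2$ is a typo), and your observation that $\chi(\Gq_p)$ collapses to the trivial group, so that ${\rm Int}$ must be the separating invariant, is exactly what the paper's computation implicitly relies on.
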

These are the first explicit examples of non-trivial discrete quantum groups with property $(T)$.

One can also consider a similar but easier family of examples with $\beta$ being trivial. We still take a natural number $n\geq 3$ and a prime number $p\geq 3$.  But we consider $\Gamma={\rm SL}_n(\Z)$ and $G={\rm SL}_n(\F_p)$ with the action $\alpha$ being given by $\alpha_\gamma(g)=[\gamma]g[\gamma]^{-1}$, $\gamma\in \Gamma, g\in G$, and $\beta$ being the trivial action. Let $\mathbb{H}_{n,p}$ denote the bicrossed product associated to the matched pair $(\Gamma,G)$. One can check, as before, that ${\rm Int}(\mathbb{H}_{n,p})\simeq{\rm SL}_n(\Z)$ and ${\rm Sp}(C_m(\mathbb{H}_{n,p}))\simeq\Z/d\Z$, where $d={\rm gcd}(n,p-1)$. Hence, the quantum groups $\mathbb{H}_{p}=\mathbb{H}_{p,p}$ for $p$ prime and $p\geq 3$, are pairwise non-isomorphic. They arise from  matched pairs for which the $\beta$ action is trivial but still they are non-commutative and non-cocommutative since $\Gamma$ and $G$ are both non-abelian. Also, their duals have property $(T)$.
\end{example}

\subsection{Examples of crossed products}\label{SectionExCrossed}

In this section, we provide non-trivial examples of crossed products. Our examples are of the type considered in \cite{Wa95b}. Let $G$ be a compact quantum group and define, for all $g\in \chi(G)$, the map $\alpha_g=(g^{-1}\ot\id\ot g)\circ\Delta^{(2)}$. It defines a continuous group homomorphism $\chi(G)\ni g\mapsto \alpha_g\in {\rm Aut}(G)$. Since $\chi(G)$ is compact, it follows that the action $\Gamma\curvearrowright G$ is always compact, for any countable subgroup $\Gamma<\chi(G)$. Actually, the action of $\chi(G)$ on $\text{Irr}(G)$ is trivial since, for $g\in\chi(G)$ and $x\in{\rm Irr}(G)$ a straightforward computation gives $(\id\ot\alpha_g)(u^x)=(V_g^*\ot 1)u^x(V_g\ot 1)$,  where $V_g=(\id\ot g)(u^x)$. Let $G_\Gamma$ denote the crossed product. For a subgroup $\Sigma<H$, we denote by $C_H(\Sigma)$ the centralizer of $\Sigma$ in $H$. Applying our results on crossed products to $G_\Gamma$ we get the following Corollary.

\begin{corollary}\label{CorExCrossed}
The following holds.
\begin{enumerate}
\item ${\rm Int}(G_\Gamma)\simeq {\rm Int}(G)\times \Gamma$ and $\chi(G_\Gamma)\simeq C_{\chi(G)}(\Gamma)\times{\rm Sp}(\Gamma)$.
\item ${\rm max}(\Lambda_{{\rm cb}}(C(G)),\Lambda_{{\rm cb}}(\Gamma))\leq\Lambda_{{\rm cb}}(C(G_\Gamma))\leq\Lambda_{{\rm cb}}(\widehat{G})\Lambda_{{\rm cb}}(\Gamma)$.
\item $\widehat{G}$ and $\Gamma$ have $(RD)$ if and only if $\widehat{G}_\Gamma$ has $(RD)$.
\item $\widehat{G_\Gamma}$ has the Haagerup property if and only if $\widehat{G}$ and $\Gamma$ have the Haagerup property.
\item $\widehat{G}_\Gamma$ has property $(T)$ if and only if $\widehat{G}$ and $\Gamma$ have property $(T)$.
\end{enumerate}
\end{corollary}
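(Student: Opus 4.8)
The plan is to prove Corollary \ref{CorExCrossed} by applying the general crossed-product results established earlier in the paper to the specific action $\alpha\,:\,\Gamma\curvearrowright G$ given by $\alpha_g=(g^{-1}\ot\id\ot g)\circ\Delta^{(2)}$ for $g\in\Gamma<\chi(G)$, together with the observation (already noted just before the statement) that this action is always compact and induces the trivial action on ${\rm Irr}(G)$. This last fact is the engine of all five assertions: compactness lets us invoke Theorems \ref{ThmWA}, \ref{ThmRD}, \ref{Thm-Crossed-H} and \ref{Thm-Crossed-T}, and triviality on ${\rm Irr}(G)$ means every length function on ${\rm Irr}(G)$ is automatically $\alpha$-invariant and the Cowling--Haagerup average of Lemma \ref{LemAverage} behaves particularly well.

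First I would dispatch $(1)$ using Proposition \ref{Prop-CrossedInt}, which gives ${\rm Int}(G_\Gamma)\simeq\Gamma\,_\alpha\ltimes{\rm Int}(G)$ and $\chi(G_\Gamma)\simeq\chi(G)^\alpha\times{\rm Sp}(\Gamma)$. It then remains to identify the two pieces: since the action on ${\rm Irr}(G)$ is trivial, $\alpha_g$ fixes every $1$-dimensional representation, so $\alpha$ acts trivially on ${\rm Int}(G)$ and the semidirect product is direct, giving ${\rm Int}(G)\times\Gamma$. For $\chi(G_\Gamma)$ one computes directly that $\chi\circ\alpha_g=g^{-1}\chi g$ for $\chi\in\chi(G)$ and $g\in\Gamma$ (using that $\chi$ is a character of $C_m(G)$), so $\chi(G)^\alpha$ is exactly the set of $\chi$ commuting with every $g\in\Gamma$, i.e. $C_{\chi(G)}(\Gamma)$. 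Assertions $(2)$, $(3)$, $(4)$, $(5)$ then follow: $(2)$ is Theorem \ref{ThmWA} verbatim, since $\alpha$ is compact; $(3)$ is Theorem \ref{ThmRD} combined with Remark \ref{RmkRD} (the trivial action on ${\rm Irr}(G)$ makes any length function $\alpha$-invariant, so no passage to $l_\alpha$ is even needed) together with the fact, recalled at the start of Section \ref{section-crossed}'s RD subsection, that $(RD)$ passes to the discrete quantum subgroups $\Gamma$ and $\widehat G$ of $\widehat{\Gq}$; $(4)$ is Theorem \ref{Thm-Crossed-H} for the forward direction and the conditional-expectation argument (from the Corollary following Proposition \ref{PropHaag}, or directly \cite[Theorem 6.7]{DFSW13}) for the reverse; $(5)$ is Theorem \ref{Thm-Crossed-T} assertions $(2)$ and $(3)$, noting that compactness of $\alpha$ is the hypothesis making $(2)$ applicable.

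The steps are mostly bookkeeping, and the only place requiring genuine care is the identification $\chi(G)^\alpha=C_{\chi(G)}(\Gamma)$ in assertion $(1)$, which I expect to be the main (minor) obstacle: one must check that the map $\chi\mapsto\chi\circ\alpha_g$ on $\chi(G)$ really is given by conjugation by $g$ inside the group $\chi(G)$, which amounts to unwinding $\alpha_g=(g^{-1}\ot\id\ot g)\circ\Delta^{(2)}$ and using coassociativity: $\chi\circ\alpha_g=(g^{-1}\ot\chi\ot g)\circ\Delta^{(2)}=g^{-1}\cdot\chi\cdot g$ where $\cdot$ is the product in $\chi(G)$. Once this is in place, $\chi$ is $\alpha$-fixed iff $g\chi=\chi g$ for all $g\in\Gamma$, which is precisely membership in $C_{\chi(G)}(\Gamma)$. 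A symmetric and even easier computation shows $\alpha_g$ fixes $1$-dimensional representations $u\in{\rm Int}(G)$, since $(\id\ot\alpha_g)(u)=(g^{-1}\ot\id\ot g)(\Delta^{(2)}\ot\id\text{-applied})$ collapses using $\Delta(u)=u\ot u$ and $g(u)g(u)^{-1}=1$ to give $u$ back; this is the content of the remark right before the Corollary that $(\id\ot\alpha_g)(u^x)=(V_g^*\ot 1)u^x(V_g\ot 1)$ specialized to $\dim x=1$. I would write assertion $(1)$ out in a couple of lines and then simply cite the four theorems for $(2)$–$(5)$, pausing only to spell out the reverse implications in $(4)$ and $(5)$ via the conditional expectations $C(G_\Gamma)\to C^*_r(\Gamma)$ and $C(G_\Gamma)\to C(G)$.
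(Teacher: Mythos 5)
Your proposal is correct and follows essentially the same route as the paper: assertions $(2)$–$(5)$ are direct citations of Theorems \ref{ThmWA}, \ref{ThmRD}, \ref{Thm-Crossed-H} and \ref{Thm-Crossed-T} (using compactness and triviality of the induced action on ${\rm Irr}(G)$), and assertion $(1)$ reduces to Proposition \ref{Prop-CrossedInt} once one checks, exactly as you do, that $\alpha_g$ fixes group-like unitaries (so the semidirect product with ${\rm Int}(G)$ is direct) and that the induced action on $\chi(G)$ is conjugation (so $\chi(G)^\alpha=C_{\chi(G)}(\Gamma)$). The only quibble is that your labels ``forward''/``reverse'' in $(4)$ are swapped relative to the statement as written, but both implications are correctly attributed to the right ingredients.
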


\begin{proof}
All the statements directly follow from the results of section $6$ and the discussion preceding the statement of the Corollary except assertion $1$ for which there is something to check: the action of $\chi(G)$ on ${\rm Int}(G)$ associated to the action $\alpha$ is trivial indeed, for all unitary $u\in C_m(G)$ for which $\Delta(u)=u\ot u$, one has $\alpha_g(u)=g(u)ug(u^*)=u$. Moreover, the action of $\chi(G)$ on itself associated to the action $\alpha$ is, by definition, the action by conjugation. Hence assertion $1$ directly follows from Proposition \ref{Prop-CrossedInt}.
\end{proof}

\begin{example}\label{ExCrossedFree}
We consider examples with $G=U_N^+$, the free unitary quantum group or $G=O_N^+$, the free orthogonal quantum group. It is well known that $\chi(U_N^+)=U(N)$ and $\chi(O_N^+)=O(N)$ and that ${\rm Int}(U_N^+)={\rm Int}(O_N)^+=\{1\}$. It is also known that the Cowling-Haagerup constant for $O_N^+$ and $U_N^+$ are both $1$ \cite{Fr13}, and $\widehat{O_N^+}$ and $\widehat{U_N^+}$ have (RD) \cite{Ve07} and the Haagerup property \cite{Br12}. Hence, for any $N\geq 2$ and any subgroups $\Sigma<O(N)$ and $\Gamma<U(N)$ the following holds.

\begin{itemize}
\item ${\rm Int}((O_N^+)_\Sigma)\simeq\Sigma$ and ${\rm Int}((U_N^+)_\Gamma)\simeq\Gamma$.
\item $\chi((O_N^+)_\Sigma)\simeq C_{O(N)}(\Sigma)\times{\rm Sp}(\Sigma)$ and $\chi((U_N^+)_\Gamma)\simeq C_{U(N)}(\Gamma)\times{\rm Sp}(\Gamma)$.
\item $\Lambda_{{\rm cb}}(\widehat{(O_N^+)_\Sigma})=\Lambda_{{\rm cb}}(\Sigma)$ and $\Lambda_{{\rm cb}}(\widehat{(U_N^+)_\Gamma})=\Lambda_{{\rm cb}}(\Gamma)$.
\item $\widehat{(O_N^+)_\Sigma}$ (resp. $(\widehat{(U_N^+)_\Gamma}$) has $(RD)$ if and only if $\Sigma$ (resp. $\Gamma$) has $(RD)$.
\item $\widehat{(O_N^+)_\Sigma}$ (resp. $(\widehat{(U_N^+)_\Gamma}$) has the Haagerup property if and only if $\Sigma$ (resp. $\Gamma$) has the Haagerup property.
\item $\widehat{(O_N^+)_\Sigma}$ and $(\widehat{(U_N^+)_\Gamma}$ do not have Property $(T)$.
\end{itemize}
\end{example}

\begin{example}(\textbf{Relative Haagerup Property})\label{ExCrossedRelH}
Since the action of $\chi(G)$ on $C_m(G)$ is given by $(\id\ot\alpha_g)(u^x)=(V_g^*\ot 1)u^x(V_g\ot 1)$,  where $V_g=(\id\ot g)(u^x)$, we have, 
\begin{equation}\label{EqActChi}
\alpha_g(\omega)(u^x_{ij})=\sum_{r,s}g(u^x_{ir})\omega(u^x_{rs})g((u^x_{js})^*), \text{ for all } \omega\in C_m(G)^*.
\end{equation}
Define the sequence of dilated Chebyshev polynomials of second kind by the initial conditions $P_0(X)=1$, $P_1(X)=X$ and the recursion relation $XP_k(X)=P_{k+1}(X)+P_{k-1}(X)$, $k\geq 1$. It is proved in \cite{Br12} (see also \cite{FV14}) that the net of states $\omega_t\in C_m(O_N^+)^*$ defined by $\omega_t(u^k_{ij})=\frac{P_k(t)}{P_k(N)}\delta_{i,j}$, for $k\in{\rm Irr}(O_N^+)=\N$ and $t\in(0,1)$ realize the co-Haagerup property for $O_N^+$, i.e., $\widehat{\omega}_t\in c_0(\widehat{O_N^+})$ for $t$ close to $1$ and $\omega_t\rightarrow \varepsilon_{O_N^+}$ in the weak* topology when $t\rightarrow 1$. Now let $g\in\chi(O_N^+)$. By Equation $(\ref{EqActChi})$, we have $\alpha_g(\omega_t)(u^k_{ij})=\frac{P_k(t)}{P_k(N)}\sum_rg(u^k_{ir})g((u^k_{jr})^*)=\frac{P_k(t)}{P_k(N)}\delta_{i,j}=\omega_t(u^k_{ij})$. Hence, $\alpha_g(\omega_t)=\omega_t$ for all $g\in\chi(G)$ and all $t\in(0,1)$. It follows that for any $N\geq 2$ and any subgroup $\Gamma<O(N)$, the pair $(O_N^+,(O_N^+)_\Gamma)$ has the relative co-Haagerup property however, the dual of $(O_N^+)_\Gamma)$ does not have the Haagerup property whenever $\Gamma$ does not have the Haagerup property.
\end{example}

\bigskip

\footnotesize

\noindent 
{\sc Pierre FIMA \\ \nopagebreak
  Univ Paris Diderot, Sorbonne Paris Cit\'e, IMJ-PRG, UMR 7586, F-75013, Paris, France \\ 
  Sorbonne Universit\'es, UPMC Paris 06, UMR 7586, IMJ-PRG, F-75005, Paris, France \\
  CNRS, UMR 7586, IMJ-PRG, F-75005, Paris, France \\
   Department of Mathematics, IIT Madras, Chennai 600036, India \\
\em E-mail address: \tt pierre.fima@imj-prg.fr}

\bigskip

\noindent
{\sc Kunal MUKHERJEE \\
  Department of Mathematics, IIT Madras \\
  Chennai 600036, India \\
\em E-mail address: \tt kunal@iitm.ac.in}

\bigskip

\noindent
{\sc Issan PATRI \\
  Institute of Mathematical Sciences \\
  Chennai 600113, India \\
\em E-mail address: \tt issanp@imsc.res.in}

\end{document}